\documentclass[reqno]{amsart}
\usepackage{amsthm,amsmath,amssymb}
\usepackage[all]{xy}
\usepackage{appendix}
\usepackage{cite}
\usepackage{tikz}
\usepackage{tikz-cd}
\usepackage{bbm}
\usepackage{mathtools}
\usepackage{mathrsfs}

\newtheoremstyle{mystyle}%   % Name
  {}%                         % Space above
  {}%                         % Space below
  {}%                         % Body font
  {}%                         % Indent amount
  {\bfseries}%                % Theorem head font
  {.}%                        % Punctuation after theorem head
  { }%                        % Space after theorem head, ' ', or \newline
  {}%                         % Theorem head spec (can be left empty, meaning `normal')

\theoremstyle{mystyle}
\theoremstyle{definition}
\newtheorem{theorem}{Theorem}[section]
\newtheorem{proposition}{Proposition}[section]
\newtheorem{lemma}{Lemma}[section]

\newtheorem{definition}{Definition}[section]
\newtheorem{remark}{Remark}[section]
\numberwithin{equation}{section}
\newcommand{\qbinom}[2]{{\begin{bmatrix}#1\\#2\end{bmatrix}}_q}
\newcommand{\Li}{\operatorname{Li}_{2}}
\newcommand{\trace}{\operatorname{trace}}
\newcommand{\fluct}{\operatorname{fluct}}
\newcommand{\full}{\operatorname{full}}
\newcommand{\Var}{\operatorname{Var}}
\newcommand{\Lip}{\operatorname{Lip}}
\newcommand{\supp}{\operatorname{supp}}
\newcommand{\Id}{\operatorname{Id}}
\usepackage{xcolor}
\usepackage[colorlinks=true, citecolor=blue, filecolor=black, linkcolor=black, urlcolor=black]{hyperref}

\begin{document}
\title[$q$-Deformed polyanalytic Ginibre point processes]{$q$-Deformed polyanalytic Ginibre point processes:\\ construction and central limit theorems for linear statistics}

%%%%%%%%%%%%%%%%%% author %%%%%%%%%%%%%%%%%%
\author[Y.-G. Jung]{Yeong-Gwang Jung}
\address{Department of Mathematical Sciences, Seoul National University, Seoul 151-747, Republic of Korea}
\email{wollow21@snu.ac.kr}

\author[R. Sato]{Ryosuke SATO}
\address{Department of Mathematics, Faculty of Science, Hokkaido University, Kita 10, Nishi 8, Kita-ku, Sapporo, Hokkaido, 060-0810, Japan}
\email{r.sato@math.sci.hokudai.ac.jp} 
%%%%%%%%%%%%%%%%%% author %%%%%%%%%%%%%%%%%%

\begin{abstract}
    Based on the representation theory of the $q$-CCR algebra, we develop a representation-theoretic construction of the $q$-deformed polyanalytic Fock spaces, their reproducing kernels, and the associated $q$-deformed polyanalytic Bargmann transforms. The associated determinantal point processes provide $q$-deformations of the pure and full polyanalytic Ginibre ensembles. Moreover, we determine their limiting density and establish central limit theorems for the fluctuations of their linear statistics as the number of particles tends to infinity and the deformation parameter $q$ tends to 1 simultaneously. Remarkably, although the $q$-deformation changes the macroscopic density and the size of the droplet, the limiting covariance structures coincide, after spatial rescaling, with those of the classical polyanalytic Ginibre ensembles.
\end{abstract}

\maketitle

\allowdisplaybreaks
%%%%%%%%%%%%%%%%%%%%%%%%%%%%%%%%%%%%%%%%%%%%%%%%%%%%%%%%%%%%%%%%%%%%%%%%%%%%%%%%%%%%%%%%%%%%%%%%%%%%%%%%%%%%%%%%%%%%%%%%%%%%%%%%%%%%%%%%%%%%%%%%%%%%%%%%%%%%%%%%%%%%

\section{Introduction}

The Ginibre point process is one of the fundamental determinantal point processes
(DPPs) on the complex plane \cite{BF24,Fo10,Gin65,HKPV09}. From the perspective
of random matrix theory, it arises as the large-size limit of the eigenvalue
point process of the complex Ginibre ensemble introduced in \cite{Gin65}, whose
matrix entries are independent complex Gaussian random variables. Notably, the statistical properties of this point process are determined by the so-called correlation kernel, given by $\mathcal{K}(z, w)= e^{z\bar w}$. Here, the complex plane $\mathbb{C}$ is equipped with the complex Gaussian measure 
\begin{equation*}
    d\mathsf{m}(z):=e^{-|z|^2}\,dA(z),\qquad
    dA(z)=\frac{dx\,dy}{\pi}
\end{equation*}
as a reference measure. On the other hand, from a functional-analytic viewpoint, $\mathcal{K}$ is precisely the reproducing kernel of the \emph{analytic Bargmann--Fock space $\mathcal{A}_1$}, consisting of holomorphic functions that are square-integrable with respect to $\mathsf{m}$. Indeed, $\mathcal{K}$ can be recovered from the canonical orthonormal basis $\{z^n/\sqrt{n!}\}_{n\geq 0}$ of $\mathcal{A}_1$ as follows:
\begin{equation*}
    \mathcal{K}(z,w)
    =\sum_{n=0}^{\infty}\frac{z^{n}\overline{w}^{n}}{n!}.
\end{equation*}
It is worth noting that the Bargmann--Fock space $\mathcal{A}_1$ carries the standard analytic realization of the \emph{canonical commutation relations} (CCR), which is unitarily equivalent to the Fock representation via the \emph{Bargmann transform} \cite{Ba61,Fo89}.

After the standard Gaussian gauge transform, the Bargmann--Fock space $\mathcal{A}_1$ is identified with the eigenspace of the Landau Hamiltonian corresponding to its lowest eigenvalue, known as the lowest Landau level. More precisely, the reproducing kernels of the higher Landau level spaces and their finite-rank truncations serve as correlation kernels of determinantal point processes, called the \emph{polyanalytic Ginibre point processes} and the \emph{polyanalytic Ginibre ensembles}, respectively; see \cite{Ab10,AF14,HH13,Sh15}. These Landau-level projection processes also admit an interpretation as systems of planar noninteracting fermions in a uniform magnetic field \cite{HH13,HW19}. Closely related determinantal models arise for fermions in rotating two-dimensional traps; these include the exact lowest-level correspondence with the complex Ginibre ensemble \cite{LCMS19}, as well as regimes involving several occupied Landau levels \cite{KMS21,SLMS22}. See also the review \cite{DLMS19}.

The purpose of this paper is to extend this framework to the $q$-deformed setting. Here, the underlying CCR and the Bargmann transform are replaced by their $q$-deformed counterparts, the $q$-CCR \cite{AC76} and the $q$-deformed Bargmann transform; see Section \ref{sec:construction} for more details. This extension allows us to introduce $q$-deformed polyanalytic Ginibre point processes and $q$-deformed polyanalytic Ginibre ensembles, which are the main objects of this paper. 

In random matrix theory, $q$-deformed random matrix ensembles have been investigated by replacing the underlying classical orthogonal polynomials and their orthogonality measures with their $q$-analogs in the basic hypergeometric family; see \cite{BF25a,BFO26,BJM26,BJO26,Fo22,FL20,FLSY23,MCI01,MCIN93}. Similarly, in the previous works \cite{AEM18, EM21}, $q$-deformations of polyanalytic Bargmann transforms were introduced using $q$-analogs of Hermite and Laguerre polynomials. In this paper, we reveal the algebraic structure underlying the $q$-deformed polyanalytic Bargmann transforms through the Fock representation of the $q$-CCR algebra. This yields a natural construction of the correlation kernels for the associated $q$-deformed Ginibre point processes and their finite-particle ensembles.

We note that the orthogonality measure for the $q$-Laguerre polynomials is not unique \cite{Mo81,IR98}. Consequently, its choice determines the measure underlying the $q$-deformed Bargmann--Fock space, which also serves as the reference measure for the corresponding $q$-deformed Ginibre point processes. In the previous work \cite{EM21}, the orthogonality measure was chosen to be an absolutely continuous, rotationally invariant measure on the complex plane. This construction was proposed as a possible starting point for $q$-deformed Ginibre-type processes, but the associated determinantal models and their large-particle asymptotics were not developed in that work.

In the present paper, we choose a different orthogonality measure for the $q$-Laguerre polynomials. With this choice, our reference measure is given by the Jackson measure in the radial variable and the uniform measure in the angular variable; see \eqref{eqn:def of mq}. Thus, the reference measure is supported on a $q$-geometric family of concentric circles in the complex plane, and our model is radially discrete but angularly continuous. We therefore construct $q$-deformations of polyanalytic Ginibre processes arising from this Jackson-radial realization and investigate their large-particle asymptotics.

\subsection{$q$-deformed polyanalytic Ginibre ensembles}

Within the representation-theoretic formalism connecting the polyanalytic Ginibre ensemble and the Fock representation of the CCR algebra, the $q$-CCR algebra and its representation play a central role in the construction of the \emph{$q$-deformed polyanalytic Ginibre ensemble}. While the full details of this construction are deferred to Section \ref{sec:construction}, here we introduce only the essential notation required to state our main results.

We first collect some basic notations from $q$-analysis; see \cite{GR04, KLS10, KS97} for more details. Throughout this paper, $q\in(0,1)$ denotes the deformation parameter.

The $q$-integer, its factorial, and the $q$-binomial coefficient are defined by 
\begin{equation*}
    [n]_{q}:=\frac{1-q^{n}}{1-q}, \quad
    [n]_{q}!:=[n]_{q}[n-1]_{q}\cdots [1]_{q}, \quad
    \qbinom{n}{k}:=\frac{[n]_{q}!}{[k]_{q}![n-k]_{q}!},
\end{equation*}
for every $n\geq k\geq 0$, with the convention $[0]_{q}!=1$. The $q$-Pochhammer symbols $(a; q)_\infty$ and $(a; q)_n$ are given as
\begin{equation*}
    (a;q)_{\infty}
    :=\prod_{k=0}^{\infty}(1-aq^{k}), \quad
    (a;q)_{n}:=\frac{(a;q)_{\infty}}{(aq^{n};q)_{\infty}}
    :=(1-a)(1-aq)\cdots(1-aq^{n-1}).
\end{equation*}
We also abbreviate
\begin{equation*}
    (a_{1},\cdots,a_{k};q)_{n}
    =\prod_{l=1}^{k}(a_{l};q)_{n},\qquad
    1\leq n\leq \infty.
\end{equation*}
Two standard $q$-exponential functions are given by
\begin{equation*}
    e_{q}(z)
    :=\sum_{n=0}^{\infty}\frac{z^{n}}{[n]_{q}!}
    =\frac{1}{((1-q)z; q)_{\infty}}, \quad
    E_{q}(z)
    :=\sum_{n=0}^{\infty}\frac{q^{n(n-1)/2}}{[n]_{q}!}z^{n}
    =(-(1-q)z;q)_{\infty}.
\end{equation*}
The power series defining $e_{q}(z)$ converges absolutely for $|z|<(1-q)^{-1}$. Since $E_q(z)$ is an entire function and $e_{q}(z)=1/E_{q}(-z)$, $e_{q}(z)$ extends to a meromorphic function on the complex plane with poles at $z=q^{-k}/(1-q)$ for each $k\geq 0$.

The $q$-differentiation and the Jackson $q$-integral are defined by 
\begin{equation*}
    D_{q}f(x)
    :=\frac{f(x)-f(qx)}{x-qx}, \quad
    \int_{0}^{\infty}f(x)\,d_{q}x
        :=(1-q)\sum_{k=-\infty}^{\infty}f(q^{k})q^{k}.
\end{equation*}
The following change-of-variables formula is useful: for every $n\in \mathbb{Z}$
\begin{equation}\label{eq:change_of_variable}
    \int_{0}^{\infty}f(q^{-n}x)\,d_{q}x
    =q^{n}\int_{0}^{\infty}f(x)\,d_{q}x.
\end{equation}

As a $q$-analog of the Gaussian weight, we introduce a Radon measure $\mathsf{m}_{q}$ on $\mathbb{C}$ defined by
\begin{equation}\label{eqn:def of mq}
    d\mathsf{m}_{q}(re^{i\theta})
    =\frac{d\theta}{2\pi}e_{q}(-r^{2})\,d_{q}(r^{2}).
\end{equation}
This measure $\mathsf{m}_q$ is concentrated on the union of concentric circles with geometrically spaced radii
\begin{equation*}
    \bigcup_{k\in\mathbb{Z}}\{z\in\mathbb{C}:|z|^{2}=q^{k}\}.
\end{equation*}
In the continuum limit $q\to1$, this measure recovers the ordinary Gaussian measure $\mathsf{m}$, while the support becomes dense in the complex plane simultaneously. This $q$-deformed Gaussian weight will serve as a reference measure for DPPs considered in this paper.

For an abstract Fock space $\mathcal{H}$ with orthonormal basis $\{|n\rangle\}_{n\geq0}$, the \emph{$q$-deformed order-$r$ Bargmann transform} is an isometric linear map
\begin{equation*}
    \mathcal{B}_{r}^{q}:\mathcal{H}\to L^{2}(\mathbb{C},\mathsf{m}_{q}),
\end{equation*}
where $r+1\geq1$ indicates the polyanalytic order of the image of $\mathcal{B}^{q}_{r}$. In analogy with the classical case, $\{\mathcal{B}_{r}^{q}|n\rangle\}_{n,r\geq0}$ forms an orthonormal system in $L^{2}(\mathbb{C},\mathsf{m}_{q})$.

Accordingly, for $m\geq1$, the \emph{$q$-deformed pure $m$-analytic Ginibre ensemble} is the DPP with respect to $\mathsf{m}_{q}$ whose correlation kernel is
\begin{equation}\label{eqn:qGinibre pure ensemble kernel}
    \mathcal{K}_{m,N}^{q}(z,w)
    =\sum_{n=0}^{N-1}\mathcal{B}_{m-1}^{q}|n\rangle(z,\overline{z})\overline{\mathcal{B}_{m-1}^{q}|n\rangle(w,\overline{w})},
\end{equation}
whereas the correlation kernel of the \emph{$q$-deformed full $m$-analytic Ginibre ensemble} is given by
\begin{equation}\label{eqn:qGinibre full ensemble kernel}
    \mathcal{K}_{m,N}^{q,\full}(z,w)
    =\sum_{r=1}^{m}\sum_{n=0}^{N-1}\mathcal{B}_{r-1}^{q}|n\rangle(z,\overline{z})\overline{\mathcal{B}_{r-1}^{q}|n\rangle(w,\overline{w})}.
\end{equation}
The pure ensemble has $N$ particles, whereas the full ensemble has $mN$ particles almost surely.

\subsection{Main results and discussion}

In this subsection, we state the main results of this paper on the asymptotic statistics of the $q$-deformed polyanalytic Ginibre ensembles: a $q$-deformed circular law for the limiting density measure and central limit theorems for linear statistics. For the asymptotic analysis of the $q$-deformed ensemble, we adopt the double-scaling on $q$ as
\begin{equation}\label{eqn:def of q scaling}
    q=e^{-\lambda/N}
\end{equation}
with $\lambda>0$ fixed. Under this scaling, the continuum limit $q\uparrow1$ is taken as the system size $N$ tends to infinity.

In analogy with the Ginibre ensemble, where the Gaussian weight is scaled to $e^{-N|z|^2}$ to obtain a non-trivial macroscopic limit, we rescale the reference measure by
\begin{equation}\label{eqn:def of mqhat}
    d\widehat{\mathsf{m}}_{q}(re^{i\theta})
    =\frac{d\theta}{2\pi}\widehat{e}_{q}(-r^{2})\,d_{q}(r^{2}),\qquad
    \widehat{e}_{q}(x)
    =e_{q}\Big(\frac{x}{1-q}\Big)
    =\frac{1}{(x;q)_{\infty}}.
\end{equation}
Accordingly, we rescale the correlation kernels of $q$-deformed polyanalytic Ginibre ensembles of the pure type and full type as
\begin{equation}\label{eqn:def of Khat}
    \widehat{\mathcal{K}}_{m,N}^{q}(z,w)
    :=\frac{1}{1-q}\mathcal{K}_{m,N}^{q}\Big(\frac{z}{\sqrt{1-q}},\frac{w}{\sqrt{1-q}}\Big), \qquad
    \widehat{\mathcal{K}}_{m,N}^{q,\full}(z,w)
    :=\frac{1}{1-q}\mathcal{K}_{m,N}^{q,\full}\Big(\frac{z}{\sqrt{1-q}},\frac{w}{\sqrt{1-q}}\Big).
\end{equation}
The normalization factor is chosen so that the rescaled kernels remain projection kernels in $L^{2}(\mathbb C,\widehat{\mathsf m}_q)$.

\begin{remark}
    In the analytic case $m=1$, $\widehat{\mathcal{K}}_{1,N}^{q}$ is the projection kernel onto the space of analytic polynomials with degree lower than $N$. Hence, the joint law of the analytic ensemble has a radially discrete Coulomb gas form
    \begin{equation}\label{eqn:analytic joint law}
        d\mathbb{P}_{N,q}(z_{1},\cdots,z_{N})
        =\frac{1}{Z_{N,q}}\prod_{1\leq j<k\leq N}|z_{j}-z_{k}|^{2}\prod_{j=1}^{N}d\widehat{\mathsf{m}}_{q}(z_{j}),
    \end{equation}
    where $Z_{N,q}>0$ is a normalization constant.
\end{remark}

For the rescaled pure and full ensembles, we define the normalized random empirical measures by
\begin{equation}\label{eqn:empirical measure}
    \widehat{\mu}_{m,N}^{q}
    :=\frac{1}{N}\sum_{j=1}^{N}\delta_{z_{j}}, \qquad
    \widehat{\mu}_{m,N}^{q,\full}
    :=\frac{1}{mN}\sum_{j=1}^{mN}\delta_{z_{j}},
\end{equation}
where $\{z_{j}\}_{j}$ denotes the random point configuration of the rescaled $q$-deformed polyanalytic Ginibre ensemble of pure and full type, respectively.
Our first main result identifies the macroscopic limit of \eqref{eqn:empirical measure} as a one-parameter deformation of the circular law. For $\lambda>0$, the \emph{$q$-deformed circular law} is the probability measure on the complex plane defined by its density function
\begin{equation}\label{eqn:limiting law}
    \widehat{\rho}_{\lambda}(z)\,dA(z)
    =\frac{1}{\lambda (1+|z|^{2})}\mathbbm{1}_{D_{\lambda}}(z)\,dA(z), \qquad
    D_{\lambda}:=\big\{z\in\mathbb{C}:|z|\leq\sqrt{e^{\lambda}-1}\big\}.
\end{equation}

\begin{theorem}\label{thm:double scale limiting law}
    Let $q=e^{-\lambda/N}$ for a fixed $\lambda\in(0,\infty)$. For a fixed $m\geq1$, the normalized empirical measures of the rescaled $q$-deformed pure and full $m$-analytic Ginibre ensemble converge weakly to the $q$-deformed circular law
    \begin{equation*}
        d\widehat{\mu}^{q}_{m,N}(z)
        \to\widehat{\rho}_{\lambda}(z)\,dA(z),\qquad
        d\widehat{\mu}^{q,\full}_{m,N}(z)
        \to\widehat{\rho}_{\lambda}(z)\,dA(z)
    \end{equation*}
    in probability.
\end{theorem}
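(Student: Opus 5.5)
Since the ensembles are determinantal, convergence in probability of the empirical measure follows from two facts: the one-point function converges, and linear statistics have vanishing variance. For a bounded continuous test function $f$, the statistic $\sum_j f(z_j)$ has mean $\int f(z)\widehat{\mathcal K}(z,z)\,d\widehat{\mathsf m}_q(z)$. Because the kernel is a projection kernel, the variance is at most $\int |f|^2 \widehat{\mathcal K}(z,z)\,d\widehat{\mathsf m}_q$, which is $O(N)$. Dividing by $N$ (respectively $mN$), the variance of $\int f\,d\widehat\mu$ is $O(1/N)\to0$. Chebyshev's inequality then reduces the theorem to convergence of the mean measures $\frac1N\widehat{\mathcal K}^q_{m,N}(z,z)\,d\widehat{\mathsf m}_q(z)\to\widehat\rho_\lambda\,dA$, and similarly for the full kernel. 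A countable convergence-determining family of test functions then upgrades this to weak convergence in probability. Moreover, the full kernel is the sum over $r=1,\dots,m$ of the pure kernels with $m$ replaced by $r$. So once every pure mean measure converges to $\widehat\rho_\lambda$, the full one converges to the average of these limits, which is again $\widehat\rho_\lambda$. It therefore suffices to treat the pure ensemble for each fixed order $r=m-1$.

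The mean measure is rotation invariant: each $\mathcal B^q_{r}|n\rangle$ should have the form $z^{n-r}$ or $\bar z^{r-n}$ times a radial $q$-Laguerre factor. It is therefore enough to test against radial functions $g(|z|^2)$. Integrating over the angle, the mean is
\[
\frac1N\sum_{n=0}^{N-1}\int_0^\infty g(t)\,\phi_{n,r}(t)\,d_q t ,
\]
where $\phi_{n,r}(t)\,d_qt$ is the rescaled probability measure $|\mathcal B^q_r|n\rangle|^2\,d\widehat{\mathsf m}_q$ pushed to $t=|z|^2$.

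The key step is a concentration statement: for fixed $r$, uniformly in $n\le N$ with $n/N\to s$, the measure $\phi_{n,r}\,d_qt$ concentrates at $t_s=e^{\lambda s}-1$. In the analytic case $r=0$ this is explicit. The weight is $\phi_{n,0}(t)\propto t^n/(t;q)_\infty$ on the points $t=q^{k}$, that is, a discrete distribution in $k$. The ratio of consecutive masses is $q^{n+1}/(1-q^{k})$, roughly $e^{-\lambda s}/(1-e^{-\lambda u})$ at $q^k=e^{-\lambda u}$. This ratio equals $1$ when $1-e^{-\lambda u}=e^{-\lambda s}$, and we must then identify the correct variable; carrying the rescaling in \eqref{eqn:def of mqhat} through carefully gives the mode $t_s=e^{\lambda s}-1$. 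I would confirm this by computing the first two moments with the $q$-binomial theorem. The mean of $t$ and the mean of $t^2$ are ratios of $q$-Pochhammer-type products, for instance $\int t^{n+1}/(t;q)_\infty\,d_qt$ divided by $\int t^n/(t;q)_\infty\,d_qt$, which equals $q^{-n-1}-1$ up to normalization. This gives mean $e^{\lambda s}-1+o(1)$ and variance $o(1)$.

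For general $r$, I would use the ladder structure from the $q$-CCR representation. There $\mathcal B^q_r|n\rangle$ is obtained from the analytic vectors by applying $r$ creation-type operators in $\bar z$ (a $q$-derivative form). This expresses $|\mathcal B^q_r|n\rangle|^2$ as a combination of the $r=0$ weights with indices shifted by at most $r$, with coefficients that are bounded uniformly as $q\to1$. I expect this to be the main technical obstacle: the bookkeeping of the $q$-Laguerre coefficients and the control of their signs and cancellations. An alternative is to compute the first two moments of $t$ under $\phi_{n,r}$ directly from the three-term recurrence, using the $q$-CCR relations $aa^*-qa^*a=1$ to evaluate $\langle n|\,\cdot\,|n\rangle$-type expressions. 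Either route yields mean $t_{n/N}+o(1)$ and variance $o(1)$ uniformly.

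Given this concentration, the Riemann sum gives
\[
\frac1N\sum_{n<N}\int g\,\phi_{n,r}\,d_qt\;\to\;\int_0^1 g(e^{\lambda s}-1)\,ds .
\]
Substituting $t=e^{\lambda s}-1$, so $ds=dt/(\lambda(1+t))$, this becomes $\int_0^{e^\lambda-1} g(t)\,\frac{dt}{\lambda(1+t)}$, which is exactly the radial form of $\widehat\rho_\lambda\,dA$ on $D_\lambda$. Tightness follows from the uniform first-moment bound, and this completes the argument.
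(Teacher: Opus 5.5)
Your reduction steps are the same as the paper's: the projection-kernel variance bound of order $N$ followed by Chebyshev, the full intensity as the average of the pure intensities of orders $1,\dots,m$, and rotation invariance. You genuinely differ in how you identify the limit of the mean measure. The paper computes all mixed moments of the \emph{summed} intensity exactly (Theorem~\ref{thm:mixed moment}), keeps only the leading order after summing over $j$, and concludes by moment determinacy of the compactly supported $\nu_\lambda$. You instead prove a law of large numbers for each single radial law $\phi_{n,r}\,d_qt$ (the rescaled laws of the independent radii in the paper's Kostlan-type remark), and then take a Riemann sum. Your route needs only the first two moments, but uniformly in $0\le n\le N-1$. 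The paper's route needs all moments, but only at leading order and with no uniformity or per-level concentration. Both are valid. Yours also gives the sharper statement that the $n$-th basis function lives near the circle $|z|^2\approx e^{\lambda n/N}-1$.

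Your first suggested route for $r\ge1$ does not work as stated. Expand $\phi_{n,r}$ in the analytic weights $\phi_{k,0}$ with $|k-n|\le r$. The coefficients are not bounded: they are of order $[n]_q^r\asymp N^r$ with alternating signs. Already for $r=1$, \eqref{eqn:q laguerre expansion} gives exactly $\phi_{n,1}=q[n]_q\,\phi_{n-1,0}-2q[n]_q\,\phi_{n,0}+[n+1]_q\,\phi_{n+1,0}$. So concentration of the $\phi_{k,0}$ transfers only if you control cancellations to order $N^{-r}$.

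Commit to your second route, which works directly. By \eqref{eqn:threeterm barz} and orthonormality, $\int t\,\phi_{n,r}\,d_qt=q^{-(n+r+1)}(1-q^{r+1}+q-q^{n+1})=e^{\lambda n/N}-1+O(N^{-1})$. The case $p=2$ of the per-$j$ identity in the proof of Theorem~\ref{thm:mixed moment} then gives variance $O(N^{-1})$. Both bounds hold uniformly in $n$.

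Also, in the analytic case the weight is $t^n/(-t;q)_\infty$, not $t^n/(t;q)_\infty$; the latter has poles on the lattice. With the correct sign, the ratio of the masses at $q^{k-1}$ and $q^k$ is $q^{-(n+1)}/(1+q^{k-1})$. This gives the mode $q^{-(n+1)}-1\approx e^{\lambda s}-1$ directly, with no need to ``identify the correct variable''. Likewise, $M_{n+1}/M_n=q^{-(n+1)}-1$ is exactly \eqref{eqn:spectral moment recurrence}.
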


\begin{remark}
    Although the $q$-deformed finite-particle ensembles are radially discrete, their normalized empirical measures converge in probability to the absolutely continuous measure $\widehat{\rho}_\lambda(z)dA(z)$. Moreover, unlike the classical Ginibre ensemble, whose empirical measure converges to the uniform measure on the unit disk by the circular law \cite{Ba97,BC12}, the limiting density in the $q$-deformed setting is non-uniform. Nevertheless, the limiting measure is supported on the disk $D_\lambda$, whose radius depends only on the scaling parameter $\lambda$, and is therefore independent of both the polyanalytic order $m$ and the type of the ensemble.

    On the other hand, the limiting density recovers the circular law in the rescaled continuum limit
    \begin{equation*}
        \lambda\widehat{\rho}_{\lambda}(\sqrt{\lambda}z)
        \rightarrow\mathbbm{1}_{\{|z|\leq 1\}}(z),\qquad
        \lambda\downarrow 0.
    \end{equation*}
    In this sense, Theorem \ref{thm:double scale limiting law} provides a $q$-deformed circular law.
\end{remark}

\begin{remark}
     For continuous analytic ensembles on the complex plane, logarithmic potential theory provides a powerful tool for identifying the macroscopic equilibrium measure. In the analytic case $m=1$, the joint law \eqref{eqn:analytic joint law} and limiting density \eqref{eqn:limiting law} can be naturally related to such a continuum equilibrium problem.

    Indeed, the asymptotic expansion of the $q$-Pochhammer symbol \cite[Theorem 4]{Mc99} suggests that, under the double scaling $q=e^{-\lambda/N}$, the radial Jackson weights are governed at the exponential scale by the effective potential
    \begin{equation*}
        Q_{\lambda}(z)
        :=-\frac{1}{\lambda}\Li(-|z|^{2}).
    \end{equation*}
    If one considers the continuous normal matrix ensemble with external potential $Q_{\lambda}$, its equilibrium measure is determined by Frostman's theorem \cite{ST97,AHM15}. The radial Frostman condition gives the equilibrium droplet
    \begin{equation*}
        D_{\lambda}
        =\big\{z\in\mathbb{C}:|z|^{2}\leq e^{\lambda}-1\big\},
    \end{equation*}
    and
    \begin{equation*}
        \partial\overline{\partial}Q_{\lambda}(z)
        =\frac{1}{\lambda(1+|z|^{2})}.
    \end{equation*}
    Hence the equilibrium measure of the associated continuous model is
    \begin{equation*}
        \frac{1}{\lambda(1+|z|^{2})}\mathbbm{1}_{D_{\lambda}}(z)\,dA(z),
    \end{equation*}
    which coincides with the limiting measure obtained in Theorem \ref{thm:double scale limiting law}.

    However, for a fixed $N$, the ensemble considered here is supported on a $q$-geometric family of concentric circles and is therefore radially discrete. Moreover, to the best of our knowledge, a large deviation principle relating the empirical measure of this discrete model to the continuum logarithmic-energy problem above has not yet been established. Thus, although the potential-theoretic argument above provides a natural continuum interpretation of the limiting measure, it does not constitute an alternative proof of Theorem \ref{thm:double scale limiting law}. We leave the development of such a large deviation theory for future work. For related large deviation problems in one-dimensional $q$-deformed and discrete biorthogonal ensembles, see \cite{BJM26,HMO25}.
\end{remark}

\medskip
Following the macroscopic characterization in Theorem \ref{thm:double scale limiting law}, our next contribution is the analysis of the second-order fluctuations of the system via linear statistics. Fluctuations of smooth linear statistics in planar random matrix models have been studied extensively. Gaussian fluctuations for analytic test functions in broader classes of non-Hermitian random matrices were established in \cite{RS06}. For $C^{1}$ linear statistics of the complex Ginibre ensemble and the decomposition into bulk and boundary noises, see \cite{RV07}. For general random normal matrix ensembles, bulk and global fluctuation results were developed in \cite{AHM11,AHM15}; see also \cite{LS18} for two-dimensional Coulomb gases at general inverse temperature. Microscopic-scale radial linear statistics for the two-dimensional Coulomb gas at inverse temperature $\beta=2$ were subsequently studied in \cite{LS25}. Recent extensions include radially symmetric models with spectral gaps \cite{ACC25,ACC26}, as well as crossover regimes between strong and weak non-Hermiticity in the elliptic Ginibre ensemble \cite{MAD26}. The central limit theorem for the classical pure and full polyanalytic Ginibre ensembles was established in \cite{HW19}.

For a real-valued smooth test function $g$ with compact support, the linear statistics of the rescaled $q$-deformed $m$-analytic Ginibre ensemble are defined by
\begin{equation*}
    \trace_{m,N}(g)
    =\sum_{j=1}^{N}g(z_{j}),
\end{equation*}
where the particles $\{z_{j}\}_{j=1}^{N}$ are distributed according to the DPP associated with $\widehat{\mathcal{K}}^q_{m,N}$.
Similarly, the linear statistics of the full ensemble are defined by
\begin{equation*}
    \trace^{\full}_{m,N}(g)
    =\sum_{j=1}^{{Nm}}g(z_{j}),
\end{equation*}
where $\{z_j\}_{j=1}^{Nm}$ are governed by the DPP with the correlation kernel $\widehat{\mathcal{K}}^{q,\full}_{m, N}$. We write
\begin{equation*}
    \fluct_{m,N}(g)
    :=\trace_{m,N}(g)-\mathbb{E}\big[\trace_{m,N}(g)\big],\qquad
    \fluct^{\full}_{m,N}(g)
    :=\trace^{\full}_{m,N}(g)-\mathbb{E}\big[\trace^{\full}_{m,N}(g)\big].
\end{equation*}

Let $\mathcal{N}(0,\sigma^{2})$ denote the centered Gaussian distribution with variance $\sigma^{2}$. The Dirichlet seminorm of $g$ on the interior $D_\lambda^\circ$ of the droplet $D_\lambda$ in Theorem \ref{thm:double scale limiting law} is defined by
\begin{equation*}
    \|g\|_{H^{1}(D_\lambda^\circ)}
    :=\Big(\int_{D_\lambda^\circ}|\overline{\partial}g|^{2}\,dA(z)\Big)^{\frac{1}{2}},
\end{equation*}
while the $H^{1/2}$ seminorm of $g$ on the boundary $\partial D_\lambda$ is expressed as
\begin{equation*}
    \|g\|_{H^{1/2}(\partial D_{\lambda})}
    :=\Big(\sum_{k\in\mathbb{Z}}|k||\widehat{g}_{\lambda}(k)|^{2}\Big)^{\frac{1}{2}},\qquad
    \widehat{g}_\lambda(k)
    :=\frac{1}{2\pi}\int_{0}^{2\pi}g(\sqrt{e^\lambda-1}e^{i\theta})e^{-ik\theta}\,d\theta.
\end{equation*}

Our second main result is the following central limit theorem for the linear statistics of the $q$-deformed pure and full $m$-analytic Ginibre ensembles.
\begin{theorem}\label{thm:CLT}
    Let $q=e^{-\lambda/N}$ for a fixed $\lambda>0$. Let $g$ be a real-valued smooth test function with compact support. For a fixed $m\geq1$, we have
    \begin{align}
        \fluct_{m,N}(g)
        &\longrightarrow\mathcal{N}\Big(0,(2m-1)\|g\|_{H^{1}(D_{\lambda}^{\circ})}^{2}+\frac{1}{2}\|g\|_{H^{1/2}(\partial D_{\lambda})}^{2}\Big),\label{eqn:pure CLT}\\
        \fluct_{m,N}^{\full}(g)
        &\longrightarrow\mathcal{N}\Big(0,m\big(\|g\|_{H^{1}(D_{\lambda}^{\circ})}^{2}+\frac{1}{2}\|g\|_{H^{1/2}(\partial D_{\lambda})}^{2}\big)\Big),\label{eqn:full CLT}
    \end{align}
    in distribution as $N\to\infty$.
\end{theorem}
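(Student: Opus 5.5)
Because the ensembles are rotation invariant, the natural route is the cumulant method for determinantal processes in an angular Fourier basis, as in \cite{HW19} for the classical polyanalytic case; the $q$-deformation should only enter through explicit one-dimensional radial sums. I would first make the radial structure explicit. By the construction of Section~\ref{sec:construction}, $\mathcal{B}^q_{r}|n\rangle(z,\bar z)$ should have the form $c_{n,r}\,z^{n-r}\,L^{(n-r)}_{r}(|z|^2;q)$, with $L$ a $q$-Laguerre polynomial orthogonal for the Jackson weight $\widehat e_q(-x)\,d_qx$ after rescaling. Hence $\widehat{\mathcal K}^q_{m,N}(z,w)=\sum_{k}\big(z\bar w\big)^{k}\,$(radial factor), summed over angular modes $k\in\mathbb Z$ with $|k|$ up to about $N$. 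For $g(z)=\sum_k g_k(|z|)e^{ik\theta}$, each cumulant $\kappa_j(\trace_{m,N}(g))$ reduces to finite sums over radial indices of Jackson integrals of products of these radial functions against $g_{k}$. The plan is then the standard one: show that $\kappa_j\to0$ for $j\ge3$, and compute the limit of $\kappa_2$.

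For the variance, write $\Var=\frac12\iint|g(z)-g(w)|^2|\widehat{\mathcal K}(z,w)|^2$, and split into a bulk part and an edge part. In the bulk, each radial function is localized at $|z|^2\approx x_n:=(q^{-n}-1)$, that is $|z|^2\approx e^{\lambda n/N}-1$, within a window of size $O(N^{-1/2})$ in the local metric $\sqrt{\lambda(1+|z|^2)}$. This comes from a $q$-WKB or steepest-descent analysis of the Jackson weight $(x;q)_\infty^{-1}$ with $q=e^{-\lambda/N}$, which gives an effective Gaussian of variance $\lambda(1+x)x/N$ around $x_n$. Under this local rescaling the kernel should converge to the classical Landau-level kernel with magnetic density $\widehat\rho_\lambda$. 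The conformal invariance of the Dirichlet energy then gives the bulk term $(2m-1)\|g\|^2_{H^1}$ in the pure case, and $m\|g\|^2_{H^1}$ in the full case, exactly as in \cite{HW19}. In particular the density drops out of the $H^1$ term after the spatial rescaling. Concretely, I would expand $\Var$ as $\sum_k\sum_{n}(\text{radial overlaps})|g_k|^2$ and perform a Taylor expansion in the index shift, so that the sum over $n$ becomes a Riemann integral in $t=n/N$ and $x=e^{\lambda t}-1$.

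The edge contribution comes from indices $n$ near $N$, where $x_N=e^\lambda-1$, which is the boundary of $D_\lambda$. The truncation of the angular-mode sum produces a term $\frac12\sum_k|k||\widehat g_\lambda(k)|^2$, with the same coefficient for each polyanalytic level. In the full case the levels combine with the telescoping identity $\sum_r(2r-1)=m^2$ to give $m\|g\|^2_{H^1}+\frac m2\|g\|^2_{H^{1/2}}$. I would verify this by the orthogonality trick $\kappa_2=\int g^2K-\iint g(z)g(w)|K|^2$, combined with a radial Euler--Maclaurin analysis near $x_N$.

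For the higher cumulants I would use Soshnikov's style of argument, or the Ameur--Hedenmalm--Makarov loop-equation/Ward approach. The Ward approach is cleaner if it can be adapted to the Jackson measure, but the discreteness in the radial direction makes it awkward. I would therefore rely on the combinatorial route: $\kappa_j=\sum$ over cyclic products of the kernel, and the locality of the kernel at scale $N^{-1/2}$ together with smoothness of $g$ gives cancellations of order $N^{-1/2}$. The main obstacle is the uniform asymptotics of the $q$-Laguerre radial functions with $q=e^{-\lambda/N}$, both in the bulk and at the edge $n\approx N$. One needs Plancherel--Rotach type estimates for Jackson-orthogonal polynomials together with Gaussian off-diagonal decay of the discrete kernel. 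To get these I would first derive an explicit $q$-Christoffel--Darboux or generating-function formula, apply saddle-point analysis to its contour integral representation, and control the replacement of Jackson sums by integrals through the smoothness of $g$ at mesh $O(1/N)$.
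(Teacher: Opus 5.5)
\textbf{Verdict: there is a genuine gap.} The decisive step, the vanishing of $\kappa_j$ for $j\ge 3$, rests on an assumption that fails at the edge, and the asymptotic inputs it needs are never supplied. There is also an error in your full-ensemble count.

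\textbf{Higher cumulants.} You want $\kappa_j\to0$, $j\geq3$, from locality of the kernel at scale $N^{-1/2}$ (``Gaussian off-diagonal decay''). This is false near $\partial D_\lambda$, which the support of a general $g$ in the theorem meets. Suppose $|\widehat{\mathcal K}^q_{m,N}(z,w)|^2$ were uniformly localized at scale $N^{-1/2}$. Taylor-expanding $g$ in $\frac12\iint|g(z)-g(w)|^2|\widehat{\mathcal K}^q_{m,N}(z,w)|^2$ would then give a local functional $\int_{D_\lambda}(\text{quadratic form in }\nabla g)\,dA$, with the edge layer of width $N^{-1/2}$ contributing only $O(N^{-1/2})$. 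But $\frac12\|g\|^2_{H^{1/2}(\partial D_\lambda)}=\frac12\sum_k|k||\widehat g_\lambda(k)|^2$ depends nonlocally on the boundary values of $g$. So the kernel has long-range correlations along the boundary. Vanishing of the higher cumulants there needs a global cancellation mechanism, such as Rider--Vir\'ag's rotary-flow combinatorics or Ward identities, not Taylor expansion against a local kernel.

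\textbf{Missing asymptotics.} Even in the bulk, and in your mode-by-mode variance computation, you need uniform Plancherel--Rotach asymptotics of the $q$-Laguerre functions at $q=e^{-\lambda/N}$. These must hold with error control, uniformly in the angular mode up to $|k|\sim N$. You name this as the main obstacle but do not resolve it.

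\textbf{What the paper does instead.} None of these asymptotics is needed. Your angular-mode reduction is essentially what the paper does, but only for polynomial $g$.
\begin{itemize}
\item For monomials the radial overlaps are exact products of ratios of normalizing constants. This gives $\Phi_j(z^\alpha\overline z^\beta)$ in closed form, and Euler--Maclaurin plus Rider--Vir\'ag's Lemmas 5.1--5.5 then settle all cumulants, edge included.
\item Higher Landau levels are reduced exactly to the analytic kernel via the $q$-raising operators and Jackson integration by parts.
\item Smooth $g$ is handled by a soft approximation by polynomials, using three ingredients: the exact bound $\Var(\fluct^{\#}_{m,N}(f))\le q^{1-m}[m]_q(e^\lambda-1)\Lip(f)^2$; localization outside a disk containing $D_\lambda$ (both of these come from the three-term recurrences); and $C^1$-continuity of the limiting variance.
\end{itemize}

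\textbf{Full-ensemble error.} Summing the pure variances with $\sum_{r=1}^m(2r-1)=m^2$ gives $m^2\|g\|^2_{H^1(D_\lambda^\circ)}+\frac m2\|g\|^2_{H^{1/2}(\partial D_\lambda)}$, not $m\|g\|^2_{H^1(D_\lambda^\circ)}+\frac m2\|g\|^2_{H^{1/2}(\partial D_\lambda)}$. The pure levels are not independent. The square $|\widehat{\mathcal K}^{q,\full}_{m,N}|^2$ contains cross terms $\widehat{\mathcal K}^q_{r,N}\overline{\widehat{\mathcal K}^q_{s,N}}$. Those between adjacent levels (the multi-indices with $\ell(\mathbf i)=1$ in the paper) contribute $-m(m-1)\|g\|^2_{H^1(D_\lambda^\circ)}$ in the limit, and this is what brings $m^2$ down to $m$.

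\textbf{Minor.} $\mathcal B^q_r|n\rangle=C_{r,n}z^{n-r}L^{(n-r)}_r(|z|^2;q)$ holds only for $n\geq r$. For $n<r$ it is $C_{r,n}\overline z^{\,r-n}L^{(r-n)}_n(|z|^2;q)$, so the angular modes of the pure kernel run from $-(m-1)$ to $N-m$.
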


Although the droplet depends on $\lambda$, the limiting variances retain the classical bulk–boundary decomposition. Dilating $D_{\lambda}$ to the unit disk, the variances recover their classical counterparts; see \cite[Theorem 1.1]{HW19}.

\begin{remark}
    Both Theorems~\ref{thm:double scale limiting law} and \ref{thm:CLT} are proved with the polyanalytic order $m$ fixed as $N\to\infty$. A different asymptotic regime arises when $m$ grows as $N$ tends to infinity. The arguments in the present paper are not uniform in $m$ and hence do not cover this regime. Determining the limiting density and the fluctuation behavior in this growing-order regime is left for future work. Related large-order phenomena for the classical polyanalytic Ginibre ensemble are discussed in \cite{Ha14, HH13}; see also the numerical illustration in \cite[Figure~1]{HW19}. Related high- and many-Landau-level regimes arise for noninteracting fermions in rotating traps; see \cite{KMS21,SLMS22}.
\end{remark}

Let us briefly comment on the proof of Theorem \ref{thm:CLT}. The argument follows the general strategies of Rider and Vir\'ag \cite{RV07} and Haimi and Wennman \cite{HW19}. Following \cite{RV07}, we first express the cumulants for polynomial test functions as cyclic integrals in the analytic case $m=1$. For the higher Landau levels, following \cite{HW19}, we use the $q$-raising-operator representation of the polyanalytic kernels, which allows us to reduce the cumulants for the higher Landau levels to those involving only the analytic kernel. Thus, we can determine their asymptotic behavior from the analytic case via the large-$N$ expansions of the relevant $q$-difference operators. 

The extension from polynomial test functions to smooth ones differs from the argument in \cite{RV07}. Rather than deriving the limiting variance for general smooth test functions, we control the approximation error using a uniform Lipschitz variance bound and a localization estimate outside any fixed disk containing the droplet.

\subsection{Further directions}
Our construction of DPPs is closely related to a general framework in which DPPs arise from group representations and coherent-state transforms. This work provides a concrete step toward extending this framework from classical group representations to their $q$-analogues. Related $q$-deformed coherent-state transforms and reproducing kernels were studied in \cite{AEM18,EM21,ME21}. In particular, \cite{EM21,ME21} suggested their possible use in constructing $q$-deformed Ginibre-type processes. Motivated by these works, we construct projection DPPs for the Jackson-radial realization developed here and study their large-particle asymptotics.

In the classical setting, the representations of the Heisenberg group, $SU(2)$, and $SU(1,1)$ give rise to parallel constructions of coherent-state transforms and reproducing-kernel Hilbert spaces. See \cite{Fo89} for the classical Heisenberg group and the Bargmann--Fock representation, and \cite{Mo05a, Mo05b, Mo12} for related constructions associated with $SU(2)$ and $SU(1,1)$. The corresponding reproducing kernels also give rise to determinantal point processes, including Ginibre- and polyanalytic Ginibre-type ensembles \cite{HH13,Sh15}, finite Weyl--Heisenberg ensembles \cite{AGR19,APRT17}, spherical ensembles \cite{Kr09}, and hyperbolic-type processes \cite{DL19}.

Motivated by these classical parallels, it would be natural to develop analogous constructions based on representations of $SU_{q}(2)$ and $SU_{q}(1, 1)$. In particular, one may ask whether suitable coherent-state transforms give rise to $q$-deformed reproducing kernel Hilbert spaces and associated determinantal point processes.

\subsection*{Organization of the paper}

This paper is organized as follows. In Section \ref{sec:construction}, we discuss the Fock representation of the $q$-CCR algebra and reveal a representation-theoretic aspect of $q$-deformed polyanalytic Bargmann transforms. Based on this, we introduce $q$-deformed polyanalytic Fock spaces and derive explicit formulas for their orthonormal bases and reproducing kernels in terms of $q$-Laguerre polynomials and $q$-hypergeometric functions. Consequently, we are able to introduce our main objects in this paper, namely $q$-deformed polyanalytic Ginibre point processes and their finite-particle truncations. 

In Section \ref{sec:double_scaled_regime}, we derive exact formulas for the spectral moments of the rescaled ensembles and identify the weak limit of their normalized intensity measures. A variance bound for finite-rank projection DPPs improves this limit to convergence in probability of the random empirical measures, completing the proof of Theorem \ref{thm:double scale limiting law}.

The remaining three sections are devoted to the proof of the central limit theorem. In Section~\ref{sec:CLT lowest}, we treat the lowest Landau level and establish the limiting cumulants for real-valued polynomial test functions by an exact analysis of cyclic traces. In Section~\ref{sec:Raising}, we use the $q$-raising operators introduced in Section~\ref{sec:construction} together with exact Jackson adjoint identities to reduce cyclic traces in higher Landau levels to cyclic integrals involving the analytic kernel. Finally, in Section~\ref{sec:CLT}, we derive the asymptotic expansions of the reduction operators and use them to establish the polynomial central limit theorems for the pure and full ensembles. A uniform Lipschitz variance bound and localization outside the droplet are used to extend these results simultaneously to compactly supported smooth test functions, completing the proof of Theorem \ref{thm:CLT}.

In Appendix \ref{app:IFS}, we give a brief discussion of possible generalizations of the Bargmann transform, the associated reproducing kernels, and the associated DPPs from the perspective of interacting Fock spaces.

\subsection*{Acknowledgments}
The authors are grateful to the organizers of \emph{The 18th HU-SNU Joint Symposium on Mathematics}, held at Hokkaido University in October 2025, from which this collaborative work stemmed.

Y.-G. J. was supported by the National Research Foundation of Korea grant (RS2025-00516909, RS-2026-25518141). Y.-G. J. is deeply grateful to Sung-Soo Byun for valuable comments.
R. S. was funded by JSPS KAKENHI Grant-in-Aid for Research Activity Start-up Grant Number 25K23325. R. S. would like to thank Zouha\"ir Mouayn, whose intensive lecture at Chuo University proved highly valuable for this project. He is also grateful to Makoto Katori, Tomoyuki Shirai, and Kohei Noda for their fruitful comments on the draft, and to Takahiro Hasebe for bringing \cite{ABH16} to his attention.

%%%%%%%%%%%%%%%%%%%%%%%%%%%%%%%%%%%%%%%%%%%%%%%%%%%%%%%%%%%%%%%%%%%%%%%%%%%%%%%%%%%%%%%%%%%%%%%%%%%%%%%%%%%%%%%%%%%%%%%%%%%%%%%%%%%%%%%%%%%%%%%%%%%%%%%%%%%%%%%%%%%%
\section{Construction of $q$-deformed polyanalytic Ginibre point processes}\label{sec:construction}
%%%%%%%%%%%%%%%%%%%%%%%%%%%%%%%%%%%%%%%%%%%%%%%%%%%%%%%%%%%%%%%%%%%%%%%%%%%%%%%%%%%%%%%%%%%%%%%%%%%%%%%%%%%%%%%%%%%%%%%%%%%%%%%%%%%%%%%%%%%%%%%%%%%%%%%%%%%%%%%%%%%%

\subsection{CCR representations and polyanalytic Ginibre processes}

We briefly review the representation-theoretic construction of the classical polyanalytic Ginibre processes, which will serve as the undeformed model for our construction.

The \emph{oscillator algebra} $\mathfrak{A}$ is the unital $\mathbb{C}$-algebra generated by $a$ and $a^{\ast}$, subject to the \emph{canonical commutation relation} (CCR)
\begin{equation*}
    aa^{\ast}-a^{\ast}a
    =1.
\end{equation*}
For the Fock space $\mathcal{H}$ with orthonormal basis $\{|n\rangle \}_{n\geq0}$, the \emph{Fock representation} of $\mathfrak{A}$ on $\mathcal{H}$ is given as
\begin{equation*}
    a^{\ast}|n\rangle
    =\sqrt{n+1}|n+1\rangle,\qquad
    a|n\rangle
    =\sqrt{n}|n-1\rangle,
\end{equation*}
with the convention $|-1\rangle=0$. The operator $\mathsf{N}:=a^{\ast}a$ is called a \emph{number operator} and it is diagonal in the Fock basis as $\mathsf{N}|n\rangle=n|n\rangle$. For every $z\in\mathbb{C}$, the eigenvalue problem $a\psi=z\psi$ has a solution
\begin{equation*}
    |z\rangle
    =e^{-\frac{|z|^{2}}{2}}\sum_{n=0}^{\infty}\frac{z^{n}}{\sqrt{n!}}|n\rangle
\end{equation*}
which is called a \emph{coherent state}. The analytic Bargmann transform $\mathcal{B}_{0}:\mathcal{H}\to L^{2}(\mathbb{C},\mathsf{m})$ can be written in terms of the coherent state
\begin{equation}\label{eqn:analytic BF coherent rep}
    \mathcal B_0h(z)
    :=e^{\frac{|z|^{2}}{2}}\langle \overline{h}|z\rangle,
\end{equation}
where $\overline{h}=\sum_{n=0}^{\infty}\overline{\langle n|h\rangle}|n\rangle$ for every $h\in\mathcal{H}$. In particular, for every $n\geq 0$ we have 
\begin{equation}\label{eqn:analytic BF basis}
    \mathcal{B}_{0}|n\rangle(z)
    =\frac{z^{n}}{\sqrt{n!}}.
\end{equation}
Thus, $\{\mathcal{B}_0|n\rangle\}_{n\geq 0}$ forms an orthonormal system in $L^2(\mathbb{C}, \mathsf{m})$. Thus, $\mathcal{B}_{0}$ is the unitary map from $\mathcal{H}$ onto the analytic Bargmann--Fock space $\mathcal{A}_{1}\subset L^2(\mathbb{C}, \mathsf{m})$. Moreover, $\mathcal{B}_0$ intertwines the Fock representation on $\mathcal{H}$ with multiplication and differentiation operators
\begin{equation}\label{eqn:intertwining classical}
    \mathcal{B}_{0}a^{\ast}
    =z\mathcal{B}_{0},\qquad
    \mathcal{B}_{0}a
    =\partial_{z}\mathcal{B}_{0}.
\end{equation}

The general polyanalytic Bargmann transforms are obtained from the same representation. For $r\geq0$, define the \emph{order-$r$ coherent states}
\begin{equation}\label{eqn:r coherent state}
    |z,\overline{z};r\rangle
    :=\frac{1}{\sqrt{r!}}(a^{\ast}-\overline{z})^{r}|z\rangle
\end{equation}
and the \emph{order-$r$ Bargmann transform}
\begin{equation}\label{eq:r Bargmann transform}
    \mathcal{B}_{r}h(z,\overline{z})
    :=e^{\frac{|z|^{2}}{2}}\langle\overline{h}|z,\overline{z};r\rangle\qquad
    (h\in\mathcal{H}).
\end{equation}
Here, $\{\mathcal{B}_{r}|n\rangle\}_{n,r\geq0}$ forms an orthonormal basis of $L^{2}(\mathbb{C},\mathsf{m})$. 

\begin{remark}
    Originally, higher-order coherent states were defined using \emph{displacement operators}. Namely, we define $D(z):=\exp(za^{\ast}-\overline{z}a)$ and $|z, \overline{z}; r\rangle:= D(z)|r\rangle$. By CCR, we have $D(z)a^{\ast}=(a^{\ast}-\overline{z})D(z)$, and hence
    \begin{equation*}
        |z,\overline{z};r\rangle
        =\frac{1}{\sqrt{r!}}D(z)(a^{\ast})^{r}|0\rangle
        =\frac{1}{\sqrt{r!}}(a^{\ast}-\overline{z})^{r}|z\rangle.
    \end{equation*}
    In this paper, we take this identity as the definition, which in turn motivates our definition of a $q$-deformation of higher-order coherent states.
\end{remark}

Iterating the intertwining relation \eqref{eqn:intertwining classical} yields
\begin{equation}\label{eqn:classical raising}
    \mathcal{B}_{r}h
    =\frac{1}{\sqrt{r!}}(\partial_{z}-\overline{z})^{r}\mathcal{B}_{0}h,
\end{equation}
which motivates the definition of the \emph{raising operator}
\begin{equation}\label{eqn:classical raising operator}
    \mathcal{R}
    :=\partial_{z}-\overline{z}.
\end{equation}
This operator admits the gauge representation
\begin{equation}\label{eqn:raising gauge}
    \mathcal{R}f
    =e^{|z|^{2}}\partial_{z}\big[e^{-|z|^{2}}f\big],
\end{equation}
which implies that $\mathcal{R}$ is the adjoint of the partial differential operator $-\partial_{\overline{z}}$ as an operator acting on $L^{2}(\mathbb{C},\mathsf{m})$. These operators reveal another CCR structure in $L^{2}(\mathbb{C},\mathsf{m})$. Indeed, they satisfy the CCR
\begin{equation*}
    (-\partial_{\overline{z}})\mathcal{R}-\mathcal{R}(-\partial_{\overline{z}})
    =1.
\end{equation*}
By \eqref{eqn:classical raising} and $(-\partial_{\overline{z}})\mathcal{R}^{r}=r\mathcal{R}^{r-1}+\mathcal{R}^{r}(-\partial_{\overline{z}})$, we have 
\begin{equation*}
    (-\partial_{\overline{z}})\mathcal{B}_{r}|n\rangle
    =\sqrt{r}\mathcal{B}_{r-1}|n\rangle,\qquad
    \mathcal{R}\mathcal{B}_{r}|n\rangle
    =\sqrt{r+1}\mathcal{B}_{r+1}|n\rangle.
\end{equation*}
Thus, for each fixed $n\geq0$, the family $\{\mathcal{B}_{r}|n\rangle\}_{r\geq0}$
admits the Fock representation of $-\partial_{\overline{z}}$ and $\mathcal{R}$ acting as the annihilation and creation operators, respectively. By analogy with the number operator $\mathsf{N}$, we define the corresponding operator
\begin{equation}\label{eqn:mathcalL}
    \mathcal{L}
    :=\mathcal{R}(-\partial_{\overline{z}})
    =-(\partial_{z}-\overline{z})\partial_{\overline{z}}
\end{equation}
so that
\begin{equation*}
    \mathcal{L}\mathcal{B}_{r}|n\rangle
    =r\mathcal{B}_{r}|n\rangle.
\end{equation*}
Under the standard Gaussian gauge transform and normalization, $\mathcal{L}$ is unitarily equivalent to the Landau Hamiltonian on the plane. Accordingly, for each $r\geq0$, the kernel space
\begin{equation}\label{eqn:Landau}
    \mathcal{A}_{r+1}
    :=\ker(\mathcal{L}-r)
    =\overline{\operatorname{span}}\{\mathcal{B}_{r}|n\rangle:n\geq0\}
    \subset L^{2}(\mathbb{C},\mathsf{m})
\end{equation}
is identified with the $r$th Landau eigenspace. Analytically, for $m\geq1$, the space $\mathcal{A}_{m}$ is referred to as the pure (or true) $m$-analytic Bargmann--Fock space, which coincides with the orthogonal complement of $\mathcal{A}_{m-1}^{\full}$ in $\mathcal{A}_{m}^{\full}$. Here, the full $m$-analytic Bargmann--Fock space is given as
\begin{equation*}
    \mathcal{A}_{m}^{\full}
    :=\big\{f:\partial_{\overline{z}}^{m}f=0\big\}\cap L^{2}(\mathbb{C},\mathsf{m}),\qquad
    \mathcal{A}_{0}^{\full}
    =\{0\}.
\end{equation*}
Throughout this paper, we use $r$ to indicate the order of the ($q$-deformed) Bargmann transform or corresponding Landau levels, whereas $m$ denotes the polyanalytic order.

These pure and full polyanalytic Bargmann--Fock spaces induce two types of polyanalytic extension of the Ginibre process: the pure type and the full type. The pure $m$-analytic Ginibre process is the planar determinantal process with respect to the Gaussian measure $\mathsf{m}$ whose correlation kernel is given by the projection kernel onto $\mathcal{A}_{m}$:
\begin{equation*}
    \mathcal{K}_{m}(z,w)
    =\sum_{n=0}^{\infty}\mathcal{B}_{m-1}|n\rangle(z)\overline{\mathcal{B}_{m-1}|n\rangle(w)},
\end{equation*}
while the correlation kernel of the full $m$-analytic Ginibre process is 
\begin{equation*}
    \mathcal{K}_{m}^{\full}(z,w)
    =\sum_{r=1}^{m}\sum_{n=0}^{\infty}\mathcal{B}_{r-1}|n\rangle(z)\overline{\mathcal{B}_{r-1}|n\rangle(w)}.
\end{equation*}
We use the term \emph{process} for the infinite-rank determinantal point processes associated with the resulting projection kernels, and the term \emph{ensemble} for their finite-rank truncations. Accordingly, the pure and full $m$-analytic Ginibre ensembles are the determinantal point processes governed by the finite-rank projection kernels
\begin{equation*}
    \mathcal{K}_{m,N}(z,w)
    =\sum_{n=0}^{N-1}\mathcal{B}_{m-1}|n\rangle(z)\overline{\mathcal{B}_{m-1}|n\rangle(w)},\qquad
    \mathcal{K}_{m,N}^{\full}(z,w)
    =\sum_{r=1}^{m}\sum_{n=0}^{N-1}\mathcal{B}_{r-1}|n\rangle(z)\overline{\mathcal{B}_{r-1}|n\rangle(w)},
\end{equation*}
respectively. Thus, the pure ensemble has $N$ particles, while the full ensemble has $mN$ particles.

\subsection{The $q$-oscillator algebra and the analytic $q$-Bargmann transform}

We next recall the Fock representation of the $q$-oscillator algebra; see, e.g., \cite[Chapter 5]{KS97}. This representation provides the algebraic basics for our construction of $q$-deformed polyanalytic Ginibre point processes.

The \emph{$q$-oscillator algebra} $\mathfrak{A}_{q}$ is the unital $\mathbb{C}$-algebra generated by $a, a^{\ast}, q^{\mathsf{N}}, q^{-\mathsf{N}}$ subject to the $q$-CCR \cite{AC76}
\begin{equation}\label{eqn:qCCR}
    aa^{\ast}-qa^{\ast}a=\Id,\quad
    q^{\mathsf{N}}q^{-\mathsf{N}}=q^{-\mathsf{N}}q^{\mathsf{N}}=\Id,\quad
    q^{\pm \mathsf{N}}a^{\ast}=q^{\pm 1}a^{\ast}q^{\pm \mathsf{N}},\quad
    q^{\pm \mathsf{N}}a=q^{\mp 1}aq^{\pm \mathsf{N}}.
\end{equation}

For a Fock space $\mathcal{H}$, the \emph{Fock representation} of $\mathfrak{A}_{q}$ on $\mathcal{H}$ is given as follows:
\begin{equation*}
    a^{\ast}|n\rangle
    :=\sqrt{[n+1]_{q}}|n+1\rangle, \quad
    a|n\rangle
    :=\sqrt{[n]_{q}}|n-1\rangle, \quad
    q^{\pm \mathsf{N}}|n\rangle = q^{\pm n}|n\rangle,
\end{equation*}
where $|-1\rangle:=0$. In contrast to the classical case, the annihilation operator $a$ is bounded with
\begin{equation*}
    \|a\|
    =\lim_{n\to\infty}\sqrt{[n]_{q}}
    =(1-q)^{-\frac{1}{2}}.
\end{equation*}
For a complex number $z$ with $|z|^{2}<(1-q)^{-1}$, the \emph{$q$-coherent state}
\begin{equation*}
    |z\rangle
    =e_{q}(|z|^{2})^{-\frac{1}{2}}\sum_{n=0}^{\infty}\frac{z^{n}}{\sqrt{[n]_{q}!}}|n\rangle
    \in\mathcal{H}
\end{equation*}
is characterized as the solution of the eigenvalue problem $a\psi=z\psi$ satisfying $\langle z|z\rangle = 1$.

\begin{remark}
    There are several standpoints to introduce coherent states; see \cite{DK92} for more details. In this paper, we view coherent states as solutions to the eigenvalue problem for the annihilation operator, called \emph{Glauber coherent states}. On the other hand, coherent states obtained by applying displacement operators to the vacuum state are called \emph{Perelomov coherent states}. In the $q$-deformed setting, this approach has been explored in \cite{FV91}. These two notions agree in the classical limit $q\to1$, but are generally different for $q \in (0,1)$.
\end{remark}

Next, we construct a $q$-counterpart of the analytic Bargmann transform \eqref{eqn:analytic BF coherent rep}. As a natural $q$-analog of $\mathsf{m}$, we use a Radon probability measure $\mathsf{m}_q$ defined in \eqref{eqn:def of mq}. The integration-by-parts formula for the Jackson $q$-integral provides the mixed moments formula
\begin{equation}\label{eqn:q mixed moments}
    \int_{\mathbb{C}}z^{n}\overline{z}^{m}\,d\mathsf{m}_{q}(z)
    =\delta_{n,m}\int_{0}^{\infty}r^{2n}e_{q}(-r^{2})\,d_{q}r^{2}
    =\delta_{n,m}q^{-\frac{n(n+1)}{2}}[n]_{q}!.
\end{equation}
Thus, we have $\mathbb{C}[z, \bar{z}]\subset L^2(\mathbb{C}, \mathsf{m}_q)$.

\begin{remark}\label{rem:indeterminate}
    Although the orthogonal polynomials with respect to $x^ne_q(-x)d_qx$ are the $q$-Laguerre polynomials, the associated moment problem is indeterminate, i.e., an orthogonality measure for the $q$-Laguerre polynomials is not unique; see \cite{IR98, Mo81}. Moreover, $x^n e_q(-x)d_qx$ is not an extremal solution to the moment problem for every $n\geq 0$. In fact, by \cite[Theorem 14]{Mo81}, the support of any extremal solution must be the zeros of a certain entire function. However, the support of $x^n e_q(-x)d_qx$ is $\{q^k\mid k\in \mathbb{Z}\}$, which has an accumulation point at 0. Therefore, by \cite[Theorem H]{Mo81},  the subspace $\mathbb{C}[z,\bar{z}]$ of polynomials is not dense in $L^2(\mathbb{C}, \mathsf{m}_q)$.
\end{remark}

In virtue of Remark \ref{rem:indeterminate}, we define the \emph{$q$-deformed analytic Bargmann--Fock space}
\begin{equation*}
    \mathcal{A}_{1}^{q}
    :=\overline{\operatorname{span}}\{z^{n}:n\geq0\}
    \subset L^{2}(\mathbb{C},\mathsf{m}_{q}).
\end{equation*}
By \eqref{eqn:q mixed moments}, the analytic polynomials $\big\{q^{n(n+1)/4}z^{n}/\sqrt{[n]_{q}!}\big\}_{n\geq0}$
form an orthonormal basis of $\mathcal{A}_{1}^{q}$. In analogy to \eqref{eqn:analytic BF basis}, we define the \emph{$q$-deformed analytic Bargmann transform} $\mathcal{B}_{0}^{q}:\mathcal{H}\to \mathcal{A}_{1}^{q}$ as the unitary map determined by
\begin{equation}\label{eqn:q analytic transform basis}
    \mathcal{B}_{0}^{q}|n\rangle(z,\overline{z})
    =q^{\frac{n(n+1)}{4}}\frac{z^{n}}{\sqrt{[n]_{q}!}}\qquad
    (n\geq0),
\end{equation}
equivalently, as an analog of \eqref{eqn:analytic BF coherent rep},
\begin{equation*}
    [\mathcal{B}_{0}^{q}h](z,\overline{z})
    :=e_q(|z|^2)^{1/2} \langle \overline{h}| z, \overline{z}; 0\rangle
    \qquad (h\in\mathcal{H}).
\end{equation*}
Here, the regularized $q$-coherent state $|z, \overline{z}; 0\rangle$ is defined by
\begin{equation*}
    |z,\overline{z};0\rangle
    :=q^{\frac{\mathsf{N(N+1)}}{4}}|z\rangle
    =e_{q}(|z|^{2})^{-\frac{1}{2}}\sum_{n=0}^{\infty}q^{\frac{n(n+1)}{4}}\frac{z^{n}}{\sqrt{[n]_{q}!}}|n\rangle
\end{equation*}
and $0$ indicates the lowest Landau level.

Prior to the construction of the higher-order transformations, we record the intertwining property of the $q$-deformed analytic Bargmann transform. Let $\partial_{q,z}$ denote the partial $q$-derivative, i.e.,
\begin{equation*}
    \partial_{q,z}f(z,\overline{z})
    =\frac{f(z,\overline{z})-f(qz,\overline{z})}{(1-q)z},
\end{equation*}
and $\mathsf{S}_{q,z}$ be the $q$-shift operator
\begin{equation*}
    [\mathsf{S}_{q,z}f](z,\overline{z})
    =f(qz,\overline{z}).
\end{equation*}
Then, $q$-extension of the intertwining relation \eqref{eqn:intertwining classical} is given by
\begin{equation}\label{eq:q_intertwing}
    \mathcal{B}^{q}_{0}q^{-\frac{\mathsf{N}}{2}}a^{\ast}
    =z\mathcal{B}^{q}_{0}, \qquad
    \mathcal{B}^{q}_{0}q^{\frac{\mathsf{N}+1}{2}}a
    =\partial_{q,z}\mathcal{B}^{q}_{0}, \qquad 
    \mathcal{B}^{q}_{0}q^{\mathsf{N}}
    =\mathsf{S}_{q,z}\mathcal{B}^{q}_{0}.
\end{equation}

\subsection{Higher-order $q$-Bargmann transforms and $q$-raising maps}\label{sec:higher_q_raising_maps}
In this subsection, we construct higher-order $q$-Bargmann transforms in a way analogous to \eqref{eq:r Bargmann transform}. As a natural $q$-deformation of \eqref{eqn:r coherent state}, for $r\geq 0$ we define the \emph{order-$r$ $q$-coherent state} 
\begin{align*}
    |z, \overline{z}; r\rangle 
    := \frac{q^{\frac{1}{4}(\mathsf{N}+r+(\mathsf{N}-r)^2)}}{\sqrt{[r]_q!}}(a^{\ast}-\overline{z}q^\mathsf{N})^r|z\rangle.
\end{align*}
From $q^{\mathsf{N}}e_{q}(|z|^{2})^{1/2}|z\rangle=e_{q}(|qz|^{2})^{1/2}|qz\rangle$ and the $q$-binomial theorem for $q$-commuting variables \cite[Proposition 2.2]{KS97}, we have
\begin{equation*}
\begin{split}
    e_q(|z|^{2})^{1/2} |z,\overline{z};r\rangle 
    &=\frac{q^{(\mathsf{N}+r+(\mathsf{N}-r)^{2})/4}}{\sqrt{[r]_q!}}\sum_{k=0}^{r}(-1)^{k}\qbinom{r}{k}\overline{z}^{k}e_{q}(|q^{k}z|^{2})^{\frac{1}{2}}(a^{\ast})^{r-k}|q^{k}z\rangle\smallskip \\
    &=\frac{q^{(\mathsf{N}+r+(\mathsf{N}-r)^{2})/4}}{\sqrt{[r]_{q}!}}\sum_{k=0}^{r}(-1)^{r-k}\qbinom{r}{k}\overline{z}^{r-k}e_{q}(|q^{r-k}z|^{2})^{\frac{1}{2}}(a^{\ast})^{k}|q^{r-k}z\rangle.
\end{split}
\end{equation*}

Analogously to \eqref{eq:r Bargmann transform}, we introduce a $q$-deformation of the order-$r$ Bargmann transform $\mathcal{B}_r$ as follows:

\begin{definition}
    For $r\geq0$, the \emph{$q$-deformed order-$r$ Bargmann transform} is a linear map $\mathcal{B}_{r}^{q}:\mathcal{H}\to L^{2}(\mathbb{C},\mathsf{m}_{q})$ defined by
    \begin{equation*}
        [\mathcal{B}_{r}^{q}h](z, \overline{z})
        :=e_q(|z|^2)^{1/2}\langle \overline{h}| z, \overline{z}; r\rangle \qquad(h \in \mathcal{H}).
    \end{equation*}
\end{definition}

Since $q^{\frac{1}{4}(\mathsf{N}+r+(\mathsf{N}-r)^2)}= q^{\frac{r(r+1)}{4}}q^{-\frac{r}{2}\mathsf{N}}q^{\frac{1}{4}\mathsf{N}(\mathsf{N}+1)}$ and $q^{\frac{1}{4}\mathsf{N}(\mathsf{N}+1)}a^{\ast}=a^{\ast}q^{\frac{\mathsf{N}+1}{2}}q^{\frac{1}{4}\mathsf{N}(\mathsf{N}+1)}$, we have 
\begin{equation*}
    |z,\overline{z};r\rangle
    =\frac{q^{\frac{r(r+1)}{4}}}{\sqrt{[r]_{q}!}}q^{-\frac{r\mathsf{N}}{2}}(a^{\ast}q^{\frac{\mathsf{N}+1}{2}}-\overline{z}q^\mathsf{N})^{r}|z,\overline{z}; 0\rangle.
\end{equation*}
Thus, for every $h \in \mathcal{H}$ we have
\begin{equation*}
    [\mathcal{B}^{q}_{r}h](z,\overline{z})
    =\frac{q^{\frac{r(r+1)}{4}}}{\sqrt{[r]_{q}!}}[\mathcal{B}^{q}_{0}((q^{\frac{\mathsf{N}+1}{2}}a-\overline{z}q^{\mathsf{N}})^{r}q^{-\frac{r\mathsf{N}}{2}})h](z,\overline{z}).
\end{equation*}
Here, in analogy with \eqref{eqn:classical raising operator}, we introduce the $ q$-raising operator $\widetilde{\mathcal{R}}_{q}:=\partial_{q,z}-\overline{z}\mathsf{S}_{q,z}$. Similarly to \eqref{eqn:raising gauge}, $\widetilde{\mathcal{R}}_q$ admits the gauge representation
\begin{equation}\label{eqn:Rtilde gauge}
    \widetilde{\mathcal{R}}_{q}f(z, \overline{z})
    =E_{q}(|z|^{2})\partial_{q,z}\big[e_{q}(-|z|^{2})f(z, \overline{z})\big] \qquad (f\in \mathbb{C}[z, \overline{z}]).
\end{equation}

The following is a consequence of \eqref{eq:q_intertwing} and \eqref{eqn:Rtilde gauge}:
\begin{proposition}\label{prop:r_Bargmann_0}
    For every $r\geq 0$ we have
    \begin{equation}\label{eqn:BRSB}
        \mathcal{B}^{q}_{r}
        =\frac{q^{\frac{r(r+1)}{4}}}{\sqrt{[r]_{q}!}}\widetilde{\mathcal{R}}_{q}^{r}\mathsf{S}_{q,z}^{-\frac{r}{2}}\mathcal{B}^{q}_{0}.
    \end{equation}
    Moreover, if $h\in\mathcal{H}$ is a finite linear combination of the basis elements $\{|n\rangle\}_{n\geq0}$, then
    \begin{equation}\label{eqn:qraising gauge}
        \mathcal{B}_{r}^{q}h(z, \overline{z})
        =\frac{q^{\frac{r(r+1)}{4}}}{\sqrt{[r]_{q}!}}E_{q}(|z|^{2})\partial_{q,z}^{r}\big[e_{q}(-|z|^{2})\mathsf{S}^{-\frac{r}{2}}_{q,z}\mathcal{B}_{0}^{q}h\big](z, \overline{z}).
    \end{equation}
\end{proposition}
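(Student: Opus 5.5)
The plan is to start from the formula derived just before the proposition,
\begin{equation*}
    \mathcal{B}^{q}_{r}h
    =\frac{q^{\frac{r(r+1)}{4}}}{\sqrt{[r]_{q}!}}\mathcal{B}^{q}_{0}\big((q^{\frac{\mathsf{N}+1}{2}}a-\overline{z}q^{\mathsf{N}})^{r}q^{-\frac{r\mathsf{N}}{2}}h\big),
\end{equation*}
and to transport the operators $q^{\frac{\mathsf{N}+1}{2}}a$ and $q^{\mathsf{N}}$ through $\mathcal{B}_0^q$ using the intertwining relations \eqref{eq:q_intertwing}. The delicate point is that $\overline{z}$ is a parameter on the Hilbert-space side, while on the function side it becomes a variable. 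Since $\mathcal{B}_0^q$ produces analytic functions, $\overline{z}$ acts as a constant for both $\partial_{q,z}$ and $\mathsf{S}_{q,z}$. So I will fix $w$, set $\overline{z}=\overline{w}$ in the operator expression, and evaluate the result at $z=w$ at the end.

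First I will verify on the basis $|n\rangle$ that $\mathcal{B}_0^q q^{-\frac{r\mathsf N}{2}}=\mathsf S_{q,z}^{-r/2}\mathcal{B}_0^q$. Here $\mathsf S_{q,z}^{-r/2}$ is interpreted as $f(z,\overline z)\mapsto f(q^{-r/2}z,\overline z)$; on monomials $z^n$ it multiplies by $q^{-rn/2}$, which matches the action of $q^{-r\mathsf{N}/2}$. Next, from the second and third relations in \eqref{eq:q_intertwing}, for fixed $\overline{w}$,
\begin{equation*}
    \mathcal{B}_0^q\,(q^{\frac{\mathsf N+1}{2}}a-\overline w q^{\mathsf N})
    =(\partial_{q,z}-\overline w\,\mathsf S_{q,z})\,\mathcal{B}_0^q .
\end{equation*}
Iterating this $r$ times gives $(\partial_{q,z}-\overline w\mathsf S_{q,z})^r\mathsf S_{q,z}^{-r/2}\mathcal{B}_0^q h$. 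Because $\partial_{q,z}$ and $\mathsf S_{q,z}$ act only on the $z$-variable and leave $\overline z$ untouched, the operator $\partial_{q,z}-\overline w\mathsf S_{q,z}$ applied to an analytic function and then evaluated at $z=w$ is the same as $\widetilde{\mathcal R}_q$ applied to that function. This identification must also survive iteration. The intermediate functions are polynomials in $z$ with $\overline w$ as a coefficient, and replacing $\overline w$ by the variable $\overline z$ commutes with $\partial_{q,z}$ and $\mathsf S_{q,z}$. Hence $(\partial_{q,z}-\overline w\mathsf S_{q,z})^r F|_{\overline w=\overline z}=\widetilde{\mathcal R}_q^rF$, which proves \eqref{eqn:BRSB}.

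I expect the main obstacle to be making this iteration rigorous, particularly when $h$ is not a finite combination of basis elements. For finite combinations everything is polynomial and the argument is purely algebraic. For general $h$, I would argue that $\mathcal{B}_0^qh$ is analytic on the disc $|z|^2<(1-q)^{-1}$, so that $q$-shifts by $q^{-r/2}$ stay within the domain of convergence for $z$ near $0$. I would then extend the identity by continuity, or simply read \eqref{eqn:BRSB} as an identity on the dense subspace of finite vectors.

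For \eqref{eqn:qraising gauge}, I will apply the gauge representation \eqref{eqn:Rtilde gauge} $r$ times. Writing $\widetilde{\mathcal R}_q=E_q(|z|^2)\,\partial_{q,z}\,e_q(-|z|^2)$ as operators on $\mathbb C[z,\overline z]$ and using $E_q(|z|^2)e_q(-|z|^2)=1$ (since $e_q(x)=1/E_q(-x)$), the intermediate factors telescope:
\begin{equation*}
    \widetilde{\mathcal R}_q^r=E_q(|z|^2)\,\partial_{q,z}^r\,e_q(-|z|^2).
\end{equation*}
Here I must check that \eqref{eqn:Rtilde gauge} applies to the intermediate functions. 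These have the form $e_q(-|z|^2)\cdot(\text{polynomial})$, so I would justify the step by confirming that the gauge identity is really the pointwise $q$-Leibniz computation
\begin{equation*}
    \partial_{q,z}[e_q(-|z|^2)f]=e_q(-|z|^2)\partial_{q,z}f+f(qz,\overline z)\,\partial_{q,z}e_q(-|z|^2),
\end{equation*}
combined with $\partial_{q,z}e_q(-|z|^2)=-\overline z\,e_q(-q|z|^2)$ and $e_q(-q|z|^2)/e_q(-|z|^2)$ simplifying as it should. This computation holds for any function on which the operations make sense. Finally, for finite $h$, the function $\mathsf S_{q,z}^{-r/2}\mathcal{B}^q_0 h$ is a polynomial, so the formula follows.
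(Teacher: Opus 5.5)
Your route is the paper's. The paper states the proposition as a direct consequence of the intertwining relations \eqref{eq:q_intertwing}, applied with $\overline{z}$ frozen as a scalar exactly as you do. It then uses the gauge form \eqref{eqn:Rtilde gauge}, telescoped through $E_q(|z|^2)e_q(-|z|^2)=1$.

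One step in your optional re-check of the gauge identity is wrong. Since $D_qe_q=e_q$, one has $\partial_{q,z}e_q(-|z|^2)=-\overline{z}\,e_q(-|z|^2)$ with no $q$-shift; the shift belongs to $E_q$, via $\partial_{q,z}E_q(z)=E_q(qz)$. Combined with the Leibniz form you wrote, your version $-\overline{z}\,e_q(-q|z|^2)$ would give $\partial_{q,z}f-(1+(1-q)|z|^2)\,\overline{z}f(qz,\overline{z})$, which is not $\widetilde{\mathcal{R}}_qf$. The correct derivative gives $\partial_{q,z}f-\overline{z}\mathsf{S}_{q,z}f$ immediately, with no ratio to simplify.

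That re-check is not needed anyway. In the induction $\widetilde{\mathcal{R}}_q^{k+1}F=E_q(|z|^2)\partial_{q,z}\big[e_q(-|z|^2)\widetilde{\mathcal{R}}_q^{k}F\big]$, the gauge identity is applied only to the polynomials $\widetilde{\mathcal{R}}_q^{k}F$. The cancellation $e_q(-|z|^2)E_q(|z|^2)=1$ is pointwise.

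Finally, $\mathcal{B}_0^qh$ is entire for every $h\in\mathcal{H}$, because its Taylor coefficients carry the factor $q^{n(n+1)/4}$. So no disc restriction arises when you extend \eqref{eqn:BRSB} beyond finite vectors.
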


Let $\mathsf{J}_{q}:=\mathsf{S}_{q,z}\mathsf{S}_{q,\overline{z}}^{-1}$. Since $\mathsf{S}_{q,z}$ and $\mathsf{S}_{q,\overline{z}}$ commute and are invertible, the fractional power of $\mathsf{J}_{q}$ is well-defined:
\begin{equation*}
    \mathsf{J}_{q}^{\alpha}f(z,\overline{z})
    =f(q^{\alpha}z,q^{-\alpha}\overline{z})
\end{equation*}
for a fixed $\alpha\in\mathbb{R}$ and every polynomial $f\in \mathbb{C}[z, \overline{z}]$. By definition, we have $\mathsf{J}_{q}^\alpha\widetilde{\mathcal{R}}_{q}=q^{-\alpha}\widetilde{\mathcal{R}}_{q}\mathsf{J}_{q}^\alpha$ on $\mathbb{C}[z, \overline{z}]$.
    
Accordingly, we define the normalized $q$-raising operator
\begin{equation}\label{eqn:qraising}
    \mathcal{R}_{q}
    =\mathsf{J}_{q}^{-\frac{1}{2}}\widetilde{\mathcal{R}}_{q}
    =q^{\frac{1}{2}}\widetilde{\mathcal{R}}_{q}\mathsf{J}_{q}^{-\frac{1}{2}}.
\end{equation}
We note that the polyanalytic order of the image of $\mathcal{B}^q_0$ is zero. Thus, by Proposition \ref{prop:r_Bargmann_0}, we have 
\begin{equation}\label{eq:r_Bargmann_RB_0}
    \mathcal{B}^q_r= \frac{q^{\frac{r(r+1)}{4}}}{\sqrt{[r]_{q}!}}\widetilde{\mathcal{R}}_q^r \mathsf{J}_q^{-\frac{r}{2}}\mathcal{B}^q_0=\frac{1}{\sqrt{[r]_q!}}\mathcal{R}_q^r\mathcal{B}^q_0.
\end{equation}

In the remainder of this subsection, we prove that $\{\mathcal{B}^q_r|n\rangle\}_{n\geq 0}$ forms an orthonormal family in $L^2(\mathbb{C}, \mathsf{m}_q)$, and hence $\mathcal{B}^{q}_{r}$ is isometric. In preparation for this, we show the following basic property of $\mathcal{R}_{q}$.

\begin{proposition}\label{prop:R_properties}
    On $\mathbb{C}[z,\overline{z}]\subset L^{2}(\mathbb{C},\mathsf{m}_{q})$, the formal adjoint of $\mathcal{R}_{q}$ is given by
    \begin{equation}\label{eqn:qraising adjoint}
        \mathcal{R}_{q}^{\ast}
        =-\partial_{q,\overline{z}}\mathsf{S}_{q,z}^{-\frac{1}{2}}\mathsf{S}_{q,\overline{z}}^{-\frac{1}{2}} = -q^{-\frac{1}{2}}\mathsf{S}_{q,z}^{-\frac{1}{2}}\mathsf{S}_{q,\overline{z}}^{-\frac{1}{2}}\partial_{q,\overline{z}} .
    \end{equation}
    Moreover, $\mathcal{R}_{q}^{\ast}$ and $\mathcal{R}_{q}$ satisfy the $q$-CCR \eqref{eqn:qCCR}:
    \begin{equation}\label{eqn:qraising qCCR}
        \mathcal{R}_{q}^{\ast}\mathcal{R}_{q}-q\mathcal{R}_{q}\mathcal{R}_{q}^{\ast}
        =\Id.
    \end{equation}
\end{proposition}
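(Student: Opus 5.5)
The plan is to verify both identities directly on the monomial basis $\{z^{a}\overline{z}^{b}\}_{a,b\geq0}$ of $\mathbb{C}[z,\overline{z}]$. All operators involved preserve $\mathbb{C}[z,\overline{z}]$, and both the adjoint relation and \eqref{eqn:qraising qCCR} are (sesqui)linear, so monomials suffice.

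\textbf{Action on monomials.} Since $\mathsf{J}_{q}^{-1/2}z^{a}\overline{z}^{b}=q^{-(a-b)/2}z^{a}\overline{z}^{b}$, the first form of \eqref{eqn:qraising} gives
\begin{equation*}
    \mathcal{R}_{q}z^{a}\overline{z}^{b}
    =q^{-\frac{a-b-1}{2}}\big([a]_{q}z^{a-1}\overline{z}^{b}-q^{a}z^{a}\overline{z}^{b+1}\big).
\end{equation*}
Let $T:=-\partial_{q,\overline{z}}\mathsf{S}_{q,z}^{-1/2}\mathsf{S}_{q,\overline{z}}^{-1/2}$ denote the proposed adjoint. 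Then
\begin{equation*}
    T z^{c}\overline{z}^{d}=-q^{-\frac{c+d}{2}}[d]_{q}z^{c}\overline{z}^{d-1}.
\end{equation*}
The second expression in \eqref{eqn:qraising adjoint} follows from the commutation rule $\partial_{q,\overline{z}}\mathsf{S}_{q,\overline{z}}^{-1/2}=q^{-1/2}\mathsf{S}_{q,\overline{z}}^{-1/2}\partial_{q,\overline{z}}$, which can be checked on $\overline{z}^{d}$. The shift $\mathsf{S}_{q,z}^{-1/2}$ commutes with $\partial_{q,\overline{z}}$.

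\textbf{Adjoint identity.} Write $M_{n}:=q^{-n(n+1)/2}[n]_{q}!$ for the moments in \eqref{eqn:q mixed moments}. Rotation invariance of $\mathsf{m}_{q}$ forces both $\langle\mathcal{R}_{q}z^{a}\overline{z}^{b},z^{c}\overline{z}^{d}\rangle$ and $\langle z^{a}\overline{z}^{b},Tz^{c}\overline{z}^{d}\rangle$ to vanish unless $a-b-1=c-d$. In that case set $n:=a+d-1=b+c$. Then
\begin{equation*}
    \langle\mathcal{R}_{q}z^{a}\overline{z}^{b},z^{c}\overline{z}^{d}\rangle
    =q^{-\frac{c-d}{2}}\big([a]_{q}M_{n}-q^{a}M_{n+1}\big),
    \qquad
    \langle z^{a}\overline{z}^{b},Tz^{c}\overline{z}^{d}\rangle
    =-q^{-\frac{c+d}{2}}[d]_{q}M_{n}.
\end{equation*}
Using $M_{n+1}=q^{-(n+1)}[n+1]_{q}M_{n}$ and $n+1-a=d$, the bracket becomes
\begin{equation*}
    M_{n}\big([a]_{q}-q^{-d}[a+d]_{q}\big)=-q^{-d}[d]_{q}M_{n},
\end{equation*}
where the last step is the elementary identity $[a+d]_{q}=q^{d}[a]_{q}+[d]_{q}$. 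The powers of $q$ then agree, since $q^{-(c-d)/2}q^{-d}=q^{-(c+d)/2}$. This proves \eqref{eqn:qraising adjoint}.

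\textbf{$q$-CCR.} Apply the two monomial formulas twice to $z^{a}\overline{z}^{b}$. Both $\mathcal{R}_{q}^{\ast}\mathcal{R}_{q}z^{a}\overline{z}^{b}$ and $\mathcal{R}_{q}\mathcal{R}_{q}^{\ast}z^{a}\overline{z}^{b}$ are combinations of $z^{a-1}\overline{z}^{b-1}$ and $z^{a}\overline{z}^{b}$. The $z^{a-1}\overline{z}^{b-1}$ terms carry the factor $[a]_{q}[b]_{q}$ in both, with powers of $q$ that should cancel exactly in $\mathcal{R}_{q}^{\ast}\mathcal{R}_{q}-q\mathcal{R}_{q}\mathcal{R}_{q}^{\ast}$. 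The $z^{a}\overline{z}^{b}$ terms should collapse to coefficient $1$ via $[b+1]_{q}-q[b]_{q}=1$.

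The only real obstacle is bookkeeping of the half-integer powers of $q$ coming from $\mathsf{J}_{q}^{-1/2}$ and the shifts. I would organize the computation by tracking the exponent as a function of $(a,b)$ at each step to make these cancellations transparent.
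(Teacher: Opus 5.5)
Your proposal is correct and follows the paper's proof. The adjoint formula is checked on monomials via the mixed moments \eqref{eqn:q mixed moments} exactly as you do; your explicit use of $[a+d]_q=q^d[a]_q+[d]_q$ supplies the step the paper leaves implicit. The $q$-CCR comes from the same direct computation, which the paper packages as the operator identities \eqref{eqn:RRast}; on monomials they read $\mathcal{R}_q^{\ast}\mathcal{R}_q z^a\overline{z}^b=[b+1]_q z^a\overline{z}^b-q^{1-a}[a]_q[b]_q z^{a-1}\overline{z}^{b-1}$ and $\mathcal{R}_q\mathcal{R}_q^{\ast}z^a\overline{z}^b=[b]_q z^a\overline{z}^b-q^{-a}[a]_q[b]_q z^{a-1}\overline{z}^{b-1}$, which confirms the cancellations you anticipated.
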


\begin{proof}
    It suffices to verify the first assertion for monomials. By the mixed moment formula \eqref{eqn:q mixed moments}, we have 
    \begin{align*}
        \langle\mathcal{R}_{q}z^{a}\overline{z}^{b},z^{c}\overline{z}^{d}\rangle_{L^{2}(\mathbb{C}, \mathsf{m}_{q})}
        &=q^{-\frac{a-b-1}{2}}([a]_{q}\langle z^{a-1}\overline{z}^{b},z^{c}\overline{z}^{d}\rangle_{L^{2}(\mathbb{C},\mathsf{m}_{q})}-q^{a}\langle z^{a}\overline{z}^{b+1},z^{c}\overline{z}^{d}\rangle_{L^{2}(\mathbb{C},\mathsf{m}_{q})})\\
        &=-\delta_{a-b-1,c-d}q^{-\frac{c+d}{2}}[d]_{q}q^{-\frac{(a+d-1)(a+d)}{2}}[a+d-1]_{q}!\\
        &=-q^{-\frac{c+d}{2}}[d]_{q}\langle z^{a}\overline{z}^{b},z^{c}\overline{z}^{d-1}\rangle_{L^{2}(\mathbb{C},\mathsf{m}_{q})}\\
        &=\langle z^{a}\overline{z}^{b},(-\partial_{q,\overline{z}}\mathsf{S}_{q,z}^{-\frac{1}{2}}\mathsf{S}_{q,\overline{z}}^{-\frac{1}{2}})z^{c}\overline{z}^{d}\rangle_{L^{2}(\mathbb{C},\mathsf{m}_{q})}.
    \end{align*}
    For the second assertion, we have
    \begin{equation}\label{eqn:RRast}
    \begin{split}
        \mathcal{R}_{q}^{\ast}\mathcal{R}_{q}
        &=-\mathsf{S}_{q,z}^{-1}\partial_{q,z}\partial_{q,\overline{z}}+\Id+ q\overline{z}\partial_{q,\overline{z}},\smallskip\\
        \mathcal{R}_{q}\mathcal{R}_q^{\ast}
        &=-q^{-1}\mathsf{S}_{q,z}^{-1}\partial_{q,z}\partial_{q,\overline{z}}+\overline{z}\partial_{q,\overline{z}}.
    \end{split}
    \end{equation}
    Thus, it immediately follows that $\mathcal{R}_{q}^{\ast}\mathcal{R}_{q}-q\mathcal{R}_{q}\mathcal{R}_{q}^{\ast}=\Id$.
\end{proof}

\begin{proposition}
    The set $\{\mathcal{B}_{r}^{q}|n\rangle\}_{n,r\geq0}$ forms an orthonormal family in $L^{2}(\mathbb{C},\mathsf{m}_{q})$.
\end{proposition}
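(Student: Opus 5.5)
The plan is to mimic the classical Fock-space argument, using \eqref{eq:r_Bargmann_RB_0} together with the $q$-CCR \eqref{eqn:qraising qCCR} from Proposition~\ref{prop:R_properties}. Write $f_{r,n}:=\mathcal{B}^q_r|n\rangle=\frac{1}{\sqrt{[r]_q!}}\mathcal{R}_q^r f_{0,n}$. Here $f_{0,n}=q^{n(n+1)/4}z^n/\sqrt{[n]_q!}$ is an analytic monomial. Each $f_{r,n}$ is a polynomial in $z,\overline{z}$, so all inner products are finite and the formal adjoint identity \eqref{eqn:qraising adjoint} applies to it. The case $r=s=0$ is exactly the mixed moment formula \eqref{eqn:q mixed moments}.

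\textbf{Step 1 (annihilation of the lowest level).} By \eqref{eqn:qraising adjoint}, $\mathcal{R}_q^{\ast}$ is proportional to $\mathsf{S}^{-1/2}_{q,z}\mathsf{S}^{-1/2}_{q,\overline{z}}\partial_{q,\overline{z}}$. Since $f_{0,n}$ is independent of $\overline{z}$, it follows that $\mathcal{R}_q^{\ast}f_{0,n}=0$.

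\textbf{Step 2 ($q$-commutation).} From \eqref{eqn:qraising qCCR}, induct on $r$ to obtain on $\mathbb{C}[z,\overline{z}]$
\begin{equation*}
\mathcal{R}_q^{\ast}\mathcal{R}_q^{r}=q^{r}\mathcal{R}_q^{r}\mathcal{R}_q^{\ast}+[r]_q\mathcal{R}_q^{r-1}.
\end{equation*}
The inductive step is
\begin{equation*}
\mathcal{R}_q^{\ast}\mathcal{R}_q^{r+1}=(\Id+q\mathcal{R}_q\mathcal{R}_q^{\ast})\mathcal{R}_q^{r}=\mathcal{R}_q^{r}+q\mathcal{R}_q\bigl(q^r\mathcal{R}_q^r\mathcal{R}_q^{\ast}+[r]_q\mathcal{R}_q^{r-1}\bigr),
\end{equation*}
which closes because $1+q[r]_q=[r+1]_q$. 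Combined with Step~1, this gives
\begin{equation*}
\mathcal{R}_q^{\ast}f_{r,n}=\sqrt{[r]_q}\,f_{r-1,n},
\end{equation*}
and $\mathcal{R}_q f_{r,n}=\sqrt{[r+1]_q}\,f_{r+1,n}$ holds by definition.

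\textbf{Step 3 (induction on $r+s$).} Assume without loss of generality $r\ge s$ and $r\ge1$. Then
\begin{equation*}
\langle f_{r,n},f_{s,k}\rangle=\frac{1}{\sqrt{[r]_q}}\langle \mathcal{R}_q f_{r-1,n},f_{s,k}\rangle=\frac{1}{\sqrt{[r]_q}}\langle f_{r-1,n},\mathcal{R}_q^{\ast}f_{s,k}\rangle=\sqrt{\tfrac{[s]_q}{[r]_q}}\langle f_{r-1,n},f_{s-1,k}\rangle.
\end{equation*}
If $s=0$, the middle expression already vanishes by Step~1. Iterating reduces the inner product to $\delta_{r,s}\langle f_{0,n},f_{0,k}\rangle=\delta_{r,s}\delta_{n,k}$, since the prefactors become $1$ when $r=s$. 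This is the classical ladder argument.

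\textbf{Main obstacle.} The delicate point is legitimacy. The adjoint relation in Proposition~\ref{prop:R_properties} is only formal, and it is verified on monomials via \eqref{eqn:q mixed moments}. I therefore need to check that every function involved stays in $\mathbb{C}[z,\overline{z}]$. This requires that $\mathsf{J}_q^{-1/2}$, $\mathsf{S}_{q,z}$, $\partial_{q,z}$ and multiplication by $\overline{z}$ map polynomials to polynomials. That holds, since fractional shifts only rescale monomials by powers of $q^{1/2}$. It also requires that the $q$-CCR identity \eqref{eqn:qraising qCCR}, derived from \eqref{eqn:RRast}, holds as an operator identity on polynomials. I would recheck \eqref{eqn:RRast} on a general monomial $z^a\overline{z}^b$ to be safe. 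Once that is settled, the argument above is purely algebraic.
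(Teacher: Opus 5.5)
Your proposal is correct and is essentially the paper's argument. The paper also reduces the claim to $\langle\mathcal{R}_q^{r}\mathcal{B}_0^q|n\rangle,\mathcal{R}_q^{s}\mathcal{B}_0^q|l\rangle\rangle$, and for $r=s$ it peels off raising operators using $\mathcal{R}_q^{\ast}\mathcal{R}_q^{r}=[r]_q\mathcal{R}_q^{r-1}+q^{r}\mathcal{R}_q^{r}\mathcal{R}_q^{\ast}$ together with $\mathcal{R}_q^{\ast}$ annihilating analytic functions. The only difference is the case $r\neq s$: the paper argues that $(\mathcal{R}_q^{\ast})^{r}$ kills a polynomial of $\overline{z}$-degree $s<r$, whereas you get the vanishing by iterating the same ladder relation, which works equally well.
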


\begin{proof}
    By \eqref{eq:r_Bargmann_RB_0}, it suffices to show that for every $r, s, n, l\geq 0$
    \begin{equation*}
        \big\langle\mathcal{R}_{q}^{r}\mathcal{B}_{0}^{q}|n\rangle,
        \mathcal{R}_{q}^{s}\mathcal{B}_{0}^{q}|l\rangle\big\rangle_{L^{2}(\mathbb{C},\mathsf{m}_{q})}
        =[r]_{q}!\delta_{r,s}\delta_{n,l}.
    \end{equation*}
    We may assume $r>s$. Since the degree in $\overline{z}$ in $\mathcal{R}_{q}^{s}\mathcal{B}_{0}^{q}|l\rangle$ is $s$, by Proposition \ref{prop:R_properties}, we have $(\mathcal{R}_q^r)^{\ast} \mathcal{R}_{q}^{s}\mathcal{B}_{0}^{q}|l\rangle=0$. Thus, $\mathcal{R}_{q}^{r}\mathcal{B}_{0}^{q}|n\rangle$ and $\mathcal{R}_{q}^{s}\mathcal{B}_{0}^{q}|l\rangle$ are orthogonal. By symmetry, the same orthogonality holds when $r<s$. Therefore, it remains only to consider the case $r=s$.

    Inductively applying the $q$-CCR relation \eqref{eqn:qraising qCCR}, we obtain $\mathcal{R}_q^\ast\mathcal{R}_q^r= [r]_q\mathcal{R}_q^{r-1}+q^r\mathcal{R}_q^r \mathcal{R}_q^\ast$. Since $\mathcal{B}_{0}^{q}|l\rangle$ is analytic, we have $\mathcal{R}_q^\ast\mathcal{R}_q^r\mathcal{B}_{0}^{q}|l\rangle=[r]_q\mathcal{R}_q^{r-1}\mathcal{B}_{0}^{q}|l\rangle$, and hence
    \begin{equation*}
        \big\langle\mathcal{R}_{q}^{r}\mathcal{B}_{0}^{q}|n\rangle,\mathcal{R}_{q}^{r}\mathcal{B}_{0}^{q}|l\rangle\big\rangle_{L^{2}(\mathbb{C},\mathsf{m}_{q})}
        =[r]_{q}!\langle\mathcal{B}^{q}_{0}|n\rangle,\mathcal{B}^{q}_{0}|l\rangle \big\rangle
        =\delta_{n,l}[r]_{q}!.
    \end{equation*}
\end{proof}

Thus, the $q$-deformed order-$r$ Bargmann transform $\mathcal{B}^{q}_{r}$ is isometric. 

\begin{definition}
The range $\mathcal{A}_{r+1}^{q}:=\mathcal{B}^q_r\mathcal{H}$ of $\mathcal{B}^q_r$ is called the \emph{$q$-deformed pure $(r+1)$-analytic Bargmann--Fock space}. For $m\geq1$, the \emph{$q$-deformed full $m$-analytic Bargmann--Fock space} is 
\begin{equation*}
    \mathcal{A}_{m}^{q,\full}
    =\bigoplus_{r=0}^{m-1}\mathcal{A}_{r+1}^{q}
    \subset L^{2}(\mathbb{C},\mathsf{m}_{q}).
\end{equation*}
\end{definition}

Moreover, by the $q$-ladder relations 
\begin{equation}\label{eqn:ladder}
    \mathcal{R}_{q}^{\ast}\mathcal{B}_{r}^{q}|n\rangle
    =\sqrt{[r]_{q}}\mathcal{B}_{r-1}^{q}|n\rangle,\qquad
    \mathcal{R}_{q}\mathcal{B}_{r}^{q}|n\rangle
    =\sqrt{[r+1]_{q}}\mathcal{B}_{r+1}^{q}|n\rangle
\end{equation}
for fixed $n\geq0$, the family $\{\mathcal{B}_{r}^{q}|n\rangle\}_{r\geq0}$ admits the Fock representation of $\mathcal{R}_q^*$ and $\mathcal{R}_q$.

\subsection{$q$-deformed coherent states and the $q$-Laguerre polynomials}

For $\alpha>-1$ and $n\geq0$, the \emph{$q$-Laguerre polynomial} $L^{(\alpha)}_{n}(x;q)$ is defined by
\footnote{In this paper, we follow the original convention in \cite{Mo81}. In another convention, the $q$-Laguerre polynomials are defined as the polynomials replacing $(1-q)x$ by $x$.} 
\begin{equation}\label{eqn:qLagueere expansion}
    L^{(\alpha)}_{n}(x;q)
    :=\frac{(q^{\alpha+1}; q)_{n}}{(q; q)_{n}}\sum_{k=0}^{n}\frac{(q^{-n}; q)_{k} q^{k(k-1)/2}}{(q^{\alpha+1};q)_{k}[k]_{q}!}(q^{n  +\alpha+1}x)^{k}.
\end{equation}
It is an orthogonal polynomial with respect to the Jackson $q$-measure $x^{\alpha}e_{q}(-x)\,d_{q}x$ and satisfies the $q$-Rodrigues-type formula
\begin{equation}\label{eqn:qLaguerre Rodrigues}
    L_{n}^{(\alpha)}(x;q)
    =\frac{1}{x^{\alpha}[n]_{q}!}E_{q}(x)\partial_{q,x}^{n}\big[x^{\alpha+n}e_{q}(-x)\big];
\end{equation}
see, e.g., \cite[Eq. (14.21.12)]{KLS10}, after the rescaling $x\mapsto(1-q)x$.

The $q$-Bargmann basis $\mathcal{B}_{r}^{q}|n\rangle(z,\overline{z})$ has the representation in terms of the $q$-Laguerre polynomials.

\begin{proposition}
    For $r$, $n\geq0$, let
    \begin{equation*}
        C_{r,n}
        :=(-1)^{r-r\wedge n}q^{\frac{n+r+(n-r)^2}{4}}\frac{\sqrt{[r]_{q}![n]_{q}!}}{[r\vee n]_{q}!}
    \end{equation*}
    where $r\wedge n:=\min\{r, n\}$ and $r\vee n:=\max\{r, n\}$. Then
    \begin{equation}\label{eq:q_Laguerre}
        \mathcal{B}^{q}_{r}|n\rangle(z,\bar{z})
        =C_{r,n}z^{n-r\wedge n}\bar{z}^{r-r\wedge n}L^{(|n-r|)}_{r\wedge n}(|z|^{2};q),
    \end{equation}
    equivalently,
    \begin{equation}\label{eqn:q laguerre expansion}
        \mathcal{B}^{q}_{r}|n\rangle(z,\bar{z})
        =\frac{1}{\sqrt{[r]_{q}!}}q^{\frac{n+r+(n-r)^{2}}{4}}\sum_{j=0}^{r\wedge n}(-1)^{r-j}\qbinom{r}{j}\frac{\sqrt{[n]_{q}!}}{[n-j]_{q}!}q^{(r-j)(n-j)}z^{n-j}\overline{z}^{r-j}.
    \end{equation}
\end{proposition}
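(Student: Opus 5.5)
The plan is to prove the explicit expansion \eqref{eqn:q laguerre expansion} first and then deduce the Laguerre form \eqref{eq:q_Laguerre} by matching coefficients. The starting point is \eqref{eq:r_Bargmann_RB_0} in the form
\[
\mathcal{B}^q_r|n\rangle=\frac{q^{\frac{r(r+1)}{4}}}{\sqrt{[r]_q!}}\,\widetilde{\mathcal{R}}_q^{\,r}\mathsf{J}_q^{-\frac r2}\mathcal{B}^q_0|n\rangle .
\]
Since $\mathcal{B}^q_0|n\rangle=q^{n(n+1)/4}z^n/\sqrt{[n]_q!}$ by \eqref{eqn:q analytic transform basis}, and $\mathsf{J}_q^{-r/2}$ acts on $z^n$ by the factor $q^{-rn/2}$, everything reduces to computing $\widetilde{\mathcal{R}}_q^{\,r}z^n$, where $\widetilde{\mathcal{R}}_q=\partial_{q,z}-\overline{z}\mathsf{S}_{q,z}$.

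\emph{Expanding $\widetilde{\mathcal{R}}_q^{\,r}$.} Set $A=\partial_{q,z}$ and $B=-\overline{z}\mathsf{S}_{q,z}$. Checking on monomials $z^k$ gives $\partial_{q,z}\mathsf{S}_{q,z}=q\,\mathsf{S}_{q,z}\partial_{q,z}$. Since $\overline{z}$ is inert under both operators, this gives $AB=qBA$. By the $q$-binomial theorem for $q$-commuting variables \cite[Proposition 2.2]{KS97}, $(A+B)^r$ is a sum over $j$ of $\qbinom{r}{j}$ times a normally ordered product of $B^{r-j}$ and $A^{j}$. I would fix the ordering convention by testing the case $r=2$ by hand; I expect $B^{r-j}A^{j}$ with no extra power of $q$. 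Then
\[
B^{r-j}A^{j}z^n=(-1)^{r-j}\,\overline{z}^{\,r-j}\,q^{(r-j)(n-j)}\frac{[n]_q!}{[n-j]_q!}\,z^{n-j}\qquad(j\le n),
\]
and the term vanishes for $j>n$, which explains the upper limit $r\wedge n$.

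\emph{Collecting exponents.} The total power of $q$ in front of the $j$-th term is
\[
\tfrac{r(r+1)}4+\tfrac{n(n+1)}4-\tfrac{rn}{2}+(r-j)(n-j).
\]
The first three terms combine to $\frac{n+r+(n-r)^2}{4}$. The factor $\sqrt{[n]_q!}$ arises from $\frac{[n]_q!}{\sqrt{[n]_q!}}$. This yields \eqref{eqn:q laguerre expansion}. The main bookkeeping risk is a stray power of $q$ coming from the ordering in the $q$-binomial step. I would cross-check it against the ladder relation \eqref{eqn:ladder} for $r=1$, and against the orthonormality already proved.

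\emph{Laguerre identification.} For $r\le n$ there is a shortcut via the gauge form \eqref{eqn:qraising gauge}. For fixed $\overline{z}$, write $z^n e_q(-|z|^2)=\overline{z}^{-n}x^ne_q(-x)$ with $x=z\overline{z}$, and use $\partial_{q,z}=\overline{z}\,\partial_{q,x}$. The $q$-Rodrigues formula \eqref{eqn:qLaguerre Rodrigues} with $\alpha=n-r$ then gives $z^{n-r}[r]_q!\,L_r^{(n-r)}(|z|^2;q)$ up to the constant $C_{r,n}$.

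In general, including the case $r>n$, I would instead compare \eqref{eqn:q laguerre expansion} with the series \eqref{eqn:qLagueere expansion}. Put $s=r\wedge n$ and $\alpha=|n-r|$, factor out $z^{n-s}\overline{z}^{r-s}$, and reindex by $k=s-j$. One then uses
\[
(q^{-s};q)_k=(-1)^kq^{k(k-1)/2-sk}\frac{(q;q)_s}{(q;q)_{s-k}}
\qquad\text{and}\qquad
\frac{(q^{\alpha+1};q)_s}{(q^{\alpha+1};q)_k}=\frac{(q;q)_{\alpha+s}\,/\,(q;q)_{\alpha}}{(q;q)_{\alpha+k}\,/\,(q;q)_{\alpha}}
\]
to turn the Laguerre coefficients into the ratios of $q$-factorials appearing in \eqref{eqn:q laguerre expansion}. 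Matching the leading power $q^{(r-j)(n-j)}=q^{k(k+\alpha)}$ against $q^{k(k-1)/2}\,q^{k(s+\alpha+1)}\,q^{k(k-1)/2-sk}$ fixes the sign $(-1)^{r-s}$ and the constant $C_{r,n}$.

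This final matching, especially getting the $q$-powers and the sign right in the asymmetric case $r>n$, is where I expect the main obstacle. I would handle it by checking $s=0$ and $s=1$ explicitly before doing the general reindexing.
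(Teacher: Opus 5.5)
Your proposal is correct, but your main line runs in the opposite direction from the paper's. The paper substitutes $\mathcal{B}^q_0|n\rangle$ into the gauge form \eqref{eqn:qraising gauge}, which gives $E_q(|z|^2)\partial_{q,z}^{r}[e_q(-|z|^2)z^n]$ times an explicit constant. It then reads off the Laguerre form \eqref{eq:q_Laguerre} from the Rodrigues formula \eqref{eqn:qLaguerre Rodrigues}; this is exactly your shortcut for $r\le n$. Only afterwards does it obtain \eqref{eqn:q laguerre expansion} from the series \eqref{eqn:qLagueere expansion}.

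You instead derive the expansion first, by applying to $\widetilde{\mathcal{R}}_q^{\,r}$ the same $q$-binomial theorem the paper uses for the order-$r$ coherent states in Section~\ref{sec:higher_q_raising_maps}. You then deduce the Laguerre form by matching coefficients. The points you flagged as risks go through:
\begin{itemize}
\item Since $AB=qBA$, taking $x=B$ and $y=A$ gives $yx=qxy$. Hence $(A+B)^r=\sum_j\qbinom{r}{j}B^{r-j}A^{j}$, with no extra power of $q$.
\item The matching with $j=s-k$ closes in both cases $s=r$ and $s=n$, using exactly the identities you list. The sign $(-1)^k$ from $(q^{-s};q)_k$ combines with $(-1)^{r-j}=(-1)^{r-s+k}$ to leave the factor $(-1)^{r-s}$ in $C_{r,n}$.
\end{itemize}

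What your route buys is uniformity. It never invokes Rodrigues, so the case $r>n$ is handled on the same footing as $r\le n$. By contrast, the paper's Rodrigues step literally produces $z^{n-r}L_r^{(n-r)}(|z|^2;q)$. When $r>n$, the parameter $n-r$ lies outside the stated range $\alpha>-1$. Converting it to $\overline{z}^{\,r-n}L_n^{(r-n)}(|z|^2;q)$ requires a reflection identity for $q$-Laguerre polynomials with negative integer parameter, which the paper leaves implicit. The paper's route is shorter and explains structurally why $q$-Laguerre polynomials appear.
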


\begin{proof}
    Substituting \eqref{eqn:q analytic transform basis} into \eqref{eqn:qraising gauge} yields
    \begin{equation*}
        \mathcal{B}_{r}^{q}|n\rangle
        =\frac{1}{\sqrt{[r]_{q}![n]_{q}!}}q^{\frac{r(r+1)}{4}-\frac{rn}{2}+\frac{n(n+1)}{4}}E_{q}(|z|^{2})\partial_{q,z}^{r}\big[e_{q}(-|z|^{2})z^{n}\big].
    \end{equation*}
    The first assertion follows from the $q$-Rodrigues-type formula of the $q$-Laguerre polynomial \eqref{eqn:qLaguerre Rodrigues}. The second is obtained by \eqref{eqn:qLagueere expansion}.
\end{proof}

The $q$-Laguerre representation recovers the orthogonality of the $q$-Bargmann basis $\{\mathcal{B}^q_r|n\rangle\}_{n,  r\geq 0}$. Moreover, the explicit expansion \eqref{eqn:q laguerre expansion} implies that the highest degree in $z$ of the $q$-Bargmann basis $\mathcal{B}_{r}^{q}|n\rangle(z,\overline{z})$ is $n$, while that in $\overline{z}$ is $r$. This observation gives a concrete description of the polyanalytic Bargmann--Fock spaces.

\begin{proposition}
    For $m\geq1$,
    \begin{equation*}
        \mathcal{A}_{m}^{q,\full}
        =\overline{\operatorname{span}}\{z^{n}\overline{z}^{r}:n\geq0,\,0\leq r\leq m-1\},\qquad
        \mathcal{A}_{m}^{q}
        =\mathcal{A}_{m}^{q,\full}\ominus\mathcal{A}_{m-1}^{q,\full}
    \end{equation*}
    with the convention $\mathcal{A}_{0}^{q,\full}=\{0\}$.
\end{proposition}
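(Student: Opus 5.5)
The plan is to prove the description of $\mathcal{A}_m^{q,\full}$ first and then read off the pure space. The tool is a triangularity relation between the $q$-Bargmann basis and the monomials. Fix $m\ge1$ and write
\[
V_m:=\operatorname{span}\{z^{n}\overline{z}^{r}:n\ge0,\,0\le r\le m-1\}.
\]
Since $\mathcal{A}_m^{q,\full}$ is the orthogonal direct sum of the ranges of the isometries $\mathcal{B}^q_r$, $0\le r\le m-1$, it is the closed linear span of the orthonormal family $\{\mathcal{B}^q_r|n\rangle:n\ge0,\,0\le r\le m-1\}$. It therefore suffices to show that this family and the monomials spanning $V_m$ have the same finite linear span $W_m$. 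Taking closures in $L^2(\mathbb{C},\mathsf{m}_q)$ then gives the first identity.

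One inclusion is immediate from the explicit expansion \eqref{eqn:q laguerre expansion}: each $\mathcal{B}^q_r|n\rangle$ is a finite combination of the monomials $z^{n-j}\overline{z}^{r-j}$ with $0\le j\le r\wedge n$, and all of these lie in $V_m$ when $r\le m-1$.

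For the reverse inclusion I will use the grading by the charge $d=n-r$. Every term of $\mathcal{B}^q_r|n\rangle$ has $z$-degree minus $\overline{z}$-degree equal to $n-r$, and its leading term is $j=0$, namely $z^n\overline{z}^r$, with coefficient
\[
\frac{1}{\sqrt{[r]_q!}}\,q^{\frac{n+r+(n-r)^2}{4}}(-1)^r\sqrt{[n]_q!}\;q^{rn}\neq 0 .
\]
The remaining terms are $z^{n-j}\overline{z}^{r-j}$ with $j\ge1$, all of the same charge and strictly lower $\overline{z}$-degree $r-j$. I will then show $z^n\overline{z}^r\in W_m$ by induction on $r$. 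The case $r=0$ holds because $\mathcal{B}^q_0|n\rangle$ is a nonzero multiple of $z^n$. For $r\ge1$, solve for $z^n\overline{z}^r$ in terms of $\mathcal{B}^q_r|n\rangle$ and monomials $z^{n-j}\overline{z}^{r-j}$ with $r-j<r$. Each of the latter is in $W_m$ by the inductive hypothesis, since $n-j\ge0$ and $r-j\le m-1$. Hence $W_m=V_m$, and the first identity follows.

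The pure identity follows from the definition $\mathcal{A}^{q,\full}_m=\bigoplus_{r=0}^{m-1}\mathcal{A}^q_{r+1}$, which gives $\mathcal{A}^{q,\full}_m=\mathcal{A}^{q,\full}_{m-1}\oplus\mathcal{A}^q_m$. The sum is orthogonal because $\{\mathcal{B}^q_r|n\rangle\}_{n,r}$ is an orthonormal family (proved above). Each summand is closed, being the range of an isometry, so $\mathcal{A}^q_m=\mathcal{A}^{q,\full}_m\ominus\mathcal{A}^{q,\full}_{m-1}$. The case $m=1$ uses the convention $\mathcal{A}_0^{q,\full}=\{0\}$.

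The only delicate point is the closure step. One should not try to identify $\mathcal{A}_m^{q,\full}$ with the space of all $L^2$ functions annihilated by $\partial_{q,\overline z}^m$. By Remark~\ref{rem:indeterminate} the polynomials are not dense, so that characterization could fail. The argument above avoids the issue by working throughout with closed spans of explicitly given families.
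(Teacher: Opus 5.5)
Your proposal is correct and follows the paper's own justification, which is just the remark that by \eqref{eqn:q laguerre expansion} the vector $\mathcal{B}^q_r|n\rangle$ has top $z$-degree $n$ and top $\overline{z}$-degree $r$; your triangular induction on the $\overline{z}$-degree, together with the closed-span and orthogonal-sum bookkeeping, is a fleshed-out version of that remark. One harmless slip: since $\sqrt{[n]_q!}/[n]_q!=1/\sqrt{[n]_q!}$, the $j=0$ coefficient is $(-1)^r q^{\frac{n+r+(n-r)^2}{4}+rn}/\sqrt{[r]_q!\,[n]_q!}$ rather than having $\sqrt{[n]_q!}$ in the numerator, but the argument only needs this coefficient to be nonzero, which it is.
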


As explained in Remark \ref{rem:indeterminate}, the space of polyanalytic polynomials is not dense in $L^{2}(\mathbb{C},\mathsf{m}_{q})$. Consequently, unlike the classical case, the closed span of all the $q$-Landau levels is a proper subspace of $L^{2}(\mathbb{C},\mathsf{m}_{q})$.

\subsection{$q$-deformed polyanalytic Ginibre processes}
In this subsection, we construct two types of $q$-deformed polyanalytic Ginibre processes and the corresponding finite particle ensembles, as the determinantal point processes associated with the $q$-deformed polyanalytic Bargmann--Fock spaces. For $m\geq1$, the \emph{$q$-deformed pure $m$-analytic Ginibre kernel} is defined by
\begin{equation*}
    \mathcal{K}_{m}^{q}(z,w)
    =\sum_{n=0}^{\infty}\mathcal{B}_{m-1}^{q}|n\rangle(z,\overline{z})\overline{\mathcal{B}_{m-1}^{q}|n\rangle(w,\overline{w})},
\end{equation*}
whereas the \emph{$q$-deformed full $m$-analytic Ginibre kernel} is
\begin{equation*}
    \mathcal{K}_{m}^{q,\full}(z,w)
    =\sum_{r=1}^{m}\sum_{n=0}^{\infty}\mathcal{B}_{r-1}^{q}|n\rangle(z,\overline{z})\overline{\mathcal{B}_{r-1}^{q}|n\rangle(w,\overline{w})}.
\end{equation*}
They are the reproducing kernels, equivalently, the orthogonal projection kernel onto $\mathcal{A}_{m}^{q}$ and $\mathcal{A}_{m}^{q,\full}$, respectively. For the analytic case $m=1$, the $q$-deformed Ginibre kernel can be written as
\begin{equation}\label{eqn:K1Eq}
    \mathcal{K}_{1}^{q}(z,w)
    =\sum_{n=0}^{\infty}\frac{1}{[n]_{q}!}q^{\frac{n(n+1)}{2}}(z\overline{w})^{n}
    =E_{q}(qz\overline{w}).
\end{equation}
More generally, the pure kernel has a representation in terms of the basic hypergeometric series. Recall that the basic hypergeometric series or the $q$-hypergeometric series is defined by
\begin{equation*}
    {}_{r}\phi_{s}\Big(\begin{matrix}a_{1},\cdots,a_{r}\\b_{1},\cdots,b_{s}\end{matrix};q,z\Big)
    =\sum_{k=0}^{\infty}\frac{(a_{1},\cdots,a_{r};q)_{k}}{(b_{1},\cdots,b_{s},q;q)_{k}}
    (-1)^{(1+s-r)k}q^{(1+s-r)\frac{k(k-1)}{2}}z^{k},
\end{equation*}
see, e.g., \cite{GR04,KLS10}.

\begin{theorem}
    For $m\geq1$ and $z,w\in\supp(\mathsf{m}_{q})\backslash\{0\}$, we have
    \begin{equation}\label{eqn:kernel hypergeometric}
        \mathcal{K}_{m}^{q}(z,w)
        =q^{m-1}E_{q}(qz\overline{w})
        {}_{3}\phi_{2}\Big(\begin{matrix} q^{1-m}, \, qz/w, \, q\bar{w}/\bar{z}\\ q, \, -(1-q)qz\bar{w}\end{matrix}; q, -(1-q)q^{m-1}w\bar{z}\Big).
    \end{equation}
    In particular, the diagonal of the kernel admits the representation
    \begin{equation}\label{eqn:q kernel diagonal}
        \mathcal{K}_{m}^{q}(z,z)
        =\frac{q^{m-1}}{1+(1-q)q^{m-1}|z|^{2}}E_{q}(|z|^{2}).
    \end{equation}
\end{theorem}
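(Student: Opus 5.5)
Since $\mathcal{K}_{m}^{q}(z,w)=\sum_{n}\mathcal{B}_{m-1}^{q}|n\rangle(z,\overline{z})\overline{\mathcal{B}_{m-1}^{q}|n\rangle(w,\overline{w})}$, I will substitute the explicit expansion \eqref{eqn:q laguerre expansion} with $r=m-1$ for both factors, expand everything as a power series in $z,\overline{z},w,\overline{w}$, and re-sum in $n$ first. Writing $j,l\in\{0,\dots,(m-1)\wedge n\}$ for the two inner summation indices, the summand contains $z^{n-j}\overline{z}^{m-1-j}\overline{w}^{n-l}w^{m-1-l}$ times
\[
\frac{q^{\frac{n+m-1+(n-m+1)^2}{2}}}{[m-1]_q!}(-1)^{j+l}\qbinom{m-1}{j}\qbinom{m-1}{l}\frac{[n]_q!}{[n-j]_q![n-l]_q!}q^{(m-1-j)(n-j)+(m-1-l)(n-l)}.
\]
I will shift $n\mapsto n+\max(j,l)$, or more symmetrically set $p=n-j$, so that the $n$-sum becomes a $q$-exponential-type series in $z\overline{w}$. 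The goal is to organize the double sum over $(j,l)$ as follows. The $z\overline{w}$-dependence collapses into $E_q(qz\overline{w})$, in the form of \eqref{eqn:K1Eq}, times a finite $q$-hypergeometric sum. The ratios $z/w$ and $\overline{w}/\overline{z}$ enter through the $q$-Pochhammers $(qz/w;q)_k$ and $(q\overline{w}/\overline{z};q)_k$. These arise, by the $q$-binomial theorem, from the finite sums over the difference $j-l$ at fixed $k$. The terminating parameter $q^{1-m}$ comes from $\qbinom{m-1}{k}$ via $(q^{1-m};q)_k/(q;q)_k=(-1)^kq^{k(k-1)/2-(m-1)k}\qbinom{m-1}{k}$.

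An alternative that I expect to be cleaner is a recursion in $m$. I will use the ladder relation \eqref{eqn:ladder} to write
\[
\mathcal{K}^q_{m+1}(z,w)=[m]_q^{-1}\,\mathcal{R}_{q,z}\,\overline{\mathcal{R}_{q,w}}\,\mathcal{K}^q_{m}(z,w),
\]
starting from $\mathcal{K}_1^q=E_q(qz\overline{w})$. Then I will check that the right side of \eqref{eqn:kernel hypergeometric} satisfies the same recursion. This uses $\partial_{q,x}E_q(x)=E_q(qx)$, and the restriction $z,w\in\supp(\mathsf{m}_q)\setminus\{0\}$ makes $z/w$ and $\overline{w}/\overline{z}$ powers of $q$ times phases, which keeps the shifts consistent. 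In practice I would use the direct expansion to guess the exact summation structure and the recursion only as a check. The main obstacle is the bookkeeping in the double sum: the factor $[n]_q!/([n-j]_q![n-l]_q!)$ must turn into the denominator $(-(1-q)qz\overline{w};q)_k$ once $E_q(qz\overline{w})$ is pulled out. I expect this to come from $E_q(qx)=(-(1-q)qx;q)_\infty$ together with $(-(1-q)qx;q)_\infty/(-(1-q)qx;q)_k=(-(1-q)q^{k+1}x;q)_\infty$, i.e.\ a $q$-shifted exponential. So the $n$-sum at fixed $k$ should be recognized, after $q$-differentiation in $z\overline{w}$, as $E_q(q^{k+1}z\overline{w})$ times a monomial.

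For the diagonal \eqref{eqn:q kernel diagonal}, set $w=z$. Then $(qz/w;q)_k=(q;q)_k$ cancels the lower parameter $q$, and $(q\overline{w}/\overline{z};q)_k=(q;q)_k$, so the series reduces to ${}_2\phi_1(q^{1-m},q;-(1-q)q|z|^2;q,-(1-q)q^{m-1}|z|^2)$. With $x=(1-q)|z|^2$ and the convention $(1+s-r)=0$, the $k$-th term is
\[
\frac{(q^{1-m};q)_k(q;q)_k}{(-qx;q)_k(q;q)_k}(-q^{m-1}x)^k.
\]
I will evaluate this by the $q$-Chu--Vandermonde / $q$-Gauss summation or by a telescoping argument. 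Specifically, I will show that $S_m:=\sum_k(q^{1-m};q)_k(-q^{m-1}x)^k/(-qx;q)_k$ satisfies $S_m=q^{1-m}/(1+q^{m-1}x)$. This can be checked via the contiguous relation between $S_m$ and $S_{m+1}$, or by verifying directly that $(1+q^{m-1}x)S_m$ telescopes, using $(-qx;q)_{k}$ in the denominator. Multiplying by $q^{m-1}E_q(|z|^2)$ then gives \eqref{eqn:q kernel diagonal}.
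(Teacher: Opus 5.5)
\textbf{The diagonal step fails.} Your reduction of the ${}_3\phi_2$ at $w=z$ to $S_m=\sum_k(q^{1-m};q)_k(-q^{m-1}x)^k/(-qx;q)_k$ is right. The closed form you then claim, $S_m=q^{1-m}/(1+q^{m-1}x)$, is false for every $m$. At $x=0$ only the $k=0$ term survives, so $S_m(0)=1\neq q^{1-m}$ for $m\geq2$. For $m=1$ one has $S_1\equiv1\neq 1/(1+x)$. For $m=2$ a direct computation gives $S_2=1+\frac{(1-q)x}{1+qx}=\frac{1+x}{1+qx}$.

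The correct evaluation is the $q$-Chu--Vandermonde sum ${}_2\phi_1(q^{-n},b;c;q,cq^{n}/b)=(c/b;q)_n/(c;q)_n$ with $n=m-1$, $b=q$, $c=-qx$. The argument $cq^n/b=-q^{m-1}x$ matches exactly, so $S_m=(-x;q)_{m-1}/(-qx;q)_{m-1}=(1+x)/(1+q^{m-1}x)$.

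You have also misread the prefactor. At $w=z$ it is $q^{m-1}E_q(q|z|^2)$, not $q^{m-1}E_q(|z|^2)$. The numerator $1+x$ is exactly what converts one into the other, via $(1+(1-q)|z|^2)E_q(q|z|^2)=E_q(|z|^2)$. With your two values the product is $E_q(|z|^2)/(1+q^{m-1}x)$, which is off from the claimed diagonal formula by a factor $q^{m-1}$. So neither the contiguous-relation check nor the telescoping check you propose would go through.

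\textbf{The general identity is not actually proved.} Neither of your routes is executed, and the resummation you defer as ``bookkeeping'' is the whole content of the theorem. The expansion route is viable in principle. For $m=2$ it gives $q[E_q(qz\overline{w})+E_q(q^2z\overline{w})(w-qz)(\overline{z}-q\overline{w})]$, with the product assembled from all pairs $(j,l)$ together with the split $[p+1]_q=1+q[p]_q$. For general $m$, however, it needs a genuine summation identity that you have not identified.

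Likewise, checking the one-step recursion on the ${}_3\phi_2$ side needs a contiguous relation for a terminating ${}_3\phi_2$ whose lower parameter depends on $z\overline{w}$. Since $\partial_{q,z}$ and $\mathsf{J}_q^{-1/2}$ treat $z$ and $\overline{z}$ as independent variables, that check must hold as an identity in independent $z,\overline{z},w,\overline{w}$. The restriction to $\supp(\mathsf{m}_q)$ therefore does nothing for you there.

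The idea that closes the computation in a few lines, and which the paper uses, is to apply all $m-1$ raising operators at once through the gauge form: $\mathcal{K}_m^q(z,w)=\frac{q^{m(m-1)/2}}{[m-1]_q!}E_q(|z|^2)E_q(|w|^2)\partial_{q,z}^{m-1}\partial_{q,\overline{w}}^{m-1}\big[e_q(-|z|^2)e_q(-|w|^2)E_q(q^{2-m}z\overline{w})\big]$.
\begin{itemize}
\item The $z$-derivatives collapse, by the $q$-Leibniz rule and the finite $q$-binomial theorem, to $e_q(-|z|^2)E_q(qz\overline{w})(-\overline{z})^{m-1}(q^{2-m}\overline{w}/\overline{z};q)_{m-1}$.
\item In the $\overline{w}$-derivatives, $(qz/w;q)_j$ and $E_q(q^{j+1}z\overline{w})$ come from $\partial_{q,x}^{j}[(ax;q)_\infty/(bx;q)_\infty]=(b/(1-q))^j(a/b;q)_j(aq^jx;q)_\infty/(bx;q)_\infty$ with $a=-(1-q)qz$ and $b=-(1-q)w$.
\item The factor $(q\overline{w}/\overline{z};q)_j$ comes from $q$-differentiating the finite product.
\end{itemize}
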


\begin{proof}
    Since the series defining $\mathcal{K}_{1}^{q}$ converges locally uniformly, the higher-order kernel is written as
    \begin{equation*}
        \mathcal{K}_{m}^{q}(z,w)
        =\frac{1}{[m-1]_{q}!}\mathcal{R}_{q,z}^{m-1}\mathcal{R}_{q,\overline{w}}^{m-1}\mathcal{K}_{1}^{q}(z,w).
    \end{equation*}
    Substituting \eqref{eqn:K1Eq} and the gauge representation of $\mathcal{R}_{q,z}$, we obtain
    \begin{equation*}
        \mathcal{K}_{m}^{q}(z,w)
        =q^{\frac{m(m-1)}{2}}\frac{1}{[m-1]_{q}!}E_{q}(|z|^{2})E_{q}(|w|^{2})\partial_{q,z}^{m-1}\partial_{q,\overline{w}}^{m-1}\big[e_{q}(-|z|^{2})e_{q}(-|w|^{2})E_{q}(q^{2-m}z\overline{w})\big].
    \end{equation*}
    We first evaluate $q$-difference operators in $z$-variable. Using
    \begin{equation*}
        \partial_{q,z}e_{q}(z)
        =e_{q}(z),\qquad
        \partial_{q,z}E_{q}(z)
        =E_{q}(qz),
    \end{equation*}
    and the $q$-Leibniz rule, we have
    \begin{equation*}
    \begin{split}
        \partial_{q,z}^{m-1}\big[e_{q}(-|z|^{2})E_{q}(q^{2-m}z\overline{w})\big]
        &=e_{q}(-|z|^{2})E_{q}(qz\overline{w})\sum_{j=0}^{m-1}\qbinom{m-1}{j}q^{j(2-m)+\frac{j(j-1)}{2}}(-\overline{z})^{m-1-j}\overline{w}^{j}\\
        &=e_{q}(-|z|^{2})E_{q}(qz\overline{w})(-\overline{z})^{m-1}(q^{2-m}\overline{w}/\overline{z};q)_{m-1}.
    \end{split}
    \end{equation*}
    In the last equality, the finite $q$-binomial theorem is applied. For $\overline{w}$-variable, using
    \begin{equation*}
        \partial_{q,x}^{j}(ax;q)_{r}
        =(-a)^{j}q^{\frac{j(j-1)}{2}}\frac{[r]_{q}!}{[r-j]_{q}!}(aq^{j}x;q)_{r-j},\qquad
        \partial_{q,x}^{j}
        \frac{(ax;q)_{\infty}}{(bx;q)_{\infty}}
        =\Big(\frac{b}{1-q}\Big)^{j}(a/b;q)_{j}\frac{(aq^{j}x;q)_{\infty}}{(bx;q)_{\infty}},
    \end{equation*}
    we have
    \begin{equation*}
    \begin{split}
        &\partial_{q,\overline{w}}^{m-1}\big[e_{q}(-|w|^{2})E_{q}(qz\overline{w})(q^{2-m}\overline{w}/\overline{z};q)_{m-1}\big]\smallskip\\
        &=\frac{(-q(1-q)z\overline{w};q)_{\infty}}{(-(1-q)|w|^{2};q)_{\infty}}\sum_{j=0}^{m-1}\qbinom{m-1}{j}\frac{[m-1]_{q}!}{[j]_{q}!}q^{\frac{(m-j-1)(m-j-2)}{2}}\Big(-\frac{q^{2-m}}{\overline{z}}\Big)^{m-1-j}(-w)^{j}\frac{(qz/w,q\overline{w}/\overline{z};q)_{j}}{(-q(1-q)z\overline{w};q)_{j}}.
    \end{split}
    \end{equation*}
    In summary, we obtain
    \begin{equation*}
        \mathcal{K}_{m}^{q}(z,w)
        =q^{m-1}E_{q}(qz\overline{w})\sum_{k=0}^{m-1}\frac{(q^{1-m};q)_{k}(qz/w;q)_{k}(q\overline{w}/\overline{z};q)_{k}}{(q;q)_{k}^{2}(-(1-q)qz\overline{w};q)_{k}}\big(-(1-q)q^{m-1}w\overline{z}\big)^{k},
    \end{equation*}
    which is equivalent to \eqref{eqn:kernel hypergeometric}.

    For the second assertion, letting $w=z$, the $q$-Chu--Vandermonde identity simplifies the basic hypergeometric function as
    \begin{equation*}
    \begin{split}
        {}_{3}\phi_{2}\Big(\begin{matrix} q^{1-m}, \, q, \, q\\ q, \, -(1-q)q|z|^{2}\end{matrix}; q, -(1-q)q^{m-1}|z|^{2}\Big)
        &={}_{2}\phi_{1}\Big(\begin{matrix} q^{1-m}, \, q\\ -(1-q)q|z|^{2}\end{matrix}; q, -(1-q)q^{m-1}|z|^{2}\Big)\smallskip\\
        &=\frac{1+(1-q)|z|^{2}}{1+(1-q)q^{m-1}|z|^{2}}.
    \end{split}
    \end{equation*}
    Then \eqref{eqn:q kernel diagonal} follows from
    \begin{equation*}
        \big(1+(1-q)|z|^{2}\big)E_{q}(q|z|^{2})=E_{q}(|z|^{2}).
    \end{equation*}
    This completes the proof.
\end{proof}

\begin{remark}
    Since the basic hypergeometric series in \eqref{eqn:kernel hypergeometric} terminates, the apparent singularities at $z=0$ or $w=0$ are removable. More precisely, by \eqref{eqn:kernel hypergeometric}, we immediately obtain, for all $z, w\in \mathbb{C}$
    \begin{equation}\label{eq:kernel_finite_sum}
        \mathcal{K}^q_m(z, w) = q^{m-1} \sum_{k=0}^{m-1}E_q(q^{k+1}z\overline{w}) \frac{(q^{1-m}; q)_k(-(1-q)q^{m-1})^k}{(q; q)_k^2} \prod_{l=1}^k(w-q^l z)(\overline{z}-q^l \overline{w}).
    \end{equation}
\end{remark}

For every region $D\subset \mathbb{C}$ we define the compression $\mathcal{K}^q_{m, D}$ by $\mathcal{K}^q_{m, D}(z, w):=\mathbbm{1}_D(z)\mathcal{K}^q_m(z, w)\mathbbm{1}_D(w)$. For every $R>0$ we denote by $\mathbb{D}_{R}$ the disk of radius $R$, i.e., $\mathbb{D}_{R}:=\{z\in \mathbb{C}\mid |z|\leq R\}$.

\begin{proposition}\label{prop:locally_trace_class}
    The integral operator of $\mathcal{K}^q_m$ acting on $L^2(\mathbb{C}, \mathsf{m}_q)$ is locally trace class, i.e., for every compact set $D\subset \mathbb{C}$ the integral operator of $\mathcal{K}^q_{m, D}$ is trace class.
\end{proposition}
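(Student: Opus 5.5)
The idea is to use that $\mathcal{K}^q_{m,D}$ is a compression of an orthogonal projection, hence a positive operator. For a positive operator, being trace class reduces to finiteness of the integral of the diagonal. Concretely, $\mathcal{K}^q_m$ is the kernel of the orthogonal projection $P$ onto $\mathcal{A}^q_m$. Let $M_D$ denote multiplication by $\mathbbm{1}_D$ on $L^2(\mathbb{C},\mathsf{m}_q)$. Then the integral operator with kernel $\mathcal{K}^q_{m,D}$ equals $M_D P M_D = (PM_D)^\ast(PM_D)$, which is positive. It therefore suffices to show that $PM_D$ is Hilbert--Schmidt.

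To do this, expand in the orthonormal basis $\{\mathcal{B}^q_{m-1}|n\rangle\}_{n\ge0}$ of $\mathcal{A}^q_m$ and compute the Hilbert--Schmidt norm:
\begin{equation*}
\|PM_D\|_{HS}^2=\|M_DP\|_{HS}^2=\sum_{n\ge0}\int_D|\mathcal{B}^q_{m-1}|n\rangle(z,\overline{z})|^2\,d\mathsf{m}_q(z)=\int_D\mathcal{K}^q_m(z,z)\,d\mathsf{m}_q(z).
\end{equation*}
The interchange of sum and integral is justified by Tonelli, since all terms are nonnegative. The trace of $M_DPM_D$ equals this same quantity.

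It remains to show that the last integral is finite. By \eqref{eqn:q kernel diagonal}, we have $\mathcal{K}^q_m(z,z)=q^{m-1}E_q(|z|^2)/(1+(1-q)q^{m-1}|z|^2)$. Since the points $z=0$ are not excluded, one can alternatively use \eqref{eq:kernel_finite_sum} at $w=z$. This expresses $\mathcal{K}^q_m(z,z)$ as a finite sum of entire functions of $|z|^2$, so it is a continuous function on $\mathbb{C}$ and hence bounded on the compact set $D$, say by $C_D$. Moreover, $\mathsf{m}_q(D)<\infty$, because $\mathsf{m}_q$ is a Radon measure; indeed, it is a probability measure by \eqref{eqn:q mixed moments} with $n=0$. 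Hence $\int_D\mathcal{K}^q_m(z,z)\,d\mathsf{m}_q\le C_D\,\mathsf{m}_q(D)<\infty$.

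The only delicate point is the identification of the integral operator with kernel $\mathbbm{1}_D(z)\mathcal{K}^q_m(z,w)\mathbbm{1}_D(w)$ with $M_DPM_D$. This requires that $P$ has kernel $\mathcal{K}^q_m$, i.e. that $\sum_n\mathcal{B}^q_{m-1}|n\rangle(z)\overline{\mathcal{B}^q_{m-1}|n\rangle(w)}$ converges suitably. I would handle this pointwise: by Cauchy--Schwarz and the finiteness of the diagonal sum $\sum_n|\mathcal{B}^q_{m-1}|n\rangle(z)|^2=\mathcal{K}^q_m(z,z)$, the series converges absolutely for each $(z,w)$. Dominated convergence then gives $Pf(z)=\int\mathcal{K}^q_m(z,w)f(w)\,d\mathsf{m}_q(w)$ for $f\in L^2$. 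With this, the trace-class property follows from the Hilbert--Schmidt factorization above.
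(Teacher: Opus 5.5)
Your proof is correct and is essentially the paper's argument: both reduce the claim to the finiteness of $\int_D\mathcal{K}^q_m(z,z)\,d\mathsf{m}_q(z)$ via the diagonal formula \eqref{eqn:q kernel diagonal}. The paper uses $E_q(r^2)e_q(-r^2)=1$ to bound the resulting Jackson integral over $[0,R^2]$ explicitly, you use continuity of the diagonal on $D$ together with $\mathsf{m}_q(D)\le 1$, and your factorization $M_DPM_D=(PM_D)^\ast(PM_D)$ makes explicit the positivity step that the paper leaves implicit.

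One small caveat concerns the kernel identification. The Cauchy--Schwarz majorant $\mathcal{K}^q_m(z,z)^{1/2}\mathcal{K}^q_m(w,w)^{1/2}|f(w)|$ is integrable in $w$ for $f$ supported in $D$, which is all the compression requires. It need not be integrable for arbitrary $f\in L^2(\mathbb{C},\mathsf{m}_q)$, because $\int_{\mathbb{C}}\mathcal{K}^q_m(w,w)\,d\mathsf{m}_q(w)=\infty$. For general $f$, argue instead via the $L^2$-convergence of $\sum_n\overline{\mathcal{B}^q_{m-1}|n\rangle(z,\overline{z})}\,\mathcal{B}^q_{m-1}|n\rangle$.
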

\begin{proof}
    It suffices to show that $\mathcal{K}^q_{m, \mathbb{D}_{R}}$ gives a trace class operator for every $R>0$. By \eqref{eqn:q kernel diagonal}, we have 
    \begin{align*}
        \operatorname{Tr}(\mathcal{K}^q_{m, \mathbb{D}_{R}})
        =\int_{\mathbb{D}_{R}}\mathcal{K}^q_{m}(z,z)\,d\mathsf{m}_q(z)
        &=\int_0^\infty \mathbbm{1}_{[0,R^{2}]}(r^2)\frac{q^{m-1}}{1+(1-q)q^{m-1}r^{2}}\,d_{q}r^{2}\\
        &<q^{m-1}\int_0^\infty \mathbbm{1}_{[0,R^{2}]}(r^{2})\,d_{q}r^{2}
        <q^{m-1}R^{2}m
    \end{align*}
    where we used $E_q(r^2)e_q(-r^2)=1$. This completes the proof.
\end{proof}

By Proposition \ref{prop:locally_trace_class}, the integral operators of $\mathcal{K}^q_m$, and therefore $\mathcal{K}^{q,\full}_{m}$ acting on $L^2(\mathsf{m}_q)$ are locally trace class and orthogonal projections. Thus, these operators give rise to the associated determinantal point process on $\mathbb{C}$; see \cite{ST03, So00}.

\begin{theorem}\label{thm:existence}
    For every $m\geq 1$, each of the kernels $\mathcal{K}^q_m$ and $\mathcal{K}^{q,\full}_m$ is the correlation kernel of a determinantal point process on $\mathbb{C}$ with reference measure $\mathsf{m}_{q}$. We call them the \emph{$q$-deformed pure} and \emph{full $m$-analytic Ginibre point processes}, respectively.
\end{theorem}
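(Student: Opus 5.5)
The plan is to invoke the Macchi--Soshnikov / Shirai--Takahashi existence theorem \cite{ST03,So00}. It states that a Hermitian, locally trace class integral operator $K$ on $L^{2}(\mathbb{C},\mathsf{m}_{q})$ with $0\le K\le \Id$ determines a unique determinantal point process whose correlation functions are $\det[K(z_i,z_j)]$ with respect to $\mathsf{m}_q$. The space $\mathbb{C}$ is locally compact, Polish and second countable, and $\mathsf{m}_q$ is a Radon measure. The work therefore reduces to verifying these hypotheses for $\mathcal{K}^q_m$ and $\mathcal{K}^{q,\full}_m$.

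First, I will check that $\mathcal{K}^q_m$ is the integral kernel of the orthogonal projection onto $\mathcal{A}^q_m$. By the orthonormality proposition, $\{\mathcal{B}^q_{m-1}|n\rangle\}_{n\ge0}$ is an orthonormal basis of $\mathcal{A}^q_m$. For $f\in L^2(\mathbb{C},\mathsf{m}_q)$ the projection is $\sum_n\langle f,\mathcal{B}^q_{m-1}|n\rangle\rangle\,\mathcal{B}^q_{m-1}|n\rangle$. This series is represented pointwise by $\int \mathcal{K}^q_m(z,w)f(w)\,d\mathsf{m}_q(w)$, because for fixed $z$ we have
\[
\sum_n|\mathcal{B}^q_{m-1}|n\rangle(z)|^2=\mathcal{K}^q_m(z,z)<\infty
\]
by \eqref{eqn:q kernel diagonal}. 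Hence $\mathcal{K}^q_m(z,\cdot)\in L^2$, and the interchange of sum and integral is justified by Cauchy--Schwarz. This kernel is Hermitian and satisfies $0\le \mathcal{K}^q_m\le\Id$ as an operator.

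For the full kernel, the spaces $\mathcal{A}^q_{r}$, $r=1,\dots,m$, are mutually orthogonal. Consequently $\mathcal{K}^{q,\full}_m=\sum_{r=1}^m\mathcal{K}^q_r$ is the projection onto $\mathcal{A}^{q,\full}_m$.

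Next comes local trace class. For $\mathcal{K}^q_m$ this is Proposition \ref{prop:locally_trace_class}. For a positive projection $P$ with continuous kernel, the compression $\mathbbm{1}_D P\mathbbm{1}_D=(P\mathbbm{1}_D)^\ast(P\mathbbm{1}_D)$ is positive, so its trace equals $\int_D P(z,z)\,d\mathsf{m}_q$ (via Hilbert--Schmidt norm of $P\mathbbm{1}_D$). Finiteness of this integral gives trace class. The full kernel is a finite sum over $r$ of locally trace class operators and is therefore locally trace class as well.

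The main subtlety is that $\mathsf{m}_q$ is atomic in the radial direction and accumulates at $0$. I must make sure that the kernel values on $\supp\mathsf{m}_q$, including the point $z=0$ where $\mathsf{m}_q$ has no mass, are well defined. Here I would use the finite-sum formula \eqref{eq:kernel_finite_sum}, which shows continuity of $\mathcal{K}^q_m$ on all of $\mathbb{C}^2$. I would also note that the trace-class estimate near $0$ is uniform, since the Jackson integral over $[0,R^2]$ converges. With these checks the cited theorem yields the processes.
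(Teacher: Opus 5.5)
Your proposal is correct and follows the paper's route. The paper likewise notes that $\mathcal{K}^q_m$ and $\mathcal{K}^{q,\full}_m=\sum_{r=1}^m\mathcal{K}^q_r$ are orthogonal projections, that they are locally trace class by Proposition~\ref{prop:locally_trace_class}, and then invokes \cite{ST03,So00}. Your extra checks fill in details the paper leaves implicit: the pointwise representation of the projection by its kernel, the identity $\operatorname{Tr}(\mathbbm{1}_D P\mathbbm{1}_D)=\int_D P(z,z)\,d\mathsf{m}_q(z)$ via the factorization $(P\mathbbm{1}_D)^\ast(P\mathbbm{1}_D)$, and continuity of the kernel on all of $\mathbb{C}^2$ from \eqref{eq:kernel_finite_sum}.
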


The following result provides further justification for the terminology introduced in Theorem \ref{thm:existence}:
\begin{proposition}
    For every $m\geq 1$, as $q\uparrow 1$, the $q$-deformed pure and full $m$-analytic Ginibre point processes converge weakly to their classical counterparts.
\end{proposition}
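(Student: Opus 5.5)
We prove weak convergence of the determinantal point processes through their correlation kernels. A standard criterion for this (cf.\ \cite{ST03}) is the following. Suppose the kernels, taken with respect to their reference measures, converge locally uniformly on compact subsets of $\mathbb{C}\times\mathbb{C}$, and the reference measures converge vaguely. Then all correlation measures $\det[K(z_i,z_j)]\,d\mathsf{m}^{\otimes k}$ converge vaguely. For projection-type kernels whose diagonal densities are uniformly locally bounded, this in turn gives convergence of the Laplace functionals $\mathbb{E}\exp(-\sum f(z_j))$ for $f\in C_c(\mathbb{C})$, $f\ge 0$. Indeed, these functionals are Fredholm determinants $\det(I-(1-e^{-f})K)$, and they converge once the compressed operators $\mathbbm{1}_D K \mathbbm{1}_D$ converge in trace norm. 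Since the $q$-model lives on $\mathsf{m}_q$, it is convenient to view all processes as point processes on $\mathbb{C}$ with intensity $\mathcal{K}(z,z)\,d\mathsf{m}(z)$ or $\mathcal{K}^q(z,z)\,d\mathsf{m}_q(z)$. We then compare the $k$-point correlation measures directly.

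\emph{Step 1 (reference measure).} We show that $\mathsf{m}_q\to\mathsf{m}$ vaguely as $q\uparrow1$. The Jackson integral $\int_0^\infty \phi(x)e_q(-x)\,d_qx$ is a Riemann sum over the mesh $\{q^k\}$, whose spacing tends to $0$ on compacts of $(0,\infty)$, with the tail near $0$ controlled by $\phi$ bounded. Moreover $e_q(-x)=1/(-(1-q)x;q)_\infty\to e^{-x}$ locally uniformly. Because the angular part is exactly uniform, this yields $\int\phi\,d\mathsf{m}_q\to\int\phi\,d\mathsf{m}$ for $\phi\in C_c(\mathbb{C})$.

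\emph{Step 2 (kernels).} We use the finite-sum formula \eqref{eq:kernel_finite_sum}. We have $E_q(q^{k+1}z\overline w)\to e^{z\overline w}$ locally uniformly, since $E_q(x)=\sum q^{n(n-1)/2}x^n/[n]_q!$ is dominated termwise by $\sum |x|^n/[n]_q!\le\sum|x|^n/(n!\,c^n)$ for $q$ near $1$. Also $(q^{1-m};q)_k(-(1-q)q^{m-1})^k/(q;q)_k^2\to(-1)^k\binom{m-1}{k}/k!$, because $(q^{1-m};q)_k/(1-q)^k\to(-1)^k(m-1)\cdots(m-k)$ and $(q;q)_k/(1-q)^k\to k!$. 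Finally the product tends to $|w-z|^{2k}$. Hence $\mathcal{K}^q_m(z,w)\to e^{z\overline w}\sum_k\binom{m-1}{k}\frac{(-|z-w|^2)^k}{k!}=e^{z\overline w}L^{(1)}_{m-1}(|z-w|^2)$, which is the classical pure kernel. The convergence is locally uniform on $\mathbb{C}^2$ because only finitely many terms appear. For the full kernel we sum over $r=1,\dots,m$.

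\emph{Step 3 (passage to processes).} The $k$-point correlation measures are $\det[\mathcal{K}^q(z_i,z_j)]\,d\mathsf{m}_q^{\otimes k}$. By Steps 1--2, continuous kernels converging uniformly on compacts against vaguely converging product measures give vague convergence of all correlation measures. Since the kernels are locally bounded uniformly in $q$, Hadamard's inequality bounds the $k$-point densities on a compact $D$ by $C_D^k k^{k/2}$. By Step 1 the masses $\mathsf{m}_q(D)$ are uniformly bounded. The Fredholm series $\sum_k\frac{(-1)^k}{k!}\int\prod(1-e^{-f(z_i)})\det[\cdots]$ is therefore dominated uniformly in $q$, so the Laplace functionals converge termwise. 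This is the main obstacle: one must justify this termwise passage uniformly, and handle that $\mathsf{m}_q$ is singular, so that convergence holds only against continuous integrands. The Hadamard bound together with continuity of the integrands $\prod(1-e^{-f})\det$ resolves both points, and convergence of Laplace functionals for all $f\in C_c^+$ gives weak convergence of the point processes.
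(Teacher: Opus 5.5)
Your route is the paper's: locally uniform convergence of the kernels from the finite-sum formula \eqref{eq:kernel_finite_sum}, vague convergence of $\mathsf{m}_q$ via Riemann sums, and then convergence of Laplace functionals. Your Fredholm-series/Hadamard argument in Step~3 spells out the step the paper delegates to a general criterion. The aside about trace-norm convergence of the compressions is not meaningful as stated, since the operators act on different spaces $L^2(\mathsf{m}_q)$ and $L^2(\mathsf{m})$, but you never use it. Two things in Step~2 need repair.

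\emph{The majorant for $E_q$ is false.} For fixed $q<1$ one has $[n]_q\le(1-q)^{-1}$, so $[n]_q!\le(1-q)^{-n}$. Hence $[n]_q!\ge n!\,c^n$ fails for large $n$, whatever $c>0$. Equivalently, $\sum_n|x|^n/[n]_q!=e_q(|x|)$ has radius of convergence $(1-q)^{-1}$ and cannot be bounded by an entire series. The factor $q^{n(n-1)/2}$ that you discarded is exactly what is needed:
\[
\frac{q^{n(n-1)/2}}{[n]_q!}=\prod_{j=1}^{n}\frac{q^{j-1}}{[j]_q}=\prod_{j=1}^{n}\frac{1}{1+q^{-1}+\cdots+q^{-(j-1)}}\le\frac{1}{n!}.
\]
So $e^{|x|}$ is a $q$-independent majorant, and termwise convergence then gives $E_q\to\exp$ locally uniformly.

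\emph{Signs and the Laguerre label.} Your coefficient limit has the wrong sign: the $(-1)^k$ from $(q^{1-m};q)_k/(1-q)^k$ cancels the one in $(-(1-q)q^{m-1})^k$, so the limit is $+\binom{m-1}{k}/k!$. Also, the product $\prod_{l=1}^{k}(w-q^lz)(\overline z-q^l\overline w)$ tends to $(-|z-w|^2)^k$, not $|z-w|^{2k}$. These two slips cancel, so your sum $\sum_k\binom{m-1}{k}(-|z-w|^2)^k/k!$ is correct. However, it equals $L^{(0)}_{m-1}(|z-w|^2)$, not $L^{(1)}_{m-1}(|z-w|^2)$. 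As written, you identify the limit of the pure $q$-kernel with the classical \emph{full} kernel; $L^{(1)}_{m-1}=\sum_{r=0}^{m-1}L^{(0)}_r$ only appears after summing over levels.

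With these fixes the proof is complete.
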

\begin{proof}
    It suffices to prove the locally uniform convergence of the correlation kernels and the vague convergence of the reference measures; see, e.g., \cite[Proposition 2.15]{Di26}. Alternatively, the assertion follows directly from the fact that these convergences imply the convergence of the corresponding Laplace functionals.

    We first consider the pure kernel. In \eqref{eq:kernel_finite_sum}, for each $k=0, \dots, m-1$, we have
    \begin{equation*}
        \frac{(q^{1-m};q)_{k}(-(1-q)q^{m-1})^{k}}{(q;q)_{k}^{2}}
        \to\frac{(m-1)!}{(k!)^{2}(m-1-k)!},\quad
        E_{q}(q^{k+1}z\overline{w})
        \to e^{z\overline{w}},\quad
        \prod_{l=1}^{k}(w-q^{l}z)(\overline{z}-q^{l}\overline{w})\to(-|z-w|^{2})^{k}
    \end{equation*}
    as $q\uparrow 1$, where the latter two convergences are locally uniform on $\mathbb{C}^2$. Thus, it follows that as $q\uparrow 1$
    \begin{align*}
        \mathcal{K}^q_m(z, w) 
        &\to e^{z\overline{w}}\sum_{k=0}^{m-1} \frac{(m-1)!}{(k!)^2(m-1-k)!} (-|z-w|^2)^k
        =e^{z\overline{w}}L^{(0)}_{m-1}(|z-w|^2)
    \end{align*}
    locally uniformly on $\mathbb{C}^2$. The last expression is precisely the correlation kernel of the pure $m$-analytic Ginibre point process. For the full kernel, as $q\uparrow 1$ we have 
    \begin{align*}
        \mathcal{K}^{q, \textup{full}}_m(z, w)
        = \sum_{r=1}^{m} \mathcal{K}^q_r(z, w)
        \to e^{z\overline{w}}\sum_{r=1}^{m} L^{(0)}_{r-1}(|z-w|^2)
        = e^{z\overline{w}} L^{(1)}_{m-1}(|z-w|^2)
    \end{align*}
    locally uniformly on $\mathbb{C}^2$. This is precisely the correlation kernel of the full $m$-analytic Ginibre point process. 

    It remains to prove the vague convergence of the reference measures. For every compactly supported continuous function $f$ on $\mathbb{C}$, we have 
    \[\int_\mathbb{C} f(z) d\mathsf{m}_q(z)= (1-q) \sum_{j\in \mathbb{Z}} F(q^j)e_q(-q^j)q^j,\]
    where $F(r):= \frac{1}{2\pi} \int_0^{2\pi}f(\sqrt{r}e^{i\theta})d\theta$, and $F$ is a compactly supported continuous function on $[0, \infty)$. We remark that for every compactly supported Riemann-integrable function $g$ on $[0, \infty)$, 
    \[\lim_{q\uparrow 1} \sum_{j\in \mathbb{Z}} g(q^j)(q^j-q^{j+1})= \int_0^\infty g(x)dx.\]
    Moreover, $e_q(-x)$ converges locally uniformly to $e^{-x}$ as $q\uparrow 1$. Combining these observations, we obtain 
    \[\lim_{q\uparrow 1} \int_\mathbb{C} f(z)d\mathsf{m}_q(z)= \int_0^\infty F(x)e^{-x}dx = \int_\mathbb{C} f(z) d\mathsf{m}(z),\]
    where we recall that $d\mathsf{m}(z)= e^{-|z|^2}dA(z)$. Therefore, $\mathsf{m}_q$ converges vaguely to $\mathsf{m}$, which completes the proof.
\end{proof}

For each $N\geq1$, we consider finite-rank truncations \eqref{eqn:qGinibre pure ensemble kernel} and \eqref{eqn:qGinibre full ensemble kernel} so that they are the kernels of finite-rank orthogonal projections onto the linear span of $\{\mathcal{B}^{q}_{m-1}|n\rangle\}_{0\leq n\leq N-1}$ and $\{\mathcal{B}^{q}_{r-1}|n\rangle\}_{0\leq n\leq N-1,1\leq r\leq m}$, which have rank $N$ and $mN$, respectively. Hence, these kernels define the finite-particle ensembles.

\begin{definition}
    The determinantal point processes on $\mathbb{C}$ with reference measure $\mathsf{m}_{q}$ and correlation kernels $\mathcal{K}^{q}_{m,N}$ and $\mathcal{K}^{q,\full}_{m,N}$ are called the \emph{$q$-deformed pure} and \emph{full $m$-analytic Ginibre ensembles}, respectively. 
\end{definition}

\begin{remark}[A Kostlan-type representation]
    The pure polyanalytic ensembles admit a Kostlan-type representation, analogous to the classical result in \cite[Theorem 6.2]{AGR19}.

    Fix $m,N\geq1$ and set $r=m-1$. Let $z_{1},\cdots,z_{N}$ denote the particles of the $q$-deformed pure $m$-analytic Ginibre ensemble. Then the radial distribution of particles $\{z_{j}\}_{1\leq j\leq N}$ satisfies
    \begin{equation*}
        \sum_{j=1}^{N}\delta_{|z_{j}|^{2}}
        \overset{\textup{d}}{=}\sum_{n=0}^{N-1}\delta_{Q_{r,n}^{(q)}},
    \end{equation*}
    where $Q_{r,0}^{(q)},\cdots,Q_{r,N-1}^{(q)}$ are independent random variables with distributions
    \begin{equation*}
        \mathbb{P}\big(Q_{r,n}^{(q)}\in dx\big)
        =|C_{r,n}|^{2}x^{|n-r|}\Big(L_{r\wedge n}^{(|n-r|)}(x;q)\Big)^{2}e_{q}(-x)\,d_{q}x,\qquad
        0\leq n\leq N-1.
    \end{equation*}
    Here, $C_{r,n}$ is the coefficient in \eqref{eq:q_Laguerre}. These are probability measures by the normalization of $\mathcal{B}_{r}^{q}|n\rangle$.

    Indeed, by \eqref{eq:q_Laguerre}, the polyanalytic $q$-Bargmann basis $\{\mathcal{B}_{r}^{q}|n\rangle\}_{n=0}^{N-1}$ remains mutually orthogonal when restricted to any annulus centered at the origin. Thus, such annuli are simultaneously observable in the sense of \cite[Section 6.2]{AGR19}. For $A=\{z\in\mathbb{C}:a<|z|^2\leq b\}$, the corresponding eigenvalues of the restricted kernel are
    \begin{equation*}
        \int_{A}\big|\mathcal{B}_{r}^{q}|n\rangle(z,\overline{z})\big|^{2}\,d\mathsf{m}_{q}(z)
        =\mathbb{P}\big(a<Q_{r,n}^{(q)}\leq b\big).
    \end{equation*}
    Consequently, \cite[Proposition 4.5.9]{HKPV09} identifies the number of particles in any annulus centered at the origin with the corresponding counts of the independent variables $Q_{r,n}^{(q)}$.

    In the analytic case $r=0$, the distributions reduce to
    \begin{equation*}
        \mathbb{P}\big(Q_{0,n}^{(q)}\in dx\big)
        =\frac{q^{n(n+1)/2}}{[n]_{q}!}x^{n}e_{q}(-x)\,d_{q}x,
    \end{equation*}
    which converges to the gamma distribution $\Gamma(n+1,1)$ as $q\uparrow1$, recovering the classical Kostlan representation.
\end{remark}
\subsection{Algebraic identities}

We record some algebraic properties of the $q$-deformed Bargmann transform.

\begin{proposition}\label{prop:three_term_recurrence}
    For every $r, n\geq 0$ we have 
    \begin{align}
        q^{n}\overline{z}\mathcal{B}^{q}_{r}|n\rangle(z,\overline{z})
        &=-q^{\frac{n-r-1}{2}}\sqrt{[r+1]_{q}}\mathcal{B}^{q}_{r+1}|n\rangle(z,\overline{z})
        +q^{\frac{n-r}{2}}\sqrt{[n]_{q}}\mathcal{B}^{q}_{r}|n-1\rangle(z,\overline{z}),\label{eqn:threeterm barz}\smallskip\\
        q^{r}z\mathcal{B}^{q}_{r}|n\rangle(z,\overline{z})
        &=q^{-\frac{n-r+1}{2}}\sqrt{[n+1]_{q}}\mathcal{B}^{q}_{r}|n+1\rangle(z,\overline{z})
        -q^{-\frac{n-r}{2}}\sqrt{[r]_{q}}\mathcal{B}^{q}_{r-1}|n\rangle(z,\overline{z})\label{eqn:threeterm z}.
    \end{align}
\end{proposition}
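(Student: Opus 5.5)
Both identities will be checked directly on the explicit monomial expansion \eqref{eqn:q laguerre expansion}, by comparing coefficients of each monomial $z^{a}\overline{z}^{b}$. For \eqref{eqn:threeterm barz} there is also a more structural route. We write $\overline{z}$ in terms of the $q$-raising operator. By definition, $\widetilde{\mathcal{R}}_q=\partial_{q,z}-\overline{z}\mathsf{S}_{q,z}$ and $\mathcal{R}_q=\mathsf{J}_q^{-1/2}\widetilde{\mathcal{R}}_q$. Since $\mathcal{B}^q_r|n\rangle$ is homogeneous of bidegree difference $n-r$ (each term is $z^{n-j}\overline{z}^{r-j}$), the operators $\mathsf{S}_{q,z}$ and $\mathsf{J}_q^{\alpha}$ act on it through explicit scalar factors coupled with $\partial_{q,z}$. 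Concretely, I would compute $\mathcal{R}_q\mathcal{B}^q_r|n\rangle$ term by term. The $\overline{z}\mathsf{S}_{q,z}$ part gives $q^{n-j}$ times the monomial shifted in $\overline{z}$, which after the $\mathsf{J}_q^{-1/2}$ rescaling produces a factor of the form $q^{n}\overline{z}\,\mathcal{B}^q_r|n\rangle$ (up to a power $q^{(\cdot)/2}$). The $\partial_{q,z}$ part must then be identified with $\sqrt{[n]_q}\,\mathcal{B}^q_r|n-1\rangle$ up to a power of $q$. Combined with the ladder relation \eqref{eqn:ladder}, $\mathcal{R}_q\mathcal{B}^q_r|n\rangle=\sqrt{[r+1]_q}\mathcal{B}^q_{r+1}|n\rangle$, this yields \eqref{eqn:threeterm barz}.

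The key sub-claim is that $\partial_{q,z}$ (with suitable shifts) maps $\mathcal{B}^q_r|n\rangle$ to a multiple of $\mathcal{B}^q_r|n-1\rangle$. I would verify this directly from \eqref{eqn:q laguerre expansion}. Applying $\partial_{q,z}$ to $z^{n-j}$ gives $[n-j]_q z^{n-j-1}$. This turns $\sqrt{[n]_q!}/[n-j]_q!$ into $\sqrt{[n]_q}\cdot\sqrt{[n-1]_q!}/[n-1-j]_q!$, and the summation range $j\le r\wedge n$ becomes $j\le r\wedge(n-1)$, because the $j=n$ term is killed. The remaining task is to match the $q$-powers: the prefactor $q^{(n+r+(n-r)^2)/4}$ and the factor $q^{(r-j)(n-j)}$ against their $n-1$ counterparts. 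The discrepancy is $q^{-(r-j)}$ times a $j$-independent power, and the $j$-dependent part is exactly absorbed by the $\mathsf{J}_q$/$\mathsf{S}_{q,z}$ scalings on the monomial $z^{n-j-1}\overline{z}^{r-j}$.

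For \eqref{eqn:threeterm z}, I would avoid a second structural argument and compare coefficients directly. Multiplying \eqref{eqn:q laguerre expansion} by $q^r z$ gives monomials $z^{n+1-j}\overline{z}^{r-j}$, and these must match the expansions of $\mathcal{B}^q_r|n+1\rangle$ (same monomials, index $j$) and $\mathcal{B}^q_{r-1}|n\rangle$. In the latter, the monomial $z^{n-i}\overline{z}^{r-1-i}$ coincides with $z^{n+1-j}\overline{z}^{r-j}$ when $i=j-1$. Matching coefficients reduces the identity to a $q$-Pascal identity of the form $\qbinom{r}{j}=q^{j}\qbinom{r-1}{j}+\qbinom{r-1}{j-1}$ (or its mirror form $\qbinom{r}{j}=\qbinom{r-1}{j}+q^{r-j}\qbinom{r-1}{j-1}$), multiplied by $[n+1]_q/[n+1-j]_q$-type ratios. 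More precisely, it reduces to the combination $[n+1]_q - q^{\cdot}[j]_q=q^{j}[n+1-j]_q$, that is, the identity $[a+b]_q=[a]_q+q^{a}[b]_q$. The same coefficient comparison serves as an independent check of \eqref{eqn:threeterm barz}; there the relevant identity is the other $q$-Pascal rule.

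The main obstacle is bookkeeping of the $q$-exponents: the quadratic prefactor $q^{(n+r+(n-r)^2)/4}$, the cross term $q^{(r-j)(n-j)}$, and the half-integer powers on the right-hand sides. I would handle this by dividing each identity by the common monomial coefficient, so that everything reduces to checking one scalar equation per $j$. Each such equation is a linear relation among $q$-integers. I would also treat the boundary terms ($j=0$, $j=r\wedge n$, and the cases $n<r$ versus $n\ge r$, where the range of $j$ changes) separately, using the conventions $|-1\rangle=0$ and $\mathcal{B}^q_{-1}=0$.
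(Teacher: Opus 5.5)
Your main route, coefficient comparison in \eqref{eqn:q laguerre expansion}, is correct and proves both identities, and it is a genuinely different proof from the paper's. The ``structural route'' you sketch for \eqref{eqn:threeterm barz}, however, does not work as written once $r\geq1$.

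\textbf{Where the structural route fails.} You assume $\mathsf{S}_{q,z}$ acts on $\mathcal{B}^q_r|n\rangle$ through a scalar, but only $\mathsf{J}_q$ does, namely by $q^{n-r}$. The operator $\mathsf{S}_{q,z}=\mathsf{J}_q\mathsf{S}_{q,\overline z}$ is not a scalar there, because $\mathsf{S}_{q,\overline z}$ multiplies $z^{n-j}\overline z^{r-j}$ by $q^{r-j}$. Explicitly,
$\mathsf{S}_{q,z}\mathcal{B}^q_r|n\rangle=q^{n-r}\mathsf{S}_{q,\overline z}\mathcal{B}^q_r|n\rangle$ and $\partial_{q,z}\mathcal{B}^q_r|n\rangle=q^{\frac{n-r}{2}}\sqrt{[n]_q}\,\mathsf{S}_{q,\overline z}\mathcal{B}^q_r|n-1\rangle$.
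Meanwhile $\mathsf{J}_q^{-1/2}$ acts on both pieces of $\widetilde{\mathcal{R}}_q\mathcal{B}^q_r|n\rangle$ by the single scalar $q^{\frac{r+1-n}{2}}$, and no $\mathsf{S}_{q,z}$ touches the $\partial_{q,z}$-term. So nothing absorbs the leftover $\mathsf{S}_{q,\overline z}$, and neither piece matches its intended term in \eqref{eqn:threeterm barz}.

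For example, with $r=n=1$ you have $\mathcal{B}^q_1|1\rangle=q^{1/2}(1-q|z|^2)$ and $\mathsf{J}_q^{-1/2}\overline z\mathsf{S}_{q,z}\mathcal{B}^q_1|1\rangle=q(\overline z-q^2z\overline z^2)$. This is not proportional to $\overline z\mathcal{B}^q_1|1\rangle$. The two pieces add up correctly only through a cross-cancellation, and that cancellation is your $q$-Pascal computation again.

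\textbf{How the paper avoids this.} The paper splits only at $r=0$. There $\mathcal{B}^q_0|n\rangle$ is a monomial, so $\mathsf{S}_{q,z}$ is the scalar $q^n$ and $\partial_{q,z}\mathcal{B}^q_0|n\rangle=q^{n/2}\sqrt{[n]_q}\mathcal{B}^q_0|n-1\rangle$. It then applies $\mathcal{R}_q^r$, using $\mathcal{R}_q\overline z=q^{1/2}\overline z\mathcal{R}_q$ and $\mathcal{R}_q^r\mathcal{B}^q_0=\sqrt{[r]_q!}\,\mathcal{B}^q_r$. Finally it gets \eqref{eqn:threeterm z} from \eqref{eqn:threeterm barz} by duality: multiplication by $z$ is adjoint to multiplication by $\overline z$, and the family is orthonormal.

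\textbf{Your coefficient comparison goes through.} Put $K_{r,n}:=q^{\frac{n+r+(n-r)^2}{4}}/\sqrt{[r]_q!}$. Up to index shifts, the only ratios you need are $K_{r,n}/K_{r,n-1}=q^{\frac{n-r}{2}}$ and $K_{r+1,n}/K_{r,n}=q^{\frac{r+1-n}{2}}/\sqrt{[r+1]_q}$.

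For \eqref{eqn:threeterm barz}, take the coefficient of $z^{n-j}\overline z^{r+1-j}$ and divide by $(-1)^{r-j}K_{r,n}\frac{\sqrt{[n]_q!}}{[n-j]_q!}q^{(r-j)(n-j)}$. The identity becomes $q^n\qbinom{r}{j}=q^{n-j}\big(\qbinom{r+1}{j}-\qbinom{r}{j-1}\big)$. This is exactly the $q$-Pascal rule $\qbinom{r+1}{j}=\qbinom{r}{j-1}+q^j\qbinom{r}{j}$.

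For \eqref{eqn:threeterm z}, take the coefficient of $z^{n+1-j}\overline z^{r-j}$ and divide by $(-1)^{r-j}K_{r,n}\frac{\sqrt{[n]_q!}}{[n+1-j]_q!}q^{(r-j)(n-j)}$. You get $q^r[n+1-j]_q\qbinom{r}{j}=q^{r-j}\big([n+1]_q\qbinom{r}{j}-[r]_q\qbinom{r-1}{j-1}\big)$. This follows from the absorption identity $[r]_q\qbinom{r-1}{j-1}=[j]_q\qbinom{r}{j}$ (not from Pascal) together with $[n+1]_q-[j]_q=q^j[n+1-j]_q$. Note there is no extra power of $q$ on $[j]_q$.

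The extra indices are $j=r+1$ when $n>r$ in the first identity, and $j=n+1$ when $n<r$ in the second. At these, the right-hand sides vanish by the same identities, so the range issues resolve themselves.

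\textbf{Comparison.} The paper's argument is short and needs only the $r=0$ basis plus operator identities. Its duality step, though, implicitly uses that the polynomial $z\mathcal{B}^q_r|n\rangle$ lies in the closed span of $\{\mathcal{B}^q_s|l\rangle\}_{s\le r}$. Your argument is elementary and self-contained given \eqref{eqn:q laguerre expansion}, and it verifies \eqref{eqn:threeterm z} independently. The cost is the exponent bookkeeping above.
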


\begin{proof}
    We first note that
    \begin{equation}\label{eqn:Rbarz}
        \mathcal{R}_{q}\overline{z}
        =q^{\frac{1}{2}}\overline{z}\mathcal{R}_{q}.
    \end{equation}
    For the analytic basis, a direct calculation gives
    \begin{equation*}
        \partial_{q,z}\mathcal{B}_{0}^{q}|n\rangle
        =q^{\frac{n}{2}}\sqrt{[n]_{q}}\mathcal{B}_{0}^{q}|n-1\rangle,
        \qquad
        \mathsf{S}_{q,z}\mathcal{B}_{0}^{q}|n\rangle
        =q^{n}\mathcal{B}_{0}^{q}|n\rangle,
    \end{equation*}
    which yields
    \begin{equation*}
        \mathcal{R}_{q}\mathcal{B}_{0}^{q}|n\rangle
        =\mathsf{J}_{q}^{-\frac{1}{2}}\big(\partial_{q,z}-\overline{z}\mathsf{S}_{q,z}\big)\mathcal{B}_{0}^{q}|n\rangle
        =q^{\frac{1}{2}}\sqrt{[n]_{q}}\mathcal{B}_{0}^{q}|n-1\rangle
        -q^{\frac{n+1}{2}}\overline{z}\mathcal{B}_{0}^{q}|n\rangle.
    \end{equation*}
    Therefore,
    \begin{equation*}
        q^{n}\overline{z}\mathcal{B}_{0}^{q}|n\rangle
        =-q^{\frac{n-1}{2}}\mathcal{R}_{q}\mathcal{B}_{0}^{q}|n\rangle+q^{\frac{n}{2}}\sqrt{[n]_{q}}\mathcal{B}_{0}^{q}|n-1\rangle.
    \end{equation*}
    By multiplying $\mathcal{R}_{q}^{r}$ on the left to both sides and iterating \eqref{eqn:Rbarz}, we obtain
    \begin{equation*}
        q^{n+\frac{r}{2}}\overline{z}\mathcal{B}_{r}^{q}|n\rangle
        =-q^{\frac{n-1}{2}}\sqrt{[r+1]_{q}}\mathcal{B}_{r+1}^{q}|n\rangle+q^{\frac{n}{2}}\sqrt{[n]_{q}}\mathcal{B}_{r}^{q}|n-1\rangle,
    \end{equation*}
    which is equivalent to the first assertion.

    Since the multiplication operator by $\overline{z}$ is the adjoint of the multiplication by $z$ and $\{\mathcal{B}_{r}^{q}|n\rangle\}_{n,r\geq0}$ forms an orthonormal system, the second relation follows directly from the first.
\end{proof}

As a $q$-counterpart of \eqref{eqn:mathcalL}, we introduce the $q$-Landau operator
\begin{equation}\label{eqn:qLandau}
    \mathcal{L}_{q}
    =\mathcal{R}_{q}\mathcal{R}_{q}^{\ast},
\end{equation}
which is defined on the $L^{2}(\mathbb{C},\mathsf{m}_{q})$-completion of the space of polyanalytic polynomials. The following proposition explains the meaning of the $q$-deformed Landau level in analogy with \eqref{eqn:Landau}.

\begin{proposition}\label{prop:qLandau}
    For each $m\geq1$, the $q$-deformed pure $m$-analytic Bargmann--Fock space is the eigenspace of the $q$-Landau operator \eqref{eqn:qLandau} corresponding to eigenvalue $[m-1]_{q}$, that is,
    \begin{equation*}
        \mathcal{A}_{m}^{q}
        =\ker(\mathcal{L}_{q}-[m-1]_{q}).
    \end{equation*}
\end{proposition}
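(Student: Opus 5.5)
The plan is to diagonalize $\mathcal{L}_{q}$ explicitly in the orthonormal family $\{\mathcal{B}^{q}_{r}|n\rangle\}_{n,r\geq0}$ and then read off its eigenspaces. By the $q$-ladder relations \eqref{eqn:ladder} we have
\begin{equation*}
    \mathcal{L}_{q}\mathcal{B}^{q}_{r}|n\rangle
    =\mathcal{R}_{q}\mathcal{R}_{q}^{\ast}\mathcal{B}^{q}_{r}|n\rangle
    =\sqrt{[r]_{q}}\,\mathcal{R}_{q}\mathcal{B}^{q}_{r-1}|n\rangle
    =[r]_{q}\,\mathcal{B}^{q}_{r}|n\rangle
\end{equation*}
for all $n,r\geq0$; for $r=0$ both sides vanish because $\mathcal{R}_{q}^{\ast}$ annihilates analytic polynomials by \eqref{eqn:qraising adjoint}. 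Thus each basis vector of $\mathcal{A}^{q}_{r+1}$ is an eigenvector with eigenvalue $[r]_{q}$. Since $r\mapsto[r]_{q}=(1-q^{r})/(1-q)$ is strictly increasing for $q\in(0,1)$, distinct Landau levels carry distinct eigenvalues.

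Next I will make the domain precise. By the last proposition of Section 2.4, the polyanalytic polynomials are exactly the finite linear combinations of the $\mathcal{B}^{q}_{r}|n\rangle$. The closure $\mathcal{P}$ of this span is therefore $\bigoplus_{r\geq0}\mathcal{A}^{q}_{r+1}$, with $\{\mathcal{B}^{q}_{r}|n\rangle\}$ an orthonormal basis. I define $\mathcal{L}_{q}$ on $\mathcal{P}$ as the closure of its restriction to polynomials. This closure is the self-adjoint diagonal operator
\begin{equation*}
    f=\sum_{r,n}c_{r,n}\mathcal{B}^{q}_{r}|n\rangle
    \;\longmapsto\;
    \sum_{r,n}[r]_{q}\,c_{r,n}\,\mathcal{B}^{q}_{r}|n\rangle,
\end{equation*}
and since $[r]_{q}\leq(1-q)^{-1}$, it is in fact bounded. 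Boundedness is the point that removes any domain subtlety: $\mathcal{L}_{q}$ extends continuously from the dense subspace of polynomials to all of $\mathcal{P}$.

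The inclusion $\mathcal{A}^{q}_{m}\subset\ker(\mathcal{L}_{q}-[m-1]_{q})$ then follows by linearity and continuity from the eigenvalue computation above. For the reverse inclusion, take $f\in\mathcal{P}$ with $\mathcal{L}_{q}f=[m-1]_{q}f$ and expand $f=\sum c_{r,n}\mathcal{B}^{q}_{r}|n\rangle$. Then
\begin{equation*}
    \sum_{r,n}\big([r]_{q}-[m-1]_{q}\big)\,c_{r,n}\,\mathcal{B}^{q}_{r}|n\rangle=0 .
\end{equation*}
Strict monotonicity of $[r]_{q}$ forces $c_{r,n}=0$ for every $r\neq m-1$, so $f\in\mathcal{A}^{q}_{m}$.

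The only genuine obstacle is the domain issue. Remark \ref{rem:indeterminate} shows that $\mathcal{P}$ is a proper subspace of $L^{2}(\mathbb{C},\mathsf{m}_{q})$, so the statement must be read inside $\mathcal{P}$, where $\mathcal{L}_{q}$ is defined. The boundedness of $\mathcal{L}_{q}$ noted above is what makes this reading unambiguous.
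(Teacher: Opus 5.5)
Your proof is correct and follows the paper's route: the paper likewise reduces the claim to $\mathcal{L}_q\mathcal{B}^q_r|n\rangle=[r]_q\mathcal{B}^q_r|n\rangle$, obtained from the ladder relations \eqref{eqn:ladder}. You also make explicit the reverse inclusion that the paper's ``it suffices'' leaves implicit, namely that $[r]_q$ is strictly increasing and that $\mathcal{L}_q$ is bounded by $(1-q)^{-1}$ and hence acts diagonally on all of $\bigoplus_{r}\mathcal{A}^q_{r+1}$.
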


\begin{proof}
    Since $\mathcal{A}_{m}^{q}$ is spanned by $\{\mathcal{B}_{m-1}^{q}|n\rangle\}_{n\geq0}$, it suffices to establish 
    \begin{equation}\label{eqn:LBrB}
        \mathcal{L}_{q}\mathcal{B}_{r}^{q}
        =[r]_{q}\mathcal{B}_{r}^{q}
    \end{equation}
    for each $r\geq0$. Applying ladder relations \eqref{eqn:ladder}, we obtain
    \begin{equation*}
        \mathcal{R}_{q}\mathcal{R}_{q}^{\ast}\mathcal{B}_{r}^{q}
        =\sqrt{[r]_{q}}\mathcal{R}_{q}\mathcal{B}_{r-1}^{q}
        =[r]_{q}\mathcal{B}_{r}^{q},
    \end{equation*}
    as desired.
\end{proof}

\begin{remark}
    The operator $\mathcal{L}_q$ gives a concrete $q$-difference realization of the $q$-oscillator Hamiltonian appearing in the $(p, q)$-deformed Landau problem studied in \cite{BGGH}. Indeed, after a suitable normalization, the Hamiltonian at $p=1$ in \cite{BGGH} is unitarily equivalent to $1+(1+q)\mathcal{L}_q$.
\end{remark}

From the $q$-Laguerre polynomial representation \eqref{eq:q_Laguerre} of the $q$-Bargmann vector $\mathcal{B}^{q}_{m}|n\rangle$, Propositions \ref{prop:three_term_recurrence} and \ref{prop:qLandau} naturally induce the algebraic properties of the $q$-Laguerre polynomial. As a direct consequence of Proposition \ref{prop:three_term_recurrence}, we have 
\begin{align*}
        q^{n+m}|z|^{2}\mathcal{B}^{q}_{m}|n\rangle(z,\overline{z})
        &=-q^{-\frac{3}{2}}\sqrt{[m+1]_{q}[n+1]_{q}}\mathcal{B}^{q}_{m+1}|n+1\rangle (z,\overline{z})\\
        &\quad+([m]_{q}+q^{-1}[n+1]_{q})\mathcal{B}^{q}_{m}|n\rangle(z,\overline{z})-q^{\frac{1}{2}}\sqrt{[m]_{q}[n]_{q}}\mathcal{B}^{q}_{m-1}|n-1\rangle(z,\overline{z}).
\end{align*}
This identity implies the three-term recurrence relation of the $q$-Laguerre polynomials; see \cite[Section 3]{Mo81}.

On the other hand, using \eqref{eqn:RRast}, the relation \eqref{eqn:LBrB} is equivalent to
\begin{equation*}
    -q^{-1}\mathsf{S}_{q,z}^{-1}\partial_{q,z}\partial_{q,\overline{z}}+\overline{z}\partial_{q,\overline{z}}
    =[r]_{q}
\end{equation*}
on the dense subspace $\mathrm{span}\{\mathcal{B}^q_r|n\rangle \mid n\geq 0\}$ of $\mathcal{A}^q_{r+1}$. This recovers the $q$-difference equation of the $q$-Laguerre polynomials; see \cite[Section 6]{Mo81}.

\section{Proof of Theorem \ref{thm:double scale limiting law}}\label{sec:double_scaled_regime}

Throughout this and the subsequent sections, we work in the double-scaling regime
\begin{equation*}
    q=e^{-\lambda/N},
\end{equation*}
where $\lambda>0$ is fixed. The asymptotic analysis is formulated in terms of the rescaled reference measure \eqref{eqn:def of mqhat} and the rescaled kernels \eqref{eqn:def of Khat}.

The proof of Theorem \ref{thm:double scale limiting law} proceeds in two steps. We first identify the weak limits of the averaged empirical measures using the method of moments. We then use the projection structure of the correlation kernels to show that the random empirical measures concentrate around their means, thereby obtaining convergence in probability.

We note that the integral formulas regarding the rescaled measure $\widehat{\mathsf{m}}_{q}$ are not obtained from those of $\mathsf{m}_{q}$ by a change of variables. Indeed, the dilation $z\mapsto\sqrt{1-q}z$ does not preserve the Jackson radial lattice. Nevertheless, the two measures satisfy an exact scaling identity on polynomials, which is sufficient for the finite-rank kernel calculations below.

\begin{lemma}
    For nonnegative integers $n$, $m$,
    \begin{equation}\label{eqn:scaled mixed moments}
        \int_{\mathbb{C}}z^{n}\overline{z}^{m}\,d\widehat{\mathsf{m}}_{q}(z)
        =\delta_{n,m}(1-q)^{n+1}q^{-\frac{n(n+1)}2}[n]_{q}!.
    \end{equation}
    Consequently, if $f$ is a polynomial in $z$ and $\overline{z}$,
    \begin{equation}\label{eqn:polynomial scaling}
        \frac{1}{1-q}\int_{\mathbb{C}}f\Big(\frac{z}{\sqrt{1-q}},\frac{\overline{z}}{\sqrt{1-q}}\Big)d\widehat{\mathsf{m}}_{q}(z)
        =\int_{\mathbb{C}}f(z,\overline{z})\,d\mathsf{m}_{q}(z).
\end{equation}
\end{lemma}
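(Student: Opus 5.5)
The plan is to compute the mixed moments directly from the definition \eqref{eqn:def of mqhat}, in the same way as \eqref{eqn:q mixed moments}, and then obtain \eqref{eqn:polynomial scaling} by linearity.

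\textbf{Angular integral.} Writing $z=re^{i\theta}$, the angular integral $\frac{1}{2\pi}\int_0^{2\pi}e^{i(n-m)\theta}\,d\theta$ equals $\delta_{n,m}$. Hence it suffices to evaluate the radial Jackson integral
\begin{equation*}
    I_n:=\int_0^\infty x^n\,\widehat{e}_q(-x)\,d_qx,\qquad \widehat{e}_q(-x)=\frac{1}{(-x;q)_\infty},
\end{equation*}
with $x=r^2$.

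\textbf{Radial integral: recursion.} I would derive a recursion in $n$ by Jackson integration by parts, mimicking the proof of \eqref{eqn:q mixed moments}. The key identity is the $q$-difference relation
\begin{equation*}
    \widehat{e}_q(-x)-\widehat{e}_q(-qx)=-x\,\widehat{e}_q(-x),
\end{equation*}
which holds because $(-qx;q)_\infty=(-x;q)_\infty/(1+x)$. Equivalently, $\widehat{e}_q(-qx)=(1+x)\widehat{e}_q(-x)$.

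Write $I_n=(1-q)\sum_{k\in\mathbb{Z}}q^{k(n+1)}\widehat{e}_q(-q^k)$ and shift the summation index by one, i.e. apply \eqref{eq:change_of_variable}. This gives
\begin{equation*}
    I_n=q^{n+1}(1-q)\sum_k q^{k(n+1)}\widehat{e}_q(-q^{k+1})=q^{n+1}(I_n+I_{n+1}).
\end{equation*}
Rearranging,
\begin{equation*}
    I_{n+1}=(q^{-n-1}-1)I_n=(1-q)q^{-n-1}[n+1]_q\,I_n.
\end{equation*}
The shift and the splitting of the sum are legitimate because the sums converge absolutely. As $k\to+\infty$ this is clear. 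As $k\to-\infty$, $\widehat{e}_q(-q^k)=1/(-q^k;q)_\infty$ decays like $q^{k^2/2}$ (superexponentially), which dominates the growth of $q^{k(n+1)}$.

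\textbf{Radial integral: base case.} The base case $I_0=(1-q)\sum_k q^k/(-q^k;q)_\infty=1-q$ is the main obstacle. I would try two routes:
\begin{itemize}
    \item relate $I_0$ to the normalization $\int_0^\infty e_q(-x)\,d_qx=1$ already used in \eqref{eqn:q mixed moments}, via $\widehat{e}_q(-x)=e_q(-x/(1-q))$;
    \item evaluate it directly with Ramanujan's ${}_1\psi_1$ summation, or with the known total mass of the Jackson $q$-Laguerre weight.
\end{itemize}
The first route is only heuristic, because the substitution $x\mapsto x/(1-q)$ does not preserve the lattice $\{q^k\}$. The second route uses $1/(-q^k;q)_\infty=(-q^{k};q)_{-k}/(-1;q)_\infty$ for $k\le 0$ (and the analogous expression for $k>0$), which recasts the bilateral sum as a ${}_1\psi_1$-type series whose value is $1/(1-q)$ times the normalization. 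If this identification proves awkward, I would instead take the normalization of the weight as a known Jackson-integral evaluation from \cite{Mo81}.

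\textbf{Conclusion.} Iterating the recursion from $I_0=1-q$ gives
\begin{equation*}
    I_n=(1-q)^{n+1}q^{-n(n+1)/2}[n]_q!,
\end{equation*}
since $\prod_{j=1}^n q^{-j}=q^{-n(n+1)/2}$. This is \eqref{eqn:scaled mixed moments}.

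For \eqref{eqn:polynomial scaling}, by linearity it is enough to take $f=z^n\overline{z}^m$. The left-hand side is then
\begin{equation*}
    (1-q)^{-1-(n+m)/2}\,\delta_{n,m}(1-q)^{n+1}q^{-n(n+1)/2}[n]_q!=\delta_{n,m}q^{-n(n+1)/2}[n]_q!,
\end{equation*}
which matches the right-hand side given by \eqref{eqn:q mixed moments}.
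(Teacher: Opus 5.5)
Your proposal is correct and follows the paper's proof: radial symmetry, the recurrence $(1-q^{n+1})I_n=q^{n+1}I_{n+1}$ obtained from the index shift together with $(1+x)(-qx;q)_\infty=(-x;q)_\infty$, iteration from $I_0=1-q$, and linearity for the scaling identity; your absolute-convergence check is a welcome addition. The base case you flag as the main obstacle, which the paper simply asserts, in fact follows at once from your own $q$-difference relation: summing $q^k\widehat{e}_q(-q^k)=\widehat{e}_q(-q^{k+1})-\widehat{e}_q(-q^k)$ over $|k|\le K$ telescopes to $\widehat{e}_q(-q^{K+1})-\widehat{e}_q(-q^{-K})\to 1$, so no ${}_1\psi_1$ evaluation is needed (also, in your rewriting $1/(-q^k;q)_\infty=(-q^k;q)_{-k}/(-1;q)_\infty$ the finite product $(-q^k;q)_{-k}$ belongs in the denominator, not the numerator).
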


\begin{proof}
    Comparing the first assertion with \eqref{eqn:q mixed moments} and using linearity yields the second assertion. It therefore suffices to prove the first.
    Since $\widehat{\mathsf{m}}_{q}$ has radial symmetry, its mixed moment vanishes unless $n=m$. Let
    \begin{equation*}
        M_{n}
        =\int_{\mathbb{C}}|z|^{2n}\,d\widehat{\mathsf{m}}_{q}(z)
        =\int_{0}^{\infty}\frac{x^{n}}{(-x;q)_{\infty}}\,d_{q}x.
    \end{equation*}
    Using $(1+x)(-qx;q)_{\infty}=(-x;q)_{\infty}$ and the change-of-variable formula for Jackson $q$-integral, we obtain
    \begin{equation*}
        M_{n}
        =q^{n+1}\int_{0}^{\infty}\frac{1+x}{(-x;q)_{\infty}}x^{n}\,d_{q}x,
    \end{equation*}
    which gives the recurrence relation
    \begin{equation}\label{eqn:spectral moment recurrence}
        (1-q^{n+1})M_{n}=q^{n+1}M_{n+1}.
    \end{equation}
    Then \eqref{eqn:scaled mixed moments} follows from applying \eqref{eqn:spectral moment recurrence} repeatedly to $M_{0}=1-q$.
\end{proof}

\subsection{Spectral moments}
For nonnegative integers $p_{1}$, $p_{2}$, the \emph{mixed spectral moment} of the rescaled $q$-deformed $m$-analytic Ginibre ensemble is defined by
\begin{equation*}
    \mathfrak{m}_{m,N}^{q}(p_{1},p_{2})
    :=\mathbb{E}\Big[\sum_{j=1}^{N}z_{j}^{p_{1}}\overline{z}_{j}^{p_{2}}\Big],
\end{equation*}
where $z_{1}$, $\cdots$, $z_{N}$ denote the particles of the corresponding ensemble, and $\mathbb{E}$ denotes the expectation with respect to their joint law. The mixed spectral moment of the full ensemble is defined in a similar way:
\begin{equation*}
    \mathfrak{m}_{m,N}^{q,\full}(p_{1},p_{2})
    :=\mathbb{E}\Big[\sum_{j=1}^{mN}z_{j}^{p_{1}}\overline{z}_{j}^{p_{2}}\Big]
    =\sum_{r=1}^{m}\mathfrak{m}_{r,N}^{q}(p_{1},p_{2}).
\end{equation*}
Let
\begin{equation}\label{eqn:averaged empirical measure}
    \overline{\mu}_{m,N}^{q}
    =\mathbb{E}\big[\widehat{\mu}_{m,N}^{q}\big],\qquad
    \overline{\mu}_{m,N}^{q,\full}
    :=\mathbb{E}\big[\widehat{\mu}_{m,N}^{q,\full}\big]
\end{equation}
be averaged empirical measures of polyanalytic Ginibre ensembles. Due to the determinantal structure, \eqref{eqn:averaged empirical measure} admit the kernel representations
\begin{equation}
    \overline{\mu}_{m,N}^{q}(dz)
    =\frac{1}{N}\widehat{\mathcal{K}}_{m,N}^{q}(z,z)\,d\widehat{\mathsf{m}}_{q}(z),\qquad
    \overline{\mu}_{m,N}^{q,\full}(dz)
    =\frac{1}{mN}\widehat{\mathcal{K}}_{m,N}^{q,\full}(z,z)\,d\widehat{\mathsf{m}}_{q}(z).
\end{equation}
This yields the integral representation of the mixed spectral moments
\begin{equation*}
    \mathfrak{m}_{m,N}^{q,\#}(p_{1},p_{2})
    =\int_{\mathbb{C}}z^{p_{1}}\overline{z}^{p_{2}}\widehat{\mathcal{K}}_{m,N}^{q,\#}(z,z)\,d\widehat{\mathsf{m}}_{q}(z),
\end{equation*}
where \# is omitted or full. The following exact formula is obtained by combining the polynomial scaling identity \eqref{eqn:polynomial scaling} with the three-term relation in Proposition \ref{prop:three_term_recurrence}.

\begin{theorem}\label{thm:mixed moment}
    For nonnegative integers $p_{1}$, $p_{2}\geq 0$ and $m\geq1$, we have
    \begin{equation}\label{eqn:spectral moment formula}
        \mathfrak{m}_{m,N}^{q}(p_{1},p_{2})=
        \begin{cases}
            \displaystyle (1-q)^{p}q^{-\frac{p(2m+p-1)}{2}}\sum_{l=0}^{p}q^{l^{2}}\qbinom{p}{l}^{2}\frac{[l]_{q}![m+p-l-1]_{q}!}{[m-1]_{q}!}\sum_{j=0}^{N-1}q^{-pj}\qbinom{j}{l}, & \textup{if }p_{1}=p_{2}=:p,\smallskip \\
            0, & \textup{otherwise.}
        \end{cases}
    \end{equation}
\end{theorem}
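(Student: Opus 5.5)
First I reduce to the unscaled model. By \eqref{eqn:def of Khat}, $\widehat{\mathcal K}^q_{m,N}(z,z)$ is $(1-q)^{-1}$ times a polynomial in $z/\sqrt{1-q}$ and its conjugate. The polynomial scaling identity \eqref{eqn:polynomial scaling} therefore gives
\[
\mathfrak m^q_{m,N}(p_1,p_2)=(1-q)^{(p_1+p_2)/2}\sum_{n=0}^{N-1}\big\langle z^{p_1}\overline z^{p_2}\mathcal B^q_{m-1}|n\rangle,\mathcal B^q_{m-1}|n\rangle\big\rangle_{L^2(\mathbb C,\mathsf m_q)} .
\]
Radial symmetry of $\mathsf m_q$ shows that these diagonal matrix elements vanish unless $p_1=p_2$. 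Indeed, by \eqref{eqn:q laguerre expansion}, $\mathcal B^q_r|n\rangle$ has angular frequency $n-r$, so $|\mathcal B^q_r|n\rangle|^2$ is radial. This settles the ``otherwise'' case. For $p_1=p_2=p$ we get a factor $(1-q)^p$, and it remains to compute $\langle |z|^{2p}\mathcal B^q_r|n\rangle,\mathcal B^q_r|n\rangle\rangle$ with $r=m-1$.

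For this computation I use Proposition \ref{prop:three_term_recurrence} as a pair of ladder operators on the orthonormal family $\{\mathcal B^q_r|n\rangle\}$. Rewrite \eqref{eqn:threeterm z} as $z=q^{-r}(A^+-B^-)$, where $A^+$ raises $n$ and $B^-$ lowers $r$, and rewrite \eqref{eqn:threeterm barz} as $\overline z=q^{-n}(-B^++A^-)$. Because each operator has a fixed effect on the indices $(r,n)$, the quantity $\langle z^p\mathcal B_r|n\rangle,z^p\mathcal B_r|n\rangle\rangle=\|z^p\mathcal B^q_r|n\rangle\|^2$ can be computed directly. Expanding $z^p\mathcal B^q_r|n\rangle$ in the basis gives
\[
z^p\mathcal B^q_r|n\rangle=\sum_{k=0}^{p\wedge r}c_k\,\mathcal B^q_{r-k}|n+p-k\rangle .
\]
Since $z$ and $\overline z$ are adjoint, $\langle|z|^{2p}f,f\rangle=\|z^pf\|^2$, and so the moment equals $\sum_k|c_k|^2$. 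Each coefficient $c_k$ is a sum over lattice paths with $p-k$ steps of type $A^+$ and $k$ steps of type $B^-$. The $q$-powers $q^{-r}$ and $q^{-(n-r)/2}$ depend on the current position, so the path sum should collapse by a $q$-binomial theorem for $q$-commuting variables. I expect
\[
c_k=(-1)^k q^{(\cdot)}\qbinom{p}{k}\sqrt{\frac{[r]_q!\,[n+p-k]_q!}{[r-k]_q!\,[n]_q!}}.
\]
To confirm this, I would check that the two operators $q^{-r}A^+$ and $q^{-r}B^-$ $q$-commute with a constant factor on the index shifts. Alternatively, I would argue by induction on $p$.

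The resulting formula is $\sum_k q^{(\cdot)}\qbinom{p}{k}^2\frac{[r]_q![n+p-k]_q!}{[r-k]_q![n]_q!}$. This does not yet match \eqref{eqn:spectral moment formula}, which has $[l]_q![m+p-l-1]_q!/[m-1]_q!$ and $\qbinom{j}{l}$ with $j=n$. The roles are dual, so I would instead expand with $\overline z^p$ first, or use $\|\overline z^p f\|^2$; this gives the same value because $|z|^{2p}=\overline z^pz^p=z^p\overline z^p$. With that choice the raising in $r$ produces $[r+p-l]_q!/[r]_q!$ and the lowering in $n$ produces $[n]_q!/[n-l]_q!=[l]_q!\qbinom{n}{l}$. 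This matches the target with $r=m-1$ and $j=n$, and the term vanishes automatically for $l>n$. Summing over $n=0,\dots,N-1$ then yields \eqref{eqn:spectral moment formula}.

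The main obstacle is the exact $q$-power bookkeeping. The factors $q^{n}$ and $q^{\pm(n-r)/2}$ in Proposition \ref{prop:three_term_recurrence} shift along each path. I must verify that they produce exactly $q^{l^2}q^{-p(2m+p-1)/2}q^{-pn}$ and are path-independent up to the $q$-binomial weight. For this I would first treat $p=1$ and $p=2$ explicitly to fix the exponent pattern. I would then prove the closed form for $c_l$ by induction on $p$, using the Pascal rule $\qbinom{p+1}{l}=\qbinom{p}{l}+q^{p+1-l}\qbinom{p}{l-1}$ (or its mirror) to absorb the two contributions at each step.
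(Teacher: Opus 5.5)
Your proposal is correct and, once you switch to expanding $\overline{z}^{p}\mathcal{B}^{q}_{m-1}|j\rangle$ with \eqref{eqn:threeterm barz}, it is the paper's argument: polynomial scaling and radial symmetry reduce the moment to $(1-q)^{p}\|\overline{z}^{p}\mathcal{B}^{q}_{m-1}|j\rangle\|^{2}_{L^{2}(\mathbb{C},\mathsf{m}_{q})}$, which is then evaluated by iterating the three-term relation and using orthonormality. The $q$-power bookkeeping you flag does close: with $s=r+n$, each step carries $q^{-\max(s_t,s_{t+1})/2}$, and summing over the positions of the $l$ lowering steps gives $q^{l(l-1)/2}\qbinom{p}{l}$ times a path-independent power. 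After multiplying by $q^{pj}$, this reproduces the paper's prefactor $q^{p(2j-2m-p+1)/4}q^{l^{2}/2}$.
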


\begin{proof}
    From the basis expansion of the diagonal of the projection kernel
    \begin{equation*}
        \widehat{\mathcal{K}}_{m,N}^{q}(z,z)
        =\frac{1}{1-q}\sum_{j=0}^{N-1}\Big|\mathcal{B}_{m-1}^{q}|j\rangle\Big(\frac{z}{\sqrt{1-q}},\frac{\overline{z}}{\sqrt{1-q}}\Big)\Big|^{2},
    \end{equation*}
    the proof reduces to evaluating integral
    \begin{equation*}
    \begin{split}
        &\frac{1}{1-q}\int_{\mathbb{C}}z^{p_{1}}\mathcal{B}_{m-1}^{q}|j\rangle\Big(\frac{z}{\sqrt{1-q}},\frac{\overline{z}}{\sqrt{1-q}}\Big)\overline{z^{p_{2}}\mathcal{B}_{m-1}^{q}|j\rangle\Big(\frac{z}{\sqrt{1-q}},\frac{\overline{z}}{\sqrt{1-q}}\Big)}\,d\widehat{\mathsf{m}}_{q}(z)\smallskip\\
        &=\delta_{p_{1},p_{2}}(1-q)^{p_{1}}\int_{\mathbb{C}}\Big|\overline{z}^{p_{1}}\mathcal{B}_{m-1}^{q}|j\rangle(z,\overline{z})\Big|^{2}\,d\mathsf{m}_{q}(z).
    \end{split}
    \end{equation*}
    Here, the radial symmetry of the underlying measure and \eqref{eqn:polynomial scaling} are applied.

    Repeatedly applying the three-term recurrence \eqref{eqn:threeterm barz}, we obtain
    \begin{equation*}
        q^{pj}\overline{z}^{p}\mathcal{B}_{m-1}^{q}|j\rangle(z,\overline{z})
        =\frac{1}{\sqrt{[m-1]_{q}!}}q^{\frac{p(2j-2m-p+1)}{4}}\sum_{l=0}^{p\wedge j}(-1)^{p-l}q^{\frac{l^{2}}{2}}\qbinom{p}{l}\sqrt{\frac{[j]_{q}![m+p-l-1]_{q}!}{[j-l]_{q}!}}\mathcal{B}_{m+p-l-1}^{q}|j-l\rangle(z,\overline{z}).
    \end{equation*}
    The orthonormality of $\{\mathcal{B}^{q}_{r}|n\rangle\}_{n,r\geq0}$ in $L^{2}(\mathbb{C}, \mathsf{m}_{q})$ gives
    \begin{equation*}
        \int_{\mathbb{C}}\Big|\overline{z}^{p}\mathcal{B}_{m-1}^{q}|j\rangle(z,\overline{z})\Big|^{2}\,d\mathsf{m}_{q}(z)
        =q^{-\frac{p(2m+p-1)}{2}-pj}\sum_{l=0}^{p\wedge j}q^{l^{2}}\qbinom{p}{l}^{2}\qbinom{j}{l}\frac{[l]_{q}![m+p-l-1]_{q}!}{[m-1]_{q}!}.
    \end{equation*}
    Taking the sum over $0\leq j\leq N-1$, we obtain the desired result.
\end{proof}

Next, we find the leading asymptotic behavior of the spectral moments in the large-$N$ limit.

\begin{lemma}\label{lem:rescaled moment limit}
    Let $q=e^{-\lambda/N}$ for a fixed $\lambda>0$ and fix $m\geq1$. As $N\to\infty$, we have
    \begin{equation}\label{eqn:large N expansion of spectral moments}
        \frac{1}{N}\mathfrak{m}_{m,N}^{q}(p,p)
        =\frac{1}{\lambda}\int_{0}^{1-e^{-\lambda}}\frac{u^{p}}{(1-u)^{p+1}}\,du+O(N^{-1}).
    \end{equation}
\end{lemma}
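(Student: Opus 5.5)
Starting from the exact formula \eqref{eqn:spectral moment formula} in Theorem~\ref{thm:mixed moment}, I would show that only the $l=0$ term contributes at order $N$, and that this term is a Riemann sum for the stated integral. Write $q=e^{-\lambda/N}$, so $1-q=\lambda/N+O(N^{-2})$ and $q^{-pj}=e^{\lambda pj/N}$.

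\emph{Step 1: size of each $l$-term.} For fixed $p,m$ and $0\le l\le p$, the prefactor $(1-q)^{p}q^{-p(2m+p-1)/2}q^{l^2}\qbinom{p}{l}^2[l]_q![m+p-l-1]_q!/[m-1]_q!$ equals $(1-q)^p$ times a quantity converging to the classical constant, with an $O(N^{-1})$ correction. Also $\qbinom{j}{l}\le C\,j^l$ uniformly in $j\le N$, since $[j]_q\le j$. Hence
\[
(1-q)^p\sum_{j=0}^{N-1}q^{-pj}\qbinom{j}{l}=O\big(N^{-p}\cdot N\cdot N^{l}\big)=O(N^{l-p+1}),
\]
because $q^{-pj}\le e^{\lambda p}$. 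So each term with $l<p$ is $O(N)$, which is too large to discard. I therefore need to recheck which term dominates. With the bound $[j]_q\sim(1-q^j)/(1-q)\sim N(1-e^{-\lambda j/N})/\lambda$, the $q$-binomial satisfies $\qbinom{j}{l}\approx N^l(1-e^{-\lambda j/N})^l/(\lambda^l l!)$, which is of order $N^l$ for $j\sim N$. So the term with $l=p$ is the one of order $N$; the terms with $l<p$ are $O(N^{l-p+1})=O(1)$, i.e.\ $O(N^{-1})$ after dividing by $N$. Thus only $l=p$ survives, and the claim is that the leading behaviour comes from $\sum_{j}q^{-pj}(1-q)^p\qbinom{j}{p}[p]_q!$.

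\emph{Step 2: the $l=p$ term as a Riemann sum.} For $l=p$ the constant is $q^{-p(2m+p-1)/2}q^{p^2}[p]_q![m-1]_q!/[m-1]_q!=[p]_q!\,(1+O(N^{-1}))$. Moreover $(1-q)^p[p]_q!\qbinom{j}{p}=(q^{j-p+1};q)_p$, which equals $(1-q^{j})^p(1+O(N^{-1}))$ uniformly in $j$. The error is $O(p/N)$ relative for $j\ge p$, and the finitely many small $j$ contribute $O(1)$ in absolute terms. Setting $t=j/N$ and $u=1-e^{-\lambda t}$, the main sum becomes
\[
\frac1N\sum_{j=0}^{N-1}e^{\lambda p j/N}\big(1-e^{-\lambda j/N}\big)^p=\int_0^1 e^{\lambda pt}(1-e^{-\lambda t})^p\,dt+O(N^{-1}),
\]
by the Riemann-sum error estimate for a smooth integrand on $[0,1]$. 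Substituting $u=1-e^{-\lambda t}$ gives $dt=du/(\lambda(1-u))$ and $e^{\lambda pt}=(1-u)^{-p}$, so the integral is $\frac1\lambda\int_0^{1-e^{-\lambda}}u^p(1-u)^{-p-1}\,du$, as claimed.

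\emph{Main obstacle.} The delicate point is making all error terms uniform in $j$, both in the ratio $(q^{j-p+1};q)_p/(1-q^j)^p$ near small $j$ and in the bounds on the $l<p$ terms. I would handle this with the elementary inequalities $1-q^{k}\le k(1-q)$ and $|1-q^{j-i}-(1-q^j)|\le i(1-q)$, which give absolute errors of $O(N^{-1})$ per summand after normalization. Summing these over $j$ and dividing by $N$ yields the $O(N^{-1})$ remainder.
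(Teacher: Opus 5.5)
Your proposal is correct and follows essentially the paper's route. You start from the exact formula of Theorem~\ref{thm:mixed moment}, use the order count to show that after dividing by $N$ the $l<p$ terms are $O(N^{-1})$, read the $l=p$ term as a Riemann sum for $\int_0^1 e^{\lambda pt}(1-e^{-\lambda t})^p\,dt$, and substitute $u=1-e^{-\lambda t}$. Your exact identity $(1-q)^p[p]_q!\qbinom{j}{p}=(q^{j-p+1};q)_p$, combined with the absolute bound $q^{j-i}(1-q^{i})\le i(1-q)$, is a clean way to justify the uniform expansion that the paper simply asserts.

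Please delete the momentary claim in Step 1 that the $l<p$ terms are $O(N)$. Your own bound gives $O(N^{l-p+1})=O(1)$ for them, which is what you then correctly conclude.
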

\begin{proof}
    Using \eqref{eqn:spectral moment formula}, we write
    \begin{equation*}
        \frac{1}{N}\mathfrak{m}_{m,N}^{q}(p,p)
        =\sum_{l=0}^{p}\mathsf{M}_{1}(m,p,l)\mathsf{M}_{2}(m,p,l)
    \end{equation*}
    where
    \begin{equation*}
    \begin{split}
        \mathsf{M}_{1}(m,p,l)
        &=\Big(\frac{N}{\lambda}\Big)^{p}(1-q)^{p}q^{-\frac{p(2m+p-1)}{2}+l^{2}}\qbinom{p}{l}^{2}\frac{[l]_{q}![m+p-l-1]_{q}!}{[m-1]_{q}!},\smallskip\\
        \mathsf{M}_{2}(m,p,l)
        &=\frac{1}{N}\Big(\frac{\lambda}{N}\Big)^{p}\sum_{j=0}^{N-1}q^{-pj}\qbinom{j}{l}.
    \end{split}
    \end{equation*}
    Since $1-q= \frac{\lambda}{N} + O(N^{-2})$, it is straightforward that
    \begin{equation}\label{eqn:mathsfM1 asymp}
        \mathsf{M}_{1}(m,p,l)
        =\binom{p}{l}^{2}\frac{l!(m+p-l-1)!}{(m-1)!}\big(1+O(N^{-1})\big).
    \end{equation}
    To analyze $\mathsf{M}_{2}(m,p,l)$, we write
    \begin{equation*}
        \mathsf{M}_{2}(m,p,l)
        =\frac{1}{N}\Big(\frac{\lambda}{N}\Big)^{p}\sum_{j=0}^{N-1}f_{l}(j),\qquad
        f_{l}(j):=q^{-pj}\qbinom{j}{l}.
    \end{equation*}
    Letting $j=Nt$, we obtain
    \begin{equation*}
        \Big(\frac{\lambda}{N}\Big)^{l}f_{l}(Nt)
        =\Big(\frac{\lambda}{N}\Big)^{l}q^{-pNt}\prod_{k=1}^{l}\frac{1-q^{Nt-l+k}}{1-q^{k}}
        =\frac{e^{\lambda pt}(1-e^{-\lambda t})^{l}}{l!}+O(N^{-1})
    \end{equation*}
    uniformly in $t\in[0,1]$. Thus, only the case $l=p$ contributes at leading order, which gives the asymptotic
    \begin{equation}\label{eqn:mathsfM2 asymp}
        \mathsf{M}_{2}(m,p,p)
        =\frac{1}{Np!}\sum_{j=0}^{N-1}e^{\lambda pj/N}(1-e^{-\lambda j/N})^{p}+O(N^{-1})
        =\frac{1}{p!}\int_{0}^{1}e^{\lambda pt}(1-e^{-\lambda t})^{p}\,dt+O(N^{-1}).
    \end{equation}
    Combining \eqref{eqn:mathsfM1 asymp} and \eqref{eqn:mathsfM2 asymp} after the change of variables $u=1-e^{-\lambda t}$, we complete the proof.
\end{proof}

\subsection{Convergence of empirical measures}

We first identify a probability measure on the complex plane whose mixed moments match the limits obtained in \eqref{eqn:large N expansion of spectral moments}, hence the limiting measure of the averaged empirical measure.

For the pure ensemble, define the radial pushforward of the averaged normalized empirical measure
\begin{equation}\label{eqn:nuN}
    d\nu_{m,N}^{q}(t)
    :=\frac{1}{N}\widehat{\mathcal{K}}_{m,N}^{q}(\sqrt{t},\sqrt{t})\widehat{e}_{q}(-t)\,d_{q}t,\qquad
    t\in\mathbb{R}_{\geq0},
\end{equation}
so that
\begin{equation*}
    \frac{1}{N}\widehat{\mathcal{K}}_{m,N}^{q}(z,z)\,d\widehat{\mathsf{m}}_{q}(z)
    =\frac{d\theta}{2\pi}\,d\nu_{m,N}^{q}(t),\qquad
    z=t^{\frac{1}{2}}e^{i\theta}.
\end{equation*}
Accordingly, we define the corresponding pushforward of the $q$-deformed circular law \eqref{eqn:limiting law}
\begin{equation}\label{eqn:nu}
    d\nu_{\lambda}(t)
    =\frac{1}{\lambda(1+t)}\mathbbm{1}_{[0,e^{\lambda}-1]}(t)\,dt.
\end{equation}

\begin{lemma}\label{lem:nu convergence}
    Let $q=e^{-\lambda/N}$ for a fixed $\lambda>0$ and fix $m\geq1$. Then we have a weak convergence
    \begin{equation}
        \nu_{m,N}^{q}
        \to\nu_{\lambda}
    \end{equation}
    as $N$ tends to infinity.
\end{lemma}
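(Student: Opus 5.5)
The plan is to prove the weak convergence by the method of moments, using Lemma \ref{lem:rescaled moment limit}. Note that $\nu^q_{m,N}$ is a probability measure on $[0,\infty)$, and that its $p$-th moment is
\begin{equation*}
    \int_0^\infty t^{p}\,d\nu^q_{m,N}(t)=\frac{1}{N}\int_{\mathbb{C}}|z|^{2p}\widehat{\mathcal{K}}^q_{m,N}(z,z)\,d\widehat{\mathsf{m}}_q(z)=\frac1N\mathfrak{m}^q_{m,N}(p,p).
\end{equation*}
Lemma \ref{lem:rescaled moment limit} shows this tends to $\frac1\lambda\int_0^{1-e^{-\lambda}}u^p(1-u)^{-p-1}\,du$. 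The substitution $t=u/(1-u)$ gives $u=t/(1+t)$ and $du=dt/(1+t)^2$, so $u^p(1-u)^{-p-1}\,du=t^p(1+t)^{p+1}(1+t)^{-p}(1+t)^{-2}\,dt=t^p(1+t)^{-1}\,dt$. The upper limit $u=1-e^{-\lambda}$ corresponds to $t=e^{\lambda}-1$. Hence the limit equals $\int t^p\,d\nu_\lambda(t)$ for every $p\ge0$. For $p=0$ this gives total mass $1$, consistent with $\nu_\lambda$ being a probability measure.

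The next step is to pass from convergence of moments to weak convergence. The limit $\nu_\lambda$ has compact support $[0,e^\lambda-1]$, so it is determined by its moments. The $p=1$ moments are bounded uniformly in $N$, which gives tightness of $\{\nu^q_{m,N}\}_N$. Any subsequential weak limit $\nu$ then needs to have the same moments as $\nu_\lambda$. Fatou's lemma only yields $\int t^p\,d\nu\le\int t^p\,d\nu_\lambda$, so we need uniform integrability. This follows from the uniform bound on the $(p+1)$-st moment, which gives $\int t^p\mathbbm{1}_{t>R}\,d\nu^q_{m,N}\le R^{-1}\sup_N\int t^{p+1}\,d\nu^q_{m,N}$. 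Consequently $\nu$ has exactly the moments of $\nu_\lambda$. Since $\nu_\lambda$ is compactly supported, it is moment-determinate (Carleman's condition holds trivially), so $\nu=\nu_\lambda$. This shows the full sequence converges weakly.

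The only delicate point is justifying moment convergence despite the noncompact, radially discrete supports. The uniform-integrability argument above handles it, and it uses only Lemma \ref{lem:rescaled moment limit} for all $p$. If desired, one could note for orientation that the measures are in fact essentially confined near $[0,e^\lambda-1]$ by the exponential decay of $\widehat e_q(-t)$ against the polynomial kernel, but this is not needed.
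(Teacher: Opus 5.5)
Your proposal is correct and follows the paper's proof. You identify the moments of $\nu^q_{m,N}$ with $\frac1N\mathfrak{m}^q_{m,N}(p,p)$, apply Lemma~\ref{lem:rescaled moment limit}, change variables via $t=u/(1-u)$, and conclude from the moment determinacy of the compactly supported limit $\nu_\lambda$; the tightness and uniform-integrability argument you spell out is the standard justification that the paper leaves implicit when it says determinacy gives the convergence.
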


\begin{proof}
    From the definition of $\nu_{m,N}^{q}$, Theorem \ref{thm:mixed moment} and Lemma \ref{lem:rescaled moment limit}, it follows that
    \begin{equation*}
        \int_{0}^{\infty}t^{p}\,d\nu_{m,N}^{q}(t)
        =\frac{1}{N}\mathfrak{m}_{m,N}^{q}(p,p)
        =\frac{1}{\lambda}\int_{0}^{1-e^{-\lambda}}\frac{u^{p}}{(1-u)^{p+1}}\,du+O(N^{-1}).
    \end{equation*}
    for each $p\in\mathbb{Z}_{\geq0}$. Applying the change of variables $t=u/(1-u)$, we obtain
    \begin{equation*}
        \int_{0}^{\infty}t^{p}\,d\nu_{m,N}^{q}(t)
        =\int_{0}^{e^{\lambda}-1}t^{p}\frac{1}{\lambda(1+t)}\,dt+O(N^{-1}),
    \end{equation*}
    where the leading term of the right-hand side is the $p$-th moment of $\nu_{\lambda}$. Since $\nu_{\lambda}$ is compactly supported, its moment problem is determinate, which gives the desired convergence.
\end{proof}

Since Lemma \ref{lem:nu convergence} implies the weak convergence of the averaged normalized empirical measure of the pure ensembles to the $q$-deformed circular law, it remains to verify the convergence of the random empirical measure in probability.

\begin{proof}[Proof of Theorem \ref{thm:double scale limiting law}]
    Fix a bounded real-valued continuous function $f$ on the complex plane. The variance formula for the projection DPP gives
    \begin{equation*}
        \Var\Big[\int_{\mathbb{C}}f\,d\widehat{\mu}_{m,N}^{q}\Big]
        =\frac{1}{2N^{2}}\iint_{\mathbb{C}^{2}}|f(z)-f(w)|^{2}|\widehat{\mathcal{K}}_{m,N}^{q}(z,w)|^{2}\,d\widehat{\mathsf{m}}_{q}^{\otimes2}(z,w).
    \end{equation*}
    Using $|f(z)-f(w)|\leq2\|f\|_{\infty}$ and the reproducing property of the projection kernel, we obtain
    \begin{equation*}
        \Var\Big[\int_{\mathbb{C}}f\,d\widehat{\mu}_{m,N}^{q}\Big]
        \leq\frac{2\|f\|_{\infty}^{2}}{N^{2}}\iint_{\mathbb{C}^{2}}|\widehat{\mathcal{K}}_{m,N}^{q}(z,w)|^{2}\,d\widehat{\mathsf{m}}_{q}^{\otimes2}(z,w)
        =\frac{2\|f\|_{\infty}^{2}}{N}.
    \end{equation*}
    Since $\widehat{\rho}_{\lambda}\,dA$ is the weak limit of the average of $\widehat{\mu}_{m,N}^{q}$, applying Chebyshev inequality, we have the convergence
    \begin{equation*}
        \int_{\mathbb{C}}f\,d\widehat{\mu}_{m,N}^{q}
        \to\int_{\mathbb{C}}f\rho_{\lambda}\,dA
    \end{equation*}
    in probability as $N\to\infty$. The proof for the full ensembles follows by combining the result for the first $m$ pure ensembles.
\end{proof}

%%%%%%%%%%%%%%%%%%%%%%%%%%%%%%%%%%%%%%%%%%%%%%%%%%%%%%%%%%%%%%%%%%%%%%%%%%%%%%%%%%%%%%%%%%%%%%%%%%%%%%%%%%%%%%%%%%%%%%%%%%%%%%%%%%%%%%%%%%%%%%%%%%%%%%%%%%%%%%%%%%%%
\section{Fluctuations of polynomial statistics in the {lowest} Landau level}
\label{sec:CLT lowest}

The remaining sections are devoted to the proof of the central limit theorem for linear statistics in the double-scaling regime \eqref{eqn:def of q scaling}. In this section, we establish Theorem \ref{thm:CLT} for the analytic case $m=1$ and polynomial test functions following the argument of Rider and Vir\'ag \cite{RV07}. This polynomial result will serve as the analytic input for the reduction from higher Landau levels developed in Section \ref{sec:Raising}. The passage to compactly supported smooth test functions is postponed to Section \ref{sec:CLT} where it is carried out simultaneously for the pure and full ensembles.

In this section, we restrict our attention to the rescaled $q$-deformed Ginibre ensemble, i.e., the determinantal point process with correlation kernel $\widehat{\mathcal{K}}^q_N:=\widehat{\mathcal{K}}^q_{1, N}$. We define 
\begin{equation}\label{eqn:varphi}
    \varphi_{j}^{q}(z)
    :=\frac{1}{\sqrt{1-q}}\mathcal{B}^{q}_{0}|j\rangle\Big(\frac{z}{\sqrt{1-q}},\frac{\overline{z}}{\sqrt{1-q}}\Big)
    =q^{\frac{j(j+1)}{4}}\frac{z^{j}}{\sqrt{(1-q)(q;q)_{j}}}.
\end{equation}
Thus, we have 
\begin{equation}\label{eqn:analytic kernel}
    \widehat{\mathcal{K}}^{q}_{N}(z,w)
    =\sum_{j=0}^{N-1}\varphi_{j}^{q}(z)\overline{\varphi_{j}^{q}(w)}.
\end{equation}

The goal of this section is the following partial case of the main theorem.

\begin{theorem}\label{thm:analytic CLT}
    Let $g$ be a real-valued polynomial. Then
    \begin{equation*}
        \fluct_{1,N}(g)
        \longrightarrow\mathcal{N}\Big(0,\,\|g\|_{H^{1}(D_{\lambda}^{\circ})}^{2}+\frac{1}{2}\|g\|_{H^{1/2}(\partial D_{\lambda})}^{2}\Big)
    \end{equation*}
    in distribution as $N\to\infty$.
\end{theorem}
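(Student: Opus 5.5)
We follow the cumulant method of Rider and Vir\'ag \cite{RV07}. Since $\fluct_{1,N}$ is linear in $g$ and every real polynomial is a real combination of $\operatorname{Re}(z^{a}\overline{z}^{b})$ and $\operatorname{Im}(z^{a}\overline{z}^{b})$, it suffices, by the Cram\'er--Wold device, to treat a fixed real polynomial $g$. We show that its cumulants of order $k\ge 3$ tend to $0$ and that its variance converges to the stated limit. For the projection kernel \eqref{eqn:analytic kernel}, the $k$-th cumulant has the standard cyclic expansion
\begin{equation*}
    \kappa_{k}(g)=\sum_{j=1}^{k}\frac{(-1)^{j-1}}{j}\sum_{\substack{k_{1}+\cdots+k_{j}=k\\ k_{i}\geq1}}\frac{k!}{k_{1}!\cdots k_{j}!}\int g^{k_{1}}(z_{1})\widehat{\mathcal{K}}^{q}_{N}(z_{1},z_{2})g^{k_{2}}(z_{2})\cdots g^{k_{j}}(z_{j})\widehat{\mathcal{K}}^{q}_{N}(z_{j},z_{1})\,d\widehat{\mathsf{m}}_{q}^{\otimes j}.
\end{equation*}
Expanding each $g^{k_i}$ into monomials reduces each cyclic integral to a trace $\operatorname{Tr}(P_{N}M_{1}P_{N}M_{2}\cdots P_{N}M_{j})$. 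Here $P_{N}$ is the projection onto $\operatorname{span}\{\varphi^{q}_{l}\}_{l<N}$ and the $M_i$ are multiplication operators by monomials $z^{a}\overline{z}^{b}$.

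On the orthonormal system $\{\varphi^{q}_{l}\}$, multiplication by $z^{a}\overline{z}^{b}$ acts exactly, with no projection error onto higher Landau levels. Its matrix elements are $\langle z^{a}\overline{z}^{b}\varphi^{q}_{l},\varphi^{q}_{l'}\rangle=\delta_{l',l+a-b}\,c_{a,b}(l)$, where $c_{a,b}(l)$ is computed explicitly from \eqref{eqn:scaled mixed moments}:
\begin{equation*}
    c_{a,b}(l)=(1-q)^{\frac{a+b}{2}}q^{-\frac{(l+a)(l+a+1)}{2}+\frac{l(l+1)}{4}+\frac{(l+a-b)(l+a-b+1)}{4}}\frac{(q;q)_{l+a}}{\sqrt{(q;q)_{l}(q;q)_{l+a-b}}}.
\end{equation*}
With $q=e^{-\lambda/N}$ and $l=Nt$, this gives $c_{a,b}(l)=F_{a,b}(t)+O(N^{-1})$, where $F_{a,b}(t)=\big(e^{\lambda t}-1\big)^{(a+b)/2}$. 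In particular $F_{a,b}(t)$ is just $\rho(t)^{(a+b)/2}$, with $\rho(t):=e^{\lambda t}-1$ the squared radius at which the $l$-th particle lives, matching $|z|^2\le e^{\lambda}-1$ on $D_\lambda$. The first correction in $1/N$ is also explicit, as a derivative in $t$.

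Each trace becomes a sum over $l\in[0,N-1]$ of products of shifted coefficients, constrained by the requirement that all intermediate indices $l+\sum_{i\le s}(a_i-b_i)$ stay in $[0,N-1]$. The argument then splits into two regions.
\begin{itemize}
    \item In the \emph{bulk}, away from the constraint, we Taylor expand the $c$'s in the index shifts, as in \cite{RV07}. The order-$N$ terms cancel in the combinatorial sum for $k\ge2$, by the standard identity that a cumulant of a sum of commuting variables vanishes. The order-one terms survive only for $k=2$, where they give a Riemann sum for a bulk integral $\int_{0}^{1}(\cdots)\,dt$. Rewritten via $|z|^{2}=e^{\lambda t}-1$ and $dA$, this integral becomes $\int_{D_\lambda}|\overline{\partial}g|^{2}\,dA$.
    \item At the \emph{edge} $l\approx N$, the boundary truncation contributes order-one terms only through finitely many indices. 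These produce $\frac12\sum_k|k||\widehat g_\lambda(k)|^2$ at radius $\sqrt{e^{\lambda}-1}$, because the coefficients at $t=1$ are the Fourier modes of $g$ restricted to $\partial D_\lambda$.
\end{itemize}
For $k\ge3$, both the bulk and the edge contributions vanish in the limit. We verify this monomial-by-monomial using multilinearity in $g$, reducing to the same combinatorial cancellation lemma as in \cite{RV07}.

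The main obstacle is to make this cancellation rigorous, with uniform error bounds, for the $q$-deformed coefficients. In \cite{RV07} the coefficients are ratios of factorials, whose expansions are explicit. Here we need uniform control, for bounded shifts and $l\le N$, of $q$-Pochhammer ratios like $(q;q)_{l+a}/(q;q)_{l}$. This ratio equals $\prod_{i=1}^{a}(1-q^{l+i})$, so it is a finite product and can be expanded cleanly to second order in $1/N$. Uniformity breaks down only near $l=0$, where $1-q^{l}\sim\lambda l/N$. There, however, only $O(1)$ indices are affected and their contribution is $O(N^{-1/2})$ or smaller after the factor $(1-q)^{(a+b)/2}$, so it is negligible.

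Finally, we identify the variance. We first compute it for the monomials $z^{a}\overline{z}^{b}$ and then use the polarization identity. We check that the result matches the stated norm by comparing with the classical computation after the substitution $|z|^{2}=e^{\lambda t}-1$, whose Jacobian is exactly $\lambda(1+|z|^{2})$. This is consistent with the density in \eqref{eqn:limiting law}.
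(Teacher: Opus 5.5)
Your outline follows the paper's proof. The ingredients are the same:
\begin{itemize}
\item the Rider--Vir\'ag cyclic expansion of the cumulants;
\item exact matrix elements of monomial multiplication on $\{\varphi^{q}_{l}\}$, obtained from \eqref{eqn:scaled mixed moments};
\item a large-$N$ expansion in which the order-$N$ term is removed by $\sum_{j=1}^{k}\frac{(-1)^{j-1}}{j}\#\{\sigma\colon[k]\twoheadrightarrow[j]\}=0$;
\item \cite[Lemmas 5.1--5.5]{RV07} for the remaining combinatorics.
\end{itemize}
Your bulk/edge split also matches the paper's order-one constant $(e^{\lambda}-1)^{s}\big(A_{j}/s-\eta_{\max}-\tfrac12\big)$. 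For $k=2$, its $A_{j}$-part produces exactly the $H^{1}$ term and its $\eta_{\max}$-part produces the $H^{1/2}$ term.

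Two details need correcting.

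First, your formula for $c_{a,b}(l)$ carries a spurious factor $(1-q)^{(a+b)/2}$. Since $(q;q)_{n}=(1-q)^{n}[n]_{q}!$ already contains that factor, the correct value is $c_{a,b}(l)=\kappa_{l}\kappa_{l+a-b}/\kappa_{l+a}^{2}$, with $\kappa_{n}=q^{n(n+1)/4}/\sqrt{(1-q)(q;q)_{n}}$. For instance, $c_{1,1}(l)=q^{-(l+1)}(1-q^{l+1})$. As written, your expression tends to $0$, which contradicts your own limit $F_{a,b}$.

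Second, the non-uniformity you single out as the main obstacle is misdescribed, and in fact it disappears. Take an individual factor such as $c_{1,0}(l)=q^{-(l+1)/2}\sqrt{1-q^{l+1}}$. Its relative error against $F_{1,0}(l/N)$ is about $1/(2l)$ on the whole range $1\leq l\ll N$, not just on $O(1)$ indices; at fixed $t$, the $1/N$ coefficient blows up like $1/(2t)$. However, around a closed cycle each intermediate $\kappa$ occurs twice, so all the square roots cancel. The summand is then exactly $\prod_{i}\prod_{r=1}^{\alpha_{i}}\big(q^{-(n+\eta_{i}+r)}-1\big)$, where $\eta_{i}$ are the partial sums of $\beta-\alpha$. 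This expression expands uniformly for $0\leq n\leq N$ and goes straight into Euler--Maclaurin, as in Lemma~\ref{lem:large_N_expansion}. So there is no need to control singular corrections of individual factors near $l=0$.
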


Let us recall that for an arbitrary real-valued random variable $X$ its cumulant $C_k(X)$ of order $k\in\mathbb{Z}_{>0}$ is defined by the coefficients in the formal series
\begin{equation*}
    \log \mathbb{E}[e^{itX}]
    =\sum_{k=1}^\infty \frac{(it)^{k}}{k!}C_k(X).
\end{equation*}
For a real-valued test function $g$ on $\mathbb{C}$, the proof of Theorem \ref{thm:analytic CLT} boils down to verifying that the cumulants of the random variable $\fluct_{1,N}(g)$ satisfy
\begin{equation}\label{eqn:analytic cumulant limit}
    \lim_{N\to\infty}C_{k}(\fluct_{1,N}(g))=
    \begin{cases}
        \displaystyle\|g\|_{H^{1}(D_{\lambda}^{\circ})}^{2}+\frac{1}{2}\|g\|_{H^{1/2}(\partial D_{\lambda})}^{2}, & \textup{if }k=2,\smallskip \\
        0, & \textup{if }k\geq3.
    \end{cases}
\end{equation}
To proceed, we make use of a representation of the cumulants $C_{k}(\fluct_{1,N}(g))$ introduced in \cite{RV07}. For a surjective map $\sigma:[k]\twoheadrightarrow[j]$ and a $k$-tuple $f=(f_{1},\cdots,f_{k})$ of test functions, define $j$-tuple $\sigma f=((\sigma f)_1, \dots, (\sigma f)_j)$ by
\begin{equation*}
    (\sigma f)_{l}(z)
    =\prod_{i:\sigma(i)=l}f_{i}(z).
\end{equation*}

We further define 
\begin{equation*}
    \mathscr{C}_k(f_{1},\cdots,f_{k})
    :=\sum_{j=1}^{k}\frac{(-1)^{j-1}}{j} \sum_{\sigma \colon [k]\twoheadrightarrow [j]}\Phi_{j} (\sigma f),
\end{equation*}
where 
\begin{equation*}
    \Phi_{j}(f_{1},\cdots,f_{j})
    :=\int_{\mathbb{C}^{j}}f_{1}(z_{1})\cdots f_{j}(z_{j})\widehat{\mathcal{K}}^{q}_{N}(z_{1},z_{2})\cdots \widehat{\mathcal{K}}^{q}_{N}(z_{j},z_{1})d\widehat{\mathsf{m}}_{q}^{\otimes j}(z_{1},\cdots, z_{j}),
\end{equation*}
This yields a formula for the $k$-th cumulant of $\fluct_{1,N}(g)$:
\begin{equation*}
    C_{k}(\fluct_{1,N}(g))
    =\mathscr{C}_{k}(g,\cdots,g),\qquad
    k\geq2;
\end{equation*}
see \cite[Eq. (4.4)]{RV07}. Thus, it is essential to investigate the asymptotic behavior of $\Phi_{j}(\sigma f)$ to establish \eqref{eqn:analytic cumulant limit}. 

Since $\mathscr{C}_{k}$ is multilinear, we begin with the case of monomials. For multi-indices $\alpha=(\alpha_{1},\cdots,\alpha_{j})$ and $\beta=(\beta_{1},\cdots,\beta_{j})$ of nonnegative integers, we denote
\begin{equation*}
    |\alpha|
    =\sum_{l=1}^{j}\alpha_{l},\qquad
    |\beta|=\sum_{l=1}^{j}\beta_{l}
\end{equation*}
and
\begin{equation*}
    z^{\alpha}\overline{z}^{\beta}
    :=(z^{\alpha_{1}}\overline{z}^{\beta_{1}},\cdots,z^{\alpha_{j}}\overline{z}^{\beta_{j}}).
\end{equation*}
For each $1\leq l\leq j$, we further define
\begin{equation*}
    \eta_{l}
    :=(\beta_{1}-\alpha_{1})+\cdots+(\beta_{l}-\alpha_{l}), \qquad
    \eta_{\min}:=\min_{1\leq l\leq j}\eta_{l}, \qquad
    \eta_{\max}:=\max_{1\leq l\leq j}\eta_{l}.
\end{equation*}

We now obtain an explicit formula for $\Phi_{j}$ with monomial arguments.
\begin{lemma}
    For every $j\geq 1$, we have
    \begin{equation}\label{eqn:Phi(zzbar)}
        \Phi_{j}(z^{\alpha}\overline{z}^{\beta})=
        \begin{cases}
            \displaystyle\sum_{n=-\eta_{\min}}^{N-1-\eta_{\max}}\prod_{l=1}^{j}q^{-\alpha_{l}(n+\eta_{l})-\frac{\alpha_{l}(\alpha_{l}+1)}{2}}(q^{n+\eta_{l}+1};q)_{\alpha_{l}}, & \textup{if }|\alpha|=|\beta|,\smallskip \\
            0, & \textup{otherwise.}
        \end{cases}
    \end{equation}
\end{lemma}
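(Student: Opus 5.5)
Substitute the basis expansion \eqref{eqn:analytic kernel} into $\Phi_j$ and compute each integral with the scaled moment formula \eqref{eqn:scaled mixed moments}. Writing $\widehat{\mathcal{K}}^q_N(z_l,z_{l+1})=\sum_{n_l}\varphi^q_{n_l}(z_l)\overline{\varphi^q_{n_l}(z_{l+1})}$ (indices cyclic, $z_{j+1}=z_1$), the integrand factorizes over $l$. The variable $z_l$ appears in the factor
\begin{equation*}
    z_l^{\alpha_l}\overline{z}_l^{\beta_l}\varphi^q_{n_l}(z_l)\overline{\varphi^q_{n_{l-1}}(z_l)},
\end{equation*}
which contains $z_l^{\alpha_l+n_l}\overline{z}_l^{\beta_l+n_{l-1}}$. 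By rotational invariance, the integral vanishes unless $n_l=n_{l-1}+\beta_l-\alpha_l$ for every $l$.

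Going once around the cycle, these constraints are consistent only if $|\alpha|=|\beta|$; otherwise every term vanishes, which gives the second case. When $|\alpha|=|\beta|$, set $n:=n_0=n_j$. Then $n_l=n+\eta_l$, and the requirement $0\le n_l\le N-1$ for all $l$ is exactly $-\eta_{\min}\le n\le N-1-\eta_{\max}$, the summation range in \eqref{eqn:Phi(zzbar)}. Since $\eta_j=0$, the constraint on $n$ itself is automatically included.

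For a fixed admissible $n$, the $l$-th integral is
\begin{equation*}
    c_{n_l}c_{n_{l-1}}\int |z|^{2(n_l+\alpha_l)}\,d\widehat{\mathsf{m}}_q,\qquad c_k:=\frac{q^{k(k+1)/4}}{\sqrt{(1-q)(q;q)_k}}.
\end{equation*}
Here we use $n_l+\alpha_l=n_{l-1}+\beta_l$. By \eqref{eqn:scaled mixed moments}, with $M:=n_l+\alpha_l$, this moment equals $q^{-M(M+1)/2}(q;q)_M/(1-q)$.

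Since the product is over a cycle, each $c_{n_l}$ appears exactly twice. We can therefore regroup, assigning $c_{n_l}^2=q^{n_l(n_l+1)/2}/\bigl((1-q)(q;q)_{n_l}\bigr)$ to the $l$-th factor. The $l$-th factor then becomes
\begin{equation*}
    q^{\frac{n_l(n_l+1)}{2}-\frac{(n_l+\alpha_l)(n_l+\alpha_l+1)}{2}}\,\frac{(q;q)_{n_l+\alpha_l}}{(q;q)_{n_l}}
    =q^{-\alpha_l n_l-\frac{\alpha_l(\alpha_l+1)}{2}}\,(q^{n_l+1};q)_{\alpha_l}.
\end{equation*}
The factors $(1-q)$ cancel, and the exponent simplifies because $(n+\alpha)(n+\alpha+1)-n(n+1)=2\alpha n+\alpha(\alpha+1)$. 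Substituting $n_l=n+\eta_l$ and summing over $n$ yields \eqref{eqn:Phi(zzbar)}.

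The only real obstacle is bookkeeping. We must fix a consistent orientation of the cyclic index, so that $n_{l-1}$ rather than $n_{l+1}$ pairs with $z_l$ and the sign of $\eta_l$ comes out as stated. We must also check that redistributing the $c$'s around the cycle is legitimate. The finite sums make all interchanges of sum and integral trivial.
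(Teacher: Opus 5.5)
Your proof follows the paper's. You expand each kernel in the basis $\varphi^q_n$ and use rotational invariance to force $n_l=n_{l-1}+\beta_l-\alpha_l$. You then parametrize by $n=n_j$ with $n_l=n+\eta_l$ and collapse the cyclic product. Your regrouping $\prod_l c_{n_l}c_{n_{l-1}}=\prod_l c_{n_l}^2$ against the moments is the paper's $\prod_l\kappa_{n+\eta_l}^2/\kappa_{n+\eta_l+\alpha_l}^2$, since orthonormality gives $\kappa_M^{-2}=\int_{\mathbb{C}}|z|^{2M}\,d\widehat{\mathsf{m}}_q$. The paper phrases this via $z^{\alpha}\varphi^q_n=(\kappa_n/\kappa_{n+\alpha})\varphi^q_{n+\alpha}$.

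There is one slip to fix. By \eqref{eqn:scaled mixed moments},
\begin{equation*}
\int_{\mathbb{C}}|z|^{2M}\,d\widehat{\mathsf{m}}_q(z)=(1-q)^{M+1}q^{-\frac{M(M+1)}{2}}[M]_q!=(1-q)\,q^{-\frac{M(M+1)}{2}}(q;q)_M,
\end{equation*}
not $q^{-M(M+1)/2}(q;q)_M/(1-q)$. With the value you wrote, each factor would carry a spurious $(1-q)^{-2}$, and $\varphi^q_n$ would not even be normalized. With the correct value the $(1-q)$ factors cancel as you claim, and your displayed $l$-th factor is exactly right.
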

\begin{proof}
    By definition of the functional $\Phi_{j}$ and the analytic kernel \eqref{eqn:analytic kernel}, we have 
    \begin{align*}
        \Phi_{j}(z^{\alpha}\overline{z}^{\beta})= \sum_{n_1, \dots, n_j=0}^{N-1}\prod_{l=1}^j \int_\mathbb{C} z^{\alpha_l} \varphi_{n_l}^q(z) \overline{z^{\beta_l}\varphi^q_{n_{l-1}}(z)}d\widehat{\mathsf{m}}_q(z),
    \end{align*}
    where $n_{0}:=n_{j}$. Moreover, we have $z^{\alpha}\varphi_{n}^{q}(z)=\frac{\kappa_{n}}{\kappa_{n+\alpha}}\varphi_{n+\alpha}^{q}(z)$ for every $n$, $\alpha\geq0$, where $\kappa_{n}:=q^{\frac{n(n+1)}{4}}\frac{1}{\sqrt{(1-q)(q;q)_{n}}}$. Since $\{\varphi_{n}^{q}\}_{n\geq0}$ forms an orthonormal set in $L^{2}(\mathbb{C},\widehat{\mathsf{m}}_{q})$, each summand on the right-hand side is nonzero if and only if $\alpha_{l}+n_{l}=\beta_{l}+n_{l-1}$ for every $1\leq l\leq j$. This condition is equivalent to
    \begin{equation}\label{eqn:eta condition}
        n_{l}=n_{j}+\eta_{l}
    \end{equation}
    for each $1\leq l\leq j$, and this identity forces $\eta_{j}=0$, that is, $|\alpha|=|\beta|$.

    On the other hand, for multi-indices $\alpha$, $\beta$ with $|\alpha|=|\beta|$, a $j$-tuple $(n_{1},\cdots,n_{j})$ satisfying \eqref{eqn:eta condition} is determined only by $n_{j}$. Moreover, in order that $0\leq n_{l}\leq N-1$ holds for all $1\leq l\leq j$, we have $-\eta_{\min}\leq n_{j}\leq N-1-\eta_{\max}$. For a fixed $n_{j}=n\in\{-\eta_{\min},\cdots,N-1-\eta_{\max}\}$, we have
    \begin{align*}
        \prod_{l=1}^{j}\int_{\mathbb{C}} z^{\alpha_{l}}\varphi_{n_{l}}^{q}(z) \overline{z^{\beta_{l}}\varphi_{n_{l-1}}^{q}(z)}d\widehat{\mathsf{m}}_q(z)
        =\prod_{l=1}^{j}\frac{\kappa_{n_{l}}}{\kappa_{n_{l}+\alpha_{l}}}\frac{\kappa_{n_{l-1}}}{\kappa_{n_{l-1}+\beta_{l}}}
        =\prod_{l=1}^{j}\frac{\kappa_{n+\eta_{l}}^{2}}{\kappa_{n+\eta_{l}+\alpha_{l}}^{2}}.
    \end{align*}
    In the last equality, we used \eqref{eqn:eta condition} and $\alpha_{l}+n_{l}=\beta_{l}+n_{l-1}$. Substituting the definition of $\kappa_{n}$ and taking a sum over $-\eta_{\min}\leq n\leq N-1-\eta_{\max}$, we complete the proof.
\end{proof}

Next, we derive a large-$N$ expansion of $\Phi_j(z^\alpha \overline{z}^\beta)$ in the double-scaling regime \eqref{eqn:def of q scaling}.

\begin{lemma}\label{lem:large_N_expansion}
    Suppose that multi-indices $\alpha=(\alpha_{1},\cdots,\alpha_{j})$ and $\beta=(\beta_{1},\cdots,\beta_{j})$ satisfy $|\alpha|=|\beta|=:s\geq1$. In the double-scaling regime $q=e^{-\lambda/N}$ for a fixed $\lambda>0$, 
    we have an expansion
    \begin{equation}\label{eqn:Phij expansion}
        \Phi_{j}(z^{\alpha}\overline{z}^{\beta})
        =N\int_{0}^{1}(e^{\lambda t}-1)^{s}\,dt+(e^{\lambda}-1)^{s}\Big(\frac{A_{j}}{s}-\eta_{\max}-\frac{1}{2}\Big)+o(1),
    \end{equation}
    as $N\to\infty$, where
    \begin{equation*}
        A_{j}:=\sum_{l=1}^j\Big(\alpha_l\eta_l+\frac{\alpha_l(\alpha_l+1)}{2}\Big).
    \end{equation*}
    
\end{lemma}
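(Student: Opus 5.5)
The plan is to turn the exact formula \eqref{eqn:Phi(zzbar)} into a Riemann sum and expand it by Euler--Maclaurin to order $O(1)$. The first step is to rewrite each factor of the summand. Put $m_l:=n+\eta_l$. Since
\begin{equation*}
    q^{-\alpha_l m_l-\frac{\alpha_l(\alpha_l+1)}{2}}=\prod_{k=1}^{\alpha_l}q^{-(m_l+k)}
    \quad\text{and}\quad
    (q^{m_l+1};q)_{\alpha_l}=\prod_{k=1}^{\alpha_l}(1-q^{m_l+k}),
\end{equation*}
the $l$-th factor equals $\prod_{k=1}^{\alpha_l}\big(q^{-(n+\eta_l+k)}-1\big)$. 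With $q=e^{-\lambda/N}$ the summand becomes
\begin{equation*}
    F_N(n)=\prod_{l=1}^{j}\prod_{k=1}^{\alpha_l}\big(e^{\lambda(n+c_{l,k})/N}-1\big),\qquad c_{l,k}:=\eta_l+k.
\end{equation*}
This is a product of exactly $s=|\alpha|$ factors, and the shifts $c_{l,k}$ are bounded independently of $N$.

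Next I expand each factor around $x=n/N$, writing $u(x):=e^{\lambda x}-1$. Each factor equals $u(x)+\frac{\lambda c_{l,k}}{N}e^{\lambda x}+O(N^{-2})$, uniformly for $n$ in the summation range $-\eta_{\min}\le n\le N-1-\eta_{\max}$. In that range $x$ lies in a bounded interval, so the remainder is uniform. Multiplying the $s$ factors gives
\begin{equation*}
    F_N(n)=u(x)^s+\frac{\lambda}{N}\,e^{\lambda x}u(x)^{s-1}\sum_{l,k}c_{l,k}+O(N^{-2}),
\end{equation*}
where $\sum_{l,k}c_{l,k}=\sum_l\big(\alpha_l\eta_l+\tfrac{\alpha_l(\alpha_l+1)}{2}\big)=A_j$. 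Summing the $O(N^{-2})$ error over $O(N)$ values of $n$ contributes only $O(N^{-1})$.

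Then I sum the two main terms over $n$.
\begin{itemize}
    \item The correction term is a Riemann sum: $\frac{\lambda A_j}{N}\sum_n e^{\lambda n/N}u(n/N)^{s-1}\to A_j\int_0^1\lambda e^{\lambda t}(e^{\lambda t}-1)^{s-1}dt=\frac{A_j}{s}(e^\lambda-1)^s$.
    \item For the leading term, first compare the range with $0\le n\le N-1$. The extra terms with $-\eta_{\min}\le n<0$ are $O(N^{-s})=o(1)$, because $u(x)=O(1/N)$ there and $s\ge1$. The $\eta_{\max}$ removed terms near $n=N-1$ each equal $(e^\lambda-1)^s+o(1)$, so they contribute $-\eta_{\max}(e^\lambda-1)^s$. (If $\eta_{\min}>0$ or $\eta_{\max}<0$, the same bookkeeping applies with signs reversed. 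Note that $\eta_j=0$ forces $\eta_{\min}\le0\le\eta_{\max}$.)
    \item Finally, Euler--Maclaurin for the smooth function $h=u^s$ gives $\sum_{n=0}^{N-1}h(n/N)=N\int_0^1h\,dt-\tfrac12\big(h(1)-h(0)\big)+O(N^{-1})$, with $h(0)=0$ and $h(1)=(e^\lambda-1)^s$.
\end{itemize}
Adding the three contributions gives \eqref{eqn:Phij expansion}.

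The main obstacle is keeping track of all $O(1)$ contributions at once. These are the first-order Taylor shift producing $A_j$, the boundary correction $-\tfrac12$ from Euler--Maclaurin, and the index-range shift producing $-\eta_{\max}$. Each error must also be shown to be uniform in $n$, which holds because all shifts are bounded and $x$ ranges over a compact set.
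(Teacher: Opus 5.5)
Your proposal is correct and follows essentially the paper's route. Both rewrite the summand as $\prod_{l}\prod_{r=1}^{\alpha_l}\big(q^{-(n+\eta_l+r)}-1\big)$, expand it uniformly to first order in $1/N$ to produce the $A_j$ term, and obtain $-\eta_{\max}-\tfrac12$ from Euler--Maclaurin together with the shifted summation range; the paper applies Euler--Maclaurin directly on $[-\eta_{\min},N-1-\eta_{\max}]$, whereas you first reduce to $[0,N-1]$, which is only a bookkeeping difference. One small slip: since $\eta_{\min}\le 0$, the range starts at $-\eta_{\min}\ge 0$, so the terms with $0\le n<-\eta_{\min}$ are missing rather than extra, but they are $O(N^{-s})$ either way and the conclusion is unaffected.
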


\begin{proof}
    Assuming $s\geq1$,  each $\eta_{l}$ is of order $O(1)$. In order to apply the Euler--Maclaurin formula for \eqref{eqn:Phi(zzbar)}, we write $\Phi_j(z^\alpha\overline{z}^\beta)= \sum_{n=-\eta_{\min}}^{N-1-\eta_{\max}} F_j(n)$, where
    \begin{equation*}
        F_{j}(n)
        =\prod_{l=1}^{j}q^{-\alpha_{l}(n+\eta_{l})-\frac{\alpha_{l}(\alpha_{l}+1)}{2}}(q^{n+\eta_{l}+1};q)_{\alpha_{l}}
        =\prod_{l=1}^{j}\prod_{r=1}^{\alpha_{l}}(q^{-(n+\eta_{l}+r)}-1).
    \end{equation*}
    Letting $n=Nt$ for $0\leq t\leq 1$, we obtain a uniform expansion
    \begin{equation*}
    \begin{split}
        F_{j}(Nt)
        =\prod_{l=1}^{j}\prod_{r=1}^{\alpha_{l}}\Big(e^{\lambda t}-1 +\frac{\lambda}{N}e^{\lambda t}(\eta_{l}+r)+O(N^{-2})\Big)
        =(e^{\lambda t}-1)^{s}+A_{j}e^{\lambda t}(e^{\lambda t}-1)^{s-1}\frac{\lambda}{N}+O(N^{-2}).
    \end{split}
    \end{equation*}
    Now, from the Euler--Maclaurin formula (see, e.g., \cite[Eq. 2.10.1]{NIST}), it follows that
    \begin{equation}\label{eqn:F EM formula}
        \sum_{n=-\eta_{\min}}^{N-1-\eta_{\max}}F_{j}(n)
        =N\int_{-\eta_{\min}/N}^{1-(1+\eta_{\max})/N}F_{j}(Nt)\,dt+\frac{F_{j}(-\eta_{\min})+F_{j}(N-1-\eta_{\max})}{2}+o(1).
    \end{equation}
    First, since
    \begin{equation*}
        N\int_{0}^{-\eta_{\min}/N}(e^{\lambda t}-1)^{s}\,dt=O(N^{-s}),\qquad
        N\int_{1-(1+\eta_{\max})/N}^{1}(e^{\lambda t}-1)^{s}\,dt=(1+\eta_{\max})(e^{\lambda}-1)^{s}+o(1)
    \end{equation*}
    we have
    \begin{equation}\label{eqn:F integral asymptotic}
        N\int_{-\eta_{\min}/N}^{1-(1+\eta_{\max})/N}(e^{\lambda t}-1)^{s}\,dt
        =N\int_{0}^{1}(e^{\lambda t}-1)^{s}\,dt-(1+\eta_{\max})(e^{\lambda}-1)^{s}+o(1).
    \end{equation}
    Moreover, a straightforward calculation gives the asymptotics
    \begin{equation}\label{eqn:F boundary asymptotic}
        F_{j}(-\eta_{\min})
        =O(N^{-s}),\qquad
        F_{j}(N-1-\eta_{\max})
        =(e^{\lambda}-1)^{s}+O(N^{-1})
    \end{equation}
    and
    \begin{equation}\label{eqn:F subleading integral}
        \int_{-\eta_{\min}/N}^{1-(1+\eta_{\max})/N}e^{\lambda t}(e^{\lambda t}-1)^{s-1}\,dt
        =\frac{1}{\lambda s}(e^{\lambda}-1)^{s}+o(1).
    \end{equation}
    Substituting the asymptotics \eqref{eqn:F integral asymptotic}, \eqref{eqn:F boundary asymptotic}, \eqref{eqn:F subleading integral} into the Euler--Maclaurin formula \eqref{eqn:F EM formula} completes the proof.
\end{proof}

\begin{proof}[Proof of Theorem \ref{thm:analytic CLT}]
    We begin with asymptotics of $\mathscr{C}_{k}(f_{1},\cdots,f_{k})$ when each $f_{j}$ is a monomial. Let $k\geq 2$ and fix multi-indices $\alpha=(\alpha_{1},\cdots,\alpha_{k})$ and $\beta=(\beta_{1},\cdots,\beta_{k})$. For $1\leq l\leq k$ and a surjection $\sigma:[k]\twoheadrightarrow[j]$, we define the length-$j$ multi-indices $\sigma\alpha$ and $\sigma\beta$ by
    \begin{equation*}
        (\sigma\alpha)_{l}=\sum_{i:\sigma(i)=l}\alpha_{i},\qquad
        (\sigma\beta)_{l}=\sum_{i:\sigma(i)=l}\beta_{i}
    \end{equation*}
    for each $1\leq l \leq j$. We further define 
    \begin{equation*}
        \eta^{\sigma}_{l}
        =\big((\sigma\beta)_{1}-(\sigma\alpha)_{1}\big)+\cdots+\big((\sigma\beta)_{l}-(\sigma\alpha)_{l}\big),\qquad
        \eta^{\sigma}_{\max}=\max_{1\leq l\leq j}\eta^{\sigma}_{l}.
    \end{equation*}
    We remark that for every $1\leq j\leq k$ and $\sigma \colon [k]\twoheadrightarrow [j]$, the condition for $\Phi_{j}(\sigma(z^{\alpha}\overline{z}^{\beta}))$ to be nonzero is $|\sigma \alpha|=|\sigma\beta|$, which is equivalent to $|\alpha|=|\beta|$. Assume $s:=|\alpha|=|\beta|$. If $s=0$, all monomials are constant and the joint cumulant vanishes. If $s\geq1$, Lemma \ref{lem:large_N_expansion} provides the asymptotic 
    \begin{equation}\label{eqn:scrC asymptotic 1}
        \mathscr{C}_{k}(z^{\alpha}\overline{z}^{\beta})
        =\sum_{j=1}^{k}\frac{(-1)^{j-1}}{j}\sum_{\sigma\colon[k]\twoheadrightarrow[j]}
        \Bigg(N\int_0^1(e^{\lambda t}-1)^{s}\,dt+(e^{\lambda}-1)^{s}\Big(\frac{A_{j}^{\sigma}}{s}-\eta^{\sigma}_{\max}-\frac{1}{2}\Big)\Bigg)+o(1),
    \end{equation}
    where
    \begin{equation*}
        A_{j}^{\sigma}
        =\sum_{l=1}^{j}\Big((\sigma\alpha)_{l}\eta_{l}^{\sigma}+\frac{(\sigma\alpha)_{l}((\sigma\alpha)_{l}+1)}{2}\Big).
    \end{equation*}
    Using the identity
    \begin{equation*}
        \sum_{j=1}^{k}\frac{(-1)^{j-1}}{j}\sum_{\sigma\colon[k]\twoheadrightarrow [j]}1=0,
    \end{equation*}
    we simplify \eqref{eqn:scrC asymptotic 1} to
    \begin{equation*}
        \mathscr{C}_{k}(z^{\alpha}\overline{z}^{\beta})
        =(e^{\lambda}-1)^{s}\sum_{j=1}^{k}\frac{(-1)^{j-1}}{j}\sum_{\sigma \colon [k]\twoheadrightarrow [j]} \Big(\frac{A_{j}^{\sigma}}{s}-\eta^{\sigma}_{\max}\Big)+o(1),
    \end{equation*}
    where the expression on the right-hand side is precisely the rotary-flow functional treated in \cite[Section 5]{RV07}. Using \cite[Lemmas 5.1--5.5]{RV07}, we obtain
    \begin{equation*}
        \sum_{j=1}^{k}\frac{(-1)^{j-1}}{j}\sum_{\sigma \colon [k]\twoheadrightarrow [j]} \Big(\frac{A_{j}^{\sigma}}{s}-\eta^\sigma_{\max}\Big)=
        \begin{cases}
            \displaystyle\frac{\alpha_{1}\beta_{2}+\alpha_{2}\beta_{1}}{2s}+\frac{|\beta_{1}-\alpha_{1}|}{2}, & \textup{if }k=2,\smallskip \\
            0, & \textup{if }k\geq 3.
        \end{cases}
    \end{equation*}
    This implies that the cumulants of order $\geq3$ vanish in the large-$N$ limit. On the other hand, for $k=2$, fix $\alpha=(\alpha_{1},\alpha_{2})$ and $\beta=(\beta_{1},\beta_{2})$ with $|\alpha|=|\beta|$, we have
    \begin{equation*}
    \begin{split}
        \mathscr{C}_{2}(z^{\alpha}\overline{z}^{\beta})
        &=(e^{\lambda}-1)^{\frac{\alpha_{1}+\beta_{1}+\alpha_{2}+\beta_{2}}{2}}
        \Big(\frac{\alpha_{1}\beta_{2}+\alpha_{2}\beta_{1}}{2(\alpha_{1}+\alpha_{2})}+\frac{|\beta_{1}-\alpha_{1}|}{2}\Big)+o(1)\smallskip\\
        &=
        \langle z^{\alpha_{1}}\overline{z}^{\beta_{1}}, z^{\alpha_{2}}\overline{z}^{\beta_{2}} \rangle_{H^{1}(D_{\lambda}^{\circ})}+\frac{1}{2}\langle z^{\alpha_{1}}\overline{z}^{\beta_{1}}, z^{\alpha_{2}}\overline{z}^{\beta_{2}}\rangle_{H^{1/2}(\partial  D_{\lambda})}+o(1)\smallskip\\
        &=\langle f_{1},f_{2}\rangle_{H^{1}(D_{\lambda}^{\circ})}+\frac{1}{2}\langle f_{1}, f_{2}\rangle_{H^{1/2}(\partial D_{\lambda})}+o(1),
    \end{split}
    \end{equation*}
    where $f_{j}(z)=z^{\alpha_{j}}\overline{z}^{\beta_{j}}$ for $j=1$, $2$. Now, the desired limit \eqref{eqn:analytic cumulant limit} follows from the multilinearity of $\mathscr{C}_{k}$. Since the Gaussian distribution is moment-determinate, this completes the proof.
\end{proof}

\section{Reduction to the lowest Landau level}
\label{sec:Raising}

Having established the polynomial central limit theorem for the analytic ensemble, we now extend the analysis to the pure and full polyanalytic ensembles. Rather than deriving separate asymptotics for each higher-level kernel, we follow the strategy of Haimi and Wennman \cite{HW19} and exploit the $q$-raising operator representation of these kernels.

After the rescaling $z\mapsto z/\sqrt{1-q}$, the raising maps factor into normalized powers of a first-order $q$-difference operator $\mathbf{T}$, together with explicit $q$-shifts. For polynomial test functions, exact Jackson integration-by-parts identities move these operators from the kernel factors onto the cumulant polynomial. This reduces the higher-level cumulants to cyclic integrals involving only the analytic kernel, with the polyanalytic dependence encoded in explicit $q$-difference operators.

Throughout this section, all operators and their adjoints are understood on $\mathbb{C}[z, \overline{z}]\subset L^{2}(\mathbb{C},\widehat{\mathsf{m}}_{q})$. We continue to use the notation $\widehat{\mathcal{K}}^{q}_{N}$ for the analytic kernel.

\subsection{Rescaled $q$-raising operator}

Let $\mathcal{U}_{q}$ be the dilation
\begin{equation*}
    [\mathcal{U}_{q}f](z,\overline{z})
    =\frac{1}{\sqrt{1-q}}f\Big(\frac{z}{\sqrt{1-q}},\frac{\overline{z}}{\sqrt{1-q}}\Big)
\end{equation*}
so that the rescaled $q$-deformed polyanalytic Bargmann basis
\begin{equation*}
    \widehat{\mathcal{B}}_{r}^{q}|n\rangle
    =\mathcal{U}_{q}\mathcal{B}_{r}^{q}|n\rangle
\end{equation*}
satisfies
\begin{equation*}
    \widehat{\mathcal{K}}_{m,N}^{q}(z,w)
    =\sum_{n=0}^{N-1}\widehat{\mathcal{B}}_{m-1}^{q}|n\rangle(z,\overline{z})\overline{\widehat{\mathcal{B}}_{m-1}^{q}|n\rangle(w,\overline{w})},\qquad
    \widehat{\mathcal{K}}_{m,N}^{q,\full}(z,w)
    =\sum_{r=1}^{m}\widehat{\mathcal{K}}_{r,N}^{q}(z,w).
\end{equation*}

The dilation $\mathcal{U}_q$ transfers the raising-operator representation developed in Section \ref{sec:higher_q_raising_maps} to the rescaled setting. We begin with the conjugate of the unnormalized raising operator $\widetilde{\mathcal{R}}_q$. Namely, we introduce 
\begin{equation*}
    \mathbf{T}
    :=-\frac{1}{\sqrt{1-q}}\mathcal{U}_q\widetilde{\mathcal{R}}_{q}\mathcal{U}_{q}^{-1}.
\end{equation*}
It admits a gauge-type representation 
\begin{equation*}
    \mathbf{T}f(z,\overline{z})
    =-\widehat{E}_{q}(|z|^{2})\partial_{q,z}[\widehat{e}_q(-|z|^{2})f(z,\overline{z})]
    =-\Big((1+|z|^{2})\partial_{q,z}-\frac{\overline{z}}{1-q}\Big)f(z,\overline{z}),
\end{equation*}
where
\begin{equation*}
    \widehat{E}_{q}(x)
    :=E_{q}\Big(\frac{x}{1-q}\Big)
    =(-x;q)_{\infty}.
\end{equation*}
For every nonnegative integer $r\geq 0$ we further define 
\begin{equation}\label{eqn:gamma}
    {\mathbf{T}}_{r}
    :=(-1)^{r}\gamma_{r}{\mathbf{T}}^r,\qquad
    \gamma_{r}
    :=q^{\frac{r(r+1)}{4}}\frac{(1-q)^{r}}{\sqrt{(q;q)_{r}}}.
\end{equation}
Its gauge-type representation is given by 
\begin{align*}
    [{\mathbf{T}}_{r}f](z,\overline{z})
    &=\frac{q^{\frac{r(r+1)}{4}}(1-q)^{r}}{\sqrt{(q;q)_{r}}}\widehat{E}_{q}(|z|^{2})\partial_{q,z}^{r}[f(z,\overline{z})\widehat{e}_{q}(-|z|^{2})]\\
    &=q^{\frac{r(r+1)}{4}}\frac{(1-q)^{r}}{\sqrt{(q;q)_{r}}}\Big((1+|z|^{2})\partial_{q,z}-\frac{\overline{z}}{1-q}\Big)^{r}f(z, \overline{z}).
\end{align*}

Applying $\mathcal{U}_q$ to \eqref{eqn:BRSB} and using the definitions $\mathbf{T}$ and $\mathbf{T}_r$, we immediately obtain
\begin{equation}\label{eqn:BhatTSB}
    \widehat{\mathcal{B}}_{r}^{q}|n\rangle
    =\mathbf{T}_{r}\mathsf{S}_{q,z}^{-\frac{r}{2}}\widehat{\mathcal{B}}_{0}^{q}|n\rangle.
\end{equation}

\begin{proposition}
    The rescaled pure and full polyanalytic kernels admit the representation
    \begin{equation}\label{eqn:kernel T}
    \begin{split}
        \widehat{\mathcal{K}}_{m,N}^{q}(z,w)
        &=\big[\mathbf{T}_{m-1}\mathsf{S}_{q}^{-m+1}\big]_{z}[\mathbf{T}_{m-1}]_{\overline{w}}\widehat{\mathcal{K}}_{N}^{q}(z,w),\smallskip\\
        \widehat{\mathcal{K}}_{m,N}^{q,\full}(z,w)
        &=\sum_{r=1}^{m}\big[\mathbf{T}_{r-1}\mathsf{S}_{q}^{-r+1}\big]_{z}[\mathbf{T}_{r-1}]_{\overline{w}}\widehat{\mathcal{K}}_{N}^{q}(z,w).
    \end{split}
    \end{equation}
\end{proposition}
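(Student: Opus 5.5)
The plan is to work term by term in the finite sum defining $\widehat{\mathcal{K}}_{m,N}^{q}$ and to use the rescaled raising representation \eqref{eqn:BhatTSB}, $\widehat{\mathcal{B}}_{r}^{q}|n\rangle=\mathbf{T}_{r}\mathsf{S}_{q,z}^{-\frac{r}{2}}\widehat{\mathcal{B}}_{0}^{q}|n\rangle$, with $r=m-1$. Since the sum over $0\leq n\leq N-1$ is finite and every summand is a polynomial in $z,\overline{z},w,\overline{w}$, all the $q$-difference operators may be applied termwise. No convergence issues arise, and linearity is the only structural input.

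First I handle the fractional shift. On the analytic basis, \eqref{eqn:varphi} gives $\widehat{\mathcal{B}}_{0}^{q}|n\rangle=\varphi_{n}^{q}$, a multiple of $z^{n}$. Hence $\mathsf{S}_{q,z}^{-\frac{r}{2}}\widehat{\mathcal{B}}_{0}^{q}|n\rangle=q^{-\frac{rn}{2}}\varphi_{n}^{q}$, so the fractional power only ever acts on monomials and is unambiguous. Therefore
\begin{equation*}
\widehat{\mathcal{B}}_{r}^{q}|n\rangle(z,\overline{z})\,\overline{\widehat{\mathcal{B}}_{r}^{q}|n\rangle(w,\overline{w})}
=q^{-rn}\,[\mathbf{T}_{r}\varphi_{n}^{q}](z,\overline{z})\,\overline{[\mathbf{T}_{r}\varphi_{n}^{q}](w,\overline{w})}.
\end{equation*}

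Next I treat the complex conjugate. By the explicit formula $\mathbf{T}=-\big((1+|z|^{2})\partial_{q,z}-\frac{\overline{z}}{1-q}\big)$ and the definition \eqref{eqn:gamma}, the operator $\mathbf{T}_{r}$ has real coefficients, because $q\in(0,1)$ and $\gamma_{r}>0$. Conjugating $\mathbf{T}_{r}g(w,\overline{w})$ therefore yields the same operator with the roles of the holomorphic and antiholomorphic variables exchanged, acting on $\overline{g}$. Here $\partial_{q,w}$ becomes $\partial_{q,\overline{w}}$, and the multiplier $\overline{w}$ becomes $w$. This is precisely the meaning of $[\mathbf{T}_{r}]_{\overline{w}}$. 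Since this operator acts only in the $w$-variables, it commutes with everything acting in $z$, and in particular it commutes with multiplication by $\varphi_{n}^{q}(z)$.

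Finally I absorb the factor $q^{-rn}$. It equals the full shift $[\mathsf{S}_{q}^{-r}]_{z}$ applied to $\varphi_{n}^{q}(z)\overline{\varphi_{n}^{q}(w)}$, and this shift must act before $\mathbf{T}_{r}$ in $z$, which is the order in the claimed formula. Summing over $n$ and using \eqref{eqn:analytic kernel} gives
\begin{equation*}
\widehat{\mathcal{K}}_{m,N}^{q}=[\mathbf{T}_{m-1}\mathsf{S}_{q}^{-m+1}]_{z}[\mathbf{T}_{m-1}]_{\overline{w}}\widehat{\mathcal{K}}_{N}^{q}.
\end{equation*}
The full kernel then follows by summing over $r=1,\dots,m$, using $\widehat{\mathcal{K}}_{m,N}^{q,\full}=\sum_{r}\widehat{\mathcal{K}}_{r,N}^{q}$.

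The only delicate point, and the step I expect to need the most care, is splitting the two half-shifts $q^{-rn/2}$ correctly. One half comes from the $z$-side and the other from the conjugated $w$-side. Both must be moved to act on the $z$-variable, and the identity $\overline{q^{-rn/2}}=q^{-rn/2}$ must hold, which it does because $q$ is real. The identity is then checked monomial by monomial on $z^{n}\overline{w}^{n}$.
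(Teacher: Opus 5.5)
Your proposal is correct and follows the paper's argument: substitute \eqref{eqn:BhatTSB} into the basis expansion of the kernel, then move the half-shift from the $\overline{w}$-side onto $z$. The paper justifies that last step by noting that $\widehat{\mathcal{K}}^{q}_{N}$ depends only on $z\overline{w}$; this is exactly your termwise observation that each half-shift contributes the same factor $q^{-rn/2}$ on $(z\overline{w})^{n}$.
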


\begin{proof}
    Substituting \eqref{eqn:BhatTSB} into the basis expansion of $\widehat{\mathcal{K}}^q_{m, N}$, we obtain
    \begin{equation*}
        \widehat{\mathcal{K}}_{m,N}^{q}(z,w)
        =\big[\mathbf{T}_{m-1}\mathsf{S}_{q}^{-\frac{m-1}{2}}\big]_{z}\big[\mathbf{T}_{m-1}\mathsf{S}_{q}^{-\frac{m-1}{2}}\big]_{\overline{w}}\widehat{\mathcal{K}}_{N}^{q}(z,w).
    \end{equation*}
    Moreover, since $\widehat{\mathcal{K}}^q_N$ depends only on $z\overline{w}$, the shift on $\overline{w}$ can be transferred to $z$, yielding the first identity. Summing this identity over the pure levels gives the second.
\end{proof}

The following lemma gives the formal transpose of the power of ${\mathbf{T}}$ with respect to the measure $\widehat{\mathsf{m}}_{q}$.

\begin{lemma}
    For every polynomials $f, g\in \mathbb{C}[z, \overline{z}]$, we have
    \begin{equation}\label{eqn:Thatr adjoint}
        \int_{\mathbb{C}}f(z,\overline{z})\big[{\mathbf{T}^{r}}g\big](z,\overline{z})\,d\widehat{\mathsf{m}}_{q}(z)
        =q^{-\frac{r(r+1)}{2}}\int_{\mathbb{C}}[\mathsf{S}_{q,z}^{-r}\partial_{q,z}^{r}f](z,\overline{z})g(z,\overline{z})\,d\widehat{\mathsf{m}}_{q}(z).
    \end{equation}
\end{lemma}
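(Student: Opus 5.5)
By induction on $r$, the statement reduces to the case $r=1$, i.e. to the identity
\begin{equation*}
    \int_{\mathbb{C}} f\,[\mathbf{T}g]\,d\widehat{\mathsf{m}}_{q}
    =q^{-1}\int_{\mathbb{C}}[\mathsf{S}_{q,z}^{-1}\partial_{q,z}f]\,g\,d\widehat{\mathsf{m}}_{q}
\end{equation*}
for all polynomials $f,g$. Granting this, the general case follows from the operator identity $\partial_{q,z}\mathsf{S}_{q,z}^{-1}=q^{-1}\mathsf{S}_{q,z}^{-1}\partial_{q,z}$, which is checked on monomials: $z^{n}\mapsto q^{-n}[n]_q z^{n-1}$ on both sides. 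Transposing $\mathbf{T}$ a total of $r$ times gives $q^{-r}(\mathsf{S}_{q,z}^{-1}\partial_{q,z})^{r}$, and commuting the shifts to the left produces the factor $q^{-(0+1+\cdots+(r-1))}=q^{-r(r-1)/2}$. Altogether this is $q^{-r}q^{-r(r-1)/2}\mathsf{S}_{q,z}^{-r}\partial_{q,z}^{r}=q^{-r(r+1)/2}\mathsf{S}_{q,z}^{-r}\partial_{q,z}^{r}$, as claimed. Each intermediate function remains a polynomial, so the base case applies at every step.

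For the base case, both sides are bilinear, so it suffices to take monomials $f=z^{a}\overline{z}^{b}$ and $g=z^{c}\overline{z}^{d}$. Using $\mathbf{T}=-(1+|z|^{2})\partial_{q,z}+\overline{z}/(1-q)$, we compute
\begin{equation*}
    \mathbf{T}g=-[c]_q\big(z^{c-1}\overline{z}^{d}+z^{c}\overline{z}^{d+1}\big)+\frac{1}{1-q}z^{c}\overline{z}^{d+1}.
\end{equation*}
We integrate against $f$ using the scaled moments \eqref{eqn:scaled mixed moments}, written as $\widehat M_n:=\int|z|^{2n}d\widehat{\mathsf{m}}_q=(1-q)^{n+1}q^{-n(n+1)/2}[n]_q!$. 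Radial symmetry forces $a+c-1=b+d$ on both sides; for the right side this uses that $\mathsf{S}_{q,z}^{-1}\partial_{q,z}f=q^{-(a-1)}[a]_q z^{a-1}\overline{z}^{b}$. Setting $n=a+c-1=b+d$, the left side becomes $-[c]_q\widehat M_n+\big(\frac{1}{1-q}-[c]_q\big)\widehat M_{n+1}$. Since $\frac{1}{1-q}-[c]_q=\frac{q^{c}}{1-q}$, the left side is
\begin{equation*}
    -[c]_q\widehat M_n+\frac{q^{c}}{1-q}\widehat M_{n+1}.
\end{equation*}

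By the recurrence \eqref{eqn:spectral moment recurrence}, we have $\widehat M_{n+1}=q^{-(n+1)}(1-q^{n+1})\widehat M_n=(1-q)q^{-(n+1)}[n+1]_q\widehat M_n$. The left side therefore equals $\widehat M_n\big(q^{c-n-1}[n+1]_q-[c]_q\big)$. Using $[n+1]_q-q^{n+1-c}[c]_q=[n+1-c]_q$, this becomes $q^{c-n-1}[n+1-c]_q\widehat M_n=q^{-a}[a]_q\widehat M_n$. The right side is $q^{-1}q^{-(a-1)}[a]_q\widehat M_n=q^{-a}[a]_q\widehat M_n$, so the two sides agree.

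The case $a=0$ also works: then $n=c-1$ and both sides vanish. The main point requiring care is the bookkeeping of the $q$-powers in the induction. An alternative base-case argument is Jackson integration by parts in the gauge form $\mathbf{T}g=-\widehat E_q\partial_{q,z}[\widehat e_q g]$, but the monomial computation avoids boundary terms and the noncommutation of $\partial_{q,z}$ with $\overline{z}$ in the measure, so I would use it.
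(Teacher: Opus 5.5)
Your proof is correct. Its induction step, which transposes $\mathbf{T}$ one factor at a time and then uses $\partial_{q,z}\mathsf{S}_{q,z}^{-1}=q^{-1}\mathsf{S}_{q,z}^{-1}\partial_{q,z}$ to collect the factor $q^{-r}\cdot q^{-r(r-1)/2}$, is exactly the paper's iteration. The two proofs differ only in the base case $r=1$.

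\textbf{The paper's base case.} The paper does no new computation there. It proceeds as follows:
\begin{itemize}
\item It rewrites the $\widehat{\mathsf{m}}_q$-pairing as an $L^2(\mathbb{C},\mathsf{m}_q)$ inner product of $\mathcal{U}_q^{-1}$-images, using the polynomial scaling identity \eqref{eqn:polynomial scaling}.
\item It uses $\mathcal{U}_q^{-1}\mathbf{T}=-(1-q)^{-1/2}\widetilde{\mathcal{R}}_q\mathcal{U}_q^{-1}$ and moves $\widetilde{\mathcal{R}}_q$ across with the adjoint $\widetilde{\mathcal{R}}_q^{\ast}=-q^{-1}\mathsf{S}_{q,\overline z}^{-1}\partial_{q,\overline z}$, taken from Proposition \ref{prop:R_properties}.
\item It undoes the dilation via $\mathcal{U}_q\partial_{q,\overline z}\mathcal{U}_q^{-1}=\sqrt{1-q}\,\partial_{q,\overline z}$, and uses complex conjugation to turn $\overline z$-operators acting on $\overline f$ into $z$-operators acting on $f$.
\end{itemize}

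\textbf{Your base case.} You check the $r=1$ identity directly on monomials in the rescaled measure. You use only the explicit form $\mathbf{T}=-(1+|z|^2)\partial_{q,z}+\overline z/(1-q)$ and the moment recurrence \eqref{eqn:spectral moment recurrence}. The steps all check out: the simplification $\frac{1}{1-q}-[c]_q=\frac{q^{c}}{1-q}$, the identity $[n+1]_q-q^{n+1-c}[c]_q=[n+1-c]_q$ with $n+1-c=a$, and the degenerate cases $a=0$ and $c=0$.

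\textbf{What each route buys.} Your argument is self-contained. It avoids the dilation and conjugation bookkeeping, and the small step of getting $\widetilde{\mathcal{R}}_q^{\ast}$ from $\mathcal{R}_q^{\ast}$, which uses that $\mathsf{J}_q^{1/2}$ is symmetric on polynomials with respect to $\mathsf{m}_q$. The paper's argument is shorter given the earlier machinery, and it shows the lemma is just the rescaled form of the $q$-CCR adjoint structure of the raising operator. At bottom both are monomial moment checks, since Proposition \ref{prop:R_properties} was itself proved by the same kind of computation in $\mathsf{m}_q$.
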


\begin{proof}
    For the case $r=1$, Proposition \ref{prop:R_properties} gives  $\widetilde{\mathcal{R}}_q^* = - q^{-1}\mathsf{S}_{q, \overline{z}}^{-1}\partial_{q, \overline{z}}$. Thus, by \eqref{eqn:polynomial scaling}, we obtain
    \begin{align*}
        \int_\mathbb{C}f(z, \overline{z}) [\mathbf{T}g](z, \overline{z})d\widehat{\mathsf{m}}_q(z)
        &=\langle \mathcal{U}_q^{-1}\mathbf{T} g, \mathcal{U}_q^{-1}\overline{f}\rangle_{L^2(\mathbb{C}, \mathsf{m}_q)}\\
        &=\frac{q^{-1}}{\sqrt{1-q}}\langle \mathcal{U}_q^{-1}g, \mathsf{S}_{q, \overline{z}}^{-1}\partial_{q, \overline{z}} \mathcal{U}_q^{-1}\overline{f}\rangle_{L^2(\mathbb{C}, \mathsf{m}_q)}\\
        &=q^{-1}\langle\mathcal{U}_{q}^{-1}g,\mathcal{U}_{q}^{-1}\mathsf{S}_{q,\overline{z}}^{-1}\partial_{q,\overline{z}}\overline{f}\rangle_{L^{2}(\mathbb{C},\mathsf{m}_{q})}\\
        &=q^{-1}\int_\mathbb{C} [\mathsf{S}_{q, z}^{-1}\partial_{q, z}f](z, \overline{z}) g(z, \overline{z}) d\widehat{\mathsf{m}}_q(z).
    \end{align*}
    Here, we used the identity $\mathcal{U}_q\partial_{q, \overline{z}}\mathcal{U}_q^{-1}= \sqrt{1-q}\partial_{q, \overline{z}}$. The general case follows by iteration and the commutation relation $\partial_{q,z}\mathsf{S}_{q,z}^{-1}=q^{-1}\mathsf{S}_{q,z}^{-1}\partial_{q,z}$.
\end{proof}

Next, we provide the formal transpose operator of $\mathsf{S}_{q}^{n}$ for $n\in\mathbb{Z}$.

\begin{lemma}
    For every polynomials $f, g\in \mathbb{C}[z, \overline{z}]$ and $n\in\mathbb{Z}$, we have
    \begin{equation}\label{eqn:Sn adjoint}
        \int_{\mathbb{C}}\big[\mathsf{S}_{q,z}^{n}f\big](z,\overline{z})g(z,\overline{z})\,d\widehat{\mathsf{m}}_{q}(z)
        =q^{-n}\int_{\mathbb{C}}f(z,\overline{z})\big[\mathsf{S}_{q,z}^{-n}g\big](z,\overline{z})(-|z|^{2};q)_{-n}\,d\widehat{\mathsf{m}}_{q}(z),
    \end{equation}
    where for every $n>0$
    \begin{equation*}
        (x;q)_{-n}
        :=\frac{1}{(xq^{-n};q)_{n}}.
    \end{equation*}
\end{lemma}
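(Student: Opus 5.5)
The plan is to compute directly in polar coordinates, using the explicit Jackson form of $\widehat{\mathsf{m}}_q$. Since the $\theta$-integration is unaffected by the dilation $z\mapsto q^{n}z$ in the $z$-variable only, we first have to be careful: $\mathsf{S}_{q,z}^{n}$ acts on $z$ but not on $\overline{z}$. So we reduce to monomials by bilinearity, taking $f=z^{a}\overline{z}^{b}$ and $g=z^{c}\overline{z}^{d}$. Then $\mathsf{S}_{q,z}^{n}f=q^{na}f$ and $\mathsf{S}_{q,z}^{-n}g=q^{-nc}g$. Both sides vanish unless $a+c=b+d$, by rotational invariance of $\widehat{\mathsf{m}}_q$ (the extra factor $(-|z|^2;q)_{-n}$ is radial). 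So the identity reduces to the radial statement
\begin{equation*}
    q^{na}\int_0^\infty x^{p}\,\widehat{e}_q(-x)\,d_qx
    = q^{-n-nc}\int_0^\infty x^{p}(-x;q)_{-n}\,\widehat{e}_q(-x)\,d_qx,\qquad p=\tfrac{a+b+c+d}{2}.
\end{equation*}

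The key step is to treat the right-hand side by the change of variables \eqref{eq:change_of_variable}. Write $\widehat{e}_q(-x)=1/(-x;q)_\infty$ and note that $(-x;q)_{-n}/(-x;q)_\infty=1/(-xq^{-n};q)_\infty=\widehat{e}_q(-q^{-n}x)$. This holds for $n>0$ by the definition of $(x;q)_{-n}$. For $n<0$ we interpret $(x;q)_{-n}=(x;q)_{|n|}$, and the same identity $(-x;q)_\infty=(-x;q)_{|n|}(-q^{|n|}x;q)_\infty$ gives it again; for $n=0$ it is trivial. Hence the right-hand integral equals
\begin{equation*}
    \int_0^\infty x^{p}\widehat{e}_q(-q^{-n}x)\,d_qx = q^{np}\int_0^\infty (q^{-n}x)^{p}\widehat{e}_q(-q^{-n}x)\,d_qx = q^{np}q^{n}\int_0^\infty x^{p}\widehat{e}_q(-x)\,d_qx.
\end{equation*}
Here the last equality is \eqref{eq:change_of_variable}, which is valid for all $n\in\mathbb{Z}$ because the Jackson lattice $\{q^k\}_{k\in\mathbb{Z}}$ is invariant under $x\mapsto q^{-n}x$; absolute convergence follows from the moment formula \eqref{eqn:scaled mixed moments}. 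The right-hand side of the identity therefore becomes $q^{-n-nc}q^{np+n}M_p=q^{n(p-c)}M_p$. Since $a+c=b+d$, we have $p=a+c$, so $p-c=a$, and this matches the left-hand side $q^{na}M_p$.

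I expect the only real obstacle to be bookkeeping: checking the sign conventions and the exponent $q^{-n}$ against \eqref{eq:change_of_variable}, and handling the negative-$n$ convention for $(x;q)_{-n}$. The latter is naturally defined through $(x;q)_{-n}=(x;q)_\infty/(xq^{-n};q)_\infty$, which covers all $n$ uniformly and is what I would use.
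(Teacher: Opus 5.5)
Your proof is correct and follows the paper's route: reduce to monomials $z^a\overline{z}^b$ and $z^c\overline{z}^d$, then apply the change-of-variables formula \eqref{eq:change_of_variable}. The identity $(-x;q)_{-n}\,\widehat{e}_q(-x)=\widehat{e}_q(-q^{-n}x)$, which you verify for all $n\in\mathbb{Z}$, is exactly the step the paper leaves implicit, and your exponent bookkeeping $q^{-n-nc}\,q^{np+n}=q^{na}$ checks out.
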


\begin{proof}
    It suffices to verify the assertion for monomials $f(z,\overline{z})=z^{a}\overline{z}^{b}$ and $g(z,\overline{z})=z^{c}\overline{z}^{d}$. The proof then follows directly from the change-of-variables formula; see \eqref{eq:change_of_variable}.
\end{proof}

\begin{lemma}
    Fix $r\geq0$ and $0\leq j\leq r$. For an analytic polynomial $f(z)$, we have
    \begin{equation}\label{eqn:partialT}
        \partial^{j}_{q,\overline{z}}{\mathbf{T}}^{r}f(z)
        =(1-q)^{-j}\qbinom{r}{j}[j]_{q}![\mathsf{S}_{q,z}^{j}\mathbf{T}^{r-j}f](z,\overline{z}).
    \end{equation}
\end{lemma}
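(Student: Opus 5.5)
The plan is to reduce the identity to the case $j=1$ and then iterate in $j$. The key observation is that $\partial_{q,\overline{z}}$ commutes with $\mathsf{S}_{q,z}$, since the two operators act on different variables. Suppose we know the first-order relation
\begin{equation*}
    \partial_{q,\overline{z}}\mathbf{T}^{r}f=(1-q)^{-1}[r]_{q}\,\mathsf{S}_{q,z}\mathbf{T}^{r-1}f\qquad(r\geq1)
\end{equation*}
for every analytic polynomial $f$, and recall that $\partial_{q,\overline{z}}f=0$ (which is the case $r=0$). Then induction on $j$ gives
\begin{equation*}
    \partial_{q,\overline{z}}^{j}\mathbf{T}^{r}f
    =(1-q)^{-1}[r]_{q}\,\mathsf{S}_{q,z}\,\partial_{q,\overline{z}}^{j-1}\mathbf{T}^{r-1}f
    =(1-q)^{-j}[r]_{q}[r-1]_{q}\cdots[r-j+1]_{q}\,\mathsf{S}_{q,z}^{j}\mathbf{T}^{r-j}f.
\end{equation*}
Since $[r]_{q}\cdots[r-j+1]_{q}=\qbinom{r}{j}[j]_{q}!$, this is exactly \eqref{eqn:partialT}.

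For the case $j=1$, I would first compute the commutator directly on polynomials. Using the $q$-Leibniz rule $\partial_{q,\overline{z}}(\overline{z}h)=h+q\overline{z}\,\partial_{q,\overline{z}}h$ and the identity $g/(1-q)-z\partial_{q,z}g=\mathsf{S}_{q,z}g/(1-q)$, one gets for any polynomial $g$
\begin{equation*}
    \partial_{q,\overline{z}}\mathbf{T}g=\frac{1}{1-q}\mathsf{S}_{q,z}g+q\,\mathbf{T}'\partial_{q,\overline{z}}g,
\end{equation*}
where $\mathbf{T}'$ is a shifted variant of $\mathbf{T}$. The factor $q$ and the shift $\mathsf{S}_{q,z}$ are what should produce $[r]_{q}$ upon iteration. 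The obstacle I expect here is that $\mathbf{T}'$ is not literally $\mathbf{T}$: the $(1+z\overline{z})$ coefficient turns into $(q^{-1}+z\overline{z})$, and $\mathsf{S}_{q,z}$ does not commute cleanly with $\mathbf{T}$. This makes a naive induction on $r$ messy.

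To get around this, I would instead transport the $q$-CCR of Proposition~\ref{prop:R_properties} to the rescaled picture. The steps are:
\begin{enumerate}
    \item Use $\mathbf{T}=-(1-q)^{-1/2}\mathcal{U}_{q}\widetilde{\mathcal{R}}_{q}\mathcal{U}_{q}^{-1}$, $\mathcal{R}_{q}=\mathsf{J}_{q}^{-1/2}\widetilde{\mathcal{R}}_{q}$, the adjoint formula \eqref{eqn:qraising adjoint} for $\mathcal{R}_{q}^{\ast}$, and $\mathcal{U}_{q}\partial_{q,\overline{z}}\mathcal{U}_{q}^{-1}=\sqrt{1-q}\,\partial_{q,\overline{z}}$.
    \item Convert $\mathcal{R}_{q}^{\ast}\mathcal{R}_{q}^{r}=[r]_{q}\mathcal{R}_{q}^{r-1}+q^{r}\mathcal{R}_{q}^{r}\mathcal{R}_{q}^{\ast}$ into an operator identity for $\partial_{q,\overline{z}}\widetilde{\mathcal{R}}_{q}^{r}$, applied to analytic functions, which $\mathcal{R}_{q}^{\ast}$ annihilates.
    \item Track the half-shifts. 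The relation $\mathsf{J}_{q}^{\alpha}\widetilde{\mathcal{R}}_{q}=q^{-\alpha}\widetilde{\mathcal{R}}_{q}\mathsf{J}_{q}^{\alpha}$ lets us move all $\mathsf{J}_{q}$ factors to one side. On the left, $\mathsf{S}_{q,z}^{-1/2}\mathsf{S}_{q,\overline{z}}^{-1/2}$ from $\mathcal{R}_{q}^{\ast}$ combines with $\mathsf{J}_{q}^{\pm1/2}$ to leave a net $\mathsf{S}_{q,z}$.
    \item The scalar bookkeeping should then give precisely $(1-q)^{-1}[r]_{q}$. The minus sign in $\mathbf{T}$ pairs with the minus sign in $\mathcal{R}_{q}^{\ast}$, and the $\sqrt{1-q}$ factors combine to $(1-q)^{-1}$.
\end{enumerate}

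The main effort is this power-of-$q$ and shift bookkeeping, in particular checking that no residual $q^{\pm1/2}$ survives. As a sanity check I would verify the $j=1$ identity directly on $f=z^{n}$ with $r=1$. There $\partial_{q,\overline{z}}\mathbf{T}z^{n}=\partial_{q,\overline{z}}\bigl(-(1+z\overline{z})[n]_{q}z^{n-1}+\overline{z}z^{n}/(1-q)\bigr)=z^{n}/(1-q)-[n]_{q}z^{n}=q^{n}z^{n}/(1-q)=(1-q)^{-1}\mathsf{S}_{q,z}z^{n}$, as required.
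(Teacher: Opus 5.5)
Your proposal is correct, but it is organized differently from the paper's proof.

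\textbf{The paper's route.} It treats all $j$ at once by an explicit formula. From the gauge form $\mathbf{T}^{r}f=(-1)^{r}\widehat{E}_{q}(|z|^{2})\partial_{q,z}^{r}[f\,\widehat{e}_{q}(-|z|^{2})]$, the $q$-Leibniz rule, and $\partial_{q,z}^{i}\widehat{e}_{q}(-|z|^{2})=\bigl(-\overline{z}/(1-q)\bigr)^{i}\widehat{e}_{q}(-|z|^{2})$, it gets $\mathbf{T}^{r}f=(-1)^{r}\sum_{k=0}^{r}\qbinom{r}{k}[\partial_{q,z}^{k}f](q^{r-k}z)\bigl(-\overline{z}/(1-q)\bigr)^{r-k}$. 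Since $f$ is analytic, $\partial_{q,\overline{z}}^{j}$ only hits the powers of $\overline{z}$. The re-indexing $\qbinom{r}{k}\frac{[r-k]_{q}!}{[r-k-j]_{q}!}=\frac{[r]_{q}!}{[r-j]_{q}!}\qbinom{r-j}{k}$ then exhibits $[\mathbf{T}^{r-j}f](q^{j}z,\overline{z})$. There is no induction and no use of the $q$-CCR.

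\textbf{Your route.} You reduce to $j=1$; the iteration via $\partial_{q,\overline{z}}\mathsf{S}_{q,z}=\mathsf{S}_{q,z}\partial_{q,\overline{z}}$ is fine. You then get $j=1$ by transporting $\mathcal{R}_{q}^{\ast}\mathcal{R}_{q}^{r}h=[r]_{q}\mathcal{R}_{q}^{r-1}h$ for analytic $h$, and the bookkeeping closes as you predict.
\begin{itemize}
\item With $\mathcal{R}_{q}^{r}h=q^{r(r+1)/4}\widetilde{\mathcal{R}}_{q}^{r}\mathsf{S}_{q,z}^{-r/2}h$ on analytic $h$, one finds $\partial_{q,\overline{z}}\widetilde{\mathcal{R}}_{q}^{r}h=-[r]_{q}q^{-(r-1)/2}\mathsf{S}_{q,z}^{1/2}\mathsf{S}_{q,\overline{z}}^{1/2}\widetilde{\mathcal{R}}_{q}^{r-1}\mathsf{S}_{q,z}^{1/2}h$.
\item Write $\mathsf{S}_{q,z}^{1/2}\mathsf{S}_{q,\overline{z}}^{1/2}=\mathsf{S}_{q,z}\mathsf{J}_{q}^{-1/2}$. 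Pushing $\mathsf{J}_{q}^{-1/2}$ through $\widetilde{\mathcal{R}}_{q}^{r-1}$ costs exactly $q^{(r-1)/2}$, which cancels, leaving $\partial_{q,\overline{z}}\widetilde{\mathcal{R}}_{q}^{r}h=-[r]_{q}\mathsf{S}_{q,z}\widetilde{\mathcal{R}}_{q}^{r-1}h$.
\item Conjugating by $\mathcal{U}_{q}$, which commutes with $\mathsf{S}_{q,z}$, gives your first-order relation with no stray power of $q$.
\end{itemize}

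\textbf{What each buys.} Your route explains the lemma as the rescaled form of the ladder relation $\mathcal{R}_{q}^{\ast}\mathcal{B}_{r}^{q}|n\rangle=\sqrt{[r]_{q}}\mathcal{B}_{r-1}^{q}|n\rangle$. The paper's route is self-contained and yields a closed formula for $\mathbf{T}^{r}f$ as a by-product.

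\textbf{The direct route you set aside.} It is in fact the shortest, because your $\mathbf{T}'$ is exactly $\mathsf{S}_{q,z}\mathbf{T}\mathsf{S}_{q,z}^{-1}$. Hence $\partial_{q,\overline{z}}\mathbf{T}=(1-q)^{-1}\mathsf{S}_{q,z}+q\,\mathsf{S}_{q,z}\mathbf{T}\mathsf{S}_{q,z}^{-1}\partial_{q,\overline{z}}$ on polynomials. Induction on $r$ using $1+q[r-1]_{q}=[r]_{q}$ then gives the $j=1$ case in two lines, with no half-shifts.
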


\begin{proof}
    Using the gauge form of $\mathbf{T}$ and $q$-Leibniz rule, we obtain
    \begin{equation*}
    \begin{split}
        \partial_{q,\overline{z}}^{j}{\mathbf{T}}^{r}f(z)
        &=(-1)^{r}\partial_{q,\overline{z}}^{j}\widehat{E}_{q}(|z|^{2})\partial_{q,z}^{r}[f(z)\widehat{e}_{q}(-|z|^{2})] \\
        &=(-1)^{r}\partial_{q,\overline{z}}^{j}\widehat{E}_{q}(|z|^{2})\sum_{k=0}^{r}\qbinom{r}{k}[\partial_{q,z}^{k}f](q^{r-k}z)[\partial_{q,z}^{r-k}\widehat{e}_{q}](-|z|^{2})\\
        &=(-1)^{r}\partial_{q,\overline{z}}^{j}\sum_{k=0}^{r}\qbinom{r}{k}[\partial_{q,z}^{k}f](q^{r-k}z)\Big(-\frac{\overline{z}}{1-q}\Big)^{r-k}\\
        &=(-1)^{r+j}(1-q)^{-j}\frac{[r]_{q}!}{[r-j]_{q}!}\sum_{k=0}^{r-j}\qbinom{r-j}{k}\Big(-\frac{\overline{z}}{1-q}\Big)^{r-k-j}[\partial_{q,z}^{k}f](q^{r-k}z)\\
        &=(1-q)^{-j}\frac{[r]_{q}!}{[r-j]_{q}!}[{\mathbf{T}}^{r-j}f](q^{j}z,\overline{z}).
    \end{split}
    \end{equation*}
\end{proof}

\subsection{Reduction operator}

We now apply the identities established in the preceding subsection to the cyclic integral representation of the polynomial cumulants. By successively moving the raising operators from the kernel factors onto the test polynomial, we reduce each higher-level cyclic integral to a cyclic integral involving only the analytic kernel. The effect of the Landau-level indices is then encoded in a family of explicit q-difference operators.

Grouping the surjections $\sigma$ in the cumulant formula used in Section \ref{sec:CLT lowest} and using the projection property of $\widehat{\mathcal{K}}^q_{m, N}$ and $\widehat{\mathcal{K}}^{q, \textup{full}}_{m, N}$, we obtain the cyclic integral representation of the $k$-th cumulants of $\textup{trace}_{m, N}(g)$ and $\textup{trace}_{m, N}^\textup{full}(g)$ as
\begin{equation}\label{eq:cumulant_cyc_int_formula}
    C_k(\textup{trace}^\#_{m, N}(g)) = \int_{\mathbb{C}^k} G_k(z_1, \dots, z_k)\widehat{\mathcal{K}}^{q, \#}_{m, N}(z_1, z_2)\cdots \widehat{\mathcal{K}}^{q, \#}_{m, N}(z_k, z_1)d \widehat{\mathsf{m}}_q^{\otimes k}(z_1, \dots, z_k),
\end{equation}
where $\#$ is either omitted or $\textup{full}$ and
\begin{equation}\label{eqn: def of Gk}
    G_{k}(z_{1},\cdots,z_{k})
    =\sum_{j=1}^{k}\frac{(-1)^{j-1}}{j}\sum_{\substack{k_{1}+\cdots+k_{j}=k\\k_{1},\cdots,k_{j}\geq1}}\frac{k!}{k_{1}!\cdots k_{j}!}\prod_{l=1}^{j}g(z_{l})^{k_{l}}.
\end{equation}
In particular, in the analytic case $m=1$, we have 
\begin{equation*}
    C_{k}(\trace_{1,N}(g))
    =\mathcal{I}_{N,k}(G_{k}),
\end{equation*}
where for a polyanalytic polynomial $F$ in $k$ variables, define the analytic cyclic integral
\begin{equation*}
    \mathcal I_{N,k}(F)
    =\int_{\mathbb{C}^{k}}F(z_{1},\cdots,z_{k})\widehat{\mathcal{K}}_{N}^{q}(z_{1},z_{2})\cdots\widehat{\mathcal{K}}_{N}^{q}(z_{k},z_{1})\,d\widehat{\mathsf{m}}_{q}^{\otimes k}(z_{1},\cdots,z_{k}).
\end{equation*}
The goal of this section is to express the general pure and full cumulants in terms of $\mathcal{I}_{N, k}$, with their dependence on the higher Landau levels encoded by explicit $q$-difference operators acting on $G_k$. 

For nonnegative integers $\alpha$, $\beta$, define the reduction operator
\begin{equation*}
    {\mathcal{D}}_{\alpha,\beta}^{q}f
    =\sum_{j=0}^{\alpha\wedge\beta}\qbinom{\alpha}{j}\qbinom{\beta}{j}\frac{[j]_{q}!}{(1-q)^{j}}q^{-j-\frac{\beta(\beta-1)}{2}-\frac{(\alpha-j)(\alpha+j+1)}{2}} \mathsf{S}_{q,\overline{z}}^{-\alpha}\partial_{q,\overline{z}}^{\alpha-j}\big[{(-q^{j}|z|^{2};q)_{\beta-j}}\mathsf{S}_{q,z}^{j}\partial_{q,z}^{\beta-j}f\big].
\end{equation*}

\begin{lemma}\label{lem:reduction}
    Let $F$ be a polyanalytic polynomial and let $\alpha$, $\beta$ be nonnegative integers. For analytic polynomials $f$, $g$, we have
    \begin{equation*}
        \int_{\mathbb{C}}F(z,\overline{z})\big[\mathbf{T}^{\beta}_{z}\mathsf{S}_{q,z}^{-\beta}f\big](z,\overline{z})\overline{\big[\mathbf{T}^{\alpha}_{z}g\big](z,\overline{z})}\,d\widehat{\mathsf{m}}_{q}(z)
        =\int_{\mathbb{C}}\big[\mathcal{D}_{\alpha,\beta}^{q}F\big](z,\overline{z})f(z)\overline{g(z)}\,d\widehat{\mathsf{m}}_{q}(z).
    \end{equation*}
\end{lemma}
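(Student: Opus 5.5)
We will prove the identity by moving the operators $\mathbf{T}^\beta\mathsf{S}_{q,z}^{-\beta}$ and $\mathbf{T}^\alpha$ off the two kernel factors and onto $F$, using the Jackson adjoint identities \eqref{eqn:Thatr adjoint}, \eqref{eqn:Sn adjoint} and the commutation formula \eqref{eqn:partialT}.

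\emph{Conjugated factor.} We write $\overline{\mathbf{T}^\alpha g}$ as $\overline{\mathbf{T}}^{\alpha}\overline{g}$, where $\overline{\mathbf{T}}$ is obtained from $\mathbf{T}$ by interchanging the roles of $z$ and $\overline{z}$. Applying \eqref{eqn:Thatr adjoint} with $z\leftrightarrow\overline{z}$ (the measure $\widehat{\mathsf{m}}_q$ is invariant under conjugation) to the pair $F\cdot\mathbf{T}^\beta\mathsf{S}_{q,z}^{-\beta}f$ and $\overline g$ gives
\begin{equation*}
q^{-\frac{\alpha(\alpha+1)}{2}}\int_{\mathbb{C}}\mathsf{S}_{q,\overline z}^{-\alpha}\partial_{q,\overline z}^{\alpha}\big[F\,\mathbf{T}^\beta\mathsf{S}_{q,z}^{-\beta}f\big]\;\overline{g}\,d\widehat{\mathsf{m}}_q .
\end{equation*}
Next we expand $\partial_{q,\overline z}^{\alpha}$ of the product by the $q$-Leibniz rule in $\overline z$. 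The $j$-th term contains $\partial_{q,\overline z}^{j}\mathbf{T}^\beta h$ with $h=\mathsf{S}_{q,z}^{-\beta}f$ analytic. By \eqref{eqn:partialT} this equals $(1-q)^{-j}\qbinom{\beta}{j}[j]_q!\,\mathsf{S}_{q,z}^{j}\mathbf{T}^{\beta-j}h$, and it vanishes for $j>\beta$. This produces the sum over $0\le j\le\alpha\wedge\beta$ and the factor $\qbinom{\alpha}{j}\qbinom{\beta}{j}[j]_q!/(1-q)^j$. The $\overline z$-shifts created by Leibniz act on $F$ and combine with the outer $\mathsf{S}_{q,\overline z}^{-\alpha}$. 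The order in which we place the factors in the Leibniz rule should be chosen so that the surviving operator on $F$ has the form $\mathsf{S}_{q,\overline z}^{-\alpha}\partial_{q,\overline z}^{\alpha-j}$, as in $\mathcal{D}^q_{\alpha,\beta}$.

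\emph{Remaining factor.} Each $j$-term now has the shape $\int \widetilde F_j\,\mathsf{S}_{q,z}^{j}\mathbf{T}^{\beta-j}\mathsf{S}_{q,z}^{-\beta}f\;\overline g\,d\widehat{\mathsf{m}}_q$, where $\widetilde F_j$ is the already transformed version of $F$. We first move $\mathsf{S}_{q,z}^{j}$ onto $\widetilde F_j\overline g$ via \eqref{eqn:Sn adjoint}; this is where the Pochhammer weight $(-|z|^2;q)_{\pm j}$ enters. We then move $\mathbf{T}^{\beta-j}$ across using \eqref{eqn:Thatr adjoint}, noting that $\partial_{q,z}$ annihilates $\overline g$, so all $z$-derivatives fall on the $F$-part. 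Finally we undo the shift $\mathsf{S}_{q,z}^{-\beta}$ on $f$, again with \eqref{eqn:Sn adjoint}, so that the integrand becomes $[\cdots]\,f\,\overline g$.

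Alternatively, to avoid juggling several weights, we would verify the identity directly on monomials. Take $F=z^a\overline z^b$, $f=z^n$, $g=z^p$. By \eqref{eqn:scaled mixed moments} both sides reduce to explicit finite sums, which we would match termwise in $j$ using the closed form of $\mathbf{T}^r z^n$ read off from the Leibniz expansion in the proof of \eqref{eqn:partialT}.

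\emph{Main obstacle.} The hard part is the bookkeeping that makes the weight come out as exactly $(-q^j|z|^2;q)_{\beta-j}$ and the power of $q$ as exactly $q^{-j-\beta(\beta-1)/2-(\alpha-j)(\alpha+j+1)/2}$. Two points need care. First, the several applications of \eqref{eqn:Sn adjoint} generate factors $(-|z|^2;q)_{-n}$, and these must be telescoped using $(x;q)_{-n}(xq^{-n};q)_{n}=1$ and $(x;q)_{k}(xq^k;q)_{l}=(x;q)_{k+l}$. Second, the multiplication by this weight must be commuted past $\mathsf{S}_{q,z}^{j}\partial_{q,z}^{\beta-j}$ into the position where $\mathcal{D}^q_{\alpha,\beta}$ puts it. I would first check the cases $\alpha=0$ and $\beta=0$, and the case $\alpha=\beta=1$, against the definition to fix the shift conventions before doing the general computation.
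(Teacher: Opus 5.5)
\textbf{There is a genuine gap: the order in which you transpose the two factors does not lead to the stated operator.}

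You transpose the conjugated factor $\overline{\mathbf{T}^{\alpha}g}$ first. After that step, your outer $\mathsf{S}_{q,\overline z}^{-\alpha}$ acts on the whole product $F\,\mathbf{T}^{\beta}h$. Since $\mathbf{T}^{\beta}h$ is not analytic, the $\overline z$-shifts cannot all be put on $F$. Take the Leibniz ordering that leaves exactly $\mathsf{S}_{q,\overline z}^{-\alpha}\partial_{q,\overline z}^{\alpha-j}F$. Then the other factor is, up to the constant from \eqref{eqn:partialT},
\begin{equation*}
\mathsf{S}_{q,\overline z}^{-j}\mathsf{S}_{q,z}^{j}\mathbf{T}^{\beta-j}h=\mathsf{J}_{q}^{j}\mathbf{T}^{\beta-j}h=q^{-j(\beta-j)}\mathbf{T}^{\beta-j}\mathsf{S}_{q,z}^{j}h .
\end{equation*}
This uses $\mathsf{J}_q\mathbf{T}=q^{-1}\mathbf{T}\mathsf{J}_q$ and the analyticity of $h$. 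It is not $\mathsf{S}_{q,z}^{j}\mathbf{T}^{\beta-j}h$, so your ``remaining factor'' step is incorrect as written.

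If you correct this, the rest of your route does go through: transpose $\mathbf{T}^{\beta-j}$ by \eqref{eqn:Thatr adjoint}, then undo $\mathsf{S}_{q,z}^{-(\beta-j)}$ on $f$ by \eqref{eqn:Sn adjoint}. But it ends at
\begin{equation*}
\sum_{j=0}^{\alpha\wedge\beta}\qbinom{\alpha}{j}\qbinom{\beta}{j}\frac{[j]_{q}!}{(1-q)^{j}}\,q^{-\frac{\alpha(\alpha+1)}{2}-j(\beta-j)-\frac{(\beta-j)(\beta-j-1)}{2}}\,(-|z|^{2};q)_{\beta-j}\,\partial_{q,z}^{\beta-j}\mathsf{S}_{q,\overline z}^{-\alpha}\partial_{q,\overline z}^{\alpha-j}F .
\end{equation*}
Here the $\overline z$-operators sit innermost, the opposite nesting from $\mathcal{D}^{q}_{\alpha,\beta}$, and the individual $j$-terms are different.

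Already for $\alpha=\beta=1$ the $j=1$ terms disagree. Yours is $\frac{q^{-1}}{1-q}\mathsf{S}_{q,\overline z}^{-1}F$, while that of $\mathcal{D}^{q}_{1,1}$ is $\frac{q^{-1}}{1-q}\mathsf{S}_{q,\overline z}^{-1}\mathsf{S}_{q,z}F$. The two operators agree only after the $j=0$ terms are added. On $z^{a}\overline z^{b}$ this agreement is the identity $q^{-1}[a]_{q}[b+1]_{q}+\frac{q^{a-1}}{1-q}=[a]_{q}[b]_{q}+\frac{q^{-1}}{1-q}$.

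So no choice of Leibniz ordering, and no telescoping of Pochhammer weights, turns your computation into $\mathcal{D}^{q}_{\alpha,\beta}$. You would need a separate resummation identity across $j$ for general $\alpha,\beta$, which you neither state nor prove. Your ``main obstacle'' paragraph therefore misidentifies the difficulty. The monomial fallback has the same defect: there is no termwise-in-$j$ correspondence to match. The displayed operator is a valid reduction formula in its own right, but it is not the one in the statement.

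\textbf{The fix is to reverse the order, which is what the paper does.}

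First transpose $\mathbf{T}^{\beta}$ by \eqref{eqn:Thatr adjoint}, and immediately apply \eqref{eqn:Sn adjoint} with $n=-\beta$. The shift $\mathsf{S}_{q,z}^{-\beta}$ produced by the transpose combines with the one built into the left-hand side. This leaves
\begin{equation*}
q^{-\beta(\beta-1)/2}\int(-|z|^{2};q)_{\beta}\,f\,\partial_{q,z}^{\beta}\big[F\,\overline{\mathbf{T}^{\alpha}g}\big]\,d\widehat{\mathsf{m}}_{q}.
\end{equation*}
The $q$-Leibniz rule in $z$ and the conjugate of \eqref{eqn:partialT} give a sum over $j$ with integrand $[\mathsf{S}_{q,z}^{j}\partial_{q,z}^{\beta-j}F]\,\mathsf{S}_{q,\overline z}^{j}\overline{\mathbf{T}^{\alpha-j}g}$.

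Transposing $\mathsf{S}_{q,\overline z}^{j}$ contributes $q^{-j}$. It also turns the weight into $(-q^{-j}|z|^{2};q)_{\beta}(-|z|^{2};q)_{-j}=(-|z|^{2};q)_{\beta-j}$.

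Transposing $\overline{\mathbf{T}}^{\alpha-j}$ gives $q^{-(\alpha-j)(\alpha-j+1)/2}\mathsf{S}_{q,\overline z}^{-(\alpha-j)}\partial_{q,\overline z}^{\alpha-j}$, acting on everything except $\overline g$; the analytic $f$ passes through.

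Finally, pull the leftover $\mathsf{S}_{q,\overline z}^{-j}$ outward past the weight and past $\partial_{q,\overline z}^{\alpha-j}$. This produces the weight $(-q^{j}|z|^{2};q)_{\beta-j}$ and the factor $q^{-j(\alpha-j)}$. The result has exactly the nesting of $\mathcal{D}^{q}_{\alpha,\beta}$ and the exponent $-j-\frac{\beta(\beta-1)}{2}-\frac{(\alpha-j)(\alpha+j+1)}{2}$.
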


\begin{proof}
    First assume that $\alpha\geq\beta$. Applying \eqref{eqn:Thatr adjoint} and \eqref{eqn:Sn adjoint} in the $z$-variable yields
    \begin{equation}\label{eqn:analytic adjoint}
        \int_{\mathbb{C}}F(z,\overline{z})\big[\mathbf{T}^{\beta}_{z}\mathsf{S}_{q,z}^{-\beta}f\big](z,\overline{z})\overline{\big[\mathbf{T}^{\alpha}_{z}g\big](z,\overline{z})}\,d\widehat{\mathsf{m}}_{q}(z)
        =q^{-\frac{\beta(\beta-1)}{2}}\int_{\mathbb{C}}(-|z|^{2};q)_{\beta}f(z)\partial_{q,z}^{\beta}\big[F(z,\overline{z})\overline{\big[\mathbf{T}^{\alpha}_{z}g\big](z,\overline{z})}\big]\,d\widehat{\mathsf{m}}_{q}(z).
    \end{equation}
    The $q$-Leibniz rule and \eqref{eqn:partialT} give
    \begin{align*}
        \partial_{q,z}^{\beta}\big[F(z,\overline{z})\overline{\big[\mathbf{T}^{\alpha}_{z}g\big](z,\overline{z})}\big]
        &=\sum_{j=0}^{\beta}\qbinom{\beta}{j}\big[\mathsf{S}_{q,z}^{j}\partial_{q,z}^{\beta-j}F](z,\overline{z})\overline{\big[\partial_{q,\overline{z}}^{j}\mathbf{T}^{\alpha}_{z}g\big](z,\overline{z})}\smallskip\\
        &=\sum_{j=0}^{\beta}(1-q)^{-j}\qbinom{\alpha}{j}\qbinom{\beta}{j}[j]_{q}!\big[\mathsf{S}_{q,z}^{j}\partial_{q,z}^{\beta-j}F](z,\overline{z})\overline{\big[\mathsf{S}_{q,z}^{j}\mathbf{T}^{\alpha-j}_{z}g\big](z,\overline{z})}.
    \end{align*}
    Substitute this into the right-hand side of \eqref{eqn:analytic adjoint} and fix $0\leq j\leq\beta$. Since $f$ is an analytic polynomial, we apply the transpose of $\mathsf{S}_{q,\overline{z}}^{j}$ and $\mathbf{T}_{\overline{z}}^{\alpha-j}$. Then the $j$-th summand becomes
    \begin{equation*}
    \begin{split}
        &q^{-\frac{\beta(\beta-1)}{2}-j}(1-q)^{-j}\qbinom{\alpha}{j}\qbinom{\beta}{j}[j]_{q}!\int_{\mathbb{C}}(-q^{-j}|z|^{2};q)_{\beta}(-|z|^{2};q)_{-j}f(z)\big[\mathsf{S}_{q,\overline{z}}^{-j}\mathsf{S}_{q,z}^{j}\partial_{q,z}^{\beta-j}F\big](z,\overline{z})\overline{\big[\mathbf{T}_{z}^{\alpha-j}g\big](z,\overline{z})}\,d\widehat{\mathsf{m}}_{q}(z)\smallskip\\
        &=\qbinom{\alpha}{j}\qbinom{\beta}{j}\frac{[j]_{q}!}{(1-q)^{j}}q^{-j-\frac{\beta(\beta-1)}{2}-\frac{(\alpha-j)(\alpha+j+1)}{2}}\int_{\mathbb{C}}\mathsf{S}_{q,\overline{z}}^{-\alpha}\partial_{q,\overline{z}}^{\alpha-j}\big[{(-q^{j}|z|^{2};q)_{\beta-j}}\mathsf{S}_{q,z}^{j}\partial_{q,z}^{\beta-j}F\big]f(z)\overline{g(z)}\,d\widehat{\mathsf{m}}_{q}(z).
    \end{split}
    \end{equation*}
    Taking the sum over $0\leq j\leq\beta$, we complete the proof in the case $\alpha\geq\beta$. The proof in the case $\alpha<\beta$ proceeds in parallel.
\end{proof}

We define the normalized reduction operator
\begin{equation*}
    \widetilde{\mathcal{D}}_{\alpha,\beta}^{q}
    :=\gamma_{\alpha}\gamma_{\beta}\mathcal{D}_{\alpha,\beta}^{q},
\end{equation*}
where the constant $\gamma_{r}$ is inherited from the raising operators \eqref{eqn:gamma}. For a multi-index $\mathbf{i}=(i_{1},\cdots,i_{k})$, define 
\begin{equation*}
    \widetilde{\mathcal{D}}^{q}_{\mathbf{i}}
    :=\prod_{j=1}^{k}\big[\widetilde{\mathcal{D}}^{q}_{i_{j-1},i_{j}}\big]_{z_{j}}
\end{equation*}
with the convention $i_{0}=i_{k}$.

\begin{proposition}
    Let $g$ be a polynomial and $G_{k}$ be given by \eqref{eqn: def of Gk}. For $m\geq1$, the cumulant of the linear statistics has the representation
    \begin{equation}\label{eqn:pure cumulant I}
    \begin{split}
        C_{k}(\trace_{m,N}(g))
        &=\mathcal{I}_{N,k}(\widetilde{\mathcal{D}}^{q}_{m-1,\cdots,m-1}G_{k}),\smallskip\\
        C_{k}(\trace_{m,N}^{\full}(g))
        &=\sum_{0\leq i_{1},\cdots,i_{k}\leq m-1}\mathcal{I}_{N,k}(\widetilde{\mathcal{D}}^{q}_{i_{1},\cdots,i_{k}}G_{k}).
    \end{split}
    \end{equation}
\end{proposition}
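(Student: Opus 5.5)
Starting from the cyclic integral representation \eqref{eq:cumulant_cyc_int_formula}, I would substitute the raising-operator form \eqref{eqn:kernel T} of the rescaled kernels. In the pure case each factor becomes
\begin{equation*}
    \widehat{\mathcal{K}}_{m,N}^{q}(z_l,z_{l+1})
    =\gamma_{m-1}^{2}\big[\mathbf{T}^{m-1}\mathsf{S}_{q}^{-m+1}\big]_{z_l}\big[\mathbf{T}^{m-1}\big]_{\overline{z_{l+1}}}\widehat{\mathcal{K}}_{N}^{q}(z_l,z_{l+1}),
\end{equation*}
since $\mathbf{T}_r=(-1)^r\gamma_r\mathbf{T}^r$ and the two signs cancel. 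Expanding $\widehat{\mathcal{K}}^q_N(z_l,z_{l+1})=\sum_{n}\varphi^q_n(z_l)\overline{\varphi^q_n(z_{l+1})}$ as a finite sum of analytic polynomials, the integrand in the variable $z_l$ is $G_k$ times $[\mathbf{T}^{\beta}\mathsf{S}_{q,z}^{-\beta}\varphi^q_{n_l}](z_l)$, coming from the kernel $\widehat{\mathcal{K}}(z_l,z_{l+1})$, times $\overline{[\mathbf{T}^{\alpha}\varphi^q_{n_{l-1}}](z_l)}$, coming from $\widehat{\mathcal{K}}(z_{l-1},z_l)$. Here $\beta=i_l$ and $\alpha=i_{l-1}$ are the Landau indices carried by these two kernels.

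Because everything is a finite sum of polynomials and the measure is a product measure, I would integrate one variable at a time. Fixing all variables except $z_l$, $G_k$ is a polyanalytic polynomial in $z_l$. Lemma~\ref{lem:reduction} with $F=G_k(\dots,z_l,\dots)$, $f=\varphi^q_{n_l}$ and $g=\varphi^q_{n_{l-1}}$ moves both operators onto $G_k$, replacing the integrand by $[\mathcal{D}^q_{i_{l-1},i_l}]_{z_l}G_k\,\varphi^q_{n_l}(z_l)\overline{\varphi^q_{n_{l-1}}(z_l)}$. Note that $\mathcal{D}^q$ acts only in $z_l$, so the result is still a polyanalytic polynomial in the other variables and the argument can be iterated over $l=1,\dots,k$. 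The operators in different variables commute, so the order does not matter. The constants $\gamma$ regroup as $\prod_l\gamma_{i_{l-1}}\gamma_{i_l}$, distributed one pair per variable, which turns each $\mathcal{D}^q$ into $\widetilde{\mathcal{D}}^q$.

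Resumming over the $n_l$ rebuilds the analytic kernels $\widehat{\mathcal{K}}^q_N(z_l,z_{l+1})$, which gives $\mathcal{I}_{N,k}(\widetilde{\mathcal{D}}^q_{\mathbf{i}}G_k)$ with the cyclic convention $i_0=i_k$. In the pure case all $i_l=m-1$. For the full case, each kernel factor is the sum $\sum_{r}\widehat{\mathcal{K}}^q_{r,N}$, so expanding the product by multilinearity gives a sum over $(i_1,\dots,i_k)\in\{0,\dots,m-1\}^k$, and the same reduction applies to each term.

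The main obstacle is bookkeeping in applying Lemma~\ref{lem:reduction}. It requires the shift $\mathsf{S}^{-\beta}$ to sit on the \emph{same} index $\beta$ as the operator $\mathbf{T}^\beta$ it accompanies, and \eqref{eqn:kernel T} puts the shift on the $z$-side of each kernel. I would check that, for the variable $z_l$, the factor carrying the shift is the one from $\widehat{\mathcal{K}}(z_l,z_{l+1})$, whose index is $i_l$, while the conjugated factor comes from $\widehat{\mathcal{K}}(z_{l-1},z_l)$ with index $i_{l-1}$. This matches $\widetilde{\mathcal{D}}^q_{i_{l-1},i_l}$ acting in $z_l$.

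I would also verify that $\mathsf{S}_q^{-r}$ applied to the analytic $\varphi_n$ stays polynomial. This is needed because the adjoint identities \eqref{eqn:Thatr adjoint} and \eqref{eqn:Sn adjoint} are only stated for polynomials.
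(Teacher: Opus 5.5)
Your proposal is correct and follows essentially the same route as the paper: substitute the raising-operator form \eqref{eqn:kernel T} into the cyclic integral \eqref{eq:cumulant_cyc_int_formula}, and apply Lemma~\ref{lem:reduction} in each variable $z_l$ with $\alpha=i_{l-1}$ for the conjugated factor and $\beta=i_l$ for the shifted factor. Then absorb $\prod_l\gamma_{i_l}^2=\prod_l\gamma_{i_{l-1}}\gamma_{i_l}$ into $\widetilde{\mathcal{D}}^q$, and in the full case expand each kernel over its Landau levels; your explicit tracking of the signs, the $\gamma$ factors, and which kernel carries the shift spells out what the paper leaves implicit.
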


\begin{proof}
    We first consider the pure case. Substituting the raising-operator representation \eqref{eqn:kernel T} into the cyclic integral formula \eqref{eq:cumulant_cyc_int_formula}, we isolate the integration with respect to $z_j$ and regard the remaining variables as fixed. By Lemma \ref{lem:reduction}, up to factors independent of $z_j$, this integration takes the form
    \begin{align*}
        &\int_{\mathbb{C}}G_{k}(z_{1},\cdots,z_{k})[\overline{\mathbf{T}_{m-1}}]_{\overline{z_{j}}}\widehat{\mathcal{K}}^{q}_{N}(z_{j-1},z_{j})[\mathbf{T}_{m-1}\mathsf{S}_{q}^{-m+1}]_{z_{j}}\widehat{\mathcal{K}}^{q}_{N}(z_{j}, z_{j+1})\,d\widehat{\mathsf{m}}_{q}(z_{j})\\
        &=\int_{\mathbb{C}}[\widetilde{\mathcal{D}}^{q}_{m-1,m-1}]_{z_{j}}G_{k}(z_{1},\cdots, z_{k})\widehat{\mathcal{K}}^{q}_{N}(z_{j-1},z_{j})\widehat{\mathcal{K}}^{q}_{N}(z_{j}, z_{j+1})\,d\widehat{\mathsf{m}}_{q}(z_{j}),
    \end{align*}
    where $z_{0}=z_{k}$ and $z_{k+1}=z_{1}$. Applying this calculation successively to $z_{1},\cdots,z_{k}$, we obtain 
    \begin{equation*}
        C_{k}(\trace_{m,N}(g))
        =\mathcal{I}_{N,k}(\widetilde{\mathcal{D}}^{q}_{m-1,\cdots,m-1}G_{k}),
    \end{equation*}
    where the normalization factors coming from the raising maps are absorbed into the definition of $\widetilde{\mathcal{D}}^{q}_{m-1,\cdots,m-1}$.

    For the full case, we first expand each kernel in \eqref{eqn:kernel T} into the sum of its Landau-level components. This yields a finite sum indexed by $(i_{1},\cdots,i_{k})\in\{0,\cdots,m-1\}^k$. For a fixed multi-index, the two kernel factors meeting at $z_{j}$ carry the levels $i_{j-1}$ and $i_{j}$, with the convention $i_{0}=i_{k}$. Applying Lemma \ref{lem:reduction} successively to $z_{1},\cdots,z_{k}$ therefore gives
    \begin{equation*}
        \mathcal{I}_{N,k}(\widetilde{\mathcal{D}}^{q}_{i_{1},\cdots,i_{k}}G_{k}).
    \end{equation*}
    Summing over all level indices completes the proof.
\end{proof}

\section{Proof of the central limit theorem}
\label{sec:CLT}

\subsection{Asymptotics of the reduction operator}

We proceed to determine the asymptotic behavior of the operator $\widetilde{\mathcal{D}}_{\alpha,\beta}^{q}$ as $N\to\infty$ with $q=e^{-\lambda/N}$. For each $d\geq0$, let $\mathscr{P}_{d}$ be the space of polynomials in $z$ and $\overline{z}$ of degree at most $d$, that is,
\begin{equation*}
    \mathscr{P}_{d}
    =\operatorname{span}\{z^{\alpha}\overline{z}^{\beta}:\alpha+\beta\leq d\}.
\end{equation*}
Here, $\mathscr{P}_{d}$ is naturally endowed with the $\ell^{1}$-norm of the coefficients. Fix $m\geq1$ and $d\geq0$. Then, for each $0\leq\alpha,\beta\leq m-1$, the restriction $\widetilde{\mathcal{D}}_{\alpha,\beta}^{q}$ to $\mathscr{P}_{d}$ can be viewed as a linear operator from $\mathscr{P}_{d}$ to $\mathscr{P}_{d+m}$. Under this restriction, the definition of $\widetilde{\mathcal{D}}_{\alpha,\beta}^{q}$ yields the operator-norm estimate 
\begin{equation}\label{eqn:mathcal tilde D operator norm}
    \|\widetilde{\mathcal{D}}_{\alpha,\beta}^{q}\|_{\mathscr{P}_{d}\to\mathscr{P}_{d+m}}
    \leq C_{m,d}(1-q)^{\frac{|\alpha-\beta|}{2}},
\end{equation}
where $C_{m,d}>0$ is a constant depending on $m$ and $d$. Thus, for a multi-index $\mathbf{i}=(i_{1},\cdots,i_{k})$, the norm of corresponding operator has order
\begin{equation}\label{eqn:Di order}
    \|\widetilde{\mathcal{D}}_{\mathbf{i}}^{q}\|
    =O(N^{-\ell(\mathbf{i})})
\end{equation}
in the double-scaling regime $q=e^{-\lambda/N}$, where
\begin{equation*}
    \ell(\mathbf{i})
    =\frac{1}{2}\sum_{j=1}^{k}|i_{j}-i_{j-1}|,\qquad
    i_{0}:=i_{k}.
\end{equation*}
Combining \eqref{eqn:Di order} with the $O(N)$ bound for $\mathcal{I}_{N,k}$ established in Lemma \ref{lem:mathcal I diagonal} below, we obtain the estimate
\begin{equation*}
    \mathcal{I}_{N,k}(\widetilde{\mathcal{D}}_{\mathbf{i}}^{q}G_{k})
    =O(N^{1-\ell(\mathbf{i})})
\end{equation*}
as $N\to\infty$. Thus, we need only consider the cases $\alpha=\beta$ and $|\alpha-\beta|=1$ in the proof of the central limit theorem. Define a differential operator
\begin{equation*}
    \mathscr{L}
    :=\overline{z}\partial_{\overline{z}}+(1+|z|^{2})\partial_z\partial_{\overline{z}}.
\end{equation*}
The following lemma provides the asymptotic behavior of $\widetilde{\mathcal{D}}_{\alpha,\beta}^{q}$ in such cases.

\begin{lemma}\label{lem:mathcal D expansion}
    Fix $m\geq1$ and $d\geq0$. Then, we have a uniform $(1-q)$-expansion in $\|\cdot\|_{\mathscr{P}_{d}\to\mathscr{P}_{d+m}}$
    \begin{align}
        \widetilde{\mathcal{D}}_{r,r}^{q}
        &=\Id+r(1-q)\mathscr{L}+O((1-q)^{2}),\label{eqn:mathcal D rr expansion}
        \smallskip\\
        \widetilde{\mathcal{D}}_{r,r-1}^{q}
        &=(1-q)^{\frac{1}{2}}\sqrt{r}\partial_{\overline{z}}+O((1-q)^{\frac{3}{2}}),\label{eqn:mathcal D rr-1 expansion}
        \smallskip\\
        \widetilde{\mathcal{D}}_{r-1,r}^{q}
        &=(1-q)^{\frac{1}{2}}\sqrt{r}(1+|z|^{2}){\partial_z}+O((1-q)^{\frac{3}{2}}),\label{eqn:mathcal D r-1r expansion}
    \end{align}
    over $1\leq r\leq m-1$.
\end{lemma}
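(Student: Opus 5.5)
The plan is to expand the explicit finite sum defining $\mathcal{D}^{q}_{\alpha,\beta}$ term by term in powers of $(1-q)$, after multiplying by $\gamma_\alpha\gamma_\beta$. Since $\mathscr{P}_d$ is finite-dimensional and every operator involved ($\partial_{q,z}$, $\partial_{q,\overline z}$, $\mathsf{S}^{\pm1}_{q,z}$, $\mathsf{S}^{\pm1}_{q,\overline z}$, multiplication by $(-q^{j}|z|^2;q)_{\beta-j}$) maps monomials to monomials with coefficients that are Laurent polynomials in $q$ or $q^{1/2}$, it suffices to compute on a monomial $F=z^a\overline z^b$ with $a+b\le d$. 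The resulting $O(\cdot)$ bounds are then automatically uniform in operator norm, with constants depending only on $m$ and $d$.

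The basic expansions on $\mathscr{P}_d$ are the following:
\begin{itemize}
\item $\partial_{q,z}z^a=[a]_q z^{a-1}=\partial_z z^a\,(1+O(1-q))$, and similarly for $\partial_{q,\overline z}$;
\item $\mathsf{S}^{n}_{q,z}z^a=q^{na}z^a=(1-na(1-q)+O((1-q)^2))z^a$;
\item $(-q^{j}|z|^2;q)_{\beta-j}=(1+|z|^2)^{\beta-j}+O(1-q)$ as polynomials;
\item $\gamma_r^2=q^{r(r+1)/2}(1-q)^{2r}/(q;q)_r=q^{r(r+1)/2}(1-q)^{r}/[r]_q!$, so $\gamma_\alpha\gamma_\beta=(1-q)^{(\alpha+\beta)/2}(\alpha!\beta!)^{-1/2}(1+O(1-q))$.
\end{itemize}
In the $j$-th summand of $\mathcal{D}^q_{\alpha,\beta}$ the prefactor carries $(1-q)^{-j}$. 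Hence after normalization the $j$-th term is of order $(1-q)^{(\alpha+\beta)/2-j}$, and only the top index $j=\alpha\wedge\beta$ and (in the diagonal case) $j=\alpha\wedge\beta-1$ can contribute to the stated orders.

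For the off-diagonal cases \eqref{eqn:mathcal D rr-1 expansion} and \eqref{eqn:mathcal D r-1r expansion}, only $j=r-1$ matters. For $(\alpha,\beta)=(r,r-1)$ the Pochhammer factor is $(\cdot;q)_0=1$ and there is one $\overline z$-derivative. The constant is
\[
\qbinom{r}{r-1}[r-1]_q!\,(1-q)^{-(r-1)}\cdot(1-q)^{r-\frac12}\big(r!(r-1)!\big)^{-1/2}\to\sqrt r\,(1-q)^{1/2}.
\]
All shifts and $q$-powers equal $1+O(1-q)$, which gives $(1-q)^{1/2}\sqrt r\,\partial_{\overline z}+O((1-q)^{3/2})$. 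For $(r-1,r)$ the same count applies, now with the factor $(-q^{r-1}|z|^2;q)_1=1+|z|^2+O(1-q)$ and one $z$-derivative, yielding $(1-q)^{1/2}\sqrt r(1+|z|^2)\partial_z$. The lower $j$ terms are $O((1-q)^{3/2})$.

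The diagonal case \eqref{eqn:mathcal D rr expansion} is the main obstacle, because the $O(1-q)$ terms must be tracked exactly. There are three sources of such terms.
\begin{itemize}
\item The $j=r$ term. Its prefactor gives $[r]_q!\cdot q^{-r-r(r-1)/2}$, which against $\gamma_r^2(1-q)^{-r}=q^{r(r+1)/2}/[r]_q!$ gives exactly $1$. Its remaining content is the shift $\mathsf{S}^{-r}_{q,\overline z}\mathsf{S}^{r}_{q,z}$, which multiplies $z^a\overline z^b$ by $q^{r(a-b)}=1+r(1-q)(b-a)+O((1-q)^2)$.
\item The $j=r-1$ term. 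Its constant tends to $r^2(r-1)!/r!=r$, times $(1-q)$. It acts by $\partial_{\overline z}[(1+|z|^2)\partial_z F]=z\partial_zF+(1+|z|^2)\partial_z\partial_{\overline z}F$, so it contributes $r(1-q)\big(a+(1+|z|^2)\partial_z\partial_{\overline z}\big)$ on the monomial.
\item The terms $j\le r-2$, which are $O((1-q)^2)$.
\end{itemize}
Summing, the $-ra$ from the shift cancels the $+ra$ from $z\partial_z$, leaving $r(1-q)\big(\overline z\partial_{\overline z}+(1+|z|^2)\partial_z\partial_{\overline z}\big)=r(1-q)\mathscr{L}$. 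This cancellation is the key check, and I would verify the signs and exponents of the $q$-powers carefully on monomials, since any leftover $q^{c}$ with $c$ depending on $r$ alone would spoil the identity term.
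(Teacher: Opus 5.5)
Your proposal is correct and follows essentially the same route as the paper. Like the paper, you keep only the top one or two summands of the finite sum defining $\mathcal{D}^{q}_{\alpha,\beta}$. You check that, after multiplying by $\gamma_\alpha\gamma_\beta$, their constants are $1$ and $(1-q)q^{1-r}[r]_q$ in the diagonal case and $(1-q)^{1/2}\sqrt{[r]_q}$ (up to $q$-powers) in the off-diagonal cases. You then substitute the first-order expansions of $\partial_{q,z}$ and $\mathsf{S}^{c}_{q,z}$.

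Your explicit monomial check is exactly the step the paper leaves implicit in ``substituting the asymptotics'': the $-ra(1-q)$ from $\mathsf{S}^{-r}_{q,\overline z}\mathsf{S}^{r}_{q,z}$ cancels the $+ra(1-q)$ from $\partial_{\overline z}(1+|z|^2)=z$.
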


\begin{proof}
    Since $\widetilde{\mathcal{D}}_{\alpha,\beta}^{q}$ consists of the $q$-difference operator $\partial_{q,z}$, shift operator $\mathsf{S}_{q}^{a}$ and their complex conjugate, we begin with the asymptotics of these building blocks. First, it is straightforward from
    \begin{equation*}
        \partial_{q,z}f(z,\overline{z})
        =\frac{f(z)-f(qz)}{(1-q)z}
        =\partial_z f(z,\overline{z})+O(1-q)
    \end{equation*}
    that
    \begin{equation*}
        \partial_{q,z}
        =\partial_z+O(1-q).
    \end{equation*}
    Using this, for each integer $c$, the expansion
    \begin{equation*}
        \mathsf{S}_{q,z}f(z,\overline{z})
        =f(qz,\overline{z})
        =f(z,\overline{z})-(1-q)z\partial_{q,z}f(z,\overline{z})
    \end{equation*}
    yields the expansion
    \begin{equation*}
        \mathsf{S}_{q,z}^{c}
        =\Id-c(1-q)z\partial_z+O((1-q)^{2}).
    \end{equation*}
    
     By definition, $\widetilde{\mathcal{D}}_{\alpha,\beta}^{q}$ admits the $(1-q)$-expansion whose leading order is $(\alpha+\beta)/2-\alpha\wedge\beta$. In the case $\alpha=\beta=r$, the definition gives the expansion
    \begin{equation*}
        \widetilde{\mathcal{D}}_{r,r}^{q}
        =\mathsf{S}_{q,\overline{z}}^{-r}\mathsf{S}_{q,z}^{r}+(1-q)q^{1-r}[r]_{q}\mathsf{S}_{q,\overline{z}}^{-r}\partial_{q,\overline{z}}\big[(1+q^{r-1}|z|^{2})\mathsf{S}_{q,z}^{r-1}\partial_{q,z}]+O((1-q)^{2}).
    \end{equation*}
    Substituting the asymptotics of $\partial_{q,z}$ and $\mathsf{S}_{q}$, we obtain \eqref{eqn:mathcal D rr expansion}.

    Next, in the cases $(\alpha,\beta)=(r,r-1)$ and $(r-1,r)$, we only need the leading coefficients. The leading behavior of these operators is given by
    \begin{align*}
        \widetilde{\mathcal{D}}_{r,r-1}^{q}
        &=(1-q)^{\frac{1}{2}}q^{-\frac{r}{2}}\sqrt{[r]_{q}}\mathsf{S}_{q,\overline{z}}^{-r}\partial_{q,\overline{z}}\mathsf{S}_{q,z}^{r-1}+O((1-q)^{\frac{3}{2}}),\smallskip\\
        \widetilde{\mathcal{D}}_{r-1,r}^{q}
        &=(1-q)^{\frac{1}{2}}q^{1-\frac{r}{2}}\sqrt{[r]_{q}}\mathsf{S}_{q,\overline{z}}^{-(r-1)}\big[(1+q^{r-1}|z|^{2})\mathsf{S}_{q,z}^{r-1}\partial_{q,z}\big]+O((1-q)^{\frac{3}{2}}).
    \end{align*}
    Again, substituting the leading asymptotic of $\partial_{q,z}$ and $\mathsf{S}_{q}$, we recover \eqref{eqn:mathcal D rr-1 expansion} and \eqref{eqn:mathcal D r-1r expansion}.
\end{proof}

We turn to asymptotic analysis of $\mathcal{I}_{N,k}$. For a polyanalytic polynomial $F$ in $k$ variables $z_{1},\cdots,z_{k}$, we define the diagonal restriction of $F$ by
\begin{equation*}
    \Delta_{F}(z)
    :=F(z,\cdots,z).
\end{equation*}

\begin{lemma}\label{lem:mathcal I diagonal}
    Fix $d\geq0$ and $k\geq1$. Let $q$ be scaled according to \eqref{eqn:def of q scaling}. Let $\{F_{N}\}_{N\geq1}$ be a sequence of polyanalytic polynomials in $k$ variables of degree at most $d$ whose coefficients are uniformly bounded in $N$. Then,
    \begin{equation*}
        \mathcal{I}_{N,k}(F_{N})
        =\int_{\mathbb{C}}\Delta_{F_{N}}(z)\widehat{\mathcal{K}}^{q}_{N}(z,z)\,d\widehat{\mathsf{m}}_{q}(z)+O(1),
    \end{equation*}
    where the integral on the right-hand side is of order $O(N)$.
\end{lemma}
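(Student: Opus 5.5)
By multilinearity we may reduce to monomials $F(z_1,\dots,z_k)=\prod_{l=1}^k z_l^{\alpha_l}\overline{z_l}^{\beta_l}$ with $|\alpha|+|\beta|\le kd$. The number of such monomials is bounded in terms of $k,d$ only, and the coefficients of $F_N$ are uniformly bounded, so it suffices to prove the estimate with an $O(1)$ error that is uniform over this finite family. For a monomial, $\mathcal{I}_{N,k}(F)=\Phi_k(z^\alpha\overline{z}^\beta)$. The diagonal term is the spectral moment $\int |z|^{2s}\widehat{\mathcal K}^q_N(z,z)\,d\widehat{\mathsf m}_q=\mathfrak m^q_{1,N}(s,s)$ when $\Delta_F=z^{|\alpha|}\overline z^{|\beta|}$ with $|\alpha|=|\beta|=s$. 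When $|\alpha|\ne|\beta|$, both sides vanish: the left side by \eqref{eqn:Phi(zzbar)} and the right side by radial symmetry (Theorem~\ref{thm:mixed moment}).

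Assume $|\alpha|=|\beta|=s$. For $s=0$ we have $\mathcal I_{N,k}(1)=\operatorname{Tr}(\widehat{\mathcal K}^q_N)^k=N$, which agrees with the diagonal term exactly. For $s\ge1$, I would compare two exact sums rather than pass through the Euler--Maclaurin expansion. By \eqref{eqn:Phi(zzbar)},
\begin{equation*}
\Phi_k(z^\alpha\overline z^\beta)=\sum_{n=-\eta_{\min}}^{N-1-\eta_{\max}}F_k(n),\qquad F_k(n)=\prod_{l=1}^k\prod_{r=1}^{\alpha_l}\big(q^{-(n+\eta_l+r)}-1\big).
\end{equation*}
The case $j=1$ of the same formula, with the single index $(s)$, $(s)$, gives
\begin{equation*}
\int|z|^{2s}\widehat{\mathcal K}^q_N(z,z)\,d\widehat{\mathsf m}_q=\sum_{n=0}^{N-1}\prod_{r=1}^{s}\big(q^{-(n+r)}-1\big).
\end{equation*}

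The difference splits into two parts. First, the summation ranges differ by at most $|\eta_{\min}|+|\eta_{\max}|\le kd$ terms. Each term is bounded by $(q^{-(N+kd+s)}-1)^{s}\le (e^{2\lambda})^{s}$ for large $N$, so this part is $O(1)$. Second, on the common range the summands $F_k(n)$ and $G(n):=\prod_{r=1}^s(q^{-(n+r)}-1)$ are both products of exactly $s$ factors of the form $q^{-(n+c)}-1$ with $|c|\le kd+s$. Two such factors differ by
\begin{equation*}
|q^{-(n+c)}-q^{-(n+c')}|\le q^{-n}\,|q^{-c}-q^{-c'}|=O(N^{-1}),
\end{equation*}
uniformly for $0\le n\le N$, and every factor is bounded by $e^{2\lambda}$. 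A telescoping product estimate then gives $|F_k(n)-G(n)|=O(N^{-1})$ uniformly in $n$. Summing over at most $N$ indices yields an $O(1)$ contribution.

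Finally, the diagonal integral is $O(N)$. This follows from the same sum: there are $N$ terms, each bounded by $(e^{2\lambda})^{s}$. Alternatively it follows from Lemma~\ref{lem:rescaled moment limit}.

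The only point that needs care is uniformity in $N$ with $q=e^{-\lambda/N}$: the factors $q^{-(n+c)}-1$ must stay bounded and the shift differences must be $O(1/N)$. Both hold because $n+c\le N+O(1)$, so no real obstacle is expected. The remaining bookkeeping is the bound on the number of boundary terms by $kd$, which comes from $|\eta_l|\le|\alpha|+|\beta|$.
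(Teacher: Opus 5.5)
Your argument is correct, but it takes a different route from the paper.

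\textbf{The paper's route.} The paper handles the case $|\alpha|=|\beta|=s$ in one line. It invokes the Euler--Maclaurin expansion \eqref{eqn:Phij expansion} of Lemma~\ref{lem:large_N_expansion} twice, once for $\Phi_k(z^\alpha\overline z^\beta)$ and once for $\Phi_1(z^s\overline z^s)$. Both have the same leading term $N\int_0^1(e^{\lambda t}-1)^s\,dt$. Their difference is therefore the difference of the explicit constant terms plus $o(1)$.

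\textbf{Your route.} You compare the two exact sums from \eqref{eqn:Phi(zzbar)} directly. Since $\eta_k=0$ forces $\eta_{\min}\le 0\le\eta_{\max}$, the summation range for $\Phi_k$ lies inside $\{0,\dots,N-1\}$. It misses only $\eta_{\max}-\eta_{\min}$ terms, each of which is bounded. On the common range, both summands are products of $s$ factors of the form $q^{-(n+c)}-1$. These factors are uniformly bounded and pairwise $O(N^{-1})$ apart, so a telescoping estimate gives an $O(1)$ total.

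\textbf{Comparison.} Your version is more elementary and self-contained.
\begin{itemize}
\item It needs neither the uniform expansion of $F_j(Nt)$ nor Euler--Maclaurin.
\item It treats $s=0$ explicitly. Lemma~\ref{lem:large_N_expansion} assumes $s\ge1$, and the paper leaves the $s=0$ case implicit.
\item It makes explicit the uniformity over the finitely many monomials with bounded coefficients.
\end{itemize}
The paper's version is shorter because it reuses a lemma already needed for Theorem~\ref{thm:analytic CLT}. It also identifies the $O(1)$ term exactly, although this lemma does not need that.
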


\begin{proof}
    By linearity, it suffices to consider monomials
    \begin{equation*}
        F(z_{1},\cdots,z_{k})
        =z^{\alpha}\overline{z}^{\beta}
    \end{equation*}
    for multi-indices $\alpha=(\alpha_{1},\cdots,\alpha_{k})$ and $\beta=(\beta_{1},\cdots,\beta_{k})$. If $|\alpha|\neq|\beta|$, both integrals vanish by rotational invariance. Suppose $|\alpha|=|\beta|=s$. The assertion follows directly from \eqref{eqn:Phij expansion} for $\mathcal{I}_{N, 1}(\Delta_F)= \Phi_{1}(z^{s}\overline{z}^{s})$ and $\mathcal{I}_{N, k}(F)=\Phi_{k}(z^{\alpha}\overline{z}^{\beta})$.
\end{proof}

As a direct consequence, under the same assumption, if $\Delta_{F_{N}}=0$, we have
\begin{equation}\label{eqn:mathcal I diagonal}
    \mathcal{I}_{N,k}(F_{N})=O(1).
\end{equation}
Moreover, if $\Delta_{F_{N}}$ converges to a polynomial $\Delta_{F}$ coefficientwise, by Lemma \ref{lem:rescaled moment limit} and rotational invariance, we have
\begin{equation*}
    \frac{1}{N}\int_{\mathbb{C}}\Delta_{F_{N}}(z)\widehat{\mathcal{K}}^{q}_{N}(z,z) d\widehat{\mathsf{m}}_{q}(z)
    =\int_{D_{\lambda}}\Delta_{F}(z)\widehat{\rho}_{\lambda}(z)\,dA(z)+o(1).
\end{equation*}
Hence, Lemma \ref{lem:mathcal I diagonal} yields
\begin{equation}\label{eqn:mathcal I LLN}
    \mathcal{I}_{N,k}(F_{N})
    =N\int_{D_{\lambda}}\Delta_{F}(z)\widehat{\rho}_{\lambda}(z)\,dA(z)+o(N),
\end{equation}
as $N\to\infty$.

\subsection{Polynomial test functions}

We are ready to prove the central limit theorem for polynomial test functions in the polyanalytic setting. We begin with the pure polyanalytic ensemble.

\begin{proposition}\label{prop:pure polynomial cumulant}
Let $g$ be a real-valued polynomial in $z$, $\overline{z}$ and $m\geq1$. Then,
\begin{equation*}
    \lim_{N\to\infty}C_{k}(\trace_{m,N}(g))
    =\begin{cases}
       \displaystyle(2m-1)\|g\|_{H^{1}(D_{\lambda}^{\circ})}^{2}
       +\frac{1}{2}\|g\|_{H^{1/2}(\partial D_{\lambda})}^{2}, & \textup{if }k=2,\smallskip\\
       0,& \textup{if } k\geq3.
     \end{cases}
\end{equation*}
\end{proposition}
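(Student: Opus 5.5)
Set $r:=m-1$. The starting point is the reduction formula \eqref{eqn:pure cumulant I}:
\[
C_k(\trace_{m,N}(g))=\mathcal{I}_{N,k}\big(\widetilde{\mathcal{D}}^q_{r,\dots,r}G_k\big),\qquad \widetilde{\mathcal{D}}^q_{r,\dots,r}=\prod_{j=1}^k[\widetilde{\mathcal{D}}^q_{r,r}]_{z_j}.
\]
By Lemma \ref{lem:mathcal D expansion}, $\widetilde{\mathcal{D}}^q_{r,r}=\Id+r(1-q)\mathscr{L}+O((1-q)^2)$ uniformly on $\mathscr{P}_d$, where $d$ is the degree of $G_k$. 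Since $1-q=\lambda/N+O(N^{-2})$, we expand the product:
\[
\widetilde{\mathcal{D}}^q_{r,\dots,r}G_k=G_k+\frac{r\lambda}{N}\sum_{j=1}^k[\mathscr{L}]_{z_j}G_k+R_N,
\]
where $R_N$ has degree at most $d+km$ and coefficients $O(N^{-2})$. By Lemma \ref{lem:mathcal I diagonal}, $\mathcal{I}_{N,k}(R_N)=O(N^{-1})$. The leading term $\mathcal{I}_{N,k}(G_k)$ equals $C_k(\trace_{1,N}(g))$, and Theorem \ref{thm:analytic CLT} (through \eqref{eqn:analytic cumulant limit}) identifies its limit: the $m=1$ variance for $k=2$, and $0$ for $k\ge3$.

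It remains to evaluate $\frac{r\lambda}{N}\,\mathcal{I}_{N,k}\big(\sum_j[\mathscr{L}]_{z_j}G_k\big)$. By \eqref{eqn:mathcal I LLN}, this equals
\[
r\lambda\int_{D_\lambda}\Delta\Big(\sum_j[\mathscr{L}]_{z_j}G_k\Big)\widehat\rho_\lambda\,dA+o(1).
\]
Write $\mathscr{L}=\overline z\partial_{\overline z}+(1+|z|^2)\partial_z\partial_{\overline z}$ and compute the diagonal restriction.

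\textbf{First-order part.} Summing $\partial_{\overline z_j}$ over $j$ and then restricting to the diagonal gives $\partial_{\overline z}$ applied to the diagonal. By \eqref{eqn: def of Gk}, $\Delta_{G_k}=g^k\sum_{j}\frac{(-1)^{j-1}}{j}\#\{\text{surjections }[k]\to[j]\}=0$ for $k\ge2$, so this part vanishes.

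\textbf{Second-order part.} Here $\sum_j\Delta\big([\partial_{z_j}\partial_{\overline z_j}]G_k\big)$ is a combination of $g^{k-2}\partial g\,\overline\partial g$ and $g^{k-1}\partial\overline\partial g$. Using the combinatorial identity behind $G_k$ (the cumulant generating structure $\log(1+\sum\cdots)$), we expect:
\begin{itemize}
\item for $k\ge3$, the coefficient vanishes, since these diagonal sums are exactly the combinatorial sums for which the $k\ge3$ cumulants of a single-point quantity vanish;
\item for $k=2$, $G_2=g(z_1)g(z_2)-g(z_1)^2$ (up to symmetrization) and the result is $-\partial g\,\overline\partial g$ (real part).
\end{itemize}
We would verify this by direct computation for $k=2$ and by a generating-function argument in general: $\sum_k \frac{t^k}{k!}G_k$ equals $\log$ of a sum over cyclic products, and diagonal derivatives correspond to second-order terms in $t$.

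With $\widehat\rho_\lambda=\frac{1}{\lambda(1+|z|^2)}$, the $(1+|z|^2)$ factor cancels the density. The $k=2$ correction is then
\[
r\lambda\int_{D_\lambda}\frac{-(1+|z|^2)\,2\operatorname{Re}\big(\partial g\cdot(-\overline\partial g)\big)\cdot\tfrac12}{\lambda(1+|z|^2)}\,dA.
\]
For real $g$ we have $\partial g=\overline{\overline\partial g}$, so $\partial g\,\overline\partial g=|\overline\partial g|^2$, and the correction should equal $2r\|g\|^2_{H^1(D_\lambda^\circ)}$. Added to the analytic variance this gives $(1+2r)=(2m-1)$ times the bulk term, while the boundary term is unchanged.

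\textbf{Main obstacle.} The hard step is pinning down the sign and constant in the $k=2$ diagonal computation, including $\overline z\partial_{\overline z}$ terms that might survive: when $\mathscr{L}$ acts on $z_j$, the term $\overline z_j\partial_{\overline z_j}$ gives $\overline z\,\partial_{\overline z}\Delta_{G_k}=0$ on the diagonal, which justifies dropping it. One must also check that the $O(1)$ error in Lemma \ref{lem:mathcal I diagonal}, multiplied by $1/N$, is negligible, which it is. As a sanity check, the constant can be compared with \cite{HW19}: classically, level $m$ yields bulk coefficient $2m-1$.
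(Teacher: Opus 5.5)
Your outline is the paper's proof: the reduction formula \eqref{eqn:pure cumulant I}, the expansion \eqref{eqn:mathcal D rr expansion}, the $O(N^{-1})$ remainder via Lemma~\ref{lem:mathcal I diagonal}, the analytic cumulants for $\mathcal{I}_{N,k}(G_k)$, and \eqref{eqn:mathcal I LLN} for the $(1-q)$-correction. You also dispose of the first-order part of $\mathscr{L}$ correctly once you sum over $j$. (An individual $\overline{z_j}\partial_{\overline{z_j}}G_k$ need not vanish on the diagonal, but the sum restricts to $\overline{z}\,\partial_{\overline{z}}\Delta_{G_k}=0$.)

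\textbf{Gap for $k=2$.} The computation that gives the factor $2m-1$ is wrong as written, and your final constant is asserted rather than derived. By \eqref{eqn: def of Gk}, $G_2=g(z_1)^2-g(z_1)g(z_2)$; you have the opposite sign. It may be symmetrized to $\frac12(g(z_1)-g(z_2))^2$, since $\mathcal{I}_{N,2}$ and the reduction operator are invariant under $z_1\leftrightarrow z_2$. Then $\partial_{z_1}\partial_{\overline{z_1}}G_2=\partial g(z_1)\,\overline{\partial}g(z_1)+(g(z_1)-g(z_2))\,\partial\overline{\partial}g(z_1)$. So for real $g$ each variable contributes $|\overline{\partial}g|^2$ on the diagonal, giving $\Delta_{(\mathscr{L}_{z_1}+\mathscr{L}_{z_2})G_2}=2(1+|z|^2)|\overline{\partial}g|^2$, which is \eqref{eqn:G2 diagonal}. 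The correction is then $r\lambda\int_{D_\lambda}2(1+|z|^2)|\overline{\partial}g|^2\,\frac{dA}{\lambda(1+|z|^2)}=2r\|g\|^2_{H^1(D_\lambda^\circ)}$. Your diagonal value $-\partial g\,\overline{\partial}g$ is off by a factor $-2$. Your displayed integral, evaluated as written, equals $r\|g\|^2_{H^1(D_\lambda^\circ)}$, which would give bulk coefficient $m$, not $2m-1$. The ``$2r$'' is read off from the expected answer and \cite{HW19}, not computed.

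\textbf{Gap for $k\geq3$.} You only \emph{expect} that $\sum_j\partial_{z_j}\partial_{\overline{z_j}}G_k$ vanishes on the diagonal. The paper takes this from \cite[Lemmas 3.2, 3.3]{AHM11}. Your generating-function idea also closes it directly and recovers the $k=2$ constant. Write $G_k=\Psi_k(g(z_1),\dots,g(z_k))$ with $\Psi_k(x_1,\dots,x_k)=\sum_j\frac{(-1)^{j-1}}{j}\sum_{\sigma\colon[k]\twoheadrightarrow[j]}\prod_l x_l^{|\sigma^{-1}(l)|}$, and let $\partial_i=\partial/\partial x_i$. By the chain rule, on the diagonal
$\sum_i\partial_{z_i}\partial_{\overline{z_i}}G_k=\big(\sum_i\partial_i\Psi_k\big)\,\partial\overline{\partial}g+\big(\sum_i\partial_i^2\Psi_k\big)\,|\overline{\partial}g|^2$,
with both coefficients evaluated at $(g,\dots,g)$. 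The first coefficient is $\frac{d}{dx}\Psi_k(x,\dots,x)=0$. For the second, count ordered pairs $a\neq b$ in a common fiber of $\sigma$ and merge them:
\[
\sum_i\partial_i^2\Psi_k(x,\dots,x)=k(k-1)\,x^{k-2}\sum_{j\geq1}\frac{(-1)^{j-1}}{j}\,\#\{\tau\colon[k-1]\twoheadrightarrow[j]\}=k(k-1)\,x^{k-2}\,\delta_{k,2}.
\]
The last equality is the coefficient of $t^{k-1}/(k-1)!$ in $\log\big(1+(e^{t}-1)\big)=t$. Hence the diagonal vanishes for $k\geq3$ and equals $2|\overline{\partial}g|^2$ for $k=2$. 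With these two computations inserted, your argument is exactly the paper's.
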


\begin{proof}
    Recall the representation of the cumulants
    \begin{equation*}
        C_{k}(\trace_{m,N}(g))
        =\mathcal{I}_{N,k}(\widetilde{\mathcal{D}}_{m-1,\cdots,m-1}^{q}G_{k}).
    \end{equation*}
    See \eqref{eqn:pure cumulant I}. Using the expansion \eqref{eqn:mathcal D rr expansion}, we obtain
    \begin{equation*}
        \widetilde{\mathcal{D}}_{m-1,\cdots,m-1}^{q}G_{k}
        =G_{k}+(m-1)(1-q)\sum_{j=1}^{k}\mathscr{L}_{z_{j}}G_{k}+O((1-q)^{2}),
    \end{equation*}
    where the error term is uniform in $N$. Thus, by \eqref{eqn:pure cumulant I} and Lemma \ref{lem:mathcal I diagonal}, we obtain a uniform expansion
    \begin{equation}\label{eqn:pure polynomial cumulant expansion}
        C_{k}(\trace_{m,N}(g))
        =\mathcal{I}_{N,k}(G_{k})+(m-1)(1-q)\sum_{j=1}^{k}\mathcal{I}_{N,k}(\mathscr{L}_{z_j}G_{k})+O(N(1-q)^{2}).
    \end{equation}
    First, consider the case $k\geq3$. Here, by the projection property of $\widehat{\mathcal{K}}^q_N$ we have $\mathcal{I}_{N,k}(G_{k})=C_{k}(\textup{trace}_{1, N}(g))$, and it vanishes as $N$ tends to infinity by \eqref{eqn:analytic cumulant limit}. Moreover, by \cite[Lemmas 3.2, 3.3]{AHM11}, the polynomials
    \begin{equation*}
        \sum_{j=1}^{k}\partial_{\overline{z_{j}}}G_{k},\qquad
        \sum_{j=1}^{k}\partial_{z_j}\partial_{\overline{z_j}}G_{k}
    \end{equation*}
    vanish on diagonal. Consequently, $\sum_{j=1}^{k}\mathscr{L}_{z_j}G_{k}$ also vanishes on the diagonal, and hence \eqref{eqn:mathcal I diagonal} yields
    \begin{equation*}
        (m-1)\sum_{j=1}^{k}\mathcal{I}_{N,k}(\mathscr{L}_{z_j}G_{k})
        =O(1).
    \end{equation*}
    Since $1-q=O(N^{-1})$, all cumulants of order $k\geq3$ vanish as $N\to\infty$.

    We turn to the second cumulant. For the first term in \eqref{eqn:pure polynomial cumulant expansion}, we obtain $\mathcal{I}_{N,2}(G_{2})=C_2(\textup{trace}_{1, N}(g))$, and by \eqref{eqn:analytic cumulant limit}, it converges to
    \begin{equation}\label{eqn:pure polynomial cumulant 1}
        \|g\|_{H^{1}(D_{\lambda}^{\circ})}^{2}+\frac{1}{2}\|g\|_{H^{1/2}(\partial D_{\lambda})}^{2}
    \end{equation}
    as $N\to\infty$. For the second term in \eqref{eqn:pure polynomial cumulant expansion}, by symmetrization, we can replace $G_{2}$ by
    \begin{equation}\label{eqn:G2 alternative}
        G_{2}(z_{1},z_{2})
        =\frac{1}{2}\big(g(z_{1})-g(z_{2})\big)^{2}.
    \end{equation}
    With this choice, a direct calculation gives
    \begin{equation}\label{eqn:G2 diagonal}
        \big(\mathscr{L}_{z}+\mathscr{L}_{w}\big)G_{2}(z,w)\Big|_{z=w}
        =2(1+|z|^{2})|\partial_{\overline{z}}g(z)|^{2}.
    \end{equation}
     Moreover, substituting \eqref{eqn:G2 diagonal} into \eqref{eqn:mathcal I LLN}, we obtain
    \begin{equation}\label{eqn:pure polynomial cumulant 2}
    \begin{split}
        \lim_{N\to\infty}(m-1)(1-q)\mathcal{I}_{N,2}\big((\mathscr{L}_{z_{1}}+\mathscr{L}_{z_{2}})G_{2}\big)
        &=(m-1)\lambda\int_{D_{\lambda}}2(1+|z|^{2})|\overline{\partial}g(z)|^{2}\frac{dA(z)}{\lambda(1+|z|^{2})}\smallskip \\
        &=2(m-1)\|g\|_{H^{1}(D_{\lambda}^{\circ})}^{2}.
    \end{split}
    \end{equation}
    Substituting \eqref{eqn:pure polynomial cumulant 1} and \eqref{eqn:pure polynomial cumulant 2} into \eqref{eqn:pure polynomial cumulant expansion} with $k=2$, we complete the proof.
\end{proof}

We note that the second cumulant coincides with the variance in \eqref{eqn:pure CLT}. We turn to the limiting behavior of the cumulants associated with the full ensemble with polynomial test function, which recovers \eqref{eqn:full CLT}.

\begin{proposition}
Let $g$ be a real-valued polynomial in $z$, $\overline{z}$ and $m\geq1$. For every $k\geq2$,
\begin{equation*}
    \lim_{N\to\infty}C_k(\trace_{m,N}^{\full}(g))
    =\begin{cases}
       m\Big(\|g\|_{H^{1}(D_{\lambda}^{\circ})}^{2}+\frac{1}{2}\|g\|_{H^{1/2}(\partial D_{\lambda})}^{2}\Big), & k=2,\smallskip\\
       0,& k\geq3.
     \end{cases}
\end{equation*}
\end{proposition}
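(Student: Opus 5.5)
We start from the representation \eqref{eqn:pure cumulant I},
\begin{equation*}
C_k(\trace_{m,N}^{\full}(g))=\sum_{0\le i_1,\dots,i_k\le m-1}\mathcal{I}_{N,k}(\widetilde{\mathcal D}^q_{\mathbf i}G_k),
\end{equation*}
and sort the multi-indices $\mathbf i$ by the quantity $\ell(\mathbf i)$.

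\emph{Terms with $\ell(\mathbf i)\ge 3/2$.} By \eqref{eqn:Di order} and Lemma \ref{lem:mathcal I diagonal}, each such term is $O(N^{1-\ell(\mathbf i)})=o(1)$.

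\emph{Terms with $\ell(\mathbf i)=0$.} These are the constant indices $\mathbf i=(r,\dots,r)$ with $0\le r\le m-1$. Each contributes exactly the pure cumulant $C_k(\trace_{r+1,N}(g))$. By Proposition \ref{prop:pure polynomial cumulant}, their sum tends to $0$ for $k\ge3$. For $k=2$ it tends to
\begin{equation*}
\sum_{r=0}^{m-1}\Big((2r+1)\|g\|_{H^1}^2+\tfrac12\|g\|_{H^{1/2}}^2\Big)=m^2\|g\|_{H^1}^2+\tfrac m2\|g\|_{H^{1/2}}^2 .
\end{equation*}

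\emph{Terms with $\ell(\mathbf i)=1$.} For these the cyclic sequence $\mathbf i$ takes two adjacent values $r-1,r$. It has exactly one ascent $r-1\to r$ and one descent $r\to r-1$, and is constant otherwise. Here $\ell(\mathbf i)=1$ means $\sum_j|i_j-i_{j-1}|=2$.

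For such $\mathbf i$, Lemma \ref{lem:mathcal D expansion} gives
\begin{equation*}
\widetilde{\mathcal D}^q_{\mathbf i}=(1-q)\,r\,[\partial_{\overline z}]_{z_a}[(1+|z|^2)\partial_z]_{z_b}+O((1-q)^2),
\end{equation*}
where $a\ne b$ are the two positions at which the level changes. All other factors are $\Id+O(1-q)$. Combining this with \eqref{eqn:mathcal I LLN} and $N(1-q)\to\lambda$, we get
\begin{equation*}
\mathcal I_{N,k}(\widetilde{\mathcal D}^q_{\mathbf i}G_k)\to r\lambda\int_{D_\lambda}\Delta\bigl(\partial_{\overline{z_a}}(1+|z_b|^2)\partial_{z_b}G_k\bigr)\widehat\rho_\lambda\,dA .
\end{equation*}

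For $k=2$ the only such indices are $(r-1,r)$ and $(r,r-1)$, so $\{a,b\}=\{1,2\}$. Using \eqref{eqn:G2 alternative}, we have $\partial_{\overline{z_1}}\partial_{z_2}G_2=-\partial_{\overline z}g(z_1)\partial_z g(z_2)$. On the diagonal this equals $-|\overline\partial g|^2$, since $g$ is real. Each of the two indices therefore contributes $-r\|g\|_{H^1}^2$, and summing over $r=1,\dots,m-1$ gives $-m(m-1)\|g\|_{H^1}^2$. Adding the $\ell(\mathbf i)=0$ total yields $m\|g\|_{H^1}^2+\frac m2\|g\|_{H^{1/2}}^2$, as claimed.

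\emph{The case $k\ge3$ (main obstacle).} The difficulty is to show that the total $\ell(\mathbf i)=1$ contribution vanishes. A single index need not give zero, because $\partial_{\overline{z_a}}\partial_{z_b}G_k$ is generally nonzero on the diagonal. So we must regroup.

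For fixed $r$, the admissible $\mathbf i$ correspond to ordered pairs $(a,b)$ of distinct positions on the cycle: $a$ is the position after the ascent and $b$ the position after the descent. Every ordered pair of distinct positions occurs exactly once. The total limit is therefore
\begin{equation*}
r\lambda\int\Delta\Bigl[\sum_{a\ne b}\partial_{\overline{z_a}}\partial_{z_b}\Bigr]G_k\,\cdot(1+|z|^2)\widehat\rho_\lambda\,dA .
\end{equation*}
We can rewrite the operator as
\begin{equation*}
\sum_{a\ne b}\partial_{\overline{z_a}}\partial_{z_b}=\Bigl(\sum_a\partial_{\overline{z_a}}\Bigr)\Bigl(\sum_b\partial_{z_b}\Bigr)-\sum_j\partial_{z_j}\partial_{\overline{z_j}}.
\end{equation*}
By \cite[Lemmas 3.2, 3.3]{AHM11}, both pieces applied to $G_k$ vanish on the diagonal for $k\ge3$. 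The first piece is the diagonal restriction of a derivative of $G_k$ along the diagonal direction; since $\Delta_{G_k}$ is constant, that restriction vanishes.

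Hence all cumulants of order $k\ge3$ tend to $0$. The Gaussian limit then follows by the method of moments, exactly as in Theorem \ref{thm:analytic CLT}.
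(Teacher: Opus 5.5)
Your proposal is correct and follows the paper's proof. It uses the same sorting of multi-indices by $\ell(\mathbf{i})$, the same use of Lemma~\ref{lem:mathcal D expansion} and \eqref{eqn:mathcal I LLN} for the $\ell(\mathbf{i})=1$ terms, and the same $k=2$ computation. Your placement of $\partial_{\overline{z}}$ at the ascent and $(1+|z|^{2})\partial_{z}$ at the descent is reversed relative to Lemma~\ref{lem:mathcal D expansion}, but this is harmless because you sum over all ordered pairs $a\neq b$.

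The only real difference is for $k\geq3$. The paper cites \cite[Lemma 3.4]{AHM11} to get $\Delta_{H_{k}}=0$. You instead derive it from the identity $\sum_{a\neq b}\partial_{\overline{z_a}}\partial_{z_b}=\big(\sum_{a}\partial_{\overline{z_a}}\big)\big(\sum_{b}\partial_{z_b}\big)-\sum_{j}\partial_{z_j}\partial_{\overline{z_j}}$. Since $\Delta_{G_k}\equiv0$, the first piece restricts to $\partial\overline{\partial}\Delta_{G_k}=0$ on the diagonal, and the second piece is handled by the diagonal-Laplacian identity already used for the pure ensemble. This is a slightly more self-contained justification than the paper's.
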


\begin{proof}
    We begin with the cumulant formula
    \begin{equation*}
        C_{k}(\trace_{m,N}^{\full}(g))
        =\sum_{0\leq i_{1},\cdots,i_{k}\leq m-1}\mathcal{I}_{N,k}\big(\widetilde{\mathcal{D}}_{i_{1},\cdots,i_{k}}^{q}G_{k}\big).
    \end{equation*}
    By \eqref{eqn:mathcal tilde D operator norm}, we have
    \begin{equation*}
        \widetilde{\mathcal{D}}_{\mathbf{i}}^{q}G_{k}
        =(1-q)^{\ell(\mathbf{i})}R_{N,k},
    \end{equation*}
    where $R_{N,k}$ is a sequence of polynomials with uniformly bounded degree and coefficients. Thus, by Lemma \ref{lem:mathcal I diagonal}, every multi-index $\mathbf{i}$ with $\ell(\mathbf{i})\geq2$ contributes at most
    \begin{equation*}
        O(N(1-q)^{2})=o(1)
    \end{equation*}
    to the cumulants.

    The multi-indices with $\ell(\mathbf{i})=0$ are the constant indices, i.e., $\mathbf{i}=(r,\cdots,r)$ for each $0\leq r\leq m-1$. In this case, they correspond to the cumulants of pure ensembles, hence, their contribution is
    \begin{equation*}
        \sum_{r=0}^{m-1}C_{k}(\trace_{r+1,N}(g)).
    \end{equation*}

    Next, a multi-index $\mathbf{i}$ satisfies $\ell(\mathbf{i})=1$ if and only if, for some $1\leq r \leq m-1$, $\mathbf{i}$ takes only the values $r-1$ and $r$, and contains exactly one block of $r$ and one block of $r-1$ under the periodic index convention $i_{0}=i_{k}$. If the transition from $r-1$ to $r$ occurs at $a$ and the transition from $r$ to $r-1$ occurs at $b$, by Lemma \ref{lem:mathcal D expansion}, we have
    \begin{equation*}
        \widetilde{\mathcal{D}}_{\mathbf{i}}^{q}G_{k}
        =r(1-q)(1+|z_{a}|^{2})\partial_{z_a}\partial_{\overline{z_b}}G_{k}+(1-q)^{2}R_{\mathbf{i},N},
    \end{equation*}
    where $R_{\mathbf{i},N}$ is a polynomial whose degree and coefficients are bounded uniformly in $N$. Considering all possible choices of $a$ and $b$, we obtain a uniform expansion
    \begin{equation}\label{eqn:full polynomial cumulant expansion}
        C_{k}(\trace_{m,N}^{\full}(g))
        =\sum_{r=0}^{m-1}C_{k}(\trace_{r+1,N}(g))+(1-q)\sum_{r=1}^{m-1}r\mathcal{I}_{N,k}(H_{k})+O(N(1-q)^{2}),
    \end{equation}
    where
    \begin{equation*}
        H_{k}
        =\sum_{1\leq a,b\leq k,~a\neq b}(1+|z_{a}|^{2})\partial_{z_a}\partial_{\overline{z_b}}G_{k}.
    \end{equation*}
    We first consider the case $k\geq3$. By Proposition \ref{prop:pure polynomial cumulant}, the cumulants of pure ensembles tend to zero. On the diagonal of $H_{k}$, by \cite[Lemma 3.4]{AHM11}, we have
    \begin{equation*}
        \Delta_{H_{k}}(z)
        =2(1+|z|^{2})\Re\left[\sum_{a<b}\partial_{z_a}\partial_{\overline{z_b}}G_{k}\right]
        \Bigg|_{z_{1}=\cdots=z_{k}=z}
        =0.
    \end{equation*}
    Thus, $\mathcal{I}_{N,k}(H_{k})=O(1)$ and the second term in \eqref{eqn:full polynomial cumulant expansion} vanishes in the large-$N$ limit.

    Finally, for $k=2$, using
    \begin{equation*}
        H_{2}(z,w)=\big((1+|z|^{2})\partial_{z}\partial_{\overline{w}}+(1+|w|^{2})\partial_{w}\partial_{\overline{z}}\big)G_{2}(z,w)
    \end{equation*}
    with \eqref{eqn:G2 alternative}, we obtain
    \begin{equation*}
        \Delta_{H_{2}}(z)
        =-2(1+|z|^{2})|\partial_{\overline{z}}g(z)|^{2}.
    \end{equation*}
    Therefore, we have the leading behavior
    \begin{equation}\label{eqn:full polynomial cumulant 1}
        \lim_{N\to\infty}(1-q)\sum_{r=1}^{m-1}r\mathcal{I}_{N,2}(H_{2})
        =-2\sum_{r=1}^{m-1}r\|g\|_{H^{1}(D_{\lambda}^{\circ})}^{2}
        =-m(m-1)\|g\|_{H^{1}(D_{\lambda}^{\circ})}^{2}.
    \end{equation}
    On the other hand, from the cumulant formula for pure ensembles, we have
    \begin{equation}\label{eqn:full polynomial cumulant 2}
        \lim_{N\to\infty}\sum_{r=0}^{m-1}C_{2}(\trace_{r+1,N}(g))
        =m^{2}\|g\|_{H^{1}(D_{\lambda}^{\circ})}^{2}+\frac{m}{2}\|g\|_{H^{1/2}(\partial D_{\lambda})}^{2}.
    \end{equation}
    Combining \eqref{eqn:full polynomial cumulant 1} and \eqref{eqn:full polynomial cumulant 2}, we complete the proof.
\end{proof}

\subsection{Extension to smooth test functions}

It remains to extend the central limit theorem to real-valued smooth test functions with compact support. For brevity, let
\begin{equation*}
    \varphi_{r,j}^{q}(z)
    :=\widehat{\mathcal{B}}_{r}^{q}|j\rangle(z,\overline z),
    \qquad r,j\geq0,
\end{equation*}
so that
\begin{equation*}
    \widehat{\mathcal{K}}_{m,N}^{q}(z,w)
    =\sum_{j=0}^{N-1}\varphi_{m-1,j}^{q}(z)\overline{\varphi_{m-1,j}^{q}(w)},\qquad
    \widehat{\mathcal{K}}_{m,N}^{q,\full}(z,w)
    =\sum_{r=0}^{m-1}\sum_{j=0}^{N-1}\varphi_{r,j}^{q}(z)\overline{\varphi_{r,j}^{q}(w)}.
\end{equation*}
In terms of $\varphi^q_{r, j}$, the three-term relations \eqref{eqn:threeterm z} and \eqref{eqn:threeterm barz} become
\begin{equation}\label{eqn:varphi threeterm z}
\begin{split}
    z\varphi_{r,j}^{q}
    &=q^{-\frac{j+r+1}{2}}\sqrt{1-q^{j+1}}\varphi_{r,j+1}^{q}-q^{-\frac{j+r}{2}}\sqrt{1-q^{r}}\varphi_{r-1,j}^{q}, \smallskip \\
    \overline{z}\varphi_{r,j}^{q}
    &=-q^{-\frac{j+r+1}{2}}\sqrt{1-q^{r+1}}\varphi_{r+1,j}^{q}+q^{-\frac{j+r}{2}}\sqrt{1-q^{j}}\varphi_{r,j-1}^{q},
\end{split}
\end{equation}
with the convention that vectors with a negative index are zero.

We begin with some auxiliary estimates.

\begin{lemma}\label{lem:Lipschitz high}
    Fix $m\geq1$. For a real-valued Lipschitz function $f$, we have a uniform bound
    \begin{equation*}
        \Var(\fluct_{m,N}^{\#}(f))
        \leq q^{1-m}[m]_{q}(e^{\lambda}-1)\Lip(f)^{2}.
    \end{equation*}
    Here, $\#$ is either omitted or full.
\end{lemma}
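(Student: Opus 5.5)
The plan is to use the standard variance formula for finite-rank projection determinantal processes, bound the Lipschitz increment by $|z-w|$, and evaluate the resulting quadratic quantity exactly through the three-term relations \eqref{eqn:varphi threeterm z}. Let $P$ denote the orthogonal projection in $L^{2}(\mathbb{C},\widehat{\mathsf m}_q)$ with kernel $\widehat{\mathcal K}^{q,\#}_{m,N}$. As in the proof of Theorem \ref{thm:double scale limiting law}, we have
\begin{equation*}
    \Var(\fluct^{\#}_{m,N}(f))=\frac12\iint_{\mathbb{C}^2}|f(z)-f(w)|^2|\widehat{\mathcal K}^{q,\#}_{m,N}(z,w)|^2\,d\widehat{\mathsf m}_q^{\otimes2}(z,w)\le\frac{\Lip(f)^2}{2}\iint_{\mathbb{C}^2}|z-w|^2|\widehat{\mathcal K}^{q,\#}_{m,N}(z,w)|^2\,d\widehat{\mathsf m}_q^{\otimes 2}(z,w).
\end{equation*}
All integrals here are finite. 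The kernel is a finite sum of polyanalytic polynomials, so its square grows at most polynomially, and every polynomial moment of $\widehat{\mathsf m}_q$ is finite by \eqref{eqn:scaled mixed moments}.

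Next I expand $|z-w|^2=(z-w)(\overline z-\overline w)$ and use the reproducing property together with cyclicity of the trace. This rewrites the double integral as $2\big(\operatorname{Tr}(P|z|^2P)-\operatorname{Tr}(PzP\overline z)\big)=2\operatorname{Tr}(P\overline z(1-P)zP)$. Equivalently,
\begin{equation*}
    \iint|z-w|^2|\widehat{\mathcal K}^{q,\#}_{m,N}|^2=2\sum_{\varphi}\|(1-P)(z\varphi)\|^2_{L^2(\widehat{\mathsf m}_q)},
\end{equation*}
where the sum runs over the orthonormal basis $\{\varphi^q_{m-1,j}\}_{j<N}$ in the pure case and over $\{\varphi^q_{r,j}\}_{r<m,\,j<N}$ in the full case. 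The factor $2$ cancels the $\frac12$, so the whole claim reduces to computing this sum.

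The computation uses the first line of \eqref{eqn:varphi threeterm z}. This identity expresses $z\varphi^q_{r,j}$ as an orthogonal combination of $\varphi^q_{r,j+1}$ and $\varphi^q_{r-1,j}$. Hence $\|(1-P)z\varphi\|^2$ is simply the sum of the squared coefficients of the basis vectors that lie outside the range of $P$.

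In the pure case there are two such contributions. The first is the vector $\varphi^q_{m-1,N}$, which occurs only for $j=N-1$ and has squared coefficient $q^{-(N+m-1)}(1-q^N)=q^{1-m}(e^\lambda-1)$. The second is the lower-level vectors $\varphi^q_{m-2,j}$, whose squared coefficients are $q^{-(j+m-1)}(1-q^{m-1})$. Summing the second family over $j<N$ with $\sum_{j<N}q^{-j}=q(e^\lambda-1)/(1-q)$ gives $q^{2-m}[m-1]_q(e^\lambda-1)$. The total is therefore $q^{1-m}(e^\lambda-1)(1+q[m-1]_q)=q^{1-m}[m]_q(e^\lambda-1)$.

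In the full case the level-lowering terms stay inside the range of $P$. Only the boundary terms $j=N-1$ survive, contributing $q^{-r}(e^\lambda-1)$ for each $0\le r\le m-1$. Their sum is again $q^{1-m}[m]_q(e^\lambda-1)$.

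I expect the main obstacle to be the trace manipulation rather than the algebra. One has to check that $\operatorname{Tr}(PzP\overline z)=\operatorname{Tr}(P\overline zPz)$ and that the cross terms combine into $\|(1-P)z\varphi\|^2$. This requires multiplication by $z$ and $\overline z$ to be treated as mutually adjoint operators on the finite-dimensional polynomial spaces involved, and that holds because multiplication by $\overline z$ is the adjoint of multiplication by $z$, as already used in Proposition \ref{prop:three_term_recurrence}.
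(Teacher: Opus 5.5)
Your proposal is correct and follows essentially the same route as the paper. It uses the projection variance formula, the Lipschitz bound by $|z-w|^2$, and an exact evaluation of $\iint|z-w|^2|\widehat{\mathcal K}^{q,\#}_{m,N}|^2$ via orthonormality and \eqref{eqn:varphi threeterm z}, arriving at the same two contributions $q^{-(N+m-1)}(1-e^{-\lambda})$ and $\sum_{j<N}q^{-(j+m-1)}(1-q^{m-1})$. Writing the cross terms as $2\sum_\varphi\|(1-P)z\varphi\|^2$ is a tidier bookkeeping of the paper's four-term expansion, and it makes the full case transparent, which the paper only describes as a similar calculation.
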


\begin{proof}
    Since $\widehat{\mathcal{K}}^{q,\#}_{m,N}$ is a projection kernel, the variance of the linear statistic admits an integral representation
    \begin{equation*}
        \Var(\fluct_{m,N}^{\#}(f))
        =\frac{1}{2}\int_{\mathbb{C}^{2}}|f(z)-f(w)|^{2}|\widehat{\mathcal{K}}_{m,N}^{q,\#}(z,w)|^{2}\,d\widehat{\mathsf{m}}_{q}^{\otimes2}(z,w),
    \end{equation*}
    which yields
    \begin{equation*}
        \Var(\fluct_{m,N}^{\#}(f))
        \leq\frac{1}{2}\Lip(f)^{2}\int_{\mathbb{C}^{2}}|z-w|^{2}|\widehat{\mathcal{K}}_{m,N}^{q,\#}(z,w)|^{2}\,d\widehat{\mathsf{m}}_{q}^{\otimes2}(z,w).
    \end{equation*}
    For the pure case, by expanding the integrand, we obtain
    \begin{equation*}
    \begin{split}
        |z-w|^{2}|\widehat{\mathcal{K}}^{q}_{m,N}(z,w)|^{2}
        &=\sum_{j,k=0}^{N-1}\big(z\varphi_{j}^{q}(z)\overline{z\varphi_{k}^{q}(z)}\varphi_{k}^{q}(w)\overline{\varphi_{j}^{q}(w)}+\varphi_{j}^{q}(z)\overline{\varphi_{k}^{q}(z)}w\varphi_{k}^{q}(w)\overline{w\varphi_{j}^{q}(w)}\big)\smallskip\\
        &\qquad-\sum_{j,k=0}^{N-1}\big(z\varphi_{j}^{q}(z)\overline{\varphi_{k}^{q}(z)}\varphi_{k}^{q}(w)\overline{w\varphi_{j}^{q}(w)}+\varphi_{j}^{q}(z)\overline{z\varphi_{k}^{q}(z)}w\varphi_{k}^{q}(w)\overline{\varphi_{j}^{q}(w)}\big),
    \end{split}
    \end{equation*}
    where we temporarily drop the subscript $m-1$ for brevity, i.e., $\varphi_{j}^{q}=\varphi_{m-1,j}^{q}$. The orthonormality of $\{\varphi_{j}^{q}\}_{j\geq0}$ in $L^{2}(\mathbb{C},\widehat{\mathsf{m}}_{q})$ and the three-term recurrence relation \eqref{eqn:varphi threeterm z} yield
    \begin{align*}
        \int_{\mathbb{C}^{2}}|z-w|^{2}|\widehat{\mathcal{K}}^{q}_{m,N}(z,w)|^{2}\,d\widehat{\mathsf{m}}_{q}^{\otimes2}(z,w)
        &=2q^{-(N+m-1)}(1-e^{-\lambda})+2\sum_{j=0}^{N-1}q^{-(j+m-1)}(1-q^{m-1})\smallskip\\
        &=2q^{1-m}[m]_{q}(e^{\lambda}-1).
    \end{align*}
    For the full ensemble, a similar calculation yields
    \begin{align*}
        \int_{\mathbb{C}^{2}}|z-w|^{2}|\widehat{\mathcal{K}}^{q,\full}_{m,N}(z,w)|^{2}\,d\widehat{\mathsf{m}}_{q}^{\otimes2}(z,w)
        =2q^{1-m}[m]_{q}(e^{\lambda}-1).
    \end{align*}
    This completes the proof.
\end{proof}

Recall that the support of our limiting density is a disk of radius $\sqrt{e^{\lambda}-1}$. The following lemma shows that the contribution from the outside of the droplet to the integral of any test function of polynomial growth vanishes in the large-$N$ regime.

\begin{lemma}\label{lem:local high}
    Fix $R>\sqrt{e^{\lambda}-1}$, $m\geq1$ and a nonnegative integer $p$. Let $q$ be scaled as in \eqref{eqn:def of q scaling}. Then
    \begin{equation*}
        \lim_{N\to\infty}\int_{|z|>R}|z|^{2p}\widehat{\mathcal{K}}^{q,\#}_{m,N}(z,z)\,d\widehat{\mathsf{m}}_{q}(z)
        =0.
    \end{equation*}
    Here, $\#$ is either omitted or full.
\end{lemma}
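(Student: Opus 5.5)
We reduce to the pure ensembles, since $\widehat{\mathcal{K}}^{q,\full}_{m,N}(z,z)=\sum_{r=1}^{m}\widehat{\mathcal{K}}^{q}_{r,N}(z,z)$ and all quantities are nonnegative. The plan is a moment (Markov-type) bound: on $\{|z|>R\}$ we have $|z|^{2p}\le |z|^{2p+2M}R^{-2M}$ for any $M\ge0$. Hence
\begin{equation*}
    \int_{|z|>R}|z|^{2p}\widehat{\mathcal{K}}^{q}_{m,N}(z,z)\,d\widehat{\mathsf{m}}_{q}(z)
    \le R^{-2M}\,\mathfrak{m}^{q}_{m,N}(p+M,p+M).
\end{equation*}
It therefore suffices to show that $\mathfrak{m}^{q}_{m,N}(P,P)$ grows at most like $C\,N\,(e^{\lambda}-1)^{P}$ times a factor polynomial in $P$, uniformly in $N$. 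Then we can choose $M$ depending on $N$, e.g.\ $M\asymp \log N$ or larger, and use $(e^\lambda-1)/R^{2}<1$ to beat the factor $N$.

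The key step is a uniform-in-$P$ estimate on the exact formula \eqref{eqn:spectral moment formula}. I would bound each factor crudely. First, $(1-q)^{P}\qbinom{j}{l}\le (1-q)^{P-l}\,(1-q^{j})^{l}/(q;q)_l\cdot$ etc. More simply, I would return to the derivation in Theorem \ref{thm:mixed moment}, where
\begin{equation*}
    \int_{\mathbb{C}}|\overline{z}^{P}\mathcal{B}^q_{m-1}|j\rangle|^{2}\,d\mathsf{m}_q
    =\|\overline{z}^{P}\mathcal{B}^q_{m-1}|j\rangle\|^{2}.
\end{equation*}
This norm is controlled by iterating the three-term relation \eqref{eqn:varphi threeterm z} in rescaled form. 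Multiplication by $\overline{z}$ maps $\varphi_{r,j}$ to a combination of two basis vectors with coefficients of size at most $q^{-(j+r+1)/2}$. So the operator norm of multiplication by $\overline z$ on the span of $\{\varphi_{r,j}: r\le m-1+P,\ j\le N-1\}$ is at most $2q^{-(N+m+P)/2}$. Summing over $j$ gives
\begin{equation*}
    \mathfrak{m}^q_{m,N}(P,P)\le N\, 4^{P} q^{-P(N+m+P)},
\end{equation*}
which is too lossy because of the $4^{P}$.

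The refinement, which I expect to be the main obstacle, is to avoid the factor $2$ per step. Using the $\ell^2$ structure, one of the two coefficients carries $\sqrt{1-q^{r+1}}=O(N^{-1/2})$ for bounded $r$. Instead of a blanket bound, I would then take $M$ fixed but large rather than growing with $N$. Lemma \ref{lem:rescaled moment limit} gives, for each fixed $P$,
\begin{equation*}
    \tfrac1N\mathfrak{m}^q_{m,N}(P,P)\to \int t^{P}\,d\nu_\lambda(t)\le (e^\lambda-1)^{P}.
\end{equation*}
This yields $\limsup_N \frac1N\int_{|z|>R}\dots\le (e^\lambda-1)^{p}\big((e^\lambda-1)/R^2\big)^{M}$, which is only $o(N)$, not $o(1)$. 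To get $o(1)$ I need an exponentially small tail.

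So the plan for this hardest step is to use the Kostlan-type representation with independent radii $Q^{(q)}_{m-1,n}$ and rescale to $\widehat Q_n=(1-q)Q_n$. Then
\begin{equation*}
    \int_{|z|>R}|z|^{2p}\widehat{\mathcal{K}}(z,z)\,d\widehat{\mathsf{m}}_q=\sum_{n<N}\mathbb{E}\big[\widehat Q_n^{p}\mathbbm 1_{\widehat Q_n>R^2}\big].
\end{equation*}
For $m=1$ the law is explicit: the weight is $\frac{q^{n(n+1)/2}}{(q;q)_n}x^n/(-x;q)_\infty$ on the lattice $q^{k}$, up to normalization $(1-q)$. Comparing consecutive lattice points, the weight ratio at $x$ versus $qx$ is $q^{n+1}(1+x)$ up to a constant. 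This ratio is below $q^{\,n+1}(1+R^2)$-type bounds. Beyond $x_*\approx q^{-(n+1)}-1\le e^{\lambda}-1+O(1/N)$ the weights decay geometrically with ratio $(1+x_*)/(1+x)\le (e^{\lambda})/(1+R^2)<1$. So $\mathbb{P}(\widehat Q_n>R^2)$ is at most $\exp(-cN)$ after summing the $\asymp N\log((1+R^2)/e^\lambda)/\lambda$ lattice steps from $x_*$ to $R^2$. This holds uniformly in $n<N$, and the tail including $x^p$ is likewise summable. Summing over $n$ gives $N e^{-cN}\to0$. For general $m$, the density is the $m=1$ density times $x^{|n-r|-n}L^2$, a polynomial of bounded degree $2(m-1)$ with coefficients polynomially bounded in $N$. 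The same geometric decay absorbs it, as does the full case by summation over $r$.
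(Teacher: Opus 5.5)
Your final argument is correct, but it is not the paper's route. In fact your abandoned first attempt is essentially the paper's proof, except that the paper iterates the three-term relation for multiplication by $z$ rather than $\overline{z}$.

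On $\{|z|>R\}$ the paper inserts $R^{-2k_N}|z|^{2k_N}$ with $k_N=\lfloor\varepsilon N\rfloor$ and bounds $\|z^{k_N+p}\varphi^q_{m-1,j}\|^2$. Multiplication by $z$ either raises $j$ or \emph{lowers} the Landau level, so at most $m-1$ lowering steps occur. Hence there are only $\sum_{l<m}\binom{k_N+p}{l}$ paths, each coefficient is bounded by a fixed $\rho<R$, and one gets $CN(k_N+1)^{2m}\rho^{2p}(\rho/R)^{2k_N}\to0$. Because $|z^P\varphi|=|\overline{z}^P\varphi|$ pointwise, the switch is free and removes your $4^P$ loss. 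With $\overline{z}$, the level climbs to $m-1+P$, so the raising coefficients $\sqrt{1-q^{r+1}}$ stop being small and all $2^P$ paths survive.

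Your route instead uses the explicit radial Jackson weights of the individual basis functions. For $m=1$, the weight $w_n(x)\propto x^{n+1}/(-x;q)_\infty$ on $q^{\mathbb{Z}}$ satisfies $w_n(qx)/w_n(x)=q^{n+1}(1+x)$. It is therefore unimodal with mode near $q^{-(n+1)}-1\le e^{\lambda}-1$. For fixed $a\in(e^{\lambda}-1,R^2)$, the $\asymp N$ lattice points in $[a,R^2]$ each contribute a factor at most $e^{\lambda}/(1+a)<1$. So the tail beyond $R^2$, even weighted by $x^p$, is $O(e^{-cN})$ uniformly in $n<N$.

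This gives sharper information: each radial coordinate is exponentially concentrated below the droplet edge. For $m>1$, however, it needs the $q$-Laguerre coefficient bookkeeping that you only assert. It does hold. For example, for $n\ge m-1$ one finds $|\varphi^q_{m-1,n}(z)|^2\le CN^{m-1}(1+|z|^2)^{m-1}|\varphi^q_{0,n}(z)|^2$ on $\{|z|>R\}$, with the factor $N^{m-1}$ coming from $1/(q;q)_{m-1}$. The paper's recurrence argument, by contrast, handles all levels uniformly with no explicit formulas.

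There are two harmless slips. First, $\widehat{Q}_n$ is not $(1-q)Q_n$ in law, since the dilation does not preserve the lattice $q^{\mathbb{Z}}$. Your ratio argument is unaffected, and the Kostlan representation can be bypassed by expanding $\widehat{\mathcal{K}}^q_{m,N}(z,z)$ in the basis directly. Second, for $n\ge m-1$ your multiplier $x^{|n-r|-n}L^2$ is $x^{-(m-1)}$ times a polynomial, not a polynomial. It is still polynomially bounded on $\{|z|>R\}$, which is all you need.
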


\begin{proof}
    Since $\widehat{\mathcal{K}}^{q, \textup{full}}_{m, N}(z, z)=\sum_{r=1}^m\widehat{\mathcal{K}}^{q}_{r, N}(z, z)$, it suffices to consider only the pure case. Fix a sufficiently small constant $\varepsilon>0$ such that
    \begin{equation*}
        e^{\lambda(1+2\varepsilon)}-1<R^{2},
    \end{equation*}
    and set $k_{N}=\lfloor{\varepsilon N}\rfloor$. We first obtain
    \begin{align*}
        \int_{|z|>R} |z|^{2p}\widehat{\mathcal{K}}^q_{m, N}(z, z) d\widehat{\mathsf{m}}_q(z) 
        &\leq R^{-2k_N}\sum_{j=0}^{N-1}\int_{|z|>R}|z|^{2(k_N+p)}|\varphi_{m-1,j}^{q}(z)|^{2}\,d\widehat{\mathsf{m}}_{q}(z)\\
        &\leq R^{-2k_N}\sum_{j=0}^{N-1}\|z^{k_N+p}\varphi^q_{m-1, j}\|^2_{L^2(\mathbb{C}, \widehat{\mathsf{m}}_q)}.
    \end{align*}
    Repeatedly applying the first relation in \eqref{eqn:varphi threeterm z}, we obtain
    \begin{equation*}
        z^{k_{N}+p}\varphi^{q}_{m-1,j}
        =\sum_{l=0}^{m-1}c_{l,N,j}\varphi^{q}_{m-1-l,j+k_{N}+p-l},
    \end{equation*}
    where $c_{l,N,j}$ is the sum of the contributions from all paths containing exactly $l$ level-lowering steps. There are $\binom{k_N+p}{l}$ such paths. We next estimate $c_{l, N, j}$. Fix $\rho>0$ with $e^{\lambda(1+2\varepsilon)}-1<\rho^{2}<R^{2}$. At each step of the iteration, multiplying $\varphi^q_{s, n}$ by $z$ produces two terms with respective coefficient factors
    \begin{equation*}
        q^{-\frac{n+s+1}{2}}\sqrt{1-q^{n+1}} \qquad
        \text{or} \qquad
        -q^{-\frac{n+s}{2}}\sqrt{1-q^{s}},
    \end{equation*}
    where $0\leq s\leq m-1$ and $j\leq n\leq j+k_{N}+p$. In both cases, for $0\leq j\leq N-1$, all coefficients are bounded by $\rho$ for all sufficiently large $N$. Thus, we have $|c_{l, N, j}|\leq \binom{k_N+p}{l}\rho^{k_N+p}$. Moreover, by the orthonormality of $\{\varphi^q_{s, n}\}$,
    \begin{equation*}
        \|z^{k_{N}+p}\varphi^{q}_{m-1,j}\|^{2}_{L^{2}(\mathbb{C}, \widehat{\mathsf{m}}_{q})}
        \leq\rho^{2(k_{N}+p)}\sum_{l=0}^{m-1}\binom{k_{N}+p}{l}^{2}
        <C_{p,m}^2(k_{N}+1)^{2m}\rho^{2(k_{N}+p)}
    \end{equation*}
    for some $C_{p, m}>0$, and hence,
    \begin{equation*}
        \int_{|z|>R}|z|^{2p}\widehat{\mathcal{K}}_{m,N}^{q}(z,z)\,d\widehat{\mathsf{m}}_{q}(z)
        \leq C_{p,m}^{2}N(k_{N}+1)^{2m}\rho^{2p}\Big(\frac{\rho}{R}\Big)^{2k_{N}}.
    \end{equation*}
    This completes the proof.
\end{proof}
We are ready to establish the central limit theorem for compactly supported smooth functions.

\begin{proof}[Proof of Theorem \ref{thm:CLT}]
Fix a real-valued smooth function $g$ with compact support. Take $R>0$ such that
    \begin{equation*}
        R>\sqrt{e^{\lambda}-1},\qquad \supp g\subset \mathbb{D}_{R}.
    \end{equation*}
    Then, $g$ can be approximated by real-valued polynomials in $C^{1}(\overline{\mathbb{D}}_{2R})$, that is, there exists a sequence of real-valued polynomials $\{P_{n}\}_{n\geq1}$ such that
    \begin{equation*}
        \|g-P_{n}\|_{C^{1}(\overline{\mathbb{D}}_{2R})}\to0
    \end{equation*}
    as $n\to\infty$. Fix a real-valued function $\chi\in C_{c}^{\infty}(\mathbb{C})$, supported in $\mathbb{D}_{2R}$ satisfying $\chi\equiv1$ on $\mathbb{D}_{R}$. Using $\chi$, we decompose $g-P_{n}=u_{n}+v_{n}$, where
    \begin{equation*}
        u_{n}=\chi(g-P_{n}),\qquad
        v_{n}=(1-\chi)(g-P_{n}).
    \end{equation*}
    Since $\chi$ is a compactly supported smooth function, $u_{n}$ is Lipschitz with
    \begin{equation*}
        \Lip(u_{n})
        \leq C_{\chi}\|g-P_{n}\|_{C^{1}(\mathbb{D}_{2R})},
    \end{equation*}
    where $C_{\chi}>0$ is a constant depending only on $\chi$. Thus, for a fixed $n\geq1$, Lemma \ref{lem:Lipschitz high} gives the uniform bound
    \begin{equation}\label{eqn:Varun}
        \Var(\fluct_{m,N}^{\#}(u_{n}))
        \leq q^{1-m}[m]_{q}(e^{\lambda}-1)C_{\chi}^{2}\|g-P_{n}\|_{C^{1}(\mathbb{D}_{2R})}^{2}
    \end{equation}
    over $N\geq1$.

    For $v_{n}$, since $\chi\equiv1$ on $\mathbb{D}_{R}$ and $g=0$ on $\mathbb{C}\setminus\mathbb{D}_{R}$, we have $v_{n}=-(1-\chi)P_{n}$. For a fixed $n$, $v_{n}$ is supported on $\{|z|>R\}$ and has polynomial growth. Thus, there exists a constant $C_{n}>0$ and a nonnegative integer $p_{n}$ such that
    \begin{equation*}
        |v_{n}(z)|^{2}
        \leq C_{n}(1+|z|^{2p_{n}})\mathbbm{1}_{\{|z|>R\}}(z).
    \end{equation*}
    Combining the estimate
    \begin{equation*}
        \Var(\fluct_{m,N}^{\#}(v_{n}))
        \leq\int_{\mathbb{C}}|v_{n}(z)|^{2}\widehat{\mathcal{K}}_{m,N}^{q,\#}(z,z)\,d\widehat{\mathsf{m}}_{q}(z)
        \leq C_{n}\int_{|z|>R}(1+|z|^{2p_{n}})\widehat{\mathcal{K}}_{m,N}^{q,\#}(z,z)\,d\widehat{\mathsf{m}}_{q}(z)
    \end{equation*}
    with the result of Lemma \ref{lem:local high}, we conclude that
    \begin{equation}\label{eqn:Varvn}
        \lim_{N\to\infty}\Var(\fluct_{m,N}^{\#}(v_{n}))=0
    \end{equation}
    for each fixed $n\geq1$. Thus, by \eqref{eqn:Varun} and \eqref{eqn:Varvn}, we obtain the variance estimate
    \begin{equation}\label{eqn:limsupVar}
        \limsup_{N\to\infty}\Var(\fluct_{m,N}^{\#}(g-P_{n}))
        \leq m(e^{\lambda}-1)C_{\chi}^{2}\|g-P_{n}\|_{C^{1}(\mathbb{D}_{2R})}^{2}
    \end{equation}
    for each fixed $n\geq1$.

    Next, we verify the continuity of the limiting variance. For a test function $f$, we denote
    \begin{equation*}
        V_{m}(f)
        =(2m-1)\|f\|_{H^{1}(D_{\lambda}^{\circ})}^{2}+\frac{1}{2}\|f\|_{H^{1/2}(\partial D_{\lambda})}^{2},\qquad
        V_{m}^{\full}(f)
        =m\big(\|f\|_{H^{1}(D_{\lambda}^{\circ})}^{2}+\frac{1}{2}\|f\|_{H^{1/2}(\partial D_{\lambda})}^{2}\big)
    \end{equation*}
    as variances in the statement of Theorem \ref{thm:CLT}. Let $Q_{n}=g-P_{n}$. First, it is straightforward that
    \begin{equation}\label{eqn:Qn H1}
        \|Q_{n}\|_{H^{1}(D_{\lambda}^{\circ})}^{2}
        \leq C_{\lambda}\|\overline{\partial} Q_{n}\|_{L^{\infty}(D_{\lambda})}^{2}
        \leq C_{\lambda}\|Q_{n}\|_{C^{1}(D_{\lambda})}^{2}
    \end{equation}
    vanishes as $n\to\infty$. For the $H^{1/2}$ term, Parseval's identity gives
    \begin{align*}
        \|Q_{n}\|_{H^{1/2}(\partial D_{\lambda})}^{2}
        =\sum_{k\in\mathbb{Z}}|k||\widehat{Q}_{n}(k)|^{2}
        &\leq\Big(\sum_{k\in\mathbb{Z}}|\widehat{Q}_{n}(k)|^{2}\Big)^{1/2}\Big(\sum_{k\in\mathbb{Z}}k^{2}|\widehat{Q}_{n}(k)|^{2}\Big)^{1/2}
        =\|Q_{n}\|_{L^{2}(\partial D_{\lambda})}\|\partial_{\theta}Q_{n}\|_{L^{2}(\partial D_{\lambda})},
    \end{align*}
    where we write $\partial_{\theta}Q_{n}=\partial_{\theta} Q_{n}(\sqrt{e^{\lambda}-1}e^{i\theta})$. This yields an estimate
    \begin{equation}\label{eqn:Qn H1/2}
        \|Q_{n}\|_{H^{1/2}(\partial D_{\lambda})}^{2}
        \leq C_{\lambda}\|Q_{n}\|^{2}_{C^{1}(D_{\lambda})},
    \end{equation}
    for a constant $C_\lambda>0$ depending on $\lambda$. We remark that $\left|\sqrt{V_m^\#(P_n)}- \sqrt{V_m^\#(g)}\right|\leq \sqrt{V_m^\#(Q_n)}$. Thus, combining \eqref{eqn:Qn H1} and \eqref{eqn:Qn H1/2}, we obtain
    \begin{equation*}
        \lim_{n\to\infty}V_{m}^{\#}(P_{n})
        =V_{m}^{\#}(g),
    \end{equation*}
    which implies that $V_{m}^{\#}$ is continuous with respect to the $C^{1}$-norm.
    
    Finally, we turn to the limiting behavior of the characteristic functions. For fixed $t\in\mathbb{R}$ and $n\geq1$, \eqref{eqn:limsupVar} gives
    \begin{equation*}
    \begin{split}
        \limsup_{N\to\infty}\Big|\mathbb{E}\big[e^{it\fluct_{m,N}^{\#}(g)}\big]-\mathbb{E}\big[e^{it\fluct_{m,N}^{\#}(P_{n})}\big]\Big|
        &\leq|t|\limsup_{N\to\infty}\big(\Var(\fluct_{m,N}^{\#}(g-P_{n}))\big)^{\frac{1}{2}}\smallskip\\
        &\leq|t|\sqrt{m(e^{\lambda}-1)}C_{\chi}\|g-P_{n}\|_{C^{1}(\mathbb{D}_{2R})}.
    \end{split}
    \end{equation*}
    Note that, according to the result of the previous subsection, the polynomial statistics satisfy the central limit theorem. Thus, for each fixed $t\in\mathbb{R}$ and $n\geq1$, we have pointwise convergence
    \begin{equation*}
        \lim_{N\to\infty}\mathbb{E}\big[e^{it\fluct_{m,N}^{\#}(P_{n})}\big]
        =\exp\Big(-\frac{V_{m}^{\#}(P_{n})}{2}t^{2}\Big).
    \end{equation*}
    Since $V_{m}^{\#}$ is continuous with respect to $C^{1}$-norm, taking first $N\to \infty$ and then $n\to\infty$, we obtain
    \begin{equation*}
        \lim_{N\to\infty}\mathbb{E}\big[e^{it\fluct_{m,N}^{\#}(g)}\big]
        =\exp\Big(-\frac{V_{m}^{\#}(g)}{2}t^{2}\Big),
    \end{equation*}
    which completes the proof.
\end{proof}

\appendix
%%%%%%%%%%%%%%%%%%%%%%%%%%%%%%%%%%%%%%%%%%%%%%%%%%%%%%%%%%%%%%%%%%%%%%%%%%%%%%%%%%%%%%%%%%%%%%%%%%%%%%%%%%%%%%%%%%%%%%%%%%%%%%%%%%%%%%%%%%%%%%%%%%%%%%%%%%%%%%%%%%%%
\section{Interacting Fock spaces and Bargmann representation measures}\label{app:IFS}

In this appendix, we discuss a general framework for introducing determinantal point processes on $\mathbb{C}$ using an interacting Fock space (see \cite[Chapter 1]{HO07}). Throughout this section, we fix two real sequences $(\alpha_n)_{n\geq 1}$ and $(\omega_n)_{n\geq 1}$ such that $\omega_1=1$ and $\omega_n> 0$ for every $n\geq 1$. As in Section \ref{sec:construction}, let $\mathcal{H}$ be a complex Hilbert space with orthonormal basis $\{|n\rangle\}_{n\geq 0}$. We define two (possibly unbounded) operators $a^{\ast}, a$ on $\mathcal{H}$ by 
\begin{equation*}
    a^{\ast}|n\rangle
    :=\sqrt{\omega_{n+1}}|n+1\rangle, \quad
    a|n\rangle
    :=\sqrt{\omega_{n}}|n-1\rangle
\end{equation*}
for every $n\geq 0$, where $|-1\rangle = 0$. The quadruple $(\mathcal{H}, \{|n\rangle\}_{n\geq 0}, a, a^{\ast})$ is called the \emph{interacting Fock space} associated with $(\omega_n)_{n\geq 0}$. For instance, when $\omega_n=n$ for every $n\geq 0$, the two operators $a^{\ast}, a$ satisfy the CCR. Moreover, if $\omega_n=[n]_q$ for every $n\geq 0$, then $a^{\ast}, a$ satisfy the $q$-CCR, i.e., they are the Fock representation of the creation and annihilation operators in $\mathfrak{A}_{q}$.

We introduce a family $(p_n(x))_{n\geq 0}$ of monic polynomials by the three-term recurrence relations
\begin{equation*}
    p_{0}(x):=1, \quad
    p_{1}(x):=x-\alpha_{1}, \quad
    xp_{n}(x)=p_{n+1}(x)+\alpha_{n+1}p_{n}(x)+\omega_{n}p_{n-1}(x).
\end{equation*}
Then, they admit a probability measure $\mu$ on $\mathbb{R}$ satisfying the orthogonality
\begin{equation*}
    \langle p_{n},p_{m}\rangle_{L^{2}(\mathbb{R},\mu)}
    =\delta_{n,m}[\omega_n]!,\qquad
    [\omega_{n}]!
    :=\omega_{n}\omega_{n-1}\cdots\omega_{1},
\end{equation*}
for every $n, m\geq0$; see \cite[Proposition 1.56]{HO07}. For instance, if $\omega_n=n$ and $\alpha_n=0$ for every $n\geq 1$, then we obtain the recurrence relations for the Hermite polynomials and therefore
\begin{equation*}
    d\mu(x)
    =\frac{1}{\sqrt{2\pi}}e^{-\frac{x^{2}}{2}}\,dx.
\end{equation*}
Similarly, if $\omega_n=[n]_q$ and $\alpha_n=0$ for every $n\geq 1$, then we obtain the recurrence relation for the continuous $q$-Hermite polynomials, and their orthogonality measure is known; see \cite[Section 14.26]{KLS10}.

\medskip
We use the following terminology from \cite{ABH16}:
\begin{definition}[{see \cite[Definition 2.2]{ABH16}}]
    A Borel probability measure $\mathsf{m}$ on $\mathbb{C}$ is said to be a \emph{Bargmann representation measure} of $\mu$ if for every $n, m\geq 0$
    \begin{equation*}
        \int_{\mathbb{C}}z^{m}\overline{z}^{n}\,d\mathsf{m}(z)
        =\delta_{n,m}[\omega_{n}]!.
    \end{equation*}
\end{definition}

We now assume that a Bargmann representation measure $\mathsf{m}$ of $\mu$ exists and $\mathscr{N}(z):=\sum_{n\geq 0}|z|^{2n}/[\omega_n]!$ converges absolutely for every $z\in \operatorname{supp}(\mathsf{m})$. As in Section \ref{sec:construction}, we can easily solve the eigenvalue problem for $a$. Namely, for every $z\in \operatorname{supp}(\mathsf{m})$
\begin{equation*}
    |z\rangle
    :=\frac{1}{\mathscr{N}(z)^{1/2}}\sum_{n=0}^{\infty}\frac{z^{n}}{\sqrt{[\omega_{n}]!}}|n\rangle
\end{equation*}
is a well-defined unit vector in $\mathcal{H}$ satisfying $a|z\rangle = z|z\rangle$.

In the same way as Section \ref{sec:construction}, we can introduce a generalized Bargmann transform $\mathcal{T}\colon \mathcal{H}\to L^{2}(\mathbb{C}, \mathsf{m})$ by 
\begin{equation*}
    [\mathcal{T}h](z)
    :=\mathscr{N}(z)^{1/2}\langle\overline{h}|z\rangle
\end{equation*}
for every $h\in \mathcal{H}$. For instance, we have $\mathcal{T}|n\rangle(z)=z^n/\sqrt{[\omega_n]!}$ for every $n\geq 0$. Thus, $\{\mathcal{T}|n\rangle\}_{n\geq 0}$ forms an orthonormal family in $L^2(\mathbb{C}, \mathsf{m})$. Therefore, the following immediately holds:
\begin{proposition}
    The linear map $\mathcal{T}$ is isometric.
\end{proposition}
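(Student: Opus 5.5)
The claim is that $\mathcal{T}$ preserves inner products. The plan is to verify this on the orthonormal basis $\{|n\rangle\}_{n\geq0}$ and then extend by linearity and continuity.

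First, compute $\mathcal{T}|n\rangle$ from the definition. For $h=|n\rangle$ we have $\overline{h}=|n\rangle$. Moreover
\[
\langle n|z\rangle=\mathscr{N}(z)^{-1/2}\frac{z^{n}}{\sqrt{[\omega_n]!}}
\]
for $z\in\operatorname{supp}(\mathsf{m})$, so that $\mathcal{T}|n\rangle(z)=z^n/\sqrt{[\omega_n]!}$ on $\operatorname{supp}(\mathsf{m})$. This is all that matters in $L^2(\mathbb{C},\mathsf{m})$. The defining moment identity of a Bargmann representation measure then gives
\[
\langle \mathcal{T}|n\rangle,\mathcal{T}|m\rangle\rangle_{L^2(\mathsf{m})}
=\frac{1}{\sqrt{[\omega_n]![\omega_m]!}}\int_{\mathbb{C}} z^{n}\overline{z}^{m}\,d\mathsf{m}(z)=\delta_{n,m}.
\]
In particular each $z^n$ lies in $L^2(\mathsf{m})$, since its squared norm is $[\omega_n]!<\infty$. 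So $\{\mathcal{T}|n\rangle\}$ is orthonormal.

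Next, for a general $h=\sum_n c_n|n\rangle$ with $\sum_n|c_n|^2<\infty$, I will show pointwise on the support that
\[
\mathcal{T}h(z)=\mathscr{N}(z)^{1/2}\sum_n c_n\langle n|z\rangle=\sum_n c_n\,\mathcal{T}|n\rangle(z).
\]
The convention is that the inner product is conjugate-linear in the first slot, and $\overline{h}$ has coefficients $\overline{c_n}$, so $\mathcal{T}$ is linear. The series converges absolutely by Cauchy--Schwarz, because $|z\rangle\in\mathcal{H}$, which uses the assumed convergence of $\mathscr{N}(z)$.

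On the other hand, the partial sums $\sum_{n\leq M}c_n\mathcal{T}|n\rangle$ form a Cauchy sequence in $L^2(\mathsf{m})$ by orthonormality, so they converge in $L^2$ to some $F$ with $\|F\|^2=\sum|c_n|^2=\|h\|^2$. A subsequence converges $\mathsf{m}$-a.e. to $F$. The pointwise limit is $\mathcal{T}h$, so $F=\mathcal{T}h$ in $L^2(\mathsf{m})$. This gives $\|\mathcal{T}h\|=\|h\|$, and polarization yields isometry.

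The only point needing care is this identification of the pointwise series with the $L^2$ limit. It is handled by the a.e.-convergent subsequence argument above.
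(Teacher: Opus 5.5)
Your proof is correct and follows the paper's route: the paper computes $\mathcal{T}|n\rangle(z)=z^{n}/\sqrt{[\omega_{n}]!}$, uses the defining moment identity of the Bargmann representation measure to see that these vectors form an orthonormal family in $L^{2}(\mathbb{C},\mathsf{m})$, and declares the isometry immediate. Your extra step, identifying the pointwise series $\sum_n c_n\,\mathcal{T}|n\rangle(z)$ with its $L^{2}$ limit via an a.e.-convergent subsequence, only makes explicit the passage from the basis to a general $h\in\mathcal{H}$, which the paper leaves implicit.
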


Let $\mathcal{A}:=\mathcal{T}(\mathcal{H})\subset L^2(\mathbb{C}, \mathsf{m})$, which is a Bargmann--Fock-type space. By definition, this is a reproducing kernel Hilbert space with kernel
\begin{equation*}
    \mathcal{K}(z,w)
    :=\sum_{n\geq 0}\mathcal{T}|n\rangle(z)\overline{\mathcal{T}|n\rangle(w)}
    =\sum_{n\geq 0}\frac{(z\overline{w})^{n}}{[\omega_n]!}, \quad
    z, w\in \supp(\mathsf{m}).
\end{equation*}
We use the same symbol $\mathcal{K}$ to denote the integral operator of this kernel $\mathcal{K}$, and it is nothing less than the orthogonal projection from $L^2(\mathbb{C}, \mathsf{m})$ onto $\mathcal{A}$. Moreover, for every compact set $D\subset \mathbb{C}$, the function $\mathcal{K}(z, z)$ is bounded on $D\cap \supp(\mathsf{m})$. Thus, if $\mathsf{m}$ is a Radon measure, we have 
\begin{equation*}
    \operatorname{Tr}(\mathbbm{1}_{D}\mathcal{K}\mathbbm{1}_{D})
    =\int_{D}\mathcal{K}(z,z)\,d\mathsf{m}(z)
    <\infty,
\end{equation*}
i.e., $\mathcal{K}$ is locally trace class. Thus, the general existence theorem for DPPs \cite{ST03, So00} yields the following:

\begin{theorem}
    Under the above assumption, there exists a determinantal point process on $\mathbb{C}$ with reference measure $\mathsf{m}$ and correlation kernel $\mathcal{K}$.
\end{theorem}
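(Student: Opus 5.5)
The plan is to check two things. First, that the kernel $\mathcal{K}$ is the kernel of an orthogonal projection on $L^{2}(\mathbb{C},\mathsf{m})$. Second, that it is locally trace class. Then I would invoke the Soshnikov--Shirai--Takahashi existence theorem \cite{ST03,So00}. This is exactly the route used for Theorem \ref{thm:existence} via Proposition \ref{prop:locally_trace_class}.

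\textbf{Step 1 (projection property).} Since $\{\mathcal{T}|n\rangle\}_{n\geq0}$ is orthonormal in $L^{2}(\mathbb{C},\mathsf{m})$ by the defining moment condition of a Bargmann representation measure, the operator
\begin{equation*}
    Pf(z)=\sum_{n\geq0}\langle f,\mathcal{T}|n\rangle\rangle\,\mathcal{T}|n\rangle(z)
\end{equation*}
is the orthogonal projection onto $\mathcal{A}=\mathcal{T}(\mathcal{H})$. I would check that $Pf(z)=\int\mathcal{K}(z,w)f(w)\,d\mathsf{m}(w)$ for $z\in\supp(\mathsf{m})$. The interchange of sum and integral should follow from Cauchy--Schwarz together with the bound
\begin{equation*}
    \sum_n|\mathcal{T}|n\rangle(z)|^{2}=\mathscr{N}(z)<\infty,
\end{equation*}
which is where the absolute-convergence assumption on $\mathscr{N}$ enters. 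In particular, $w\mapsto\mathcal{K}(z,w)$ lies in $L^{2}(\mathsf{m})$ with norm $\mathscr{N}(z)^{1/2}$. Hence $0\leq \mathcal{K}\leq \mathrm{Id}$, and $\mathcal{K}$ is self-adjoint, as required by the existence theorem.

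\textbf{Step 2 (local trace class).} For compact $D$, the operator $\mathbbm{1}_D\mathcal{K}\mathbbm{1}_D$ is positive and equals $A^{\ast}A$ with
\begin{equation*}
    A=\mathcal{K}\mathbbm{1}_D \quad (\text{using } \mathcal{K}^{2}=\mathcal{K}).
\end{equation*}
Its trace is therefore the Hilbert--Schmidt norm of $A$, namely
\begin{equation*}
    \sum_n\int_D|\mathcal{T}|n\rangle|^{2}\,d\mathsf{m}=\int_D\mathscr{N}(z)\,d\mathsf{m}(z)
\end{equation*}
by monotone convergence. To bound this, note that $\mathscr{N}(z)$ is a power series in $|z|^{2}$ with nonnegative coefficients, so it is nondecreasing in $|z|$. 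Let $R_D:=\sup\{|z|:z\in D\cap\supp(\mathsf{m})\}$. Pick a point of $\supp(\mathsf{m})$ of modulus at least $R_D$; then the convergence of $\mathscr{N}$ there bounds $\mathscr{N}$ on $D\cap\supp(\mathsf{m})$.

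\textbf{Main obstacle.} Finding such a point is the delicate step. If $\supp(\mathsf{m})$ is bounded, I would take a point of maximal modulus in it, which exists since the support is closed, hence compact. If the support is unbounded, any point of modulus larger than $R_D$ works. Either way $\mathscr{N}$ is bounded on $D\cap\supp(\mathsf{m})$. The Radon property of $\mathsf{m}$ then gives $\mathsf{m}(D)<\infty$, so the trace is finite.

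\textbf{Conclusion.} By the Macchi--Soshnikov/Shirai--Takahashi theorem, a locally trace class self-adjoint operator with $0\leq\mathcal{K}\leq \mathrm{Id}$ defines a determinantal point process with kernel $\mathcal{K}$ and reference measure $\mathsf{m}$. A version of the kernel valid on $\supp(\mathsf{m})$ suffices, since $\mathsf{m}$-null sets are irrelevant.
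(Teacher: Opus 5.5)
Your proposal is correct and follows the paper's route. You identify $\mathcal{K}$ as the orthogonal projection onto $\mathcal{A}$, show $\operatorname{Tr}(\mathbbm{1}_D\mathcal{K}\mathbbm{1}_D)=\int_D\mathscr{N}\,d\mathsf{m}<\infty$ from the boundedness of $\mathcal{K}(z,z)=\mathscr{N}(z)$ on $D\cap\supp(\mathsf{m})$, and invoke \cite{ST03,So00}; along the way you supply the monotonicity argument and the Hilbert--Schmidt trace identity that the paper leaves implicit. Your ``main obstacle'' needs no case split: $D\cap\supp(\mathsf{m})$ is itself compact, so the maximal modulus is attained there, and $\mathsf{m}(D)\le 1$ holds trivially because $\mathsf{m}$ is a probability measure.
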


%%%%%%%%%%%%%%%%%%%%%%%%%%%%%%%%%%%%%%%%%%%%%%%%%%%%%%%%%%%%%%%%%%%%%%%%%%%%%%%%%%%%%%%%%%%%%%%%%%%%%%%%%%%%%%%%%%%%%%%%%%%%%%%%%%%%%%%%%%%%%%%%%%%%%%%%%%%%%%%%%%%%

\end{document}